\title[Gromov-Lawson-Rosenberg Conjecture]
{The  Gromov-Lawson-Rosenberg Conjecture for  the  group $\mathbb{Z}/4\times \mathbb{Z}/4$ }
\author{No\'{e} B\'{a}rcenas }
                \email{barcenas@matmor.unam.mx}
         \urladdr{http://www.matmor.unam.mx /~ barcenas}
 \address{Centro de Ciencias Matem\'aticas. UNAM \\Ap.Postal 61-3 Xangari. Morelia, Michoac\'an MEXICO 58089}
\author{Luis Eduardo Garc\'ia-Hern\'andez}
\address{Luis Eduardo Garc\'a-Hern\'andez, Universidad Nacional Aut\'onoma de M\'exico.}
\email{legh@ciencias.unam.mx}
\author{Raphael Reinauer} 
\address{LILT Inc,  2200 Powell St Ste 900, Emeryville, CA 94608, United States}
\email{raphael.reinauer@lilt.com}
         \date{\today}
\DeclareMathAlphabet\EuR{U}{eur}{m}{n}
\SetMathAlphabet\EuR{bold}{U}{eur}{b}{n}
\theoremstyle{plain}
\newtheorem{theorem}{Theorem}[section]
\newtheorem{lemma}[theorem]{Lemma}
\newtheorem{proposition}[theorem]{Proposition}
\newtheorem{corollary}[theorem]{Corollary}
\newtheorem{conjecture}[theorem]{Conjecture}
\theoremstyle{definition}
\newtheorem{definition}[theorem]{Definition}
\newtheorem{example}[theorem]{Example}
\newtheorem{remark}[theorem]{Remark}
\newtheorem{notation}[theorem]{Notation}
\global\let\c@equation=\c@theorem}
\numberwithin{figure}{section}
\newcommand{\comsquare}[8]                   
{\begin{CD}
#1 @>#2>> #3\\
@V{#4}VV @V{#5}VV\\
#6 @>#7>> #8
\end{CD}
}
\newcommand{\xycomsquare}[8]                   
{\xymatrix
{#1 \ar[r]^{#2} \ar[d]^{#4} &
#3 \ar[d]^{#5}  \\
#6\ar[r]^{#7} &
#8
}
}
\newcommand{\curs}{\EuR}
\newcommand{\MODULES}{\curs{MODULES}}
\newcommand{\colim}{\operatorname{colim}}
\newcommand{\Ext}{\operatorname{Ext}}
\newcommand{\Spin}{\operatorname{Spin}}
\newcommand{\Pin}{\operatorname{Pin}}
\newcommand{\Tor}{\operatorname{Tor}}
\newcommand{\cdos}[2]{(\mathbb{Z}/2^{#1})^{#2}}
\newcommand{\higherlim}[3]{{\setbox1=\hbox{\rm lim}
        \setbox2=\hbox to \wd1{\leftarrowfill} \ht2=0pt \dp2=-1pt
        \mathop{\vtop{\baselineskip=5pt\box1\box2}}
        _{#1}}^{#2}#3}
\newcommand{\version}[1]                       
{\begin{center} last edited on #1\\
last compiled on \today\\
name of texfile: \jobname
\end{center}
}
\newcounter{commentcounter}
\begin{document}

\maketitle

\section{Introduction}

The study of  metrics  of  positive scalar  curvature on  spin  manifolds  has been  traditionally  related  to the  spectral  properties  of  the  Dirac  operator. 

The  fact  that  the vanishing of  polynomials   in  Pontrjagyin classes,  or  the  non- existence of harmonic spinors  is  a necessary  condition  for  the existence  of a metric  of  positive  scalar curvature is  known since Lichnerovicz \cite{lichnerowicz} and  Bochner-Yano  \cite{bochner}.

The  emergence  of  the   Atiyah-Singer Index  Theorem  and  its cohomological  formulas in  preliminary  form  led to Hitchin  \cite{hitchin} to  formulate  the  vanishing  of  the  mod 2  index of  the  dirac  operator, which  finally  led  to formulations  in  terms  of  real  $K$-theory and  Clifford-linear operators. 

Given  a  smooth  compact  spin  manifold  $M$ of  even dimension and  a  choice  of  a spin structure, there  exists a polynomial in  Pontryagin classes $\hat{A}$,  such that  the  index  of  the  Dirac  operator  of $M$ with  respect  to the  spin  structure satisfies  the  equality 
$$\hat{A}(M)= {\rm Index}(D). $$

Hitchin constructed an  invariant  taking  values  on  the  coefficients  of  real  $K$-Theory,  viewed  as the  $K$-theory of  modules over  a  Clifford  algebra. 

In symbols,
$$ \alpha(M)\in KO_{n}.$$  

Denoting  the  fundamental  group  of  the  spin manifold  $M$ by  $\pi$,  the  (spin) bordism  invariance of the  index  has  as  consequence  that Hitchin's construction defines   a natural transformation  of  homology  theories between spin  bordism   and  connective real $K$-homology

$$D(M): \Omega_{n}^{\Spin}(B\pi)\to ko_{n}(B\pi). $$

Gromov  and  Lawson \cite{gromovlawsonsurgery} established  the  fact  that  the  vanishing of   the  $KO$- Pontryagin nubers of  Anderson-Brown-Peterson \cite{andersonbrownpeterson} is  a  necesary  condition  for  the existence of  a metric of  positive scalar  curvature on  a  spin manifold. Trough  the  surgery theorem in loc.cit \cite{gromovlawsonsurgery}, and  \cite{gromovlawsonfundamental}, they established  that  the  question  of  whether  a spin  manifold  of positive  scalar  curvature  of  dimension  greater or  equal than  5 is  a  problem  of  the  spin bordism class of  the given  spin manifold.  

They   also  conjectured  that  the vanishing  of  the  $\alpha$-invariant  described above  is sufficient  for  the  manifold   with fundamental group $\pi$ to  admit  a  metric  of  positive  scalar  curvature.

Rosenberg  in  \cite{rosenbergalpha}  elaborated  on these  results  to construct a  
refinement  of  the  $\alpha$-invariant and he   formulated   the  following  Conjecture  which  will  be  the  main topic  of  this  note. 

\begin{conjecture}[Gromov-Lawson-Rosenberg ]
Let $M$ be a  smooth, compact  spin  manifold  of  dimension  $n\geq 5$, and  fundamental  group $\pi$. Then, $M$ admits  a metric  of  positive scalar  curvature  if  and  only  if  the invariant ${ \rm Ind}(M)= A\circ {\rm per }\circ  D $ 
taking values on the  real  $K$ theory  of  the  reduced real group $C^*$- algebra  $  KO_{n}(C_{r}^{*}(\pi))$ vanishes.
\end{conjecture}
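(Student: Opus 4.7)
The plan is to establish the two directions separately, with the hard direction reduced via bordism to an assembly-map question.

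For necessity, I would apply Rosenberg's $C^{*}$-algebraic extension of the Lichnerowicz-Schr\"odinger identity: twisting the Dirac operator of $M$ by the canonical flat Mishchenko bundle $M\times_{\pi}C_{r}^{*}(\pi)$ produces an operator whose square equals the connection Laplacian plus $\tfrac{1}{4}\mathrm{scal}(g)$. A metric of positive scalar curvature then makes the twisted Dirac operator fiberwise invertible as a bounded $C_{r}^{*}(\pi)$-linear Fredholm operator, so that its index, which by construction coincides with $A\circ\mathrm{per}\circ D([M])$, vanishes in $KO_{n}(C_{r}^{*}(\pi))$.

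For sufficiency, the Gromov-Lawson-Rosenberg equivariant surgery theorem, together with the $\pi$-$\pi$ theorem, reduces the question to the class of $M$ in $\Omega_{n}^{\Spin}(B\pi)$ modulo the subset $\Omega_{n}^{\Spin,+}(B\pi)$ of bordism classes containing a psc representative. The conjecture is then equivalent to injectivity of the induced map
\[
\Omega_{n}^{\Spin}(B\pi)/\Omega_{n}^{\Spin,+}(B\pi)\longrightarrow KO_{n}(C_{r}^{*}(\pi)).
\]
I would factor the index as $\Omega_{n}^{\Spin}(B\pi)\xrightarrow{D}ko_{n}(B\pi)\xrightarrow{\mathrm{per}}KO_{n}(B\pi)\xrightarrow{A}KO_{n}(C_{r}^{*}(\pi))$ and attack the quotient in two independent pieces. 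For the kernel of $D$, Stolz's theorem settles the simply connected case by producing null-bordisms through $\mathbb{HP}^{2}$-bundles, each of which admits a psc metric via the Kraines-type construction; a fiberwise transfer argument promotes this to families over $B\pi$. For the kernel of $A\circ\mathrm{per}$ inside $ko_{n}(B\pi)$, I would invoke Stolz's homotopy fiber sequence relating the psc bordism spectrum, $\mathrm{MSpin}$, and $\mathrm{ko}$, combined with split injectivity of the real Baum-Connes assembly map for $\pi$.

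The main obstacle is showing that every class in $ko_{n}(B\pi)$ annihilated by $A\circ\mathrm{per}$ can in fact be represented geometrically by a spin manifold over $B\pi$ carrying a psc metric. This requires controlling the Atiyah-Hirzebruch spectral sequence for $ko_{\ast}(B\pi)$ (especially its $2$-primary torsion), identifying which differentials and extensions can be realized by psc bordisms, and producing explicit geometric representatives at each filtration level. For a general $\pi$ this step is very delicate and is precisely where the algebraic structure of $\pi$ enters; any successful argument will need to combine Stolz's transfer, strong injectivity of assembly, and constructions tailored to the torsion and rank of $\pi$, likely proceeding by induction from maximal finite subgroups and an equivariant Chern character analysis.
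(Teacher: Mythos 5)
The statement you are proving is labelled a \emph{conjecture} in the paper, and the paper does not prove it in this generality; indeed it cannot be proved, because it is false. As the introduction records, Schick exhibited a counterexample for $\pi=\mathbb{Z}^{3}\times\mathbb{Z}/4$ (and Dwyer--Schick--Stolz gave torsion-free counterexamples), for groups where the assembly map is perfectly well understood. The paper's actual theorem is the conjecture for the single group $\mathbb{Z}/4\times\mathbb{Z}/4$, proved by explicit computation of $ko_{*}(B(\mathbb{Z}/4\times\mathbb{Z}/4))$ via the Adams spectral sequence and by exhibiting enough psc manifolds (lens space bundles, $\mathbb{R}P$-bundles, products with Bott manifolds) whose $\eta$-invariants and ${\rm mod}\,2$ homology classes exhaust the kernel of $A\circ{\rm per}$ degree by degree.

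The concrete gap in your proposal is the step you yourself flag as ``the main obstacle'': showing that every class in $ko_{n}(B\pi)$ annihilated by $A\circ{\rm per}$ is represented by a psc spin manifold over $B\pi$. This is not a delicate technical point that can be handled by ``Stolz's transfer, strong injectivity of assembly, and constructions tailored to $\pi$''; it is a genuine obstruction that fails for some $\pi$ even when the real Baum--Connes assembly map is split injective. Split injectivity of assembly controls only the passage $KO_{n}(B\pi)\to KO_{n}(C_{r}^{*}(\pi))$; it says nothing about whether classes killed by the periodicity map ${\rm per}:ko_{n}(B\pi)\to KO_{n}(B\pi)$, or more generally classes in $\ker(A\circ{\rm per})$, are geometrically realizable by psc representatives, and Schick's minimal hypersurface argument shows they need not be. Your necessity direction (Lichnerowicz--Rosenberg vanishing) and the surgery-theoretic reduction to bordism are correct and standard, and your outline of the remaining step is a fair description of the programme one carries out for specific groups --- it is essentially what this paper does for $\mathbb{Z}/4\times\mathbb{Z}/4$ --- but as a proof of the general statement it cannot be completed.
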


 The invariant ${\rm ind}(M)$ is  defined  as  follows.  The first map in the  composition  is  the  map $D$, which  associates  to  the  cycle  for spin  bordism $(M, f)$, the  image  of the $ko$-theoretical  fundamental  class  $f_{*}(D(M))$.  
 The  map  $f: M\to B\pi$ is  the  classifying  map  for  the  fundamental  group,  and  the  periodicity  map ${\rm per} $ from connective  to  periodic  real  $K$- theory  followed  by  the  Baum-Connes  assembly  map $A$. In  symbols  
 
 $$\Omega_{n}^{\Spin}(B\pi )\overset{D}{\to }ko_{n}(B\pi )\overset{\rm per}{\to} KO_{n}(B \pi)\overset{A}{\to} KO_{n}(C_{r}^{*}(\pi)).$$

S. Stolz in  \cite{stolzannals} proved  the  conjecture  in  the  simply-connected  case,    using his  previous  result  in \cite{stolzspin}  which  contains  an   identification trough computations  with  the  Adams  spectral  sequence of  the  kernel of  the  $\alpha$-invariant.  In  slightly  more  detail,  the  kernel  of  the  $\alpha$-invariant  is  generated by  manifolds  which  are  fiber  bundles  with fiber $\mathbb{H}P^{2}$ and structural group $ PSp(3)$. 

The  conjecture   has  been proved  for  several groups,  including groups with  periodic cohomology  \cite{botvinnikgilkeystolz},  the  semidihedral group of  order  16 \cite{malhotra}, and  several torsionfree groups  for  which  the  Baum-Connes assembly map is  injective. This  includes  notably  Fuchsian  groups \cite{pearsondavis},   surface groups, and  free  groups. On the  other  hand,  there  exists  a reduced number of   infinite  groups  containing  torsion 
\cite{hughes}, \cite{lueckdavis} for  which  the  conjecture  is known to  hold   as a  consequence  of computations  of  connective $ko$-homology.

The conjecture  is  known  to be false for the  group $\mathbb{Z}^{3}\times \mathbb{Z}/4$ \cite{schick}, and  several torsionfree groups \cite{stolzdwyerschick} but  the  question  whether  the  conjecture  is  valid  for  finite  groups remains open.

We  will prove  in this  text  the  following  result
\begin{theorem}[Main Result]\label{theo:gromovz42}
The  Gromov-Lawson-Rosenberg  Conjecture  is  true  for the  group $\mathbb{Z}/4 \times \mathbb{Z}/4$. 

\end{theorem}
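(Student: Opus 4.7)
The plan is to follow the standard Stolz-Rosenberg framework for verifying the Gromov-Lawson-Rosenberg conjecture on a finite $2$-group. By Stolz's theorem \cite{stolzannals}, in dimensions $n\geq 5$ the kernel of the $ko$-orientation $D\colon \Omega^{\Spin}_n(B\pi)\to ko_n(B\pi)$ is generated by total spaces of $\mathbb{H}P^2$-bundles with structure group $PSp(3)$, each of which admits positive scalar curvature. It therefore suffices to establish that the composition
\begin{equation*}
A\circ\mathrm{per}\colon ko_n(B\pi)\longrightarrow KO_n(C^*_r(\pi)) \quad \text{is injective on }\mathrm{im}(D)\text{ for }n\geq 5,
\end{equation*}
where $\pi=\mathbb{Z}/4\times\mathbb{Z}/4$. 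Once this is known, any $M$ with $\mathrm{Ind}(M)=0$ must satisfy $D([M,f])=0$, so $[M,f]$ lies in the PSC-bordism subgroup and Gromov-Lawson surgery \cite{gromovlawsonsurgery} endows $M$ itself with a metric of positive scalar curvature.

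The computational heart is the $2$-local calculation of $ko_*(B(\mathbb{Z}/4\times\mathbb{Z}/4))$, which I would carry out via the Adams spectral sequence
\begin{equation*}
E_2^{s,t}=\mathrm{Ext}^{s,t}_{\mathcal{A}(1)}\bigl(H^*(B\pi;\mathbb{F}_2),\mathbb{F}_2\bigr)\;\Longrightarrow\; ko_{t-s}(B\pi)^{\wedge}_2,
\end{equation*}
where $\mathcal{A}(1)=\langle Sq^1,Sq^2\rangle$. The mod-$2$ cohomology $H^*(B\pi;\mathbb{F}_2)$ is the tensor square of $H^*(B\mathbb{Z}/4;\mathbb{F}_2)$, whose Steenrod structure is classical; as an $\mathcal{A}(1)$-module the product fragments into a large but tractable collection of indecomposable summands. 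The preamble's loading of the \texttt{pythonhighlight} package suggests that this decomposition and the resulting Ext computation are performed by computer. A parallel calculation gives $KO_*(B\pi)$ by inverting Bott periodicity on the Adams page, while $KO_*(C^*_r(\pi))$ is identified directly from the decomposition of the real group algebra $\mathbb{R}[\mathbb{Z}/4\times\mathbb{Z}/4]$ into its irreducible real and complex characters.

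With all three groups laid out, the injectivity of $A\circ\mathrm{per}$ in the required range reduces to a degree-by-degree comparison on the Adams $E_\infty$-pages, subject to verifying that no hidden $2$-extensions intervene. The principal obstacle, and the step most likely to require substantial case-by-case analysis, is precisely this verification of hidden extensions: the $\mathcal{A}(1)$-structure on $H^*(B(\mathbb{Z}/4)^2;\mathbb{F}_2)$ does not split as a na\"ive tensor product of Ext-modules, and the mixed summands arising from products of the Bockstein classes of both factors introduce contributions whose behaviour under periodification and assembly has to be tracked individually. Once these extensions are controlled, the injectivity statement follows, and the theorem itself follows from the reduction above.
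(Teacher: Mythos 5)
There is a genuine gap at the very first step: your reduction asks for $A\circ\mathrm{per}$ to be \emph{injective} on $\mathrm{im}(D)$, and that statement is false for finite groups with torsion, already for $\mathbb{Z}/2$ and $\mathbb{Z}/4$, hence certainly for $\mathbb{Z}/4\times\mathbb{Z}/4$. Since $MSpin\to ko$ is (2-locally) split surjective, $\mathrm{im}(D)$ is essentially all of $ko_n(B\pi)$, whereas for a finite group $KO_n(C^*_r\pi)$ is a sum of copies of $KO_n(\mathbb{R})$, $KO_n(\mathbb{C})$, $KO_n(\mathbb{H})$ and the assembly map annihilates almost all of the (torsion) reduced group $\widetilde{ko}_n(B\pi)$: for example $\widetilde{ko}_3(B\mathbb{Z}/2)\cong\mathbb{Z}/8$, generated by the $ko$-fundamental class of $\mathbb{R}P^3$, maps to $KO_3(C^*_r\mathbb{Z}/2)=0$. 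So from $\mathrm{Ind}(M)=0$ you cannot conclude $D([M,f])=0$, and the appeal to Stolz's $\mathbb{H}P^2$-bundle theorem plus surgery does not get off the ground. The correct reduction (the one the paper uses, following Botvinnik--Gilkey--Stolz) is the opposite inclusion: one must show that the \emph{large} kernel of $A\circ\mathrm{per}$ inside $ko_n(B\pi)$ is exhausted by images under $D$ of spin bordism classes carrying positive scalar curvature; then Stolz's theorem handles $\ker D$ and Gromov--Lawson surgery finishes.

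Your proposal contains no mechanism for producing such positive scalar curvature representatives, and this is where the bulk of the paper's work lies. After splitting $ko_*(B(\mathbb{Z}/4)^2)$ along $B\mathbb{Z}/4\vee B\mathbb{Z}/4\vee(B\mathbb{Z}/4\wedge B\mathbb{Z}/4)$ and running the $\mathcal{A}_1$-Adams spectral sequence (this computational portion of your plan does match the paper, including the resolution of differentials and hidden extensions), the paper constructs explicit PSC manifolds --- lens spaces $L^{4k-1}_j$, lens-space bundles $X^{4k+1}_j$ and $N_{i,j}$ over real projective spaces, and their products with Bott manifolds --- induced along the six cyclic subgroups $H_{m,n}$, and then proves a detection theorem: in odd degrees, matrices of Atiyah--Patodi--Singer $\eta$-invariants (restricted along the subgroups and reduced via the representation theory of $\mathbb{Z}/4\times\mathbb{Z}/4$) show that these classes generate a subgroup whose order equals the computed order of $\ker(A\circ\mathrm{per})$; in even degrees, a mod 2 homological argument shows the fundamental classes of the $N_{i,j}$ and related manifolds already span the relevant Adams $E_2$-line. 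Without an analogue of this detection step, knowing the orders of $ko_*(B\pi)$, $KO_*(B\pi)$ and $KO_*(C^*_r\pi)$ cannot prove the conjecture.
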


Theorem \ref{theo:gromovz42} includes  substancial previous work  of the  Ph. D.  theses  of Christian Siegemeyer \cite{siegemeyer} and Raphael Reinauer \cite{reinauer},  defended  at  the  University of  M\"unster under  the  supervision  of  Michael Joachim.

The  method  we  employ will  consist of   first  establishing  in  \ref{section:splitting} the  structure  of  the  integral and  mod  2 cohomology of  $\mathbb{Z}/4\times \mathbb{Z}/4$
 as a  module  over  the Steenrod  Algebra and  the  subalgebras $\mathcal{A}_{1}$ and  $E(1)$. Ingredients  here  are,  besides  from  the  actual  cohomology  computation,  the splitting  theorem \ref{theo:splitting }.

 Moreover,  we  will  produce  a  minimal  resolution  of  all  relevant  modules  over  the  Steenrod  algebra.  
  
In  section \ref{section:adams} we  use  the  previous  information as  input for  the determination  of  the  connective $ko$-Theory  groups  of  the  group $\mathbb{Z}/4\times \mathbb{Z}/4$ by  the  Adams  spectral sequence.

We  need  to  determine   differentials  of  the  Adams  Spectral sequence; we will  do  this   in section \ref{section:eta}. We  do  so  by  comparing  to the  Atiyah-Hirzebuch spectral  sequence in  the  $\eta$-$c$-$r$ exact sequence  \ref{lemma:etacR}, and  obtain differentials  in  Adams  degree  up  to  four. 

We will  prove  that there  are no  higher  differentials,  and  after  analyzing  hidden  Adams  extensions, the  computation  of  the  connective  $ko$-theory  of  the classifying  space  for  $\mathbb{Z}/4\times \mathbb{Z}/4$ is  achieved. 

In the  final  section, \ref{section:eta},  we  guarantee  that  the  subgroup  of spin  bordism  classes  of manifolds  with positive  scalar  curvature   exhausts  the  kernel  of  the  $\alpha$-invariant by  constructing explicitely  the manifolds  representing  the kernel  of  the  alpha  invariant. We  use  two  distinct  methods  for  the  even  and  odd dimensional case.  In the  even dimensional  case, we  introduce  homological  considerations,  and  the  odd   dimensional  argument  follows  closely  previous constructions  using  $\eta$-invariants.   

\subsection{Aknowledgements}
The  first  author   thanks  suport  of DGAPA-UNAM  grant IN101423. The  first  two authors  aknowledge  support  of   CONAHCYT Grant CF-2019 217392.  The first and  third  author thank Michael Joachim for the  generous introduction  to  the  topic,  and several  discussions  along the  years.


   \section{The  Adams Spectral sequence  and  the  Atiyah-Hirzebruch  Spectral Sequences}\label{section:adams}
   
 We  will  introduce  now  the  ${\rm mod}\,  2$ -Adams  spectral  sequence.

\begin{definition}
The  ${\rm  mod\, 2}$ Steenrod  algebra $\mathcal{A}$ is  the $\mathbb{F}_{2}$- algebra   of  stable  cohomology  operations in  ${\rm  mod }\, 2 $ cohomology. It  can  be  defined in terms  of  the  following  axioms. 

\begin{enumerate}
\item The algebra $\mathcal{A} $ is  generated  by   elements 
$$ \{ Sq^{i} \mid \, i\in  \mathbb{N}\cup \{0\} \}$$
called  Steenrod  squares. 
\item $ Sq^{0}=1$. 

 \item(Adem relation) $ Sq^{a}\circ Sq^{b}= \underset{c=0}{\overset{\lfloor{a/2}\rfloor}{\sum}} \binom{b-c-a}{a-2c} Sq^{a+b-c}Sq^{c}$.    
\end{enumerate}

Recall  the  following  well-known cohomological  properties  of Steenrod  squares. The  multiplicative  structure  refers to  the  usual  cup  product  in  ordinary  cohomology.  See  \cite{moshertangora}, \cite{adams} for  more  details. 

\begin{itemize}
\item The  elements  $Sq^{i}$ correspond  to cohomology  operations $$Sq^{i} :H^{*}(\quad )\to H^{*+i}(\quad). $$ Moreover,  this  defines  a  structure  of  graded  module over  the  ${\rm mod}\, 2$-Steenrod  algebra on  the  cohomology of a  fixed space $H^{*}(X)$.  

\item    $Sq^{1}$ is  the  ${\rm mod \, 2}$-Bockstein homomorphism. 
\item If $x\in H^{*}(X)$, and $i>deg(x)$, then $Sq^{i}(x)=0$. 
\item  If  $x$  is  of  cohomological  degree $n$,  then  $Sq^{n}(x)= x^2$. 
\item  For  the  connecting  homomorphism for  the  lang  exact  sequence  in  cohomology  $\delta^* $,  the  equality $Sq^{i}\circ \delta^*= \delta^* \circ Sq^i$ holds.
\item The  cartan  formula holds: $ Sq^{n}(xy)=\underset{i+j=n}{\sum }Sq^{i}(x)Sq^{j}(y) $.   
\end{itemize}
\end{definition}

The  following  subalgebras of   the  $\rm mod \, 2 $ Steenrod  algebra  will  be  important for  the  determination  of  the  $E_{2}$  terms of  the  Adams  spectral sequences.  

\begin{definition}\label{def:a1}

We  introduce   the  following  subalgebras  of  the  ${\rm mod}\, 2 $ Steenrod  algebra. 
\begin{itemize}

\item The  subalgebra   $\mathcal{A}_{1}$ is  defined  as  the  subalgebra generated  by  $Sq^1$ and  $Sq^2$. In  terms  of  generators  and  relations ,  it  is   the  quotient 
$$ \langle Sq^{1}, Sq^{2}\mid Sq^{1} Sq^{2}Sq^{1}=Sq^{2} Sq^{2}\rangle. $$

\item  The  subalgebra $E(1)$ is  the  exterior  algebra  generated  by  $Q_{0}=Sq^{1}$  and $Q_{1}=Q_{0}Sq^{2}- Sq^{2}Q_{0}$. 

\end{itemize}

We  will  consider  (graded)  modules  over  the  algebras  $\mathcal{A}$, $\mathcal{A}_{1}$, and $E(1)$. We  recall briefly  the  relevant  definitions,  which  appear  for  instance in  \cite{weibel}, \cite{McCleary}. 

Let   $\Gamma$  be  an  algebra  over  a  field $\mathbb{K}$ with  augmentation 
$$ \epsilon: \mathbb{K}\to \Gamma, $$
and  unit 
$$\eta: \Gamma\to \mathbb{K} . $$
\begin{definition}
Given  two graded  modules  $M$  and $N$ over $\Gamma$, a  $\mathbb{K}$- homomorphism $f:M\to N$ is  of  degree $t$,  where $t$  is  a  natural  number  if  it satisfies  $f(M_{q})\subset N_{q+t}$.
Denote  the  $\mathbb{K}$- vector  space  of  homomorphisms  of  degree  $t$ between  graded modules  $M$, and $N$ as

$${\rm hom}^{t}(M,N). $$ 
\end{definition}

\begin{definition}
The  suspension functor $\Sigma: \Gamma - \MODULES\to \Gamma-  \MODULES $  is  defined on  a  graded  $\Gamma$- module  $M$ as 
$$\Sigma M_{n} = M_{n+1}. $$
The  iterated  suspension  functor $\Sigma ^{n}$ is  defined inductively   as
$$\Sigma^0= 1, \,  \Sigma^{k=1}= \Sigma\circ \Sigma^{k-1}. $$
\end{definition}
\begin{definition}
Let $M$, and  $N$  be a pair  of graded $\Gamma$-modules, and  let  $P_{*}(M)$ be  a  projective resolution  of  the $\Gamma$-module $M$, $s\geq 0 $, and  $t\in \mathbb{Z}$. The ${\rm Ext}$- groups  $\Ext_{\Gamma}^{s,t}$  are  defined  as

$$Ext_{\Gamma}^{s,t}(M,N) = H^{s}({\rm hom}^{t}(P_{*}(M)), N). $$ 
\end{definition}
As  it  is  usual in  homological  algebra,  the   definition  does  not  depend  on the  particular  resolution. 

Recall  the  existence  of  the  Yoneda  product 
$$ Ext_{\Gamma}^{s,t}(L, M)\otimes Ext_{\Gamma}^{s^{'},t^{'}}(M, N) \longrightarrow Ext_{\Gamma}^{s+s^{'},t+t^{'}}(L, N).$$

\end{definition} 
The  following  result  concerns  the  construction   and  convergence  of  the  Adams  spectral sequence, and  it is  proved  in  \cite{adams}, chapter  15  in page 316. 

See \cite{adams}, chapter 15   and  \cite{ravenel}, Chapter 2 for proofs  of  the  following result.  
   
\begin{theorem}\label{theo:adamsspectralsequence}
Let  $X$, and  $Y$ be   connective  $CW$-spectra  for  which  the  ${\rm mod}\, 2$-homology of  $X$ if is  finitely  generated  in  every  degree, and  for  which  $Y$ is  finite. Then, there  exists a spectral sequence with $E_{2}$ term 

$$\Ext^{s,t}_{\mathcal{A}}( H^{*}(X),H^{*}(Y) ). $$ 

It  converges to  the  $2$-adical completion of the  stable homotopy groups  of classes of   maps between  $X$ and  $Y$.   
$$\pi_{s-t}[X,Y]_{\hat{2}}. $$

\end{theorem}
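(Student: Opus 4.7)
The plan is to follow the classical construction of the Adams spectral sequence via an \emph{Adams resolution}, so the "proof" amounts to assembling a standard argument that the paper invokes as a black box. First, I would build a tower of spectra
$$X = X_0 \longleftarrow X_1 \longleftarrow X_2 \longleftarrow \cdots$$
together with maps $g_s \colon X_s \to K_s$, where each $K_s$ is a wedge of suspensions of the mod-$2$ Eilenberg--MacLane spectrum $H\mathbb{F}_2$, chosen so that the induced map $H^*(K_s;\mathbb{F}_2) \to H^*(X_s;\mathbb{F}_2)$ is surjective onto a free $\mathcal{A}$-module cover of $H^*(X_s;\mathbb{F}_2)$; define $X_{s+1}$ to be the homotopy fiber of $g_s$. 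The hypothesis that $H_*(X;\mathbb{F}_2)$ is degreewise finitely generated lets one arrange each $K_s$ to have finitely many cells in each degree, so that applying $H^*(-;\mathbb{F}_2)$ to the associated cofibre sequences yields a genuine free resolution
$$\cdots \to F_2 \to F_1 \to F_0 \to H^*(X;\mathbb{F}_2) \to 0$$
of graded $\mathcal{A}$-modules, with $F_s = H^*(K_s;\mathbb{F}_2)$.

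Next, I would apply $[-,Y]_*$ to the tower to obtain an exact couple with
$$D_1^{s,t} = [X_s,Y]_{t-s}, \qquad E_1^{s,t} = [K_s,Y]_{t-s}.$$
Because $K_s$ is a wedge of suspensions of $H\mathbb{F}_2$ and $Y$ is finite, one identifies $[K_s,Y]_{t-s}$ with $\hom_{\mathcal{A}}^{t}(F_s, H^*(Y;\mathbb{F}_2))$ using the universal property of $H\mathbb{F}_2$ together with the finiteness of $Y$ (needed to interchange products and sums). The $d_1$-differential, induced by the composite $K_s \to X_{s+1} \to K_{s+1}$, coincides with the differential of the $\hom$-complex of the free resolution, so passing to cohomology gives
$$E_2^{s,t} = \Ext^{s,t}_{\mathcal{A}}\bigl(H^*(X;\mathbb{F}_2),\, H^*(Y;\mathbb{F}_2)\bigr).$$

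Finally comes the convergence statement, which is the main obstacle and the step requiring the most care. A priori the spectral sequence converges to the homotopy groups of the $H\mathbb{F}_2$-nilpotent completion of the mapping spectrum $F(X,Y)$, equipped with the decreasing filtration $\mathrm{im}\bigl([X_s,Y]_* \to [X,Y]_*\bigr)$. The task is to identify this nilpotent completion with the ordinary $2$-adic completion $\pi_*[X,Y]^{\wedge}_2$ appearing in the statement. Here both hypotheses are crucial: the finiteness of $Y$ guarantees that $\pi_*Y$ is finitely generated in each degree, while the degreewise finite generation of $H_*(X;\mathbb{F}_2)$ ensures that each $[X_s,Y]_*$ is finitely generated and that the relevant $\varprojlim{}^1$ terms vanish by the Mittag-Leffler criterion. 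Combining the Bousfield--Kan comparison between $H\mathbb{F}_2$-nilpotent completion and $2$-completion in the finite-type setting with the strong convergence of the associated tower yields the claimed abutment, completing the proof along the lines of \cite{adams}, Chapter~15 and \cite{ravenel}, Chapter~2.
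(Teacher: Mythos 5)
Your outline has the right shape (Adams resolution, exact couple, identification of the $E_1$-page with a $\hom$-complex, then convergence via comparison of nilpotent and $2$-adic completion), and it is the same classical route the paper points to, since the paper itself gives no proof but only cites Adams, Ch.~15 and Ravenel, Ch.~2. However, there is a genuine error of variance at the central step. You build the Adams tower on $X$ (correct, since $H^*(X)$ is the first variable of the $\Ext$) but then apply $[-,Y]$ to the tower and claim $[K_s,Y]_{t-s}\cong \hom^t_{\mathcal{A}}(F_s,H^*(Y;\mathbb{F}_2))$ ``by the universal property of $H\mathbb{F}_2$.'' That universal property identifies maps \emph{into} Eilenberg--MacLane spectra with cohomology classes, $[W,\Sigma^t H\mathbb{F}_2]\cong H^t(W;\mathbb{F}_2)$; it says nothing about maps \emph{out} of $H\mathbb{F}_2$. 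In fact for $Y$ finite one has $F(H\mathbb{F}_2,Y)\simeq F(H\mathbb{F}_2,S)\wedge Y\simeq 0$ (e.g.\ because $H\mathbb{F}_2$ is dissonant while finite spectra are harmonic), so with your choice the $E_1$-page would be identically zero and the construction computes nothing; moreover applying $[-,Y]$ to the tower produces a colimit-type abutment related to $[X,Y]$, not the group the theorem is about.

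The repair is to keep your resolution of $X$ but apply $[Y,-]$ to the tower: then $E_1^{s,t}=[Y,K_s]_{t-s}\cong \hom^t_{\mathcal{A}}(H^*(K_s;\mathbb{F}_2),H^*(Y;\mathbb{F}_2))=\hom^t_{\mathcal{A}}(F_s,H^*(Y;\mathbb{F}_2))$, using that $K_s$ can be taken of finite type (this is where the degreewise finite generation of $H_*(X;\mathbb{F}_2)$ enters, to replace the wedge by a product degreewise), the $d_1$ is the differential of the free resolution, and $E_2=\Ext_{\mathcal{A}}^{s,t}(H^*(X),H^*(Y))$. The abutment is then the $2$-completion of the homotopy classes of maps \emph{from the finite spectrum into the finite-type one} (this is what the paper's $\pi_{s-t}[X,Y]_{\hat{2}}$ is meant to denote; in the paper's only application $Y=S$ and the abutment is $\pi_*(X)^{\wedge}_2$). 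Your final paragraph on convergence is fine in spirit once this is fixed: finiteness of $Y$ lets $[Y,-]$ commute with the relevant homotopy limits and Milnor sequences, and finite type of $X$ gives degreewise finite $E_r$-terms so the $\varprojlim^1$ obstructions vanish and the $H\mathbb{F}_2$-nilpotent completion agrees with the $2$-adic completion.
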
   
   
\begin{remark}[]\label{remark:filtration}
For further  reference, let  us  unravel  the  gradings of  differentials  for  the  classical Adams  spectral sequence. 
The  differentials $d_{r}$ have  the  Adams  grading, 
$$d_{r}: E_{r}^{s,t}\longrightarrow E_{r}^{s+r, t+r-1}. $$

\end{remark}

Denote  by  $ko$  the  connective  real $K$- theory  spectrum. 

 Similarly,  denote  by  $ku$ the  connective  complex  $K$- theory  spectrum.

 The  following  result,  atributed  to  Stong \cite{stong}   simplifies   substantially   the $ E_{2}$ terms of  the Adams  spectral sequences  converging  to  the real  and  complex  connective  $K$-homology  groups. 
 
 \begin{theorem}\label{theo:stong}
The following  $\mathbb{F}_{2}$-algebras are  isomorphic. 
\begin{enumerate}
\item $H^{*}(ko, \mathbb{F}_{2}) $ and $\mathcal{A}\otimes_{\mathcal{A}_{1}}\mathbb{F}_{2} $. 

\item $H^{*}(ku, \mathbb{F}_{2})$ and  $\mathcal{A}\otimes_{E(1)}\mathbb{F}_{2} $.

\item $H^{*}(H\mathbb{Z})$ and  $ E(0)=\mathcal{A}\otimes_{\mathcal{A}_{1}} \mathcal{A}_{1}/ \langle  Sq^{1} \rangle. $ 
\end{enumerate} 
 \end{theorem}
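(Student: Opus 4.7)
The three claims each present $H^{*}(E;\mathbb{F}_{2})$, for $E$ one of the ring spectra $H\mathbb{Z}$, $ku$, $ko$, as a cyclic $\mathcal{A}$-module generated by the class $\iota_{E}\in H^{0}(E;\mathbb{F}_{2})$ coming from the unit map $S\to E$, modulo an explicit left ideal of $\mathcal{A}$. My plan is to bootstrap from $H\mathbb{Z}$ to $ku$ to $ko$, using cofiber sequences relating these three spectra together with the corresponding long exact sequences in $\mathbb{F}_{2}$-cohomology, and finishing each case with a Poincar\'e series comparison that promotes a surjection of $\mathcal{A}$-modules to an isomorphism.

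The starting point is $H\mathbb{Z}$ (item (iii), after observing that the right-hand side collapses to $\mathcal{A}/\mathcal{A}Sq^{1}$). From the Bockstein cofibration $H\mathbb{Z}\xrightarrow{\times 2}H\mathbb{Z}\to H\mathbb{F}_{2}$, whose connecting map induces $Sq^{1}$ on $\mathbb{F}_{2}$-cohomology, and the vanishing of $\times 2$ on $\mathbb{F}_{2}$-cohomology, the long exact sequence splits into short exact sequences
$$0\to H^{n-1}(H\mathbb{Z};\mathbb{F}_{2})\xrightarrow{Sq^{1}}H^{n}(H\mathbb{F}_{2};\mathbb{F}_{2})\to H^{n}(H\mathbb{Z};\mathbb{F}_{2})\to 0,$$
from which one reads off the identification $H^{*}(H\mathbb{Z};\mathbb{F}_{2})\cong\mathcal{A}/\mathcal{A}Sq^{1}$.

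For $ku$, the bottom Postnikov stage is the map $ku\to H\mathbb{Z}$ whose fiber, by Bott periodicity, is homotopy equivalent to $\Sigma^{2}ku$. The resulting cofiber sequence $\Sigma^{2}ku\to ku\to H\mathbb{Z}$ has connecting map detecting the Milnor primitive $Q_{1}=Sq^{1}Sq^{2}+Sq^{2}Sq^{1}$; combined with the relation $Q_{0}\iota_{ku}=0$ coming from the projection $ku\to H\mathbb{Z}$, an induction on degrees through the long exact sequence yields an $\mathcal{A}$-linear surjection $\mathcal{A}/\mathcal{A}(Q_{0},Q_{1})\twoheadrightarrow H^{*}(ku;\mathbb{F}_{2})$. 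A Poincar\'e series comparison, where the right side is determined by the Atiyah--Hirzebruch spectral sequence for $ku$, promotes this to the isomorphism of item (ii).

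Finally, for $ko$, I would use Wood's cofibration $\Sigma ko\xrightarrow{\eta}ko\to ku$ (coming from $ku\simeq ko\wedge C\eta$). The associated long exact sequence, together with (ii) and the Postnikov analysis showing that the relations $Sq^{1}\iota_{ko}=0$ and $Sq^{2}\iota_{ko}=0$ hold (the first from the composite $ko\to ku\to H\mathbb{Z}$, the second from the first non-trivial $k$-invariant of $ko$ in dimension two, where $\pi_{1}ko=\mathbb{Z}/2$), delivers an $\mathcal{A}$-linear surjection $\mathcal{A}\otimes_{\mathcal{A}_{1}}\mathbb{F}_{2}\twoheadrightarrow H^{*}(ko;\mathbb{F}_{2})$. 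The main obstacle is confirming that this surjection is an isomorphism; the cleanest route is another Poincar\'e series comparison, in which the left side has series determined by the Milnor basis decomposition of $\mathcal{A}$ as a free right module over the sub-Hopf-algebra $\mathcal{A}_{1}$, while the right side is computed from item (ii) through Wood's cofibration. Matching the two series closes the argument and simultaneously upgrades the module isomorphism to an algebra isomorphism via the Hopf algebra structures on both sides.
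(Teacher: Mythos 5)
The paper does not prove this statement at all: it is quoted as a classical result attributed to Stong \cite{stong}, so there is no internal argument to compare yours against. Your sketch is the standard classical proof of that result: identify $H^{*}(H\mathbb{Z};\mathbb{F}_{2})\cong \mathcal{A}/\mathcal{A}Sq^{1}$ from the Bockstein cofibration, climb to $ku$ through the Bott cofibration $\Sigma^{2}ku\to ku\to H\mathbb{Z}$, and to $ko$ through Wood's cofibration $\Sigma ko\to ko\to ku$, with the relations $Q_{0}\iota_{ku}=Q_{1}\iota_{ku}=0$ and $Sq^{1}\iota_{ko}=Sq^{2}\iota_{ko}=0$ correctly sourced from the projections to $H\mathbb{Z}$ and from the first nontrivial $k$-invariants. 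In outline this is sound and is exactly how the literature establishes the theorem.

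Two points need tightening before the argument closes. First, the assertion that the Poincar\'e series of $H^{*}(ku;\mathbb{F}_{2})$ is ``determined by the Atiyah--Hirzebruch spectral sequence for $ku$'' is circular as stated: the relevant $E_{2}$-term is $H^{*}(ku;\mathbb{F}_{2})$ itself. The count should instead come from the cofibrations: once the fundamental class is shown to generate, the maps induced by $v$ and by $\eta$ on mod $2$ cohomology vanish, the long exact sequences split into short exact sequences, and one gets $P_{ku}(t)=P_{H\mathbb{Z}}(t)/(1+t^{3})$ and $P_{ko}(t)=P_{ku}(t)/(1+t^{2})$, which match $P_{\mathcal{A}}(t)/P_{E(1)}(t)$ and $P_{\mathcal{A}}(t)/P_{\mathcal{A}_{1}}(t)$ by the Milnor--Moore freeness you already invoke for $ko$. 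Second, the ``induction on degrees through the long exact sequence'' that produces the surjection is where the real content sits: the inductive step amounts to showing that the connecting map is injective in each degree, i.e.\ that right multiplication by $Q_{1}$ from $\mathcal{A}\otimes_{E(1)}\mathbb{F}_{2}$ to $\mathcal{A}/\mathcal{A}Q_{0}$ (and by $Sq^{2}$ from $\mathcal{A}\otimes_{\mathcal{A}_{1}}\mathbb{F}_{2}$ to $\mathcal{A}\otimes_{E(1)}\mathbb{F}_{2}$ in the $ko$ step) is injective; this again follows from the freeness of $\mathcal{A}$ over $E(1)$ and over $\mathcal{A}_{1}$, but it must be said explicitly, since without it the induction does not close. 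With these two points made precise your proposal is a complete proof.
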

 The  existence  of  the  Adams  spectral  sequence  together  with  the  previous  theorem   has  as  consequence  the  following corollary. 
 
\begin{corollary}\label{cor:e2termkoku}
Let $G$ be   a finite  $2$-group. 
\begin{itemize}

\item The  $E_{2}$ term  of  the  Adams  spectral  sequence  converging  to  $ko_{*}(BG)$ can  be  identified  as  
$$\Ext^{s,t}_{\mathcal{A}_{1}}(H^{*}(BG), \mathbb{F}_{2}).  $$

 It  converges  to $ko_{t-s}(BG)$. 

\item The $E_{2}$ term  of  the  Adams  spectral  sequence  converging  to  $ku_{*}(BG)$ can  be  identified  as 
$$ \Ext^{s,t}_{E(1)}(H^{*}(BG), \mathbb{F}_{2}).$$ 
It  converges  to $ku_{t-s}(BG)$. 

\end{itemize}

\end{corollary}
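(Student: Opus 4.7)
The plan is to derive both identifications from Theorem \ref{theo:adamsspectralsequence} and Theorem \ref{theo:stong} via a standard change-of-rings / Frobenius reciprocity argument for Hopf algebra extensions. I spell out the $ko$-case; the $ku$-argument is formally identical with $\mathcal{A}_{1}$ replaced by $E(1)$.

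I would first apply the $\mathrm{mod}\,2$ Adams spectral sequence to the spectrum $ko \wedge BG_{+}$, whose stable homotopy groups are $ko_{*}(BG)$. Since $G$ is a finite $2$-group, $H^{*}(BG;\mathbb{F}_{2})$ is finite-dimensional in each degree, so the finite-type hypothesis of Theorem \ref{theo:adamsspectralsequence} is met and the spectral sequence converges $2$-adically to $ko_{*}(BG)$. In the relevant form, its $E_{2}$-page can be written as
$$E_{2}^{s,t} \;=\; \Ext_{\mathcal{A}}^{s,t}\bigl(H^{*}(BG),\; H^{*}(ko;\mathbb{F}_{2})\bigr).$$

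The heart of the argument is to simplify this Ext group. Theorem \ref{theo:stong} identifies $H^{*}(ko;\mathbb{F}_{2}) \cong \mathcal{A}\otimes_{\mathcal{A}_{1}}\mathbb{F}_{2}$ as $\mathcal{A}$-modules. Since $\mathcal{A}_{1}$ is a sub-Hopf algebra of $\mathcal{A}$ and $\mathcal{A}$ is free as a right $\mathcal{A}_{1}$-module (a consequence of the Milnor--Moore theorem), Frobenius reciprocity yields the change-of-rings isomorphism
$$\Ext_{\mathcal{A}}^{s,t}\bigl(M,\; \mathcal{A}\otimes_{\mathcal{A}_{1}}\mathbb{F}_{2}\bigr) \;\cong\; \Ext_{\mathcal{A}_{1}}^{s,t}\bigl(M,\; \mathbb{F}_{2}\bigr)$$
for any $\mathcal{A}$-module $M$, where on the right $M$ is viewed as an $\mathcal{A}_{1}$-module by restriction. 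Taking $M = H^{*}(BG)$ produces exactly the claimed $E_{2}$-term; the bigrading of Remark \ref{remark:filtration} is preserved automatically because the change-of-rings isomorphism is one of internally graded bifunctors.

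The only mild subtlety is that $BG_{+}$ is not a finite spectrum, whereas Theorem \ref{theo:adamsspectralsequence} is stated for $Y$ finite. This is handled in the standard way by filtering $BG$ by its finite skeleta, applying the spectral sequence skeletonwise, and observing that the finite-type hypothesis forces the relevant $\lim^{1}$-terms to vanish. Beyond this technical point, no genuine obstacle arises: the proof is essentially routine homological bookkeeping once Theorem \ref{theo:stong} and the Milnor--Moore freeness of $\mathcal{A}$ over $\mathcal{A}_{1}$ (respectively $E(1)$) are in hand.
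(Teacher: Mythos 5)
Your overall strategy---the Adams spectral sequence for $ko\wedge BG_{+}$, Stong's Theorem \ref{theo:stong}, and a change-of-rings---is the right one and is essentially the paper's route (the paper deduces the corollary from Theorems \ref{theo:adamsspectralsequence} and \ref{theo:stong}, with the change of rings carried out in Theorem \ref{theo:changeofrings} and Corollary \ref{cor:changeofrings}). However, the specific change-of-rings step you invoke is not correct. Frobenius reciprocity (Shapiro's lemma), for a sub-Hopf algebra over which $\mathcal{A}$ is free, moves an \emph{induced} module across $\Ext$ only when it sits in the \emph{first} variable: $\Ext_{\mathcal{A}}(\mathcal{A}\otimes_{\mathcal{A}_{1}}N,\mathbb{F}_{2})\cong\Ext_{\mathcal{A}_{1}}(N,\mathbb{F}_{2})$. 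In the second variable the corresponding statement requires the \emph{coinduced} module $\hom_{\mathcal{A}_{1}}(\mathcal{A},\mathbb{F}_{2})$, which is not isomorphic to $\mathcal{A}\otimes_{\mathcal{A}_{1}}\mathbb{F}_{2}$ (one is bounded below, the other bounded above). Your claimed isomorphism $\Ext_{\mathcal{A}}(M,\mathcal{A}\otimes_{\mathcal{A}_{1}}\mathbb{F}_{2})\cong\Ext_{\mathcal{A}_{1}}(M,\mathbb{F}_{2})$ already fails for $M=\mathbb{F}_{2}$ in bidegree $(0,0)$: since $Sq^{4}$ acts nontrivially on the unit of $\mathcal{A}//\mathcal{A}_{1}$, one has $\hom_{\mathcal{A}}(\mathbb{F}_{2},\mathcal{A}//\mathcal{A}_{1})=0$ in internal degree $0$, whereas $\Ext^{0,0}_{\mathcal{A}_{1}}(\mathbb{F}_{2},\mathbb{F}_{2})=\mathbb{F}_{2}$.

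Relatedly, the $E_{2}$-page you wrote, $\Ext_{\mathcal{A}}(H^{*}(BG),H^{*}(ko))$, is not the $E_{2}$-page of the Adams spectral sequence for $\pi_{*}(ko\wedge BG_{+})$: the homological Adams spectral sequence has $E_{2}=\Ext_{\mathcal{A}}(H^{*}(ko\wedge BG_{+}),\mathbb{F}_{2})$, while a term with $H^{*}(ko)$ and $H^{*}(BG)$ in separate variables belongs to the spectral sequence computing $ko$-\emph{cohomology} (compare Theorem \ref{theo:adamsspanierwhitehead}), and even there the correct placement is $\Ext_{\mathcal{A}}(H^{*}(ko),H^{*}(BG))$. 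The correct derivation, which is what Corollary \ref{cor:changeofrings} encodes, is: the K\"unneth isomorphism (finite type) gives $H^{*}(ko\wedge BG_{+})\cong(\mathcal{A}//\mathcal{A}_{1})\otimes H^{*}(BG)$; the untwisting isomorphism of Theorem \ref{theo:changeofrings} identifies this with $\mathcal{A}\otimes_{\mathcal{A}_{1}}H^{*}(BG)$ as an $\mathcal{A}$-module; and then the genuine first-variable change of rings (using the Milnor--Moore freeness you cite, but in the correct variable) yields $\Ext_{\mathcal{A}_{1}}(H^{*}(BG),\mathbb{F}_{2})$, and similarly for $E(1)$ and $ku$. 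Your treatment of the non-finiteness of $BG$ by skeleta and vanishing of $\lim^{1}$ is fine and standard.
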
 

Section \ref{section:splitting} will discuss explicit resolutions  of  the  group  cohomology  as  a  module over  the Algebras $\mathcal{A}_{1}$, and $E(1)$.

We  introduce  now  notation  which  will  allow  us  to understand periodicity  phenomenae on the  $E_{2}$-term  of  the  Adams  spectral sequence  and  the  $ko$, respectively $ku$-homology of  finite  groups.

Recall that  the homotopy groups  of  real  connective  $K$-Theory  are  as  follows 
$$ \pi_{i}(ko)= \begin{cases} \mathbb{Z}/2 &\text{ $i\overset{4}{\equiv} 1, 2$ }  \\ \mathbb{Z} &\text{$i \overset{4}{\equiv } 0$}\\ 0 &\text{else.}\end{cases}$$

As  a  graded  algebra,  the coefficients 
$$ \bigoplus_{*}\pi_{*}(ko)$$
 are  the truncated  algebra 
 $$\mathbb{Z}[\eta, \alpha, \beta ]/ \eta^{3}, 2\eta,\alpha\beta, \alpha^{2}- 4 \beta , $$ 
 where  $\eta$  is  of  degree  1, $\omega $  is  of  degree $4$,  and $\mu$  is  of  degree $8$.

\begin{lemma}\label{lemma:coefficientsko}

The  Adams  spectral  sequence converging  to  the  coefficients of  $ko_{\hat{2}}$  has  as  $E_{2}$ term 

$$ \frac{\mathbb{F}_{2}[h_{0}, h_{1}, a, b]}{h_{0}h_{1}, h_{1}^3, h_{1}a, a^{2}-h_{0}b.} $$ 
The  element  $h_{0}$  has  degree $(1,1)$, $h_1$  has  degree $(1,2)$, $a$  has degree $(3,7)$,  and $b$ has  degree $(4, 12)$. It  collapses  and  it  is  depicted in  picture \ref{spec:adamskopt}.  
\end{lemma}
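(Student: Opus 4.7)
\emph{Proof plan.} The strategy is to identify the $E_{2}$ term as $\Ext$ over the small subalgebra $\mathcal{A}_{1}$, compute it from a minimal resolution, read off the multiplicative structure, and verify collapse using the ring-spectrum structure together with sparsity of the chart.

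First, by Theorem~\ref{theo:stong}(i) the cohomology $H^{*}(ko;\mathbb{F}_{2})$ is the induced module $\mathcal{A}\otimes_{\mathcal{A}_{1}}\mathbb{F}_{2}$, so the Milnor--Moore change-of-rings isomorphism
$$\Ext^{s,t}_{\mathcal{A}}(\mathcal{A}\otimes_{\mathcal{A}_{1}}\mathbb{F}_{2},\mathbb{F}_{2})\;\cong\; \Ext^{s,t}_{\mathcal{A}_{1}}(\mathbb{F}_{2},\mathbb{F}_{2})$$
reduces the Adams $E_{2}$ term to a finite-dimensional algebraic problem, since $\mathcal{A}_{1}$ is an $8$-dimensional Poincar\'e duality Hopf subalgebra of $\mathcal{A}$ generated by $Sq^{1}$ and $Sq^{2}$.

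Second, I would build a minimal free $\mathcal{A}_{1}$-resolution $P_{\bullet}\twoheadrightarrow \mathbb{F}_{2}$ stage by stage. A generator of $P_{s}$ in internal degree $t$ contributes a class in $\Ext^{s,t}_{\mathcal{A}_{1}}(\mathbb{F}_{2},\mathbb{F}_{2})$; working through the first several stages produces $h_{0}\in \Ext^{1,1}$ and $h_{1}\in \Ext^{1,2}$ dual to $Sq^{1}$ and $Sq^{2}$, a class $a\in \Ext^{3,7}$ forced by the top class of $\mathcal{A}_{1}$ in degree $6$, and a periodicity generator $b\in \Ext^{4,12}$ implementing the self-map arising from Poincar\'e duality of $\mathcal{A}_{1}$. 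The relations $h_{0}h_{1}=0$, $h_{1}^{3}=0$, $h_{1}a=0$ and $a^{2}=h_{0}b$ are then checked by explicit Yoneda composition inside the minimal resolution; once these hold one has a surjection from the stated polynomial algebra onto $\Ext_{\mathcal{A}_{1}}(\mathbb{F}_{2},\mathbb{F}_{2})$, which is an isomorphism after matching dimensions stem by stem with the Poincar\'e series of the resolution.

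Third, for the collapse assertion I would invoke that the Adams spectral sequence is a spectral sequence of algebras because $ko$ is a commutative ring spectrum, so all differentials satisfy the Leibniz rule. The classes $h_{0}$ and $h_{1}$ are permanent cycles since the unit $\mathbb{S}\to ko$ detects $2\in \pi_{0}(ko)$ and $\eta\in \pi_{1}(ko)$ in Adams filtration one, and the Bott element $\beta\in \pi_{8}(ko)$ yields a permanent cycle in bidegree $(4,12)$ which must equal $b$ up to a unit by sparsity of $E_{2}$ there. Because every class in $E_{2}$ is a polynomial in $h_{0}$, $h_{1}$, $a$, $b$, Leibniz reduces the problem to showing $d_{r}(a)=0$ for all $r\ge 2$; a direct inspection of the chart shows that every bidegree $(3+r,7+r-1)$ with $r\ge 2$ is empty in $E_{2}$, so $a$ is a permanent cycle and the sequence collapses.

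The main obstacle I expect is the explicit verification of the Yoneda relation $a^{2}=h_{0}b$, which requires a careful chain-level computation propagating through four stages of the minimal resolution and keeping track of the Poincar\'e duality self-pairing of $\mathcal{A}_{1}$. The rest is either the formal change-of-rings argument or a formal consequence of the multiplicative structure, once the description of the $E_{2}$ page is in hand.
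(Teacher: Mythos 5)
The paper does not actually prove this lemma: it is quoted as a classical computation (the chart in the cited figure, going back to the computation of $\Ext_{\mathcal{A}_{1}}(\mathbb{F}_{2},\mathbb{F}_{2})$ in the literature on $ko$, cf.\ Theorem~\ref{theo:stong} and the references \cite{adams}, \cite{stong}), so there is nothing in the text to compare line by line. Your proposal supplies exactly the standard argument that the paper implicitly relies on: change of rings via $H^{*}(ko)\cong\mathcal{A}\otimes_{\mathcal{A}_{1}}\mathbb{F}_{2}$ to reduce to $\Ext_{\mathcal{A}_{1}}^{s,t}(\mathbb{F}_{2},\mathbb{F}_{2})$, a minimal $\mathcal{A}_{1}$-resolution to produce $h_{0}$, $h_{1}$, $a$, $b$ and the relations, and then collapse from multiplicativity together with the sparsity of the chart (the targets of $d_{r}$ on $a$ and $b$ lie in stems $3$ and $7$, which are empty, while $h_{0}$, $h_{1}$ are permanent cycles detecting $2$ and $\eta$). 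That is correct and complete in outline.

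One point you should be aware of: the relation as printed in the statement, $a^{2}-h_{0}b$, cannot be what your Yoneda computation will produce, because it is not homogeneous in the stated $(s,t)$ grading: $a^{2}$ sits in bidegree $(6,14)$ while $h_{0}b$ sits in $(5,13)$. The minimal-resolution computation gives $a^{2}=h_{0}^{2}b$, which is the standard presentation
$$\Ext_{\mathcal{A}_{1}}^{*,*}(\mathbb{F}_{2},\mathbb{F}_{2})\cong \mathbb{F}_{2}[h_{0},h_{1},a,b]/(h_{0}h_{1},\,h_{1}^{3},\,h_{1}a,\,a^{2}-h_{0}^{2}b),$$
consistent with the depicted chart (the stem-$8$ tower generated by $b$ meets $a^{2}$ two filtrations above $b$). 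So when you carry out the chain-level check you describe, expect $h_{0}^{2}b$ rather than $h_{0}b$; this is a typo in the statement rather than a flaw in your method, and the rest of your argument (including the collapse) is unaffected.
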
 

\begin{lemma}\label{lemma:coefficientsku}
The Adams  spectral sequence converging  to the  coefficients of $ ku_{\hat{2}}$ has  as  $E_2$  term 
$$ \mathbb{F}_{2}[h_{0}, v],$$ 
with $h_{0}$  of  degree $(1,1)$,  and $v$ of  degree  $(1,3)$. 
It  collapses  and  it  is  depicted  in picture \ref{spec:adamskupt}.  

 \end{lemma}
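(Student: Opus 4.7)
The plan is to combine Theorems \ref{theo:adamsspectralsequence} and \ref{theo:stong} in the case $X = Y = S^0$: together they identify the $E_2$ page with $\Ext^{s,t}_{E(1)}(\mathbb{F}_2, \mathbb{F}_2)$, since $H^{*}(ku,\mathbb{F}_2) = \mathcal{A}\otimes_{E(1)} \mathbb{F}_2$ and change-of-rings converts $\Ext_{\mathcal{A}}$ of the induced module into $\Ext_{E(1)}$ of $\mathbb{F}_2$. Because $E(1) = \Lambda_{\mathbb{F}_2}(Q_0, Q_1)$ is an exterior algebra on generators of internal degrees $|Q_0| = 1$ and $|Q_1| = 3$, I would compute this $\Ext$ via the standard minimal Koszul resolution, namely the free $E(1)$-complex $E(1) \otimes \mathbb{F}_2[y_0, y_1]$ in which $y_i$ sits in homological filtration $1$ and internal degree $|Q_i|$, with differential $y_i \mapsto Q_i$ extended multiplicatively. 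Applying ${\rm hom}^{*}_{E(1)}(-, \mathbb{F}_2)$ annihilates the $E(1)$-action, and the cohomology reduces to the polynomial algebra $\mathbb{F}_2[h_0, v]$ with $h_0 = y_0^{\vee}$ in bidegree $(1,1)$ and $v = y_1^{\vee}$ in bidegree $(1,3)$, matching the stated $E_2$ term.

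For collapse, I would argue by a parity argument in $t - s$. Every monomial $h_0^i v^j$ has $t - s = 2j$, so the $E_2$ page is concentrated in even topological degrees. By Remark \ref{remark:filtration}, $d_r$ has bidegree $(r, r-1)$ and hence decreases $t - s$ by one, forcing its target into an odd column, which is identically zero. Therefore $d_r = 0$ for all $r \geq 2$, and $E_\infty = E_2$.

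The only point that warrants verification is the comparison of $E_\infty$ with the known homotopy $\pi_*(ku)_{\hat{2}} = \mathbb{Z}_{\hat{2}}[\beta]$, $|\beta| = 2$. The column $t - s = 2j$ is a single infinite $h_0$-tower generated by $v^j$, which filters $\mathbb{Z}_{\hat{2}} \cdot \beta^j$, with $h_0$ detecting multiplication by $2$. No nontrivial additive extensions arise because each column consists of a single tower, and the multiplicative extension $h_0 \leftrightarrow 2$ is the standard one encoding the underlying $\mathbb{Z}_{\hat{2}}$-module structure. I anticipate no substantive obstacle here; the computation is essentially formal once the Koszul resolution is written down, and the hardest part is really just bookkeeping of the bidegrees.
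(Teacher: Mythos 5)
Your proposal is correct and follows the same route the paper has in mind: the identification of the $E_2$ term via Theorem \ref{theo:stong} and the change-of-rings isomorphism with $\Ext_{E(1)}(\mathbb{F}_2,\mathbb{F}_2)$, computed by the Koszul resolution of the exterior algebra on $Q_0$, $Q_1$, with collapse forced by the concentration in even $t-s$ and the bidegree of $d_r$ from Remark \ref{remark:filtration}. The paper simply asserts the lemma (it is the standard computation, cf.\ the picture in Figure \ref{spec:adamskupt}), and your write-up supplies exactly the details that are being taken for granted, including the correct identification of the $h_0$-towers with $\mathbb{Z}_{\hat{2}}\cdot v^{j}$.
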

 
 \begin{figure}
 \includegraphics[scale=0.55]{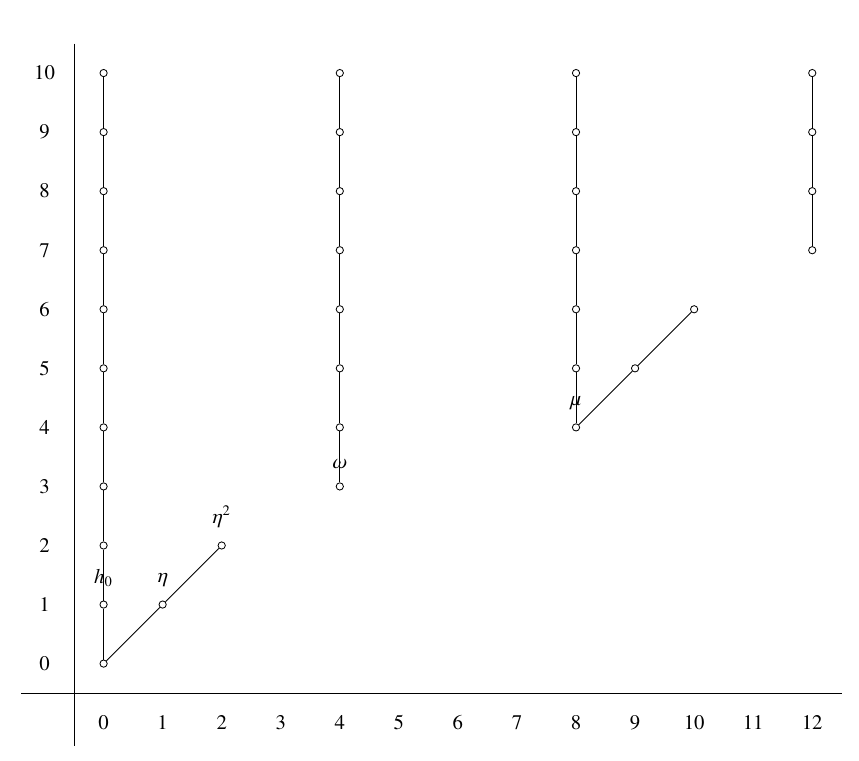}   
 \caption{Adams spectral sequence  converging to $\pi_{*}(ko_{\hat{2}})$.}\label{spec:adamskopt}  
   \end{figure}

\begin{figure}
\includegraphics[scale=0.55]{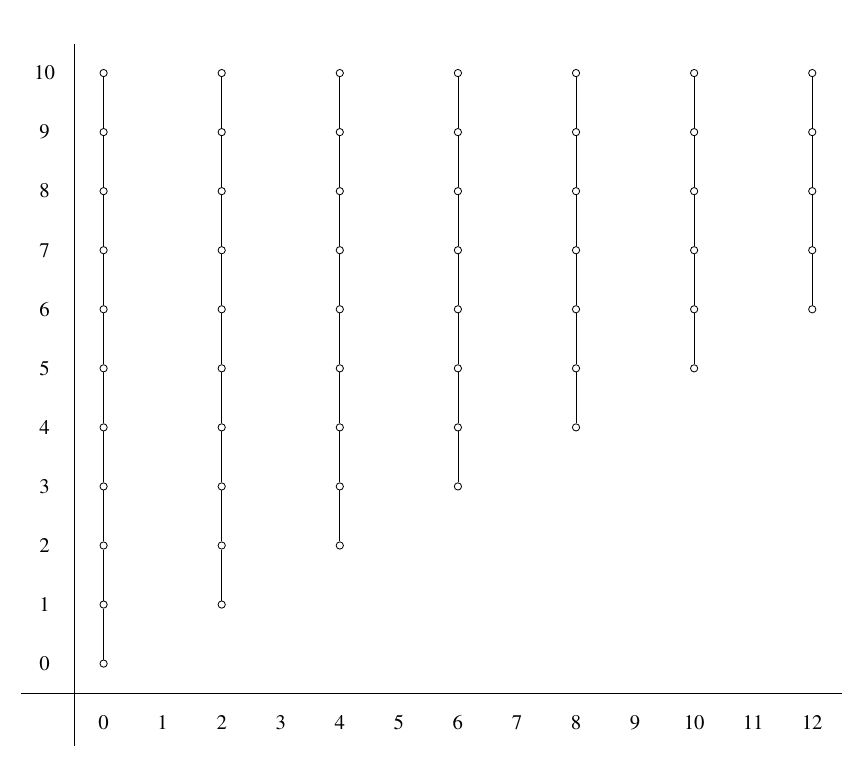}
\caption{Adams  spectral sequence  converging  to $\pi_{*}(ku_{\hat{2}})$.}\label{spec:adamskupt}
\end{figure}

 Besides  from  the  Yoneda  product, we  will  need  the cap  product  structure for  the  spectral  sequences converging  to  $ko$ and  $ku$.

\begin{theorem}\label{theo:cap-structure}
Let $E$ be  a connected ring  spectrum, and let $X$ be  the  suspension spectrum of a pointed space. Then, there exists a  cap  pairing 
$$ \cap : E^{s}(X)\otimes E_{s+t} (X)\to E_{t}(X)$$
\end{theorem}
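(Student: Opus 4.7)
The plan is to construct the cap product via the standard two ingredients: the ring structure $\mu \colon E \wedge E \to E$ of the spectrum $E$ together with its unit $\iota \colon \mathbb{S} \to E$, and the diagonal $\Delta \colon X \to X \wedge X$, which exists because $X$ is the suspension spectrum of a pointed space (and hence inherits a coassociative, cocommutative diagonal from the space-level diagonal). With these in hand, the existence of the pairing reduces to an explicit assembly in the stable homotopy category.

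Concretely, first I would fix the identifications $E^{s}(X) = [X, \Sigma^{s}E]$ and $E_{n}(X) = \pi_{n}(E \wedge X)$, so that a cohomology class is represented by a map of spectra $\alpha \colon X \to \Sigma^{s} E$ and a homology class by a map $\beta \colon S^{s+t} \to E \wedge X$. I would then define $\alpha \cap \beta$ as the homotopy class of the composition
$$
S^{s+t} \xrightarrow{\beta} E \wedge X \xrightarrow{\mathrm{id} \wedge \Delta} E \wedge X \wedge X \xrightarrow{\mathrm{id} \wedge \alpha \wedge \mathrm{id}} E \wedge \Sigma^{s} E \wedge X \xrightarrow{\mu \wedge \mathrm{id}} \Sigma^{s} E \wedge X,
$$
which represents an element of $\pi_{s+t}(\Sigma^{s}E \wedge X) \cong \pi_{t}(E \wedge X) = E_{t}(X)$, after applying the canonical suspension isomorphism.

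Next I would verify the routine properties needed for the pairing to be well-defined and bilinear. Independence of representatives follows because any two representatives of $\alpha$ (respectively $\beta$) are homotopic, and smashing and composing with fixed maps preserve homotopy. Bilinearity over $\mathbb{Z}$ is immediate from the additivity of smash product and composition of stable maps. One can also verify at this stage the standard unitality statement $1 \cap \beta = \beta$ using the unit axiom for $\mu$, and record that naturality in $X$ with respect to maps induced from pointed space maps holds because such maps commute with the diagonal up to homotopy.

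The main technical point, which is where care is needed, is the use of the diagonal: for a general spectrum $X$ there is no diagonal, which is precisely why the hypothesis that $X$ is a suspension spectrum is essential. Once this is invoked, the rest of the construction is formal. I expect no serious obstacle beyond bookkeeping of suspensions and of the twist involved in rewriting $\pi_{s+t}(\Sigma^{s} E \wedge X)$ as $\pi_{t}(E \wedge X)$; these are handled by fixing a convention for the canonical isomorphism $\Sigma^{s} E \wedge X \simeq \Sigma^{s}(E \wedge X)$ and checking that the pairing is independent of that choice up to the usual sign.
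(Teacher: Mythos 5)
Your construction is correct and is essentially the paper's own proof: both represent a cohomology class by a stable map out of (a suspension of) $X$ and a homology class by a map $S^{s+t}\to E\wedge X$, and define $\alpha\cap\beta$ as the composite $(\mu\wedge\mathrm{id})\circ(\mathrm{id}\wedge\alpha\wedge\mathrm{id})\circ(\mathrm{id}\wedge\Delta)\circ\beta$, using the diagonal available because $X$ is a suspension spectrum and the ring structure of $E$. The only difference is bookkeeping — the paper desuspends at the start while you apply the suspension isomorphism at the end — so no further comparison is needed.
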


\begin{proof}
    The  pairing  is  given  by assigning  to  representatives
    
    $$ \alpha: \Sigma^{\infty -s}(X)\to E \in E^{s}(X),$$
    and  
    $$ \beta : S^{s+t}\to E\wedge X \in E_{s+t}(X)$$
    the  composition
    $$  S^{t}\overset{\beta}{\to} S^{-s}\wedge E\wedge X \overset{(\tau\wedge 1 )\circ (1\wedge \Delta)}{\to} E\wedge S^{-s}\wedge X\wedge X
     \overset{1\wedge \alpha \wedge 1}{\to} E\wedge E \wedge X \overset{\mu \wedge 1}{\to} X. $$
\end{proof}
\begin{remark}[Cap pairing  for  the  Adams Hirzebruch spectral  sequence]
We  will  need  both  the  cap  pairing and  the  external  smash  product   for  the  Adams  spectral  sequence  converging  to  $ko$  and $ku$. In  a  second  stage,  we will  need  these algebraic structures  for the  $\Ext$- terms  obtained  for  modules  over  $\mathcal{A}_{1}$  and  $E(1)$.  The  following  result  has  as  objective  the external  spash product.  

\end{remark}
\begin{theorem}\label{theo:capadams}
Let     $E$ be a connected ring spectrum such  that $H_*(E)$ is  of  finite  type, and  let $X$ be the  suspension spectra of a finite CW complex. Then, there  exists a pairing
$$\hat{E}_{r}^{s,t}\otimes E_{r}^{u,v}\to E_{r}^{u+s,v-t}    $$
of  the  Adams  spectral sequences $\hat{E}_{r}^{s,t}\Rightarrow E^{s+t}(X)$,  where 
$$ \hat{E}_{r}^{s,t} = {\rm Ext}_{\mathcal{A}}^{s,-t}(H^{*}(E), H^{*}(X)),$$
    and 
    $$E_{r}^{u,v} \Rightarrow E_{u+v}(X), $$
for  
$$E_{r}^{u,v} = {\rm Ext}_{\mathcal{A}_{1}}^{u,v}(H^{*}(E)\otimes H^{*}(X),\mathbb{F}_{2} ).$$
This  product is  compatible  with  the  cap   product  pairing  of  definition \ref{theo:cap-structure}. 
\end{theorem}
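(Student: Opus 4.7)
The plan is to realize the pairing geometrically as a pairing of Adams towers, then show that it descends page by page and identifies on $E_\infty$ with the cap product of Theorem \ref{theo:cap-structure}. Fix an $H\mathbb{F}_2$-Adams resolution $E = \mathrm{holim}\, E_s$, with cofiber sequences $E_s \to E_{s-1} \to K_s$ whose cofibers $K_s$ are wedges of suspensions of $H\mathbb{F}_2$; the associated exact couple produces $\hat{E}_r^{s,t}$ converging to $E^{s+t}(X)$. Dually (or by resolving $E \wedge X$ with a suitable $H\mathbb{F}_2$-resolution), one obtains the filtered spectrum whose associated exact couple gives $E_r^{u,v}$ converging to $E_{u+v}(X)$.

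Second, I would construct the pairing on the tower level. The cap product is the composite given in Theorem \ref{theo:cap-structure}, built from the diagonal $\Delta\colon X \to X \wedge X$, a twist $\tau$, the unit and multiplication of $E$, and the evaluation of $\alpha$. Applying this composite with $\alpha$ factoring through the $s$-th stage of the cohomological tower and $\beta$ factoring through the $u$-th stage of the homological tower, naturality of $\Delta$ and the multiplicativity of the resolution (choose resolutions so that products of cofibers land in the total cofiber at the sum of filtrations, an argument originally due to Moss) produce maps that raise total filtration by $s$ and lower the complementary grading by $t$. This gives the claimed bidegree $(s,t)\otimes (u,v) \to (u+s, v-t)$ and, by the standard exact couple argument, descends to a pairing of $d_r$-differentials.

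Third, I would identify the induced pairing on $E_2$ with an algebraic Ext-pairing. Using Stong's Theorem \ref{theo:stong} and change-of-rings, $\Ext_{\mathcal{A}}^{s,*}(H^*(E), H^*(X)) \cong \Ext_{\mathcal{A}_1}^{s,*}(\mathbb{F}_2, H^*(X))$ and $\Ext_{\mathcal{A}_1}^{u,*}(H^*(E) \otimes H^*(X), \mathbb{F}_2) \cong \Ext_{\mathcal{A}_1}^{u,*}(H^*(X), \mathbb{F}_2)$; the pairing is then identified with the Yoneda composition through $H^*(X)$, which is the cap product on the $\Ext$ algebra of the coalgebra $H^*(E)$. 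Finally, convergence of the spectral sequence, plus the fact that the tower pairing preserves Adams filtration, implies that the pairing at $E_\infty$ is the associated graded of the cap product.

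The main obstacle is the second step: choosing the two resolutions sufficiently compatibly so that the composite in Theorem \ref{theo:cap-structure}, when restricted to filtration $s$ in the first factor and filtration $u$ in the second, lands in filtration $u+s$. This requires a diagonal map of Adams towers lifting $\Delta_X$, and a multiplicative pairing of cofiber wedges realizing the coproduct on $H^*(E)$ on $E_2$-pages; both can be produced by inductively lifting along the cofiber sequences, but one must verify that the resulting indeterminacy does not affect the $E_r$-pairing, a check that, while standard, requires tracking filtrations through each of the four smash factors in the cap composite.
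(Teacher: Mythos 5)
Your plan is essentially the classical Moss--Bruner construction: lift the cap composite of Theorem \ref{theo:cap-structure} to a pairing of Adams towers, check that products of filtration-$s$ and filtration-$u$ stages land in filtration $u+s$, descend through the exact couples, and identify the $E_2$-pairing algebraically. That is a correct and standard route, but it is not the one this paper takes. Here the theorem is not proved by a tower-level multiplicative structure at all; instead the cohomological spectral sequence $\Ext_{\mathcal{A}}^{s,-t}(H^{*}(E),H^{*}(X))$ is first converted, using Spanier--Whitehead duality for the finite spectrum $X$ (Lemma \ref{lemma:spanierwhitehead}, Proposition \ref{prop:iso-spanier-adams}, Theorem \ref{theo:adamsspanierwhitehead}, obtained by smashing an Adams resolution of $E$ with $D(X)$), into a homological-type spectral sequence for $E\wedge D(X)$; the pairing is then the external smash pairing built purely algebraically from tensor products of free $\mathcal{A}$-resolutions (Lemma \ref{lemma:tensor-product-ext}, Theorem \ref{theo:smashpairing}), composed with the maps induced by the diagonal $\Delta_X$, the multiplication $\mu$ and the evaluation ${\rm ev}\colon D(X)\wedge X\to S$ (Definition \ref{def:cap.adams}). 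The duality route buys the compatibility with differentials for free from naturality of smashing resolutions, at the price of the finiteness hypothesis on $X$, while your route requires the Moss-style verification you flag (compatible multiplicative resolutions and the indeterminacy check), but works without dualizing and is closer to the geometric cap product. One caution about your third step: the identifications $\Ext_{\mathcal{A}}^{s,*}(H^{*}(E),H^{*}(X))\cong\Ext_{\mathcal{A}_1}^{s,*}(\mathbb{F}_2,H^{*}(X))$ and $\Ext^{u,*}(H^{*}(E)\otimes H^{*}(X),\mathbb{F}_2)\cong\Ext_{\mathcal{A}_1}^{u,*}(H^{*}(X),\mathbb{F}_2)$ use Stong's theorem and change of rings, hence are only available when $H^{*}(E)\cong\mathcal{A}//\mathcal{A}_1$ (or $\mathcal{A}//E(1)$), i.e.\ for $ko$ or $ku$; the present theorem is stated for an arbitrary connected ring spectrum $E$ of finite type, and in that generality the $E_2$-identification must stay over $\mathcal{A}$, the change-of-rings compatibility being the content of the later Theorem \ref{theo:capsubalgebra} rather than of this statement.
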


Since  we  will  be  dealing  with the  Hopf  subalgebras $E(1)$, and  $\mathcal{A}_{1}$ of  the  Steenrod algebra, we  will  need   the following  results and  definitions. 

\begin{definition}\label{def:hopf}
A Hopf  algebra  over  $\mathbb{F}_{2}$  is  a  tuple $ (A, \nabla ,u, \Delta, \epsilon, S) $ consisting  of 
\begin{enumerate}
\item A connected and  graded $\mathbb{F}_{2}$ algebra $(A, \nabla, u)$. 
\item A connected and graded $\mathbb{F}_{2}$-coalgebra $(A,\Delta,\epsilon) $. 
\item An $\mathbb{F}_{2}$- linear map $\chi: A\to A$, called  conjugation such  that  the algebra structure  is  compatible  with the  coalgebra structure:  $\nabla$ and $u$  are coalgebra  homomorphisms,  $\Delta$  and $\epsilon$ are algebra  homomorphisms,  and  the   following  diagram  commutes: 
$$ \xymatrix{  & A\otimes A\ar[rr]^-{\chi \otimes {\rm id}} &  & A\otimes A \ar[rd]^-{\nabla}&   \\
A\ar[ru]^-{\Delta}\ar[rr]^-{\epsilon} \ar[rd]_-{\Delta} &  & \mathbb{F}_{2} \ar[rr]^-{u} & & A \\ &A\otimes A \ar[rr]_-{{\rm id}\otimes \chi}& &A\otimes A\ar[ru]_-{\nabla} &   } $$

\end{enumerate}
\end{definition}

\begin{definition}\label{def:hopfsubalgebra}
Let  $A$ be  a  Hopf  algebra  over  $\mathbb{F}_{2}$. 

 A  Hopf ideal  is  an $\mathbb{F}_{2}$-algebra which is  an ideal when restricting  to the  algebra  structure of $A$ such that 
$$\Delta(I)\subset I\otimes A+A\otimes I .$$ 
 \begin{enumerate}

\item The  subset $I(A):=\ker(\epsilon)$ is  an ideal, called  the  augmentation ideal. 
\item A subalgebra  $B\subset A$ is a Hopf  subalgebra  if it  is  a subcoalgebra and $\chi(B)\subset B$. 
\item  A  subalgebra $B$  is  called  normal  if 
$$A I(B)=I(B)A ,$$ 
where $I(B)$ is  the  aumentation  ideal  of  $B$.  

\item An $\mathbb{F}_{2}$-vector space  together  with a  structure of  a  left $A$-module  is called  a  left  module of  $A$. 
\end{enumerate}

\end{definition}

\begin{remark}
Given  a  Hopf  algebra  $A$,  the coaction  defines  a   natural $A$-module  structure on  a  tensor product $M\otimes N$  of  left  $A$-  modules  by 
$$A\otimes (M\otimes N) \overset{\nabla \otimes 1 \otimes 1}{\to} A\otimes A\otimes M\otimes N\overset{1\otimes \tau \otimes 1}{\to} A\otimes M \otimes A\otimes N \overset{\lambda_{M}\otimes \lambda_{N}}{\to} M\otimes N, $$
where  $\tau$  interchanges  the factors $A$ and $M$, and $\lambda_{N}$,  $\lambda_{M}$ denote  the homomorphisms given by  the  module  structures  of  $M$  and  $N$. 
\end{remark}
The  following  lemma  is  proved  in \cite{stong}, \cite{milnor}. 

\begin{lemma}\label{lemma:steenrod.hopf}

The  Steenrod Algebra  $\mathcal{A}$ is  a  Hopf  algebra. 
Both $\mathcal{A}_{1}$  and $E(1)$  are normal  subalgebras  of  the  Steenrod Algebra  $\mathcal{A}$.

\end{lemma}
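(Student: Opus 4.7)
The plan is to verify the three claims sequentially: the Hopf algebra structure on $\mathcal{A}$, closure of $\mathcal{A}_1$ and $E(1)$ under comultiplication and antipode, and finally normality.

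First, I would recall Milnor's coproduct on $\mathcal{A}$, defined on the generating squares by
$$\Delta(Sq^n) = \sum_{i+j=n} Sq^i \otimes Sq^j,$$
together with counit $\epsilon(Sq^0)=1$, $\epsilon(Sq^n)=0$ for $n>0$, and antipode $\chi$ determined recursively from the Hopf axiom. The nontrivial point is that $\Delta$ descends to the quotient of the tensor algebra on $\{Sq^i\}$ by the Adem relations, which is exactly the content of Milnor's theorem in \cite{milnor}. A formal consequence of the symmetry of the formula is that the resulting Hopf algebra is cocommutative, a fact I will reuse when treating normality.

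Next I would check that $\mathcal{A}_1$ is a Hopf subalgebra. On the generators one computes $\Delta(Sq^1)=Sq^1\otimes 1+1\otimes Sq^1$ and $\Delta(Sq^2)=Sq^2\otimes 1+Sq^1\otimes Sq^1+1\otimes Sq^2$, both lying in $\mathcal{A}_1\otimes \mathcal{A}_1$. Since $\Delta$ is multiplicative and $\mathcal{A}_1$ is generated as an algebra by $Sq^1,Sq^2$, closure of $\Delta$ on all of $\mathcal{A}_1$ follows, and $\chi(\mathcal{A}_1)\subset\mathcal{A}_1$ is then forced by induction on degree from the antipode identity. For $E(1)$, I would compute directly from the generating formula, combined with the Adem relation $Sq^1 Sq^2 Sq^1=Sq^2 Sq^2$, that both $Q_0=Sq^1$ and $Q_1=Sq^1 Sq^2-Sq^2 Sq^1$ are primitive, so $E(1)$ is a sub-Hopf algebra.

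The hardest step is normality, which asks that $\mathcal{A}\cdot I(B)=I(B)\cdot\mathcal{A}$ for $B\in\{\mathcal{A}_1,E(1)\}$. I would avoid a case-by-case Adem manipulation by dualizing: in the Milnor dual $\mathcal{A}_*=\mathbb{F}_2[\xi_1,\xi_2,\ldots]$, sub-Hopf algebras of $\mathcal{A}$ correspond to quotient Hopf algebras of $\mathcal{A}_*$ by Hopf ideals cut out by profile functions. Both $\mathcal{A}_1$ and $E(1)$ arise from such profiles, dualizing to $\mathcal{A}_1^*\cong\mathbb{F}_2[\xi_1,\xi_2]/(\xi_1^4,\xi_2^2)$ and $E(1)^*\cong\mathbb{F}_2[\xi_1,\xi_2]/(\xi_1^2,\xi_2^2)$ respectively, with all higher $\xi_i$ quotiented out. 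In the cocommutative graded connected setting, a general Milnor--Moore argument then forces such sub-Hopf algebras to be normal, which completes the proof. The main obstacle is resisting the temptation to verify the equality $\mathcal{A}\cdot I(B)=I(B)\cdot\mathcal{A}$ element-wise, which quickly becomes combinatorially unwieldy; the dualization to profile functions is the clean path.
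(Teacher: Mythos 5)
Your first two steps are fine and are essentially what the sources cited by the paper contain: Milnor's coproduct $\Delta(Sq^n)=\sum_{i+j=n}Sq^i\otimes Sq^j$ makes $\mathcal{A}$ a Hopf algebra, and the formulas for $\Delta(Sq^1)$, $\Delta(Sq^2)$ together with the primitivity of $Q_0$ and $Q_1$ show that $\mathcal{A}_{1}$ and $E(1)$ are sub-Hopf algebras stable under $\chi$. Note that the paper gives no argument for this lemma at all (it only points to \cite{milnor} and \cite{stong}), so the only substantive issue is your treatment of normality.

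That step has a genuine gap. The principle you appeal to --- that for a graded connected cocommutative Hopf algebra a sub-Hopf algebra cut out by a profile function is automatically normal --- is false, and no Milnor--Moore-type argument can supply it: by Adams--Margolis \emph{every} sub-Hopf algebra of $\mathcal{A}$ comes from a profile function, yet $\Lambda(Q_1)\subset\mathcal{A}$ is not normal. Indeed $Sq^4Q_1\in\mathcal{A}\,I(\Lambda(Q_1))$, but $Sq^4Q_1\notin I(\Lambda(Q_1))\mathcal{A}=Q_1\mathcal{A}$: any degree-$7$ element of $Q_1\mathcal{A}$ is a combination of $Q_1Sq^4$ and $Q_1Sq^3Sq^1$, and both annihilate the generator $t\in H^1(\mathbb{R}P^{\infty};\mathbb{F}_2)$ for degree reasons, whereas $Sq^4Q_1(t)=Sq^4(t^4)=t^8\neq 0$. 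So normality is not formal, and in fact the clause you are asked to prove does not even survive a low-degree check with the definition of Definition \ref{def:hopfsubalgebra}: $Sq^4\cdot Sq^2\in\mathcal{A}\,I(\mathcal{A}_1)$, while the Adem relations ($Sq^1Sq^5=0$, $Sq^1Sq^3=0$, $Sq^3Sq^2=0$, $Sq^3Sq^3=Sq^5Sq^1$, $Sq^2Sq^3=Sq^5+Sq^4Sq^1$, $Sq^2Sq^4=Sq^6+Sq^5Sq^1$) show that the degree-$6$ part of $I(\mathcal{A}_1)\mathcal{A}$ is spanned by $Sq^5Sq^1$ and $Sq^6+Sq^5Sq^1$, so $Sq^4Sq^2\notin I(\mathcal{A}_1)\mathcal{A}$; dually, the two one-sided quotients of $\mathcal{A}$ correspond to the distinct subalgebras of $\mathcal{A}_*$ generated by $\xi_1^4,\xi_2^2,\xi_3,\dots$ and by their conjugates $\zeta_i=\chi(\xi_i)$, and an analogous discrepancy occurs for $E(1)$ in degree $7$. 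The honest repair is therefore not a cleverer proof of normality but a rerouting: everything the paper uses later (Lemma \ref{lemma:hopfinduction}, Theorem \ref{theo:changeofrings}, Corollary \ref{cor:changeofrings}, and the identifications of Theorem \ref{theo:stong}) only requires that $\mathcal{A}_1$ and $E(1)$ be sub-Hopf algebras, that $\mathcal{A}$ be free over them (Milnor--Moore), and the change-of-rings isomorphism $\mathcal{A}\otimes_{B}M\cong(\mathcal{A}\otimes_B\mathbb{F}_2)\otimes M$, none of which needs the two-sided ideal identity $\mathcal{A}\,I(B)=I(B)\,\mathcal{A}$. Prove the sub-Hopf-algebra statements as you did, and cite freeness and the tensor identity instead of the normality clause.
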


\begin{lemma}\label{lemma:hopfinduction}
For  a  normal  subalgebra  $B\subset A$, 
$J:=A I(B)$ is  a  $2$-sided ideal, and 
$$ A// B:= A/J $$
inherits  the structure  of  a  Hopf  algebra. There  exists  an  isomorphism  of  Hopf  algebras

$$  A//B \cong A\otimes_{B}\mathbb{F}_{2}. $$
\end{lemma}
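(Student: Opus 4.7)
The plan is to verify the three assertions in sequence: that $J=AI(B)$ is two-sided, that the Hopf algebra structure of $A$ descends to $A/J$, and finally that this quotient is canonically $A\otimes_{B}\mathbb{F}_{2}$.

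First I would use normality directly to show $J$ is two-sided: by Definition \ref{def:hopfsubalgebra}, $AI(B)=I(B)A$, so $J$ is simultaneously a left and a right ideal of $A$. Hence the multiplication and unit of $A$ descend to $A/J$ without further work.

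Second, I would check that the coalgebra and antipode structures descend. For the counit, $\epsilon$ vanishes on $I(B)$ and is an algebra map, so $\epsilon(a\cdot b)=\epsilon(a)\epsilon(b)=0$ for $a\in A,\,b\in I(B)$, giving $\epsilon(J)=0$. For the antipode, one uses $\chi(B)\subset B$ together with the fact that the restriction of $\chi$ to $B$ is the antipode of the sub–Hopf–algebra $B$, which preserves $I(B)$; then $\chi(ab)=\chi(b)\chi(a)\in I(B)\cdot A=AI(B)=J$ by normality. For the comultiplication the key input is the standard identity
\[
\Delta\bigl(I(B)\bigr)\subset I(B)\otimes B + B\otimes I(B),
\]
which follows from the axiom $(\epsilon\otimes\id)\Delta=\id$ applied to an element of $I(B)$, combined with the fact that $B$ is a subcoalgebra. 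Since $\Delta$ is a morphism of algebras,
\[
\Delta(J)=\Delta(A)\cdot\Delta\bigl(I(B)\bigr)\subset (A\otimes A)\bigl(I(B)\otimes B+B\otimes I(B)\bigr)\subset J\otimes A+A\otimes J,
\]
so $\Delta$ descends to $A/J$. Compatibility of the descended operations is inherited termwise from Definition \ref{def:hopf}, so $A//B$ is a Hopf algebra.

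Third, I would produce the isomorphism $A//B\cong A\otimes_{B}\mathbb{F}_{2}$. Regard $\mathbb{F}_{2}$ as the left $B$–module $B/I(B)$ via the augmentation $\epsilon\colon B\to \mathbb{F}_{2}$. The natural $A$–linear map $A\to A\otimes_{B}\mathbb{F}_{2}$, $a\mapsto a\otimes 1$, is surjective, and its kernel is generated by elements $ab\otimes 1 - a\otimes \epsilon(b)$ for $b\in B$; when $b\in I(B)$ this reduces to $ab\otimes 1$, so the kernel is exactly $AI(B)=J$. This yields an $A$–linear isomorphism $A/J\xrightarrow{\cong}A\otimes_{B}\mathbb{F}_{2}$, and transport of structure along this isomorphism matches the Hopf algebra structure constructed in the previous step.

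The main obstacle is the comultiplicative step: the containment $\Delta(I(B))\subset I(B)\otimes B+B\otimes I(B)$ is not formal, and bootstrapping it to $\Delta(J)\subset J\otimes A+A\otimes J$ is what genuinely uses both the subcoalgebra hypothesis on $B$ and the normality condition $AI(B)=I(B)A$. Everything else is a direct verification from Definitions \ref{def:hopf} and \ref{def:hopfsubalgebra}.
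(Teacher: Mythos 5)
Your argument is correct, and it is in fact more than the paper provides: the paper states this lemma as a standard structural fact (of Milnor--Moore type), relying on the references given around Lemma \ref{lemma:steenrod.hopf} (Milnor, Stong) rather than proving it, so there is no in-text proof to compare against. Your route is the standard one and all three steps go through: normality makes $J=AI(B)=I(B)A$ two-sided; the counit kills $J$ because $\epsilon$ is an algebra map vanishing on $I(B)$; $\chi(J)\subset I(B)A=J$ uses $\chi(B)\subset B$, $\epsilon\circ\chi=\epsilon$, and the anti-multiplicativity of the conjugation (which you invoke implicitly -- it is a standard consequence of the antipode axiom, worth a word since the paper's Definition \ref{def:hopf} does not state it); and the right-exactness argument identifying $\ker\bigl(A\to A\otimes_{B}\mathbb{F}_{2}\bigr)$ with $AI(B)$ is exactly the usual proof that $A\otimes_{B}\mathbb{F}_{2}\cong A/J$. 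One small quibble: the containment $\Delta\bigl(I(B)\bigr)\subset I(B)\otimes B+B\otimes I(B)$, which you flag as the genuinely non-formal step, is actually a one-line formal consequence of the counit axiom once you split $B=\mathbb{F}_{2}\cdot 1\oplus I(B)$ and apply $\epsilon\otimes\epsilon$; the real content of the lemma is, as you use elsewhere, the normality hypothesis, without which $\Delta(J)\subset J\otimes A+A\otimes J$ and the two-sidedness of $J$ both fail.
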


\begin{theorem}\label{theo:changeofrings}
Let  $A$  be  a  Hopf  algebra, and  let $B$  be  a  subalgebra.  For  an  $A$-module  $M$ with  module  structure 
$$\lambda: A\otimes  M \to M, $$
we  denote  the  underlying  $B$-module  $M$  by  $\mid M \mid$. Then
\begin{itemize}
\item  There is  an isomorphism  of  $A$-modules
\begin{equation} \label{eq:induction}
A \otimes_{B} \mid M \mid \cong A//B \otimes M . 
\end{equation}
where  we  denote by  $\otimes$  the  tensor  product  over  $\mathbb{F}_{2}$. The  algebra  $A$  acts  on the  left  hand  side  only  on  the  $A$-factor. 
\item  On the  right  hand  side of  \ref{eq:induction},  the module  structure  over  the  algebra  $A$  is  defined   by 
$$A\otimes A // B\otimes M\to A//B \otimes M   $$
$$a\otimes c\otimes m \mapsto \underset{i}{\sum}( a^{'}_{i} c) \otimes (a_{i}^{''}m),  $$
where $$\Delta(a)=\underset{i}{\sum} a^{'}_{i}\otimes a_{i}^{''} .$$
\end{itemize}

\end{theorem}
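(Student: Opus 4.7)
My plan is to construct explicit mutually inverse $\mathbb{F}_{2}$-linear maps between $A\otimes_{B}|M|$ and $A//B\otimes M$, and then verify that the isomorphism intertwines the two prescribed $A$-module structures. Throughout I write $\Delta(a)=\sum a'_{i}\otimes a''_{i}$ in Sweedler-style notation, and use that $B$ is a Hopf subalgebra, so that $\Delta(B)\subset B\otimes B$ and $\chi(B)\subset B$, together with the identification $A//B\cong A\otimes_{B}\mathbb{F}_{2}$ from Lemma~\ref{lemma:hopfinduction}.

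The first step is to define
$$f\colon A\otimes_{B}|M|\longrightarrow A//B\otimes M,\qquad f(a\otimes m)=\sum \overline{a'_{i}}\otimes a''_{i}m,$$
and, in the opposite direction,
$$g\colon A//B\otimes M\longrightarrow A\otimes_{B}|M|,\qquad g(\bar a\otimes m)=\sum a'_{i}\otimes \chi(a''_{i})m.$$
To see that $f$ is well defined, I would compute $f(ab\otimes m)$ for $b\in B$: expanding $\Delta(ab)=\Delta(a)\Delta(b)$ and using that in $A//B$ one has $\overline{xb'}=\overline{x}\,\epsilon(b')$ for $b'\in B$, the counit identity $\sum \epsilon(b'_{j})b''_{j}=b$ collapses the resulting expression to $\sum\overline{a'_{i}}\otimes a''_{i}bm=f(a\otimes bm)$. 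To see that $g$ descends through $A//B$, I would check that $g(ci\otimes m)=0$ for $c\in A$ and $i\in I(B)$: after expanding $\Delta(ci)$ and rewriting $\chi(c''i'')=\chi(i'')\chi(c'')$, the factors $i'$ and $\chi(i'')$ lie in $B$ and can be slid across the tensor in $A\otimes_{B}|M|$, whereupon $\sum i'_{j}\chi(i''_{j})=\epsilon(i)=0$ finishes the job.

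Next, $f\circ g=\mathrm{id}$ and $g\circ f=\mathrm{id}$ are both one-line computations using coassociativity together with the antipode axiom $\sum a'_{i}\chi(a''_{i})=\epsilon(a)\cdot 1$ and the counit identity $\sum a'_{i}\epsilon(a''_{i})=a$. Finally, equivariance of $f$ follows by direct comparison: $f(xa\otimes m)=\sum\overline{x'_{j}a'_{k}}\otimes x''_{j}a''_{k}m$ on the one hand, while the formula in the statement yields $x\cdot f(a\otimes m)=\sum\overline{x'_{j}a'_{k}}\otimes x''_{j}a''_{k}m$ on the other, after using $\Delta(xa)=\Delta(x)\Delta(a)$. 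Since $f$ is a bijection, the right-hand formula automatically defines an $A$-module structure on $A//B\otimes M$, completing the proof.

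The only real obstacle is bookkeeping: each of the four verifications is a Sweedler-notation computation, and the key structural inputs are the counit and antipode axioms together with the observation that for $b\in B$ every tensorand of $\Delta(b)$ remains in $B$, so that it can either be pushed into the $\otimes_{B}$-relation or collapsed via $\epsilon$ in $A//B$. Nothing beyond the Hopf subalgebra hypothesis is needed for the module isomorphism itself.
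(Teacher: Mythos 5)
Your construction is correct, and it is the standard Milnor--Moore ``untwisting'' argument: the map $f(a\otimes m)=\sum \overline{a'_i}\otimes a''_i m$ with inverse $g(\bar a\otimes m)=\sum a'_i\otimes\chi(a''_i)m$, well-definedness over $\otimes_B$ and modulo $AI(B)$ via $\Delta(B)\subset B\otimes B$, $\chi(B)\subset B$ and $\sum i'\chi(i'')=\epsilon(i)=0$ for $i\in I(B)$, mutual inversion via coassociativity plus the counit and conjugation axioms, and equivariance from $\Delta$ being an algebra map. The paper itself states Theorem \ref{theo:changeofrings} without proof, treating it as the classical change-of-rings fact behind Corollary \ref{cor:changeofrings} (it is the result usually attributed to Milnor--Moore and used by Stong), so your write-up supplies a proof the paper omits rather than diverging from one it gives. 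Two small points worth making explicit if you expand the sketch: the theorem's hypothesis ``$B$ a subalgebra'' must be read as ``sub-Hopf algebra'' (closed under $\Delta$ and $\chi$, as in Definition \ref{def:hopfsubalgebra}), exactly as you use and as holds for $\mathcal{A}_1$ and $E(1)$; and the diagonal formula on $A//B\otimes M$ is well defined before one speaks of equivariance because $AI(B)$ is a left ideal, so $\overline{x'c}=0$ whenever $\bar c=0$ --- alternatively one can simply transport the structure along your bijection, which is what your last sentence amounts to. You also implicitly use that $\chi$ is an algebra anti-homomorphism in rewriting $\chi(c''i'')=\chi(i'')\chi(c'')$; this is not listed among the axioms in Definition \ref{def:hopf} but is a standard consequence for connected graded Hopf algebras, so a one-line citation or remark would close that gap.
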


Theorem  \ref{theo:changeofrings}  has  the  following  consequence  for  the  subalgebras  $\mathcal{A}_{1}$, and  $E(1)$ \cite{stong}. 

\begin{corollary}\label{cor:changeofrings}
Let  $ X$  be  a   spectrum. Then,  there  exist isomorphisms  of  $\mathcal{A}$-modules 
\begin{itemize}
\item $ \theta_{\mathcal{A}_{1}}: \mathcal{A}\otimes_{\mathcal{A}_{1}}H^{*}(X; \mathbb{F}_{2})\to (\mathcal{A}\otimes_{\mathcal{A}_{1}}\mathbb{F}_{2})\otimes H^{*}(X;\mathbb{F}_{2} )$. 
\item $ \theta_{E(1)}: \mathcal{A}\otimes_{E(1)}H^{*}(X; \mathbb{F}_{2})\to (\mathcal{A}\otimes_{E(1)}\mathbb{F}_{2})\otimes H^{*}(X;\mathbb{F}_{2} ) $

\item The  isomorphisms  $\theta$ induce 
isomorphisms  
$$ \Ext^{*,*}_{\mathcal{A}_{1}}(H^{*}(X); \mathbb{F}_{2} ) \cong \Ext^{*,*}_{\mathcal{A}}(H^{*}(ko)\otimes H^{*}(X), \mathbb{F}_{2}  ). $$

$$ \Ext^{*,*}_{E(1)}(H^{*}(X); \mathbb{F}_{2} ) \cong \Ext^{*,*}_{\mathcal{A}}(H^{*}(ku)\otimes H^{*}(X), \mathbb{F}_{2}  )$$ 
\end{itemize}
\end{corollary}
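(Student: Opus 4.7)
\medskip

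\noindent\textbf{Proof plan for Corollary \ref{cor:changeofrings}.}

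\emph{First part (the isomorphism $\theta$).} The plan is to obtain $\theta_{\mathcal{A}_1}$ and $\theta_{E(1)}$ as direct applications of Theorem \ref{theo:changeofrings}. Lemma \ref{lemma:steenrod.hopf} asserts that both $\mathcal{A}_1$ and $E(1)$ are normal Hopf subalgebras of $\mathcal{A}$, so Lemma \ref{lemma:hopfinduction} endows $\mathcal{A}//\mathcal{A}_1$ and $\mathcal{A}//E(1)$ with Hopf algebra structures and gives the identifications $\mathcal{A}//B \cong \mathcal{A}\otimes_B \mathbb{F}_2$ for $B \in \{\mathcal{A}_1, E(1)\}$. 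Taking $M = H^*(X;\mathbb{F}_2)$, viewed as a left $\mathcal{A}$-module and then restricted to $B$, Theorem \ref{theo:changeofrings} produces the desired $\mathcal{A}$-module isomorphism, where the $\mathcal{A}$-action on the right is the diagonal action built from the comultiplication $\Delta$ applied via the explicit formula $a \otimes c \otimes m \mapsto \sum_i (a'_i c) \otimes (a''_i m)$.

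\emph{Second part (the $\Ext$-isomorphism).} The goal is to interpret $H^*(ko)\otimes H^*(X)$ (with diagonal $\mathcal{A}$-action) as an induced module, and then apply the classical change of rings formula. Using Stong's identification (Theorem \ref{theo:stong}) $H^*(ko;\mathbb{F}_2) \cong \mathcal{A}//\mathcal{A}_1$ together with the first part, we obtain an $\mathcal{A}$-module isomorphism
$$
H^*(ko) \otimes H^*(X;\mathbb{F}_2) \;\cong\; \mathcal{A}\otimes_{\mathcal{A}_1} H^*(X;\mathbb{F}_2),
$$
and analogously for $ku$ and $E(1)$. It therefore suffices to establish the Shapiro-type isomorphism
$$
\Ext^{*,*}_{\mathcal{A}}\!\bigl(\mathcal{A}\otimes_{B} N,\,\mathbb{F}_2\bigr) \;\cong\; \Ext^{*,*}_{B}\!\bigl(N,\,\mathbb{F}_2\bigr),
$$
for $N = H^*(X;\mathbb{F}_2)$ and $B \in \{\mathcal{A}_1, E(1)\}$. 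This rests on two ingredients: the induction--restriction adjunction $\Hom_{\mathcal{A}}(\mathcal{A}\otimes_B -, -) \cong \Hom_B(-, -)$, and the fact that $\mathcal{A}$ is \emph{free} as a right $B$-module, which follows from the Milnor--Moore theorem for graded connected Hopf algebras and guarantees that induction is exact. Thus, given a $B$-projective (equivalently $B$-free) resolution $P_\bullet \to N$, the complex $\mathcal{A}\otimes_B P_\bullet$ is an $\mathcal{A}$-projective resolution of $\mathcal{A}\otimes_B N$, and the adjunction identifies the two $\Hom$-complexes degree by degree.

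\emph{Where the subtlety lies.} The first part is essentially bookkeeping once Theorem \ref{theo:changeofrings} is in place. The conceptual core is the freeness of $\mathcal{A}$ over $\mathcal{A}_1$ and over $E(1)$, which is what converts the induction--restriction adjunction into an honest isomorphism of derived functors. For these two specific subalgebras this freeness can additionally be checked by exhibiting an explicit basis of $\mathcal{A}$ as a right $B$-module (for $E(1)$, this is visible from the admissible monomial basis combined with the Milnor coproduct; for $\mathcal{A}_1$, it is a classical result of Milnor--Moore going through Stong). Once freeness is secured, the rest of the argument is a standard application of the Grothendieck spectral sequence for the composition $\Hom_B(-,\mathbb{F}_2) = \Hom_{\mathcal{A}}(\mathcal{A}\otimes_B -,\mathbb{F}_2)$, which degenerates because induction is exact.
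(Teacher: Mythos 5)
Your proposal is correct and matches the paper's intended route: the paper states the corollary as an immediate consequence of Theorem \ref{theo:changeofrings} together with Stong's identification (Theorem \ref{theo:stong}), citing Stong rather than writing out details. Your argument simply supplies the standard justification the paper leaves implicit, namely the induction--restriction adjunction combined with freeness of $\mathcal{A}$ over the sub-Hopf algebras $\mathcal{A}_{1}$ and $E(1)$ (Milnor--Moore), so there is no substantive difference in approach.
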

\begin{lemma}\label{lemma:tensor-product-ext}
Let  $X$  and  $Y$ be  connected spectra such that $H_{*}(X)$  and  $H^{*}(Y)$  are  of  finite  type. Let  $P_{*}\to H^{*}(X)$  and  $Q_{*}\to H^{*}(Y)$ be free  $\mathcal{A}$- resolutions. Then $P_{*}\otimes Q_{*
}\to H^{*}(X\wedge Y)$ is a   free $\mathcal{A}$- resolution. The  pairing 
$$\hom_{\mathcal{A}}(H^{*}(X), \mathbb{F}_{2})\otimes (\hom_{\mathcal{A}}(H^{*}(Y), \mathbb{F}_{2} )) \to \hom_{\mathcal{A}}(P_{*}\otimes Q_{*}, \mathbb{F}_{2}) $$
given  by  
$$ (P_{s}\to \Sigma^{t}\mathbb{F}_{2})\otimes (Q_{u}\to \Sigma^{v}\mathbb{F}_{2})\mapsto(P_{s}\otimes Q_{u}\to \Sigma^{t+v}\mathbb{F}_{2}).$$

induces  a  pairing  of  $\Ext$- groups 
$$ \Ext^{s,t}(H^{*}(X), \mathbb{F}_{2})\otimes \Ext^{u,v}(H^{*}(Y), \mathbb{F}_{2}).$$
\end{lemma}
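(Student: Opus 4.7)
The plan is to argue in three stages: first, verify that $P_* \otimes Q_*$ (with total-degree differential $d_P \otimes 1 + 1 \otimes d_Q$) has homology concentrated in degree zero; second, verify it is degreewise $\mathcal{A}$-free; third, check that the displayed pairing on Hom-complexes is a chain map that descends to the $\Ext$-groups.

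For the first stage, since we work over the field $\mathbb{F}_2$ and each $P_s$, $Q_u$ is $\mathbb{F}_2$-flat, the algebraic Künneth theorem applies degreewise, giving
\[
H_n(P_* \otimes Q_*) \;\cong\; \bigoplus_{i+j=n} H_i(P_*) \otimes H_j(Q_*).
\]
Because $P_* \to H^*(X)$ and $Q_* \to H^*(Y)$ are resolutions, only the $(i,j)=(0,0)$ summand survives, yielding $H^*(X) \otimes H^*(Y)$. The finite-type hypothesis on $H_*(X)$ and $H^*(Y)$ is precisely what is needed to invoke the topological Künneth isomorphism $H^*(X \wedge Y) \cong H^*(X) \otimes H^*(Y)$, and one checks that the augmentation $P_0 \otimes Q_0 \to H^*(X) \otimes H^*(Y)$ is $\mathcal{A}$-linear with respect to the diagonal action provided by the coproduct of $\mathcal{A}$ (Lemma \ref{lemma:steenrod.hopf}).

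For the second stage, I would exploit the Hopf-algebra structure on $\mathcal{A}$. It suffices to show that $\mathcal{A} \otimes \mathcal{A}$, with the diagonal action obtained by composing the coproduct $\Delta$ with left multiplication on both factors, is free as an $\mathcal{A}$-module. The standard trick uses the conjugation $\chi$: the map
\[
\mathcal{A} \otimes \mathcal{A} \longrightarrow \mathcal{A} \otimes |\mathcal{A}|, \qquad a \otimes b \;\longmapsto\; \sum_i a'_i \otimes \chi(a''_i)\, b,
\]
where $\Delta(a) = \sum_i a'_i \otimes a''_i$ and $|\mathcal{A}|$ carries the trivial $\mathcal{A}$-action, is an isomorphism of left $\mathcal{A}$-modules with inverse $a \otimes b \mapsto \sum_i a'_i \otimes a''_i b$. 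Since the right-hand side is manifestly free (it is $\mathcal{A}$ tensored with an $\mathbb{F}_2$-vector space), so is the left. Extending by direct sums, $P_s \otimes Q_u$ is free over $\mathcal{A}$ for every $(s,u)$, and hence so is each total-degree piece $\bigoplus_{s+u=n} P_s \otimes Q_u$.

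For the third stage, note that the differential on $\hom_{\mathcal{A}}(P_* \otimes Q_*, \mathbb{F}_2)$ decomposes as the sum of duals of $d_P \otimes 1$ and $1 \otimes d_Q$; therefore if $\alpha : P_s \to \Sigma^t \mathbb{F}_2$ and $\beta : Q_u \to \Sigma^v \mathbb{F}_2$ are cocycles (i.e.\ $\alpha \circ d_P = 0$ and $\beta \circ d_Q = 0$), the prescribed extension of $\alpha \otimes \beta$ by zero to the complement of $P_s \otimes Q_u$ inside $(P \otimes Q)_{s+u}$ is again a cocycle. Likewise, if $\alpha = \alpha' \circ d_P$ then $\alpha \otimes \beta$ is the coboundary of $\alpha' \otimes \beta$, and symmetrically in $\beta$, so the operation descends to cohomology and yields the desired pairing with values in $\Ext^{s+u,t+v}_{\mathcal{A}}(H^*(X \wedge Y), \mathbb{F}_2)$. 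The main obstacle is the second stage: the freeness of a tensor product of free $\mathcal{A}$-modules under the diagonal action is not formal in arbitrary graded algebras and really does require the existence of the conjugation; once that is in hand, the rest is bookkeeping with the Künneth formula and the Leibniz rule for the Hom-complex differential.
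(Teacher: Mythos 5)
Your argument is correct. Note that the paper does not actually supply a proof of Lemma \ref{lemma:tensor-product-ext} — it is stated and then used as input for Theorem \ref{theo:smashpairing} — so there is no in-text argument to compare against; what you have written is the standard proof and it fills the gap soundly. Your three stages are exactly the points that need checking: the algebraic K\"unneth theorem over the field $\mathbb{F}_{2}$ together with the finite-type hypothesis identifies the homology of the total complex with $H^{*}(X)\otimes H^{*}(Y)\cong H^{*}(X\wedge Y)$; the untwisting isomorphism $a\otimes b\mapsto \sum_{i} a'_{i}\otimes \chi(a''_{i})b$, with inverse $a\otimes b\mapsto \sum_{i}a'_{i}\otimes a''_{i}b$, shows that $\mathcal{A}\otimes\mathcal{A}$ with the diagonal action is an extended, hence free, module, and this is indeed the one non-formal step — it uses precisely the conjugation guaranteed by Definition \ref{def:hopf} and Lemma \ref{lemma:steenrod.hopf}, so your proof stays within the paper's own toolkit; and the cocycle/coboundary bookkeeping (with no signs, since we are over $\mathbb{F}_{2}$) shows the pairing descends to $\Ext^{s+u,t+v}_{\mathcal{A}}(H^{*}(X\wedge Y),\mathbb{F}_{2})$, which also correctly supplies the target that the lemma's final display omits.
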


\begin{theorem}
\label{theo:smashpairing}
Let  $\mathcal{A}$  be the  Steenrod  algebra, and  let $\mathcal{B}$  be  a subalgebra  of  $\mathcal{A}$. Let  $X$  and  $Y$  be  connected spectra of  finite  type, and  let  $E$ be  a  ring  spectrum with  $H^{*}(E)= \mathcal{A}// \mathcal{B}$ and  multiplication $\mu: E\wedge E\to E$. 
Then,  the  smash pairing  of  $\Ext$ groups 
	$$\xymatrix{\Ext^{s,t}_{\mathcal{A}}(H^{*}(E)\otimes H^{*}(X), \mathbb{F}_{2})\otimes  \Ext^{u,v}_{\mathcal{A}}(H^{*}(E)\otimes H^{*}(Y), \mathbb{F}_{2}) \ar[d] \\ \Ext^{s+u, t+v}(H^{*}(E)\otimes H^{*}(E)\otimes H^{*}(X)\otimes H^{*}(Y), \mathbb{F}_{2}) \ar[d] \\ \Ext^{s+u, t+v}(H^{*}(E)\otimes H^{*}(X)\otimes  H^{*}(Y), \mathbb{F}_{2} )} $$
	is  compatible  via the  change  of rings isomorphism  with  the  cap  product  pairing of  $\Ext$  groups  for  $\mathcal{B}$-modules
	$$ \Ext^{s,t}(H^{*}(X), \mathbb{F}_{2})\otimes \Ext^{u,v}(H^{*}(Y), \mathbb{F}_{2})\to  \Ext^{s+u,t+v}(H^{*}(X)\otimes H^{*}(Y), \mathbb{F}_{2}).$$
	
Here,  the  first  morphism denotes the  tensor  product  on the  $\Ext$ groups introduced  in \ref{lemma:tensor-product-ext}. The  second morphism  is induced  by $\mu$. 
\end{theorem}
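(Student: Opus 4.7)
The plan is to realize both pairings through explicit free resolutions and then to compare them using the change-of-rings isomorphism $\theta$ of Corollary \ref{cor:changeofrings}, the multiplicativity of the induction functor $\mathcal{A}\otimes_{\mathcal{B}}(-)$, and the fact that under the identification $H^{*}(E)\cong \mathcal{A}//\mathcal{B}$ the map $\mu^{*}$ in cohomology is precisely the coproduct of $\mathcal{A}//\mathcal{B}$ inherited from $\mathcal{A}$.

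First I would choose free $\mathcal{B}$-resolutions $P_{\bullet}\to H^{*}(X)$ and $Q_{\bullet}\to H^{*}(Y)$. Because $\mathcal{B}$ is a normal Hopf subalgebra of $\mathcal{A}$ (Lemma \ref{lemma:steenrod.hopf}), $\mathcal{A}$ is free as a right $\mathcal{B}$-module, so the induction functor $\mathcal{A}\otimes_{\mathcal{B}}(-)$ is exact and carries free $\mathcal{B}$-modules to free $\mathcal{A}$-modules. Hence $\mathcal{A}\otimes_{\mathcal{B}}P_{\bullet}$ and $\mathcal{A}\otimes_{\mathcal{B}}Q_{\bullet}$ are free $\mathcal{A}$-resolutions that, via $\theta$, become resolutions of $H^{*}(E)\otimes H^{*}(X)$ and $H^{*}(E)\otimes H^{*}(Y)$ respectively. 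Using the Hopf coproduct of $\mathcal{B}$, the complex $P_{\bullet}\otimes Q_{\bullet}$ is a free $\mathcal{B}$-resolution of $H^{*}(X)\otimes H^{*}(Y)$ on which the $\mathcal{B}$-cap pairing of Lemma \ref{lemma:tensor-product-ext} is computed by the bilinear formula $(f\otimes g)(p\otimes q)=f(p)g(q)$, while $(\mathcal{A}\otimes_{\mathcal{B}}P_{\bullet})\otimes(\mathcal{A}\otimes_{\mathcal{B}}Q_{\bullet})$ is a free $\mathcal{A}$-resolution computing the $\mathcal{A}$-smash pairing by the analogous expression.

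The core technical step is to construct a comparison map of free $\mathcal{A}$-resolutions
$$\Phi\colon \mathcal{A}\otimes_{\mathcal{B}}(P_{\bullet}\otimes Q_{\bullet})\longrightarrow (\mathcal{A}\otimes_{\mathcal{B}}P_{\bullet})\otimes(\mathcal{A}\otimes_{\mathcal{B}}Q_{\bullet}),\qquad a\otimes(p\otimes q)\longmapsto \sum_{i}(a_{i}'\otimes p)\otimes(a_{i}''\otimes q),$$
induced by the coproduct $\Delta(a)=\sum_{i}a_{i}'\otimes a_{i}''$, and to verify that under change of rings the square
$$\xymatrix@C=1.3em{\mathcal{A}\otimes_{\mathcal{B}}(P_{\bullet}\otimes Q_{\bullet}) \ar[r]^-{\Phi}\ar[d]_{\theta} & (\mathcal{A}\otimes_{\mathcal{B}}P_{\bullet})\otimes (\mathcal{A}\otimes_{\mathcal{B}}Q_{\bullet}) \ar[d]^{\theta\otimes\theta} \\ H^{*}(E)\otimes P_{\bullet}\otimes Q_{\bullet} \ar[r]^-{\mu^{*}\otimes\, \id} & H^{*}(E)\otimes H^{*}(E)\otimes P_{\bullet}\otimes Q_{\bullet}}$$
commutes. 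This is the concrete cochain-level manifestation of Hopf coassociativity in $\mathcal{A}$ together with the fact that the quotient $\mathcal{A}\to \mathcal{A}//\mathcal{B}$ is a morphism of coalgebras, which is what makes the $\mathcal{A}//\mathcal{B}$-coproduct agree with the cohomology map induced by $\mu$.

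Dualizing with $\hom_{\mathcal{A}}(-,\mathbb{F}_{2})$ and passing to cohomology, this square intertwines the smash pairing applied to the induced $\mathcal{A}$-resolutions (followed by $\mu^{*}$) with the image under change of rings of the $\mathcal{B}$-cap pairing applied to $P_{\bullet}\otimes Q_{\bullet}$, proving the asserted commutativity. The main obstacle I anticipate is the chain-level commutativity of the intermediate square: it requires carefully unwinding the module-structure formula of Theorem \ref{theo:changeofrings}(ii), invoking coassociativity $(\Delta\otimes 1)\circ\Delta=(1\otimes\Delta)\circ\Delta$ in $\mathcal{A}$, and checking compatibility with the projection $\mathcal{A}\to\mathcal{A}//\mathcal{B}$. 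Once this is in place, the remaining passage to $\Ext$ is a standard diagram chase using naturality of the pairing of Lemma \ref{lemma:tensor-product-ext}.
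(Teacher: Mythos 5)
Your argument is correct, and it is in fact more detailed than what the paper provides: Theorem \ref{theo:smashpairing} is stated there without proof, and the compatibility is later invoked only implicitly (in the one-line proof of Theorem \ref{theo:capsubalgebra}, which says the cap structure is induced by the smash product and is therefore compatible with the change-of-rings isomorphism of Corollary \ref{cor:changeofrings}). Your chain-level route is the standard way to make that assertion precise: induce free $\mathcal{A}$-resolutions from free $\mathcal{B}$-resolutions using that $\mathcal{A}$ is free over the sub-Hopf-algebra $\mathcal{B}$ (Milnor--Moore), and compare $\mathcal{A}\otimes_{\mathcal{B}}(P_{\bullet}\otimes Q_{\bullet})$ with $(\mathcal{A}\otimes_{\mathcal{B}}P_{\bullet})\otimes(\mathcal{A}\otimes_{\mathcal{B}}Q_{\bullet})$ via the coproduct-induced map $\Phi$. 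Two points carry the weight and are worth spelling out when you write this up: (i) well-definedness and $\mathcal{A}$-linearity of $\Phi$ over $\otimes_{\mathcal{B}}$ use exactly that $\Delta(\mathcal{B})\subset\mathcal{B}\otimes\mathcal{B}$ and that $\Delta$ is multiplicative, and since $\Delta(1)=1\otimes 1$ the adjunction $\hom_{\mathcal{A}}(\mathcal{A}\otimes_{\mathcal{B}}P_{s},\mathbb{F}_{2})\cong\hom_{\mathcal{B}}(P_{s},\mathbb{F}_{2})$ then turns the pulled-back product $(f\cdot g)\circ\Phi$ into the $\mathcal{B}$-level tensor pairing essentially by inspection; (ii) the identification of the second morphism in the statement with the map induced by $\Phi$ rests on the fact that under $H^{*}(E)\cong\mathcal{A}//\mathcal{B}$ the map $\mu^{*}$ is the quotient coproduct, which you assert correctly but should reference (it is part of the Milnor--Stong identification behind Theorem \ref{theo:stong}). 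Note also that exact commutativity of your intermediate square is not strictly needed: $\Phi$ is a lift over free resolutions of the module map $\mu^{*}\otimes\id$ (after the shuffle), so uniqueness of such lifts up to chain homotopy already gives the desired equality on $\Ext$, which makes the argument robust against sign and ordering bookkeeping.
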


We  will use  the smash product  structure  of  the  Adams  spectral  sequence  to   define  a  cap  product. 

Let  $X$ be  the  suspension  spectrum  of  a  pointed  $CW$-complex.  Denote  the  Spanier-Whitehead dual  of  $X$  by $D(X)$.  Consider the duality morphism  
$$ u: S\to  D(X)\wedge X .$$

The  functional  spectrum  is denoted  by  
$$F(X, E). $$

Notice  the weak  equivalence 
$$D(X)\simeq F(X,S) . $$
 A general  reference for  Spanier Whitehead  duality  in the category  of spectra  is  \cite{ravenelfinite}. 
 We  will  need  the following two results  for   the  construction  of  the  cap  structure.

\begin{lemma}\label{lemma:spanierwhitehead}
The duality  morphism  induces 
\begin{itemize}
\item An $\mathcal{A}$-linear  pairing 
$$  u^{*}: H^{*}(DX)\otimes_{\mathbb{F}_{2}}H^{*}(X)\to \mathbb{F}_{2}.$$
\item An  isomorphism  of $\mathcal{A}$-algebras 
$$H^{*}(DX)\cong \hom_{\mathbb{F}_{2}}(H^{*}(X), \mathbb{F}_{2}). $$
\end{itemize}

\end{lemma}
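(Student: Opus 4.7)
The plan is to construct the pairing as the cohomology pullback of the duality map, then identify it with the canonical evaluation pairing via Spanier--Whitehead duality.

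First, I would apply mod $2$ cohomology to the duality morphism $u\colon S\to D(X)\wedge X$. Under the assumption that $X$ is (the suspension spectrum of) a finite CW complex, both $H^{*}(X)$ and $H^{*}(DX)$ are of finite type, so the Künneth isomorphism gives
$$ H^{*}(D(X)\wedge X)\;\cong\; H^{*}(DX)\otimes_{\mathbb{F}_{2}} H^{*}(X).$$
Composing with $u^{*}\colon H^{*}(D(X)\wedge X)\to H^{*}(S)=\mathbb{F}_{2}$ produces the desired pairing
$$u^{*}\colon H^{*}(DX)\otimes_{\mathbb{F}_{2}} H^{*}(X)\longrightarrow \mathbb{F}_{2}.$$
The $\mathcal{A}$-linearity is automatic: the map $u^{*}$ is induced from a morphism of spectra, so it commutes with Steenrod operations; the $\mathcal{A}$-action on the tensor product is given by the Cartan formula coming from the coproduct of $\mathcal{A}$, while $H^{*}(S)$ is concentrated in degree zero, so $u^{*}$ kills the image of every positive-degree Steenrod operation. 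This establishes the first item.

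For the second item, I would pass to the adjoint
$$\phi\colon H^{*}(DX)\longrightarrow \hom_{\mathbb{F}_{2}}\!\bigl(H^{*}(X),\mathbb{F}_{2}\bigr),\qquad \alpha\longmapsto \bigl(x\mapsto u^{*}(\alpha\otimes x)\bigr).$$
Bijectivity reduces to a standard Spanier--Whitehead computation: for a finite spectrum $X$ the coevaluation $u$ is the unit of a duality, so $\pi_{n}(DX\wedge H\mathbb{F}_{2})\cong H^{-n}(X;\mathbb{F}_{2})^{\vee}$, i.e.\ $H^{n}(DX;\mathbb{F}_{2})\cong H_{n}(X;\mathbb{F}_{2})$; composing with the universal coefficient isomorphism over the field $\mathbb{F}_{2}$ gives exactly $\phi$. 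Alternatively, one can verify bijectivity on cells: both sides transform compatibly under cofiber sequences and agree for $X=S^{n}$, so a finite induction on the cells of $X$ finishes the argument.

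It remains to check that $\phi$ is an isomorphism of $\mathcal{A}$-algebras. The algebra structure on $H^{*}(DX)$ arises from the ring spectrum structure on $DX$ obtained by dualizing the diagonal $X\to X\wedge X$, while the algebra structure on $\hom_{\mathbb{F}_{2}}(H^{*}(X),\mathbb{F}_{2})$ is the convolution product dual to the coalgebra structure on $H^{*}(X)$ induced by the same diagonal. Naturality of Spanier--Whitehead duality identifies these two products. The $\mathcal{A}$-module structure on $\hom_{\mathbb{F}_{2}}(H^{*}(X),\mathbb{F}_{2})$ is the one given by the conjugation (antipode) $\chi$ of the Hopf algebra $\mathcal{A}$, namely $(\theta\cdot f)(x)=f(\chi(\theta)\cdot x)$; under this convention the pairing $u^{*}$ becomes $\mathcal{A}$-balanced, and $\phi$ is automatically $\mathcal{A}$-linear.

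The main obstacle I anticipate is this last compatibility: checking that the algebra structure on $H^{*}(DX)$ coming from the ring spectrum structure corresponds, via $\phi$, to the convolution product on the dual coalgebra, and that the $\mathcal{A}$-action matches the conjugation-twisted action. Both are classical consequences of Spanier--Whitehead duality (see, e.g., the discussion in \cite{ravenelfinite}), but writing them out carefully is where the bookkeeping with $\chi$ and the coproduct of $\mathcal{A}$ has to be done precisely.
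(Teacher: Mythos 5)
Your sketch is sound, but note that the paper itself offers no proof of this lemma: it is stated as a standard consequence of Spanier--Whitehead duality, with \cite{ravenelfinite} cited just above as the general reference, so there is no argument in the paper to compare against. What you propose is exactly the classical argument the authors implicitly invoke: apply mod $2$ cohomology to the duality morphism $u\colon S\to D(X)\wedge X$, use the K\"unneth isomorphism (valid since $X$ is a finite complex, hence of finite type) to obtain the pairing, and observe that $\mathcal{A}$-linearity is forced because $H^{*}(S)$ is concentrated in degree $0$; then prove the adjoint $\phi$ is bijective either via $D(X)\wedge H\mathbb{F}_{2}\simeq F(X,H\mathbb{F}_{2})$ or by cellular induction, and match the product on $H^{*}(DX)$ (coming from the ring structure on $D(X)=F(X,S)$ dual to the diagonal of $X$) with the dual of the coproduct on $H^{*}(X)$, the $\mathcal{A}$-action on the dual being twisted by the antipode $\chi$. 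Two small points to fix in the write-up: the grading identification should read $H^{n}(DX)\cong H_{-n}(X;\mathbb{F}_{2})\cong\bigl(H^{-n}(X;\mathbb{F}_{2})\bigr)^{\vee}$, not $H_{n}(X;\mathbb{F}_{2})$, consistent with the graded dual appearing in the statement; and it is worth stating explicitly that, by adjunction, $\mathcal{A}$-linearity of the pairing in the first bullet (coproduct action on the tensor factor, trivial action on $\mathbb{F}_{2}$) is equivalent to $\mathcal{A}$-linearity of $\phi$ for the $\chi$-twisted action, so the two bullets are established together. The remaining bookkeeping you flag (compatibility of the two products via naturality of duality) is indeed the only delicate verification, and your plan goes through.
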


\begin{proposition}\label{prop:iso-spanier-adams}
Let  $E$ be  a  spectrum and  let  $X$ be  the  suspension spectrum  of  a  finite  $CW$  complex.  Then  there is  a  natural  isomorphism 
$$ \hom_{\mathcal{A}}(H^{*}(E\wedge D(X)), \mathbb{F}_{2})\cong \hom_{\mathcal{A}}(H^{*}(E), H^{*}(X)).  $$

\end{proposition}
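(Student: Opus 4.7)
The plan is to chain together three standard isomorphisms, each of which sits naturally in $E$ and $X$. First, I would exploit finiteness. Since $X$ is a finite $CW$-complex, $H^{*}(X)$ is finite-dimensional in each degree, and so is $H^{*}(DX)$ by Lemma \ref{lemma:spanierwhitehead}. The Künneth theorem with $\mathbb{F}_{2}$ coefficients then supplies a natural isomorphism of $\mathcal{A}$-modules
$$H^{*}(E\wedge DX)\;\cong\; H^{*}(E)\otimes_{\mathbb{F}_{2}} H^{*}(DX),$$
where the right-hand side carries the diagonal action induced by the coproduct on $\mathcal{A}$ (using Lemma \ref{lemma:steenrod.hopf}). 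Applying Lemma \ref{lemma:spanierwhitehead} once more, I identify $H^{*}(DX)$ with the $\mathbb{F}_{2}$-linear dual $\hom_{\mathbb{F}_{2}}(H^{*}(X),\mathbb{F}_{2})$ as an $\mathcal{A}$-module.

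Second, I would invoke the tensor-hom adjunction for modules over the Hopf algebra $\mathcal{A}$: for any three $\mathcal{A}$-modules $M$, $N$, $P$ there is a natural isomorphism
$$\hom_{\mathcal{A}}(M\otimes N, P)\;\cong\;\hom_{\mathcal{A}}\bigl(M,\hom_{\mathbb{F}_{2}}(N,P)\bigr),$$
where the $\mathcal{A}$-action on $\hom_{\mathbb{F}_{2}}(N,P)$ is defined using the coproduct and the antipode $\chi$ of $\mathcal{A}$. Taking $M=H^{*}(E)$, $N=\hom_{\mathbb{F}_{2}}(H^{*}(X),\mathbb{F}_{2})$ and $P=\mathbb{F}_{2}$, the left-hand side becomes $\hom_{\mathcal{A}}(H^{*}(E\wedge DX),\mathbb{F}_{2})$ by the first step.

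Third, because $H^{*}(X)$ is finite-dimensional in each degree, the canonical double-dualisation map is an isomorphism of $\mathcal{A}$-modules
$$\hom_{\mathbb{F}_{2}}\bigl(\hom_{\mathbb{F}_{2}}(H^{*}(X),\mathbb{F}_{2}),\mathbb{F}_{2}\bigr)\;\cong\; H^{*}(X).$$
Splicing the three identifications together produces the desired natural isomorphism
$$\hom_{\mathcal{A}}(H^{*}(E\wedge DX),\mathbb{F}_{2})\;\cong\;\hom_{\mathcal{A}}(H^{*}(E),H^{*}(X)).$$

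The only delicate point—and hence the main obstacle—is bookkeeping the $\mathcal{A}$-module structures at each step: the diagonal action on the Künneth tensor product, the twisted action on the internal $\hom$ that uses $\chi$, and the verification that the evaluation/double-dual identification is genuinely $\mathcal{A}$-equivariant. Once those structures are consistently tracked, naturality in both $E$ and $X$ is automatic since each of the three ingredients (Künneth, Spanier--Whitehead duality, tensor-hom adjunction) is itself natural.
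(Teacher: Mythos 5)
Your argument is correct, but it reaches the isomorphism by a more structural route than the paper does. The paper's proof is an explicit basis-level construction: it fixes an $\mathbb{F}_{2}$-basis $\{e_{\alpha}\}$ of $H^{*}(X)$, the dual basis $\{e^{\alpha}\}$ of $H^{*}(D(X))$ with respect to the nondegenerate Spanier--Whitehead pairing $u^{*}$ of Lemma \ref{lemma:spanierwhitehead}, and a basis $\{f_{\beta}\}$ of $H^{*}(E)$, and then writes down mutually inverse maps $\phi$ and $\psi$ in matrix coefficients $T^{\alpha}_{\beta}$, $S^{\alpha}_{\beta}$ --- in effect, composing with the coevaluation $u$ in one direction and with the evaluation pairing in the other. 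You instead chain the K\"unneth isomorphism $H^{*}(E\wedge DX)\cong H^{*}(E)\otimes H^{*}(DX)$ (legitimate because $H^{*}(DX)$ is finite dimensional, $X$ being a finite complex), the identification $H^{*}(DX)\cong \hom_{\mathbb{F}_{2}}(H^{*}(X),\mathbb{F}_{2})$ from Lemma \ref{lemma:spanierwhitehead}, the tensor-hom adjunction for modules over the Hopf algebra $\mathcal{A}$, and double dualisation. This is sound: the adjunction you quote is standard for graded Hopf algebras, and the equivariance of the double-dual map is unproblematic here because the mod $2$ Steenrod algebra is cocommutative, so $\chi^{2}=\mathrm{id}$ and the antipode-twisted dual of the dual returns the original module; note also that the twisted action on $\hom_{\mathbb{F}_{2}}(H^{*}(X),\mathbb{F}_{2})$ is exactly the module structure that Lemma \ref{lemma:spanierwhitehead} realises topologically, so the two twists match. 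What your version buys is that naturality in $E$ and $X$ and the precise role of finiteness and of the Hopf structure are made explicit, whereas the paper's version buys concrete formulas for $\phi$ and $\psi$, which is what is actually used downstream when the isomorphism is applied termwise to an Adams resolution in Theorem \ref{theo:adamsspanierwhitehead} and in the cap-product construction of Definition \ref{def:cap.adams}. Be aware that the paper also tacitly uses the K\"unneth identification (its $T$ is evaluated on elements $f_{\beta}\otimes e^{\alpha}$), so your first step is not an extra hypothesis but simply an explicit acknowledgement of it.
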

\begin{proof}
Let $\{e_{\alpha}\}$  be  an $\mathbb{F}_{2}$-  basis of  $H^{*}(X)$, and let  $ \{ e^{\alpha} \}$ be  the   corresponding  dual  basis  of  $H^{*}(D(X))$  with respect  to  the  nondegenerate  pairing $u^{*}: H^{*}(D(X))\otimes_{\mathbb{F}_{2}}H^{*}(X)\to \mathbb{F}_{2}$. Furthermore,  let  $\{f_{\beta} \}$ be  a  basis of  $H^{*}(E)$. 
We will  define  maps 
$$ \hom_{\mathcal{A}}(H^{*}(E\wedge D(X)), \mathbb{F}_{2})) \overset{\phi }{\underset{\psi}{\overset{\rightarrow}{\leftarrow}}} \hom_{\mathcal{A}}(H^{*}(E), H^{*}(X)) . $$

For $ T\in \hom_{\mathcal{A}}(H^{*}(E\wedge D(X)), \mathbb{F}_{2}) $,  we  define 
$\phi(T)$  to  be  the  composite 
$$f_{\beta}\mapsto \underset{\alpha}{\sum}f_{\beta}\otimes e^{\alpha}\otimes e_{\alpha}\overset{T\otimes 1}{\mapsto} \underset{\alpha}{\sum} T_{\beta}^{\alpha}(e_{\alpha}),$$ 
 where  $T_{\beta}^{\alpha}:= T(f_{\beta}\otimes e^{\alpha})$. 
 
 For  an  element 
 $$S\in \hom_{\mathcal{A}}(H^{*}(E), H^{*}(X)),  $$
we  define  $\psi(S)$ to be the  composite 
$$ f_{\beta}\otimes e^{\alpha}\overset{S\otimes 1}{\mapsto } \underset{\alpha^{'}}{\sum} S_{\beta}^{\alpha^{'}}e_{\alpha^{'}} \otimes e^{\alpha}\overset{u^{*}}{\mapsto}S_{\beta}^{\alpha}, $$

where  $S_{\beta}^{\alpha}= u^{*}\circ (S\otimes 1) (f_{\beta}\otimes e_{\alpha})$. 

\end{proof}

\begin{theorem}\label{theo:adamsspanierwhitehead}
Let  $X$  be the  suspension  spectrum  of a finite  pointed  $CW$-complex. Let  $E$  be  a connected  spectrum  such  that  $H_{*}(E)$  is  of  finite  type.  Then, the  Adams  spectral sequence 
$$\Ext^{s,t}(H^{*}(F(X, E)), \mathbb{F}_{2})\Rightarrow [S, F(X,E)]_{\hat{2}}\cong [X,E]_{\hat{2}}.  $$
Is  naturally  isomorphic  to  the  Adams  spectral  sequence 
$$ \Ext^{s,t}(H^{*}(E), H^{*}(X))\Rightarrow [X, E]_{\hat{2}}.$$
\end{theorem}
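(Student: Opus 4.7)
The plan is to realize both spectral sequences from a single geometric Adams tower, and then apply Proposition~\ref{prop:iso-spanier-adams} to identify their $E_{2}$-pages.

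First, I would fix an Adams resolution $\cdots \to E_{2} \to E_{1} \to E_{0} = E$ of $E$ in the category of spectra, with cofibers that are wedges of suspensions of $H\mathbb{F}_{2}$. Since $X$ is a finite pointed $CW$-complex, its Spanier--Whitehead dual $D(X)$ is a finite spectrum, and smashing with $D(X)$ preserves cofiber sequences; hence
$$\cdots \to E_{2}\wedge D(X) \to E_{1}\wedge D(X) \to E_{0}\wedge D(X) = E\wedge D(X) \simeq F(X,E)$$
is an Adams resolution of $F(X,E)$, whose associated spectral sequence abuts to $[S, F(X,E)]_{\hat{2}}$. Applying $[X,-]$ to the original resolution gives, via the adjunction $[X, E_{s}] \cong [S, F(X, E_{s})] \cong \pi_{*}(E_{s}\wedge D(X))$, the same exact couple. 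Thus both spectral sequences in the statement arise from a single tower, and their common abutment is $[X,E]_{\hat{2}} \cong [S, F(X,E)]_{\hat{2}}$.

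Next, to match the two stated forms of the $E_{2}$-page, I would choose a free $\mathcal{A}$-resolution $P_{*} \to H^{*}(E)$. Since $X$ is finite, $H^{*}(D(X))$ is a finite-dimensional $\mathbb{F}_{2}$-vector space and $H^{*}(F(X,E)) \cong H^{*}(E)\otimes H^{*}(D(X))$ as $\mathcal{A}$-modules by Lemma~\ref{lemma:spanierwhitehead}. Because $\mathcal{A}$ is a Hopf algebra (Lemma~\ref{lemma:steenrod.hopf}), a free $\mathcal{A}$-module tensored with any $\mathcal{A}$-module of finite type is again free, so $P_{*}\otimes H^{*}(D(X)) \to H^{*}(F(X,E))$ is a free $\mathcal{A}$-resolution. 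Applying $\hom^{t}_{\mathcal{A}}(-,\mathbb{F}_{2})$ degreewise and invoking Proposition~\ref{prop:iso-spanier-adams} yields a natural isomorphism of cochain complexes
$$\hom^{t}_{\mathcal{A}}(P_{*}\otimes H^{*}(D(X)), \mathbb{F}_{2}) \cong \hom^{t}_{\mathcal{A}}(P_{*}, H^{*}(X)),$$
whose cohomology gives $\Ext^{s,t}_{\mathcal{A}}(H^{*}(F(X,E)), \mathbb{F}_{2}) \cong \Ext^{s,t}_{\mathcal{A}}(H^{*}(E), H^{*}(X))$.

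The main obstacle is promoting this identification of $E_{2}$-pages to an isomorphism of spectral sequences beyond the $E_{2}$-term, including compatibility with all higher differentials and filtrations on the abutment. The cleanest route is to observe that the single geometric tower $\{E_{s}\wedge D(X)\}$ determines the whole spectral sequence --- its differentials and its filtration on $[X,E]_{\hat{2}}$ --- so it suffices to check that the two algebraic descriptions of the $E_{2}$-page agree under the canonical identification arising from Proposition~\ref{prop:iso-spanier-adams}. Naturality of the construction $\phi$, $\psi$ there in the variable $P_{s}$ ensures that the isomorphism extends to one of cochain complexes and hence to an isomorphism of spectral sequences; convergence on both sides is then a direct application of Theorem~\ref{theo:adamsspectralsequence}.
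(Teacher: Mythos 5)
Your proposal follows essentially the same route as the paper: smash a fixed Adams resolution of $E$ with the finite spectrum $D(X)$ to obtain an Adams resolution of $F(X,E)\simeq E\wedge D(X)$, and then use Proposition~\ref{prop:iso-spanier-adams} to identify the resulting $\hom_{\mathcal{A}}$-complexes, hence the $E_{2}$-terms and the whole spectral sequence arising from the single geometric tower. The extra detail you supply (the adjunction identifying the abutments and the freeness of $P_{*}\otimes H^{*}(D(X))$ over the Hopf algebra $\mathcal{A}$) is a harmless elaboration of what the paper leaves implicit.
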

\begin{proof}
Given  an  Adams  resolution of  $E$
$$\xymatrix{\ldots \ar[r] &F_{2}\ar[d] \ar[r]&F_{1}\ar[r] \ar[d] &F_{0}\ar[r]^{=} \ar[d] & E \\  &K_{2}& K_{1}& K_{0} & }, $$

we  smash with  $D(X)$  to  obtain  an  Adams  resolution  for  $E\wedge D(X)$. 
$$\xymatrix{\ldots \ar[r] &F_{2}\wedge D(X)\ar[d] \ar[r]&F_{1}\wedge D(X)\ar[r] \ar[d] &F_{0}\wedge D(X)\ar[r]^{=} \ar[d] & E\wedge D(X) \\  &K_{2}\wedge D(X)& K_{1}\wedge D(X)& K_{0}\wedge D(X) & }. $$
	
Due  to  proposition \ref{prop:iso-spanier-adams} we  obtain  an  isomorphism 
$$\hom^{t}_{\mathcal{A}}(H^{*}(\Sigma^{s}K_{s}\wedge D(X)), \mathbb{F}_{2})\cong \hom^{t}_{\mathcal{A}}(H^{*}(\Sigma^{s}K_{s}), H^{*}(X)), $$	
which  induces  after  taking  homology  an  isomorphism 
 $$ \Ext^{s,t}_{\mathcal{A}}(H^{*}(\Sigma^{s}K_{s}\wedge D(X)), \mathbb{F}_{2}) )\longrightarrow \Ext^{s,t}(H^{*}(\Sigma^{s}K_{s}), H^{*}(X))	) $$
\end{proof}

\begin{definition}\label{def:cap.adams}
The  cap  product  is  defined as the  composition 
$$\xymatrix{\Ext_{\mathcal{A}}^{s, -t}(H^{*}(E), H^{*}(X))\otimes \Ext_{\mathcal{A}}^{u,v}(H^{*}(E)\otimes H^{*}(X), \mathbb{F}_{2})\ar[d] \\
 \Ext_{\mathcal{A}}^{s, -t}(H^{*}(E)\otimes H^{*}(D(X)) \otimes \Ext_{\mathcal{A}}^{u,v} (H^{*}(E)\otimes H^{*}(X), \mathbb{F}_{2})\ar[d]\\ \Ext_{\mathcal{A}}^{u+s, v-t}( H^{*}(E)\otimes H^{*}(E)\otimes H^{*}(D(X)) \otimes H^{*}(X), \mathbb{F}_{2}  ) \ar[d]_-{{(1\otimes \Delta_{X})}_{*}}\\ \Ext_{\mathcal{A}}^{u+s, v-t}( H^{*}(E)\otimes H^{*}(E)\otimes H^{*}(D(X))\otimes H^{*}(X)\otimes H^{*}(X), \mathbb{F}_{2} ) \ar[d]_-{\Ext_{\mathcal{A}}^{*,*}(\mu^{*}\otimes  {\rm ev}^{*} \otimes 1 )}\\ \Ext^{u+s, v-t}(H^{*}(E)\otimes H^{*}(X), \mathbb{F}_{2}) ,}$$
where  $\mu: E\wedge E\to E$  is  the  product  in $E$,  and ${\rm ev}: D(X)\wedge X\to S$  is the  evaluation map. 

\end{definition}

Corollary  \ref{cor:changeofrings} together  with  definition  \ref{def:cap.adams} has  as  consequence  the following  result,  which  is  the  cap  structure  that  we  will  need  in sections \ref{section:complex} and  \ref{section:real}. 

\begin{theorem}[Cap Pairing for sub-hopf algebras] \label{theo:capsubalgebra}
    Let $\mathcal{A}$ be  the Steenrod  algebra  $H^*(H)$, and  let $\mathcal{B}$ be  a  subalgebra  of $\mathcal{A}$. Furthermore,  let $X$  be a  pointed finite CW  complex, and  let $E$ be  a  ring  spectrum with  $H^{*}(E)\cong \mathcal{A}// \mathcal{B}$.  
    Then, the  cap  product  pairing  of  Ext groups  over the  algebra $\mathcal{A}$ \ref{theo:capadams} is  compatible via the change  of  rings  isomorphism with the  cap  pairing  
    $$ {\rm Ext}_{\mathcal{B}}^{u, -v} (\mathbb{F}_{2}, H^{*}(X)) \otimes {\rm Ext}^{s,t}_{\mathcal{B}}(H^{*}(X), \mathbb{F}_{2})\to  {\rm Ext}_{\mathcal{B}}^{u+s, t-v}(H^{*}(X), \mathbb{F}_{2}).$$
    
\end{theorem}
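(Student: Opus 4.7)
The plan is to unfold Definition \ref{def:cap.adams} and translate each arrow in that composite into its counterpart over $\mathcal{B}$ via the change-of-rings isomorphism of Corollary \ref{cor:changeofrings}. Since $H^{*}(E)\cong \mathcal{A}//\mathcal{B}$, Lemma \ref{lemma:hopfinduction} and Theorem \ref{theo:changeofrings} identify $\mathcal{A}\otimes_{\mathcal{B}}(-)$ with $H^{*}(E)\otimes(-)$ naturally, and after passing to Ext this yields the same change-of-rings identification on both factors of the pairing.

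First I would rewrite both inputs of the cap product. For the second factor, Corollary \ref{cor:changeofrings} gives $\Ext^{u,v}_{\mathcal{A}}(H^{*}(E)\otimes H^{*}(X),\mathbb{F}_{2})\cong \Ext^{u,v}_{\mathcal{B}}(H^{*}(X),\mathbb{F}_{2})$ directly. For the first factor, Proposition \ref{prop:iso-spanier-adams} and Lemma \ref{lemma:spanierwhitehead} identify $\Ext^{s,-t}_{\mathcal{A}}(H^{*}(E),H^{*}(X))$ with $\Ext^{s,-t}_{\mathcal{A}}(H^{*}(E)\otimes H^{*}(DX),\mathbb{F}_{2})$, and a second application of change of rings, combined with the duality $H^{*}(DX)\cong \hom_{\mathbb{F}_{2}}(H^{*}(X),\mathbb{F}_{2})$, puts this in the form $\Ext^{s,-t}_{\mathcal{B}}(\mathbb{F}_{2},H^{*}(X))$. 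Next I would chase the composite of Definition \ref{def:cap.adams} through these identifications: the smash pairing is compatible with the $\mathcal{B}$-tensor pairing by Theorem \ref{theo:smashpairing}; the multiplication $\mu^{*}:H^{*}(E)\to H^{*}(E)\otimes H^{*}(E)$ is precisely the datum that collapses two induced modules $\mathcal{A}\otimes_{\mathcal{B}}(-)\otimes \mathcal{A}\otimes_{\mathcal{B}}(-)$ back to a single $\mathcal{A}\otimes_{\mathcal{B}}(-)$, and under change of rings this collapse becomes the identity at the $\mathcal{B}$-Ext level; finally the diagonal $\Delta_{X}^{*}$ and evaluation $\ev^{*}$ assemble into the evaluation pairing $H^{*}(X)\otimes \hom_{\mathbb{F}_{2}}(H^{*}(X),\mathbb{F}_{2})\to \mathbb{F}_{2}$ which implements the $\mathcal{B}$-cap structure on the right-hand side.

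The hard part will be ensuring that the twist built into Theorem \ref{theo:changeofrings} — which uses the coproduct $\Delta(a)=\sum a_{i}'\otimes a_{i}''$ on $\mathcal{A}$ — threads correctly through both the Spanier-Whitehead duality of Lemma \ref{lemma:spanierwhitehead} and the coproduct $\Delta_{X}^{*}$ of the space $X$. I would handle this on explicit resolutions: choosing a free $\mathcal{B}$-resolution $P_{*}\to H^{*}(X)$ and inducing up produces a free $\mathcal{A}$-resolution $\mathcal{A}\otimes_{\mathcal{B}}P_{*}\cong H^{*}(E)\otimes P_{*}$ of $H^{*}(E)\otimes H^{*}(X)$, on which the composite of Definition \ref{def:cap.adams} admits an explicit chain-level description. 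Matching this description with the standard Yoneda-style cap pairing over $\mathcal{B}$ reduces the theorem to the bookkeeping verification that the $\mathcal{A}$-twist cancels against the $\mathcal{B}$-structure, whereupon the conclusion follows from naturality of all the maps involved.
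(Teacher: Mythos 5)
Your proposal is correct and takes essentially the same route as the paper: the paper's proof is a one-line observation that the cap product of Definition \ref{def:cap.adams} is induced by the smash pairing, which by Theorem \ref{theo:smashpairing} is compatible with the change-of-rings isomorphism of Corollary \ref{cor:changeofrings}. Your unfolding of the Spanier-Whitehead identifications and the chain-level bookkeeping of the coproduct twist simply makes explicit the details the paper leaves implicit.
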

   
\begin{proof}
Since  the  cap  structure \ref{theo:capadams} is  induced by  the  smash product,  it  is  compatible  with the  change of  rings  isomorphism  from  Corollary \ref{cor:changeofrings}. 
\end{proof}

\subsection{The  Atiyah-Hirzebruch Spectral Sequence.}
Given  a  generalized  cohomology  theory $\mathcal{H}^{*}$ represented  by  a  spectrum $E$,  there  exist spectral  sequences  converging  to $\mathcal{H}^{*}$ (of cohomological type )and  to  $\mathcal{H}_{*}$ (of  homological  type). The  differentials  of  the  spectral sequence  of  cohomological  type  are  natural transformations of  cohomology theories, and their  description   will be  needed to  deduce  differentials  of  the  Adams  spectral sequence  in section \ref{section:complex} and  \ref{section:real}.

\begin{theorem}\label{theo:atiyahhirzebruch}
Let  $E$ be  a  spectrum, and denote by  $\mathcal{H}^{*}$ and  $\mathcal{H}_{* }$ the cohomology, respectively  homology  theories  associated  to  $E$. There  exist  spectral  sequences 

$$E_{2}^{p,q}=H^{p}(X, \pi_{q}(E)),   $$

$$ E^{2}_{p,q}= H_{p}(X, \pi_{q}(E)),$$
of  cohomological,  respectively  homological  grading, which  converge  to  the cohomology  theory $\mathcal{H}^{*}$,  respectively $\mathcal{H}_{*}$
\end{theorem}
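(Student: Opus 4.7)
The plan is to construct both spectral sequences from the skeletal filtration of $X$. Choose a CW structure $\emptyset = X^{(-1)}\subset X^{(0)}\subset X^{(1)}\subset\cdots$ with $X=\colim_n X^{(n)}$. Each inclusion fits into a cofiber sequence $X^{(p-1)}\hookrightarrow X^{(p)}\to X^{(p)}/X^{(p-1)}$, where the quotient is a wedge of $p$-spheres indexed by the $p$-cells of $X$.

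For the homological statement, I would smash the skeletal tower with $E$ and apply stable homotopy to obtain, for each $p$, the long exact sequence of the cofibration. Splicing these produces an unrolled exact couple with
$$D^1_{p,q}=\pi_{p+q}(E\wedge X^{(p)}),\qquad E^1_{p,q}=\pi_{p+q}(E\wedge X^{(p)}/X^{(p-1)}).$$
Using the wedge decomposition and the suspension isomorphism $\pi_{p+q}(E\wedge S^p)\cong \pi_q(E)$, one identifies $E^1_{p,q}\cong C^{CW}_p(X)\otimes\pi_q(E)$. To recognize the $d_1$ differential as $\partial^{CW}\otimes\operatorname{id}$, I would compare with the case $E=H\IZ$ via naturality in $E$, which recovers the ordinary cellular differential on the additive generators. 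Taking homology yields $E^2_{p,q}=H_p(X;\pi_q(E))$.

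For the cohomological version I would dualize, applying the cohomology functor $\mathcal{H}^{*}(-)$ to the skeletal tower. The resulting inverse system of long exact sequences assembles into an exact couple whose spectral sequence has
$$E_1^{p,q}=\mathcal{H}^{p+q}(X^{(p)}/X^{(p-1)})\cong\prod_{\alpha}\pi_{q}(E)=C^p_{CW}(X;\pi_q(E)),$$
where the product runs over the $p$-cells of $X$. The $d_1$ differential is identified, again by naturality in $E$ and comparison with ordinary cellular cohomology, with the cellular coboundary, so $E_2^{p,q}=H^p(X;\pi_q(E))$.

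The main obstacle is convergence. For homology the argument is clean: when $X$ is finite-dimensional the filtration is bounded, and for general CW $X$ one has $\mathcal{H}_*(X)\cong\colim_n \mathcal{H}_*(X^{(n)})$ because smash product commutes with filtered colimits, giving strong convergence. The cohomology spectral sequence is more delicate because $\mathcal{H}^*$ does not preserve inverse limits of towers in general: potential $\lim^1$ terms on the system $\{[X^{(n)},E]_*\}$ give only conditional convergence and may require a Milnor-type sequence to compare the limit with $\mathcal{H}^*(X)$. For the finite CW-complexes relevant to the applications in subsequent sections of the paper, the skeletal tower stabilizes, and this issue disappears.
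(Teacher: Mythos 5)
The paper does not actually prove this statement: it is quoted as a classical result (the Atiyah--Hirzebruch spectral sequence), so there is no internal argument to compare yours against. Your skeletal-filtration construction is the standard proof and is essentially correct: the exact couple obtained from the cofiber sequences of the skeleta, the identification $E^1_{p,q}\cong C^{CW}_p(X)\otimes\pi_q(E)$ via the wedge decomposition and suspension isomorphism, and the identification of $d_1$ with the cellular differential all go through. Two small points deserve care. First, the identification of $d_1$ is cleaner if done directly, by computing the attaching-map degrees on $\pi_{p+q}(E\wedge X^{(p)}/X^{(p-1)})$, rather than ``by comparison with $E=H\IZ$''; naturality in $E$ alone does not literally transport the cellular differential, since the comparison map on $E^1$-terms need not be injective for general $E$ --- the standard fix is that $d_1$ is given on each summand by the degree matrix of the composite $S^p\to X^{(p-1)}/X^{(p-2)}$, which is the same matrix that defines $\partial^{CW}$. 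Second, your closing remark that the convergence issue ``disappears for the finite CW-complexes relevant to the applications'' is slightly off target: the spaces the paper feeds into these spectral sequences, $B\mathbb{Z}/4$ and $B\mathbb{Z}/4\wedge B\mathbb{Z}/4$, are infinite-dimensional. For the homological spectral sequence your colimit argument handles this correctly, and that is the version actually used for computing $ku_*$ and $ko_*$; for the cohomological one, the paper only uses the formulas for the differentials (which are natural transformations determined on finite skeleta), so conditional convergence plus a Milnor $\lim^1$ sequence --- with $\lim^1$ vanishing here because the relevant groups are finite in each degree --- suffices, but this should be said rather than waved off by finiteness of $X$.
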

The construction  of  the  Postnikov  System  for  the  spectrum $KO$, and  hence  for  the  connective  version $ko$  has  as consequence  the  following result:

\begin{lemma}\label{lemma:difatiyahirzebruch}
The  primary  differentials  in the  cohomological Atiyah-Hirzebruch Spectral  sequence for  $ko$  are  as  follows: 

\begin{itemize}
\item $H^{p}(X, ko_{8q}) \overset{d_{2}}{\to} H^{p+2}(X,ko_{8q+2})$ is  $Sq^{2}\circ {r}: H^{p}(X, \mathbb{Z})\to H^{p+2}(X, \mathbb{F}_{2}) $, where  $r$ is  the  reduction.  
\item $ H^{p}(X, ko_{8q+1}) \overset{d_{2}}{\to} H^{p+2}(X,ko_{8q+2})$ is $Sq^{2}: H^{p}(X, \mathbb{F}_{2})\to H^{p+2}(X, \mathbb{F}_{2})  $. 
\item $H^{p}(X, ko_{8q+1}) \overset{d_{3}}{\to} H^{p+3}(X,ko_{8q+2})$ is $ Sq^{3}= Sq^{1}\circ Sq^{2}:H^{p}(X, \mathbb{F}_{2})\to H^{p+3}(X, \mathbb{Z}) $. 
\item $H^{p}(X, ko_{8q+4}) \overset{d_{5}}{\to} H^{p+3}(X,ko_{8q+8})$ is 
$ Sq^{5}=  Sq^{1}\circ Sq^{4}\circ {r}:H^{p}(X, \mathbb{Z})\to H^{p+5}(X, \mathbb{Z})$, where  $r$  is the  reduction. 
\end{itemize}
For  the  homological  Atiyah-Hirzebruch  spectral sequence, we  have the  following  result: 
\begin{lemma}\label{lemma:difatiyahhirzebruch.homological}
The  primary differentials  in the  homological  Atiyah-Hirzebruch  spectral sequence  for  $ko$  are  as  follows: 

\begin{itemize}
\item $H_{p}(X, ko_{8q}) \overset{d^{2}}{\to} H_{s-2}(X,ko_{8q+1})$ is  $Sq_{2}\circ {r}: H_{p}(X, \mathbb{Z})\to H_{p-2}(X, \mathbb{F}_{2}) $, where  $r$ is  the  reduction. 
\item $ H_{p}(X, ko_{8q+1}) \overset{d^{2}}{\to} H_{p-2}(X,ko_{8q+2})$ is $Sq_{2}: H_{p}(X, \mathbb{F}_{2})\to H_{p-2}(X, \mathbb{F}_{2})  $. 
\item $H_{p}(X, ko_{8q+1}) \overset{d^{3}}{\to} H_{p-3}(X,ko_{8q+2})$ is $ Sq_{3}= Sq_{1}\circ Sq_{2}:H^{p}(X, \mathbb{F}_{2})\to H^{p-3}(X, \mathbb{Z}) $. 

\item $H_{p}(X, ko_{8q+4}) \overset{d_{5}}{\to} H_{p-5}(X,ko_{8q+8})$ is 
$ Sq_{5}=  Sq_{1}\circ Sq_{4}\circ {r}:H_{p}(X,\mathbb{Z} )\to H_{p-5}(X, \mathbb{Z})$, where  $r$  is the  reduction. 

\end{itemize}

\end{lemma}

\end{lemma}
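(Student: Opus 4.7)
The plan is to derive Lemma \ref{lemma:difatiyahhirzebruch.homological} as the homological counterpart of Lemma \ref{lemma:difatiyahirzebruch}, by dualizing the argument. Both spectral sequences arise from the same Postnikov tower of $ko$, and the primary differentials are induced by the same $k$-invariants; the difference is that in the cohomological AHSS these $k$-invariants act on cohomology via $Sq^i$, while in the homological AHSS they act on homology via the dual operations $Sq_i$.

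First, I would recall that for a spectrum $E$ with Postnikov tower $\{P_n E\}$, the $k$-invariant $k_n \in H^{n+1}(P_{n-1} E; \pi_n E)$ is precisely what realizes the first nontrivial differential landing in the $\pi_n E$ row of the AHSS, both in the cohomological and in the homological variant. For $ko$, the identification in Theorem \ref{theo:stong} together with the classical analysis of the first few Postnikov sections of $ko$ (essentially the Anderson--Brown--Peterson splitting in low degrees) gives that these $k$-invariants are exactly $Sq^2 \circ r$, $Sq^2$, $Sq^3 = Sq^1 Sq^2$, and $Sq^5 = Sq^1 Sq^4 \circ r$, in the four cases listed in Lemma \ref{lemma:difatiyahirzebruch}.

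Second, I would apply the duality between homology and cohomology operations: under the evaluation pairing $H^m(X;\mathbb{F}_2)\otimes H_m(X;\mathbb{F}_2)\to\mathbb{F}_2$, the cohomology Steenrod square $Sq^i$ is dual to the homology Steenrod square $Sq_i$, and the integral Bockstein in cohomology dualizes to the integral Bockstein $Sq_1$ in homology via the connecting map of the short exact coefficient sequence $0\to\mathbb{Z}\to\mathbb{Z}\to\mathbb{F}_2\to 0$. Dualizing the four differentials $d_2, d_2, d_3, d_5$ of Lemma \ref{lemma:difatiyahirzebruch} gives the four differentials $d^2, d^2, d^3, d^5$ claimed in the present lemma, with the gradings shifted from $+r$ to $-r$ as required by the homological indexing.

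The main obstacle, though essentially bookkeeping, is keeping track of the integral-versus-mod-$2$ coefficient switches, particularly for $Sq_3 = Sq_1 \circ Sq_2$ and $Sq_5 = Sq_1 \circ Sq_4 \circ r$, which should land in integral homology. Here one must verify that the reduction $r\colon H_*(X;\mathbb{Z})\to H_*(X;\mathbb{F}_2)$ and the integral Bockstein $Sq_1$, understood as the duals of the analogous operations in cohomology, compose to give targets in integral homology exactly as in the cohomological statement. This is handled by the same five-lemma argument applied to the short exact coefficient sequence and its dual, already present implicitly in the proof of Lemma \ref{lemma:difatiyahirzebruch}.
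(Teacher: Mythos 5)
Your proposal is correct and follows essentially the same route as the paper, which offers no detailed argument but simply derives the lemma from the Postnikov tower of $KO$ (hence of $ko$): the primary differentials are the $k$-invariants $Sq^{2}\circ r$, $Sq^{2}$, $Sq^{3}=Sq^{1}Sq^{2}$ and $Sq^{5}=Sq^{1}Sq^{4}\circ r$, which is exactly the content of your first step. Your second step, dualizing via the evaluation pairing to obtain the homological differentials $Sq_{i}$, is the standard bookkeeping the paper leaves implicit, and it is carried out correctly.
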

\subsection{The  $\eta$-$c$-$R$ sequence.}  

We  will  recall  an  exact  sequence  which relates  connective complex and real $k$- theory. 

Recall that  the  element  $\eta: S^{1}: ko$  defined  above fits  into  a cofiber sequence  of  spectra  maps 

 $$ \Sigma ko \overset{\eta}{\longrightarrow} ko \overset{c}{\longrightarrow}  C(\eta) \longrightarrow  \Sigma^{2}ko, $$
 
where  $C\eta\simeq  S^2$ denotes  the  cone of  multiplication  by  $\eta$. 
We  will  examinate in section \ref{section:real} the  behaviour  of  the  $\eta$-$c$-$R$  on both the  Adams  and  the  Atiyah-Hirzebruch  spectral sequences. 

\section{Splittings, group cohomology  and  minimal  resolutions.}\label{section:splitting}

Let  $X$  and  $Y$  be  $CW$-complexes. 
\begin{lemma}
Let  $X$ and $Y$ be  suspension  spectra.  

There exists  a  stable  splitting for  spaces $X$ and $Y$ $$X\times Y\simeq X\vee Y \vee X\wedge Y, $$
inducing a  weak homotopy  equivalence 
$$B\mathbb{Z}/4\times B\mathbb{Z}/4 \simeq (B\mathbb{Z}/4) \vee (B\mathbb{Z}/4) \wedge (B\mathbb{Z}/4 \wedge B\mathbb{Z}/4).$$
\end{lemma}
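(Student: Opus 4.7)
The plan is to identify the lemma as an instance of the classical James-style stable splitting of a product, then specialize to the group $\mathbb{Z}/4$. The statement for a product of two pointed spaces has the short slogan: the suspension spectrum of a product is the wedge of the suspension spectra of the smash factors.

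First I would recall the basic cofiber sequence of pointed spaces
$$ X \vee Y \;\xrightarrow{\;j\;}\; X \times Y \;\xrightarrow{\;q\;}\; X \wedge Y, $$
where $j$ is the canonical inclusion of the wedge as $X \times \{*\} \cup \{*\} \times Y$, and $q$ is the quotient map collapsing this wedge. This is valid for any pointed CW pair, and taking suspension spectra yields a cofiber sequence of spectra.

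Next I would construct a stable retraction of $j$. Let $p_X\colon X\times Y \to X$ and $p_Y\colon X\times Y \to Y$ be the two projections (available as maps of unpointed spaces, but basepoint-preserving), and let $\iota_X\colon X \to X\vee Y$, $\iota_Y\colon Y \to X\vee Y$ be the wedge inclusions. In the stable homotopy category the set of maps $[X\times Y, X\vee Y]$ is an abelian group, so one can define
$$ r \;=\; \iota_X\circ p_X + \iota_Y \circ p_Y \;\in\; [X\times Y,\, X\vee Y]. $$
A direct check on the two wedge summands shows $r\circ j = \mathrm{id}_{X\vee Y}$: for $x\in X$ one has $p_X(x,*)=x$ and $p_Y(x,*)=*$, so $r\circ j|_X = \iota_X$, and symmetrically on $Y$.

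With a retraction in hand, the cofiber sequence splits stably, giving the desired equivalence of suspension spectra
$$ X\times Y \;\simeq\; X \,\vee\, Y \,\vee\, X\wedge Y. $$
Specializing to $X = Y = B\mathbb{Z}/4$ (which are connected pointed CW complexes, hence admissible as suspension spectra) yields the claimed weak equivalence after stabilization. I do not anticipate any serious obstacle: the argument uses only the abelian group structure of stable maps and the universal property of the cofiber, both entirely standard, and there are no finiteness or connectivity hypotheses that fail for $B\mathbb{Z}/4$. The only care needed is to remember that the splitting lives in the stable category and is not a splitting of unstable spaces.
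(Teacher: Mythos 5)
Your argument is correct: the paper states this splitting without proof (it is the classical stable splitting of a product of well-pointed CW complexes), and your retraction argument --- building $r=\iota_X\circ p_X+\iota_Y\circ p_Y$ in the additive stable category, checking $r\circ j=\mathrm{id}$ on each wedge summand, and splitting the cofiber sequence $X\vee Y\to X\times Y\to X\wedge Y$ --- is exactly the standard justification the paper implicitly invokes. No gaps; your closing caveat that the splitting is only stable, not unstable, is the right one to keep in mind.
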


\begin{theorem}\label{theo:splitting }
For the  group $\mathbb{Z}/4\times \mathbb{Z}/4$, there  exist   isomorphisms

$$ku_{*}(B\mathbb{Z}/4)\oplus ku_{*}(B\mathbb{Z}/4) \oplus ku_{*}(B\mathbb{Z}/4\wedge B\mathbb{Z}/4),  $$
as  well  as  
$$ko_{*}(B\mathbb{Z}/4)\oplus ko_{*}(B\mathbb{Z}/4) \oplus ko_{*}(B\mathbb{Z}/4\wedge B\mathbb{Z}/4). $$ 
\end{theorem}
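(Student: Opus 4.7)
The theorem is essentially a corollary of the stable splitting lemma that immediately precedes it, so my proof plan would be very short. The strategy is to apply the stable splitting of a product in two stages: first identify the classifying space, then feed the splitting into a generalized homology functor.

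First I would note the standard identification of classifying spaces $B(\mathbb{Z}/4 \times \mathbb{Z}/4) \simeq B\mathbb{Z}/4 \times B\mathbb{Z}/4$, so that $ku_{*}(B(\mathbb{Z}/4 \times \mathbb{Z}/4)) = ku_{*}(B\mathbb{Z}/4 \times B\mathbb{Z}/4)$ and similarly for $ko$. Taking $X = Y = B\mathbb{Z}/4$ in the stable splitting lemma then gives a weak equivalence of suspension spectra
\[
\Sigma^{\infty} (B\mathbb{Z}/4 \times B\mathbb{Z}/4) \simeq \Sigma^{\infty} B\mathbb{Z}/4 \vee \Sigma^{\infty} B\mathbb{Z}/4 \vee \Sigma^{\infty} (B\mathbb{Z}/4 \wedge B\mathbb{Z}/4),
\]
after fixing any basepoint on $B\mathbb{Z}/4$ (the natural one coming from the basepoint of $K(\mathbb{Z}/4,1)$).

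Next I would apply the (reduced) generalized homology theories associated to the connective spectra $ku$ and $ko$. Since $ku$ and $ko$ are genuine spectra, the functors $ku_{*}(-)$ and $ko_{*}(-)$ are defined on stable homotopy types, and in particular they turn the wedge decomposition of suspension spectra above into an internal direct sum at the level of abelian groups. Applying this wedge axiom degree by degree yields the two claimed isomorphisms
\[
ku_{*}(B(\mathbb{Z}/4 \times \mathbb{Z}/4)) \cong ku_{*}(B\mathbb{Z}/4) \oplus ku_{*}(B\mathbb{Z}/4) \oplus ku_{*}(B\mathbb{Z}/4 \wedge B\mathbb{Z}/4),
\]
and likewise with $ku$ replaced by $ko$.

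There is really no hard step here; the only thing to be slightly careful about is the distinction between reduced and unreduced homology when invoking the splitting. Under the convention that $ku_*$ and $ko_*$ denote the reduced theories of pointed spaces, the splitting lemma is stated correctly for pointed $X,Y$ and the wedge axiom applies directly. If one instead wishes to compute the unreduced homology, the extra copy of the basepoint contributes an additional $ku_{*}(\mathrm{pt})$ (resp.\ $ko_{*}(\mathrm{pt})$) summand, absorbed uniformly across all three wedge factors. In either reading, the decomposition claimed in the theorem falls out immediately from the stable splitting lemma together with the wedge axiom for generalized homology theories.
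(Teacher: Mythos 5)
Your proposal is correct and is essentially the argument the paper intends: the theorem is deduced from the stable splitting lemma immediately preceding it (with $X=Y=B\mathbb{Z}/4$, using $B(\mathbb{Z}/4\times\mathbb{Z}/4)\simeq B\mathbb{Z}/4\times B\mathbb{Z}/4$) by applying the reduced $ku$- and $ko$-homology functors and the wedge axiom. Your remark on the reduced versus unreduced convention is a sensible point of care but does not change the argument.
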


The  determination  of  the  orders  of  the  $ko$-homology  groups of  $B\mathbb{Z}/4$  was originally  done  by  Bottvinik-Gilkey-Stolz using  the   Atiyah-Hirzebruch spectral sequence together  with the  proof  of  the  Gromov-Lawson-Rosenberg  conjecture  in \cite{botvinnikgilkeystolz}. 

Here  we  will  need  the differentials of  the  Adams  spectral  sequence of  the  $\mathbb{Z}/4$ summand as  input  for  the  determination of  the  differentials in the  Adams  spectral  sequence  of  the  $ku$-homology groups of  the smash  product  factor  in section \ref{section:complex}. 

We  begin  by introducing  two modules  over  the  Steenrod  algebra 
\begin{definition}\label{def:modulebow}

The  module  $M_{B}$,  called  "bow", consists  of  the  following  data: 
\begin{itemize}
\item Elements $w_{0}$, of  degree $0$, $w_{2}$  of  degree  $2$.  
\item The  action of  the  algebra  $\mathcal{A}_{1}$ is  determined  by  the  fact   that  $Sq^1(w_{0})=0$, $Sq^1(w_{1})=0$, and  $Sq^2(w_{0})=w_{2}$. 

\end{itemize}
\end{definition}

The  $\Ext$ term  is  depicted  in  \ref{spec:MB}

\begin{figure}
\includegraphics[scale=0.8]{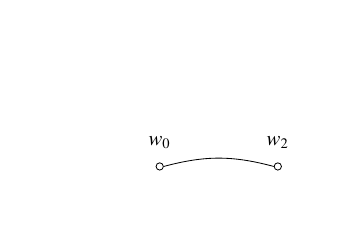}
\caption{The Module $M_{B}$}\label{fig:bow}
\end{figure}

\begin{figure}
\includegraphics[scale=0.55]{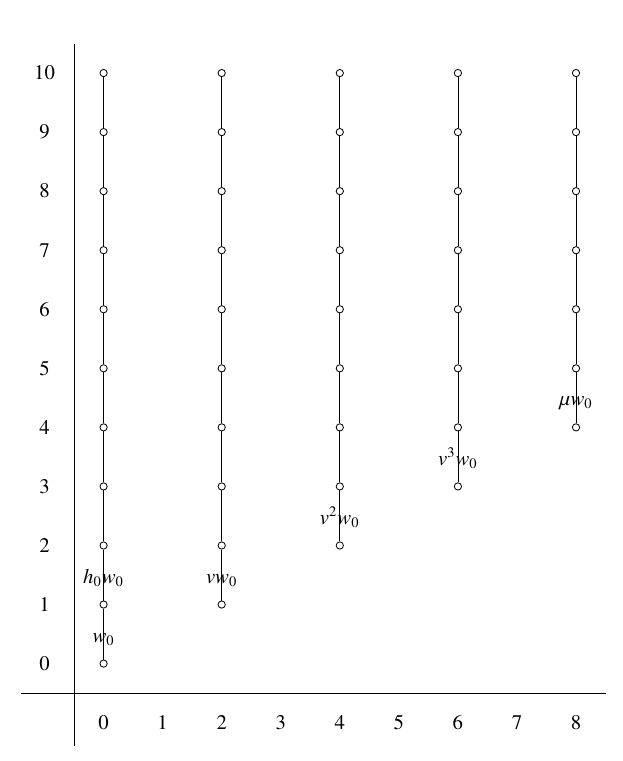}
\caption{The  graded Module $\Ext_{\mathcal{A}_{1}}(M_{B}, \mathbb{F}_{2})$ }\label{spec:MB}
\end{figure}

\begin{lemma}\label{lemma:Extbow}
The $\Ext_{\mathcal{A}_{1}}^{*,*}(\mathbb{F}_{2}, \mathbb{F}_{2})$-module $\Ext_{\mathcal{A}_{1}}^{*,*}(M_{B}, \mathbb{F}_{2})$  is  generated  by  classes  
$w_{0}$,  $w_{2}$, $w_{4}$, $w_{6}$	 of  $(t-s,s)$-degree 

$$\mid w_{0}\mid =(0,0), \, \mid w_2\mid = (2,1),\, \mid w_{4}\mid =(4,2), \,  \mid w_{6}\mid =(6,3).  	      $$
with  the  relations 
$$ \eta w_{i}=0 $$	for  $i=0,2,4,6$. 
$$v w_{2i}= w_{2(i+1)},$$
$$    \omega w_{i}= h_{0} w_{i+4},           $$
$$ \mu \omega_{i}=\omega_{i+8}.$$
\end{lemma}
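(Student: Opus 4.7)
The strategy I would follow is to identify $M_B$ with the mod-$2$ cohomology of the cofibre $C\eta$ of $\eta\colon S^1\to S^0$ and then apply Wood's theorem $ko\wedge C\eta\simeq ku$ together with the change-of-rings isomorphism of Corollary \ref{cor:changeofrings}. The identification $M_B\cong H^*(C\eta;\mathbb{F}_2)$ is immediate from Definition \ref{def:modulebow}: both are $\mathbb{F}_2\{w_0,w_2\}$ with $Sq^2w_0=w_2$ and all other $\mathcal{A}_1$-operations trivial. Wood's equivalence gives an isomorphism $H^*(ko)\otimes M_B\cong H^*(ku)$ of $\mathcal{A}$-modules, and Corollary \ref{cor:changeofrings} then yields
\[
\Ext^{*,*}_{\mathcal{A}_1}(M_B,\mathbb{F}_2)\;\cong\;\Ext^{*,*}_{\mathcal{A}}(H^*(ku),\mathbb{F}_2)\;\cong\;\Ext^{*,*}_{E(1)}(\mathbb{F}_2,\mathbb{F}_2),
\]
which by Lemma \ref{lemma:coefficientsku} is the polynomial ring $\mathbb{F}_2[h_0,v]$ with $h_0$ in $(t-s,s)$-bidegree $(0,1)$ and $v$ in bidegree $(2,1)$. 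Setting $w_{2i}\leftrightarrow v^i$ then assigns $w_{2i}$ the correct bidegree $(2i,i)$, and the relation $v\,w_{2i}=w_{2(i+1)}$ is exactly the polynomial ring structure on the right-hand side.

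Second, I would read off the $\Ext^{*,*}_{\mathcal{A}_1}(\mathbb{F}_2,\mathbb{F}_2)$-module structure through Yoneda composition. Under Wood's equivalence, this action coincides with the $\pi_*(ko)$-module structure on $\pi_*(ku)\cong\mathbb{Z}_2[v]$ induced by the complexification map $c\colon ko\to ku$. The classical values $c_*(\eta)=0$, $c_*(\omega)=2v^2$, $c_*(\mu)=v^4$ translate, on Adams $E_2$-pages (using that $h_0$ detects $2$ and $v$ detects the Bott element), into the identities
\[
h_1\cdot v^i=0,\qquad \omega\cdot v^i=h_0v^{i+2},\qquad \mu\cdot v^i=v^{i+4},
\]
which are precisely the asserted relations $\eta w_i=0$, $\omega w_i=h_0w_{i+4}$, $\mu w_i=w_{i+8}$.

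Third, any monomial $h_0^mv^k\in\mathbb{F}_2[h_0,v]$ can, after writing $k=4q+r$ with $0\le r\le 3$, be rewritten as $\mu^q\cdot(h_0^m w_{2r})$; hence the four classes $w_0,w_2,w_4,w_6$ indeed generate the module, and a bidegree count shows the listed relations are complete.

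The main obstacle is tracking the complexification map $c_*$ at the level of Adams $E_2$-representatives, i.e.\ justifying that $\omega\mapsto h_0v^2$ and $\mu\mapsto v^4$. This is a classical computation that can be cited, or alternatively the lemma can be proved entirely internally by constructing the minimal $\mathcal{A}_1$-free resolution of $M_B$ (whose Betti numbers $\dim\Ext^{s,t}_{\mathcal{A}_1}(M_B,\mathbb{F}_2)$ can be checked to match the monomial basis of $\mathbb{F}_2[h_0,v]$ term by term) and computing the required Yoneda products directly from the resolution; this second route is longer but needs only the material already developed in Section \ref{section:adams}.
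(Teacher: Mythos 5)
Your argument is correct, but it is worth noting that the paper itself gives no written proof of this lemma: it simply records the answer in the chart of Figure \ref{spec:MB}, with the implicit justification (announced in the introduction and carried out in the cited theses and in Bruner--Greenlees) being a minimal $\mathcal{A}_{1}$-free resolution of $M_{B}$. Your route is genuinely different and arguably cleaner: identifying $M_{B}$ with $H^{*}(C\eta;\mathbb{F}_{2})$ and invoking Wood's equivalence $ko\wedge C\eta\simeq ku$ together with Corollary \ref{cor:changeofrings} reduces everything to $\Ext_{E(1)}(\mathbb{F}_{2},\mathbb{F}_{2})=\mathbb{F}_{2}[h_{0},v]$, and the module structure then follows from the restriction map $\Ext_{\mathcal{A}_{1}}(\mathbb{F}_{2},\mathbb{F}_{2})\to\Ext_{E(1)}(\mathbb{F}_{2},\mathbb{F}_{2})$, whose effect on $h_{1}$, $\omega$ and $\mu$ you can in fact cite internally from Lemma \ref{lemma:ecrcoefficient} (both coefficient spectral sequences collapse by Lemmas \ref{lemma:coefficientsko} and \ref{lemma:coefficientsku}, and the Adams filtrations of $2v^{2}$ and $v^{4}$ match those of $a$ and $b$, so no filtration jump can interfere). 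Two small improvements: you do not even need the topological input of Wood's theorem for the first isomorphism, since $M_{B}\cong\mathcal{A}_{1}\otimes_{E(1)}\mathbb{F}_{2}=\mathcal{A}_{1}//E(1)$ as $\mathcal{A}_{1}$-modules, so Lemma \ref{lemma:hopfinduction} and the algebraic change-of-rings give $\Ext_{\mathcal{A}_{1}}(M_{B},\mathbb{F}_{2})\cong\Ext_{E(1)}(\mathbb{F}_{2},\mathbb{F}_{2})$ directly; and the closing ``bidegree count'' showing completeness of the relations deserves one explicit sentence (the quotient of the free module on $w_{0},w_{2},w_{4},w_{6}$ by the stated relations already has the same $\mathbb{F}_{2}$-dimension as $\mathbb{F}_{2}[h_{0},v]$ in each bidegree). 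What your approach buys is a short, structural proof relying only on material in Section \ref{section:adams}; what the resolution approach buys is the explicit chart data (and the $E(1)$-side bases) that the paper reuses later for the $\eta$-$c$-$R$ comparison.
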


\begin{definition}\label{def:modulebowS}

Let $M_{SB}$ denote  the  $\mathcal{A}_{1}$-  module 
$$ \mathbb{F}_{2}\langle d_0\rangle \oplus \mathbb{F}_{2}\langle d_{2,0}\rangle \oplus \mathbb{F}_{2}\langle d_{2,1}\rangle \oplus \mathbb{F}_{2}\langle d_{4} \rangle , $$
generated by  elements $d_0$ of  degree $0$, $d_{20}$ and $d_{21}$ of  degree 2, and $d_{4}$ of  degree four,  and  the   relations  
$Sq^2(d_0)=d_{20}+ d_{21}$, $Sq^2(d_{20})=Sq^{2}(d_{21})=d_{4}$.

\end{definition}

See  picture \ref{fig:SB} for  the graphical  depiction  of  the  module  $M_{SB}$. 
\begin{figure}
\includegraphics[scale=0.8]{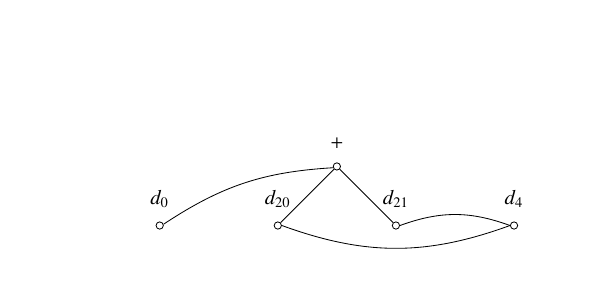}
\caption{The  module  $M_{SB}$.}\label{fig:SB}
\end{figure}

\begin{lemma}\label{lemma:ercMSB}
The  following  holds  for  the  $\mathcal{A}_{1}$-module  $M_{SB}$. 
\begin{itemize}
\item The  graded  group 
$$ {\rm Ext}_{\mathcal{A}_1}^{*,*}(M_{SB}, \mathbb{F}_2)$$
is   freely  generated   as  a  module  over  $ \mathbb{F}_{2}[h_0]$
 by  elements  $ a_{0,i}, a_{2,i}$ of  bidegree $(2i,i)$, respectively  $(2i+2,i)$ for  all  $i=0, 1, \ldots$.
 
\item  They  satisfy the relations 
$$h_2 a_{0,i}= h_0 a_{0,i+2},\quad  h_{2}a_{2,i}=  h_0 a_{2, i+2},$$ 

$$h_{3} a_{0,i}= a_{0, i+4}, \quad  h_{3} a_{2,i}= a_{2, i+4} .$$ 

\item After  restricting  to  $E(1)$, $M_{SB}$  splits  as  a   direct  sum  of  trivial $ E(1)$- modules $\mathbb{F}_{2}$. 
\item The  graded  group 

$$ {\rm Ext}_{E(1)}(M_{SB}, \mathbb{F}_{2}) $$
is  freely  generated  as  a $\mathbb{F}_{2}[h_{0}]$- module 

in the  elements $b_{0,i}$ of  degree $(2i,i)$, $b_{20,i}$, $b_{21,i}$ of  degree $(2i+2, i)$, and $b_{4, i}$ of  degree $(2i+4, i)$. 
\item Under  the complexification  map $$c:{\rm Ext}_{\mathcal{A}_1}^{*,*}(M_{SB}, \mathbb{F}_2)\to {\rm Ext}_{E(1)}(M_{SB}, \mathbb{F}_{2}), $$

$\nu^{i}a_{20}$ corresponds  to $b_{20,i}$, and  $\nu^{i}a_{21}$ corresponds  to $b_{21,i}$. 
The  image   of  $c: {\rm Ext}_{\mathcal{A}_1}^{*,*}(M_{SB}, \mathbb{F}_2)\to {\rm Ext}_{E(1)}(M_{SB}, \mathbb{F}_{2})$ is  generated  as  an $\mathbb{F}_{2}[h_{0}]$-module   by 
$$b_{ 0,2i},    b_{0,2i+1}+h_{0} b_{20,2i}, $$
and 
$$b_{20,2i} + b_{21, 2i}, \quad b_{20, 2i+1}+ b_{21,2i+1} +h_{0}b_{4,2i}. $$

\end{itemize}
\end{lemma}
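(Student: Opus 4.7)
My plan is to reduce the entire computation to the bow module $M_B$ of Lemma \ref{lemma:Extbow} by exhibiting $M_{SB}$ as a direct sum of two bow modules over $\mathcal{A}_{1}$, and then to exploit the fact that $Sq^{1}$ annihilates every generator of $M_{SB}$ to trivialize the $E(1)$-module structure. The complexification map is then handled by comparing the two minimal resolutions at the chain level.

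\textbf{$\mathcal{A}_{1}$-splitting and Ext over $\mathcal{A}_{1}$.} Perform the change of basis $f_{0}=d_{0}$, $f_{2}=Sq^{2}(d_{0})=d_{20}+d_{21}$, $g_{2}=d_{20}$, $g_{4}=d_{4}$. Then $Sq^{2}(f_{0})=f_{2}$, $Sq^{2}(f_{2})=d_{4}+d_{4}=0$, $Sq^{2}(g_{2})=g_{4}$, $Sq^{2}(g_{4})=0$, and all $Sq^{1}$'s vanish, so the submodules $\langle f_{0},f_{2}\rangle\cong M_{B}$ and $\langle g_{2},g_{4}\rangle\cong \Sigma^{2}M_{B}$ split $M_{SB}$ as an $\mathcal{A}_{1}$-module. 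Applying Lemma \ref{lemma:Extbow} to each summand produces the free $\mathbb{F}_{2}[h_{0}]$-generators $a_{0,i}$ at $(t-s,s)=(2i,i)$ (from $M_{B}$) and $a_{2,i}$ at $(2i+2,i)$ (from $\Sigma^{2}M_{B}$), and the relations $h_{2}a_{\bullet,i}=h_{0}a_{\bullet,i+2}$ and $h_{3}a_{\bullet,i}=a_{\bullet,i+4}$ are read off from the identities $\omega w_{i}=h_{0}w_{i+4}$ and $\mu w_{i}=w_{i+8}$ once one identifies $h_{2}$ with $\omega$ of bidegree $(4,3)$ and $h_{3}$ with $\mu$ of bidegree $(8,4)$.

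\textbf{$E(1)$-restriction.} Since $Sq^{1}=0$ on every basis element of $M_{SB}$, both $Q_{0}=Sq^{1}$ and $Q_{1}=Q_{0}Sq^{2}-Sq^{2}Q_{0}$ annihilate the module, so as an $E(1)$-module $M_{SB}$ is a direct sum of four shifted copies of the trivial module $\mathbb{F}_{2}$ concentrated in degrees $0,2,2,4$. Combining this with $\Ext^{*,*}_{E(1)}(\mathbb{F}_{2},\mathbb{F}_{2})=\mathbb{F}_{2}[h_{0},v]$ from Lemma \ref{lemma:coefficientsku} gives four copies of $\mathbb{F}_{2}[h_{0},v]$ with the required internal shifts, and the $h_{0}$-free generators $b_{0,i},b_{20,i},b_{21,i},b_{4,i}$ arise as $v^{i}$-multiples of each summand's fundamental class, exactly as claimed.

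\textbf{Complexification $c$ and main obstacle.} To compute $c$ on generators I would choose an $\mathcal{A}_{1}$-free resolution $P_{\bullet}\to M_{SB}$, restrict scalars along $E(1)\hookrightarrow\mathcal{A}_{1}$, and lift the identity along a chain map to a minimal $E(1)$-free resolution; the map induced on $\Ext$ is $c$. Using the splitting of Step~1 it suffices to track $c$ on the $M_{B}$ and $\Sigma^{2}M_{B}$ summands separately, and then to re-express the output in the original $\{d_{0},d_{20},d_{21},d_{4}\}$-basis. This re-expression accounts for the cross terms: $f_{2}=d_{20}+d_{21}$ translates into $b_{20,\bullet}+b_{21,\bullet}$ on the $E(1)$-side, matching $c(a_{2,i})=b_{20,i}+b_{21,i}$, while $a_{0,i}$ detects both the degree-$0$ generator and (hidden through $Sq^{2}$) the degree-$2$ generators. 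The main obstacle is precisely the $h_{0}$-correction summands such as $h_{0}b_{20,2i}$ in $c(a_{0,2i+1})$ and $h_{0}b_{4,2i}$ in $c(a_{2,2i+1})$: these Adams extensions are invisible at the associated-graded level and must be detected by an explicit chain-level lift between the two resolutions, or equivalently by comparison with the $c:ko\to ku$-map on the four-cell spectrum realising $M_{SB}$.
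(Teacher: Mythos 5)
Your reduction of bullets one through four is correct and complete: the change of basis $f_{0}=d_{0}$, $f_{2}=d_{20}+d_{21}$, $g_{2}=d_{20}$, $g_{4}=d_{4}$ does split $M_{SB}\cong M_{B}\oplus\Sigma^{2}M_{B}$ over $\mathcal{A}_{1}$ (all $Sq^{1}$'s vanish and $Sq^{2}f_{2}=d_{4}+d_{4}=0$), Lemma \ref{lemma:Extbow} then gives the free $\mathbb{F}_{2}[h_{0}]$-generators $a_{0,i}$, $a_{2,i}$ with the stated $(4,3)$- and $(8,4)$-relations, and the triviality of the $E(1)$-action gives bullets three and four from $\Ext_{E(1)}(\mathbb{F}_{2},\mathbb{F}_{2})=\mathbb{F}_{2}[h_{0},v]$. (For what it is worth, the paper states this lemma without any proof, so there is no argument of the authors to compare against; your splitting is a clean way to organize it.)

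The genuine gap is the last bullet, which is the only part of the lemma with real content beyond bookkeeping, and you do not prove it: you explicitly defer the $h_{0}$-correction terms ($h_{0}b_{20,2i}$ in $c(a_{0,2i+1})$ and $h_{0}b_{4,2i}$ in $c(a_{2,2i+1})$) to "an explicit chain-level lift" that is never carried out, and it is exactly these filtration-jumping terms, together with the even/odd pattern in $i$, that the statement asserts. Your partial bookkeeping is also off: since $c$ respects the direct-sum splitting, the classes $a_{2,i}$ come from the summand $\langle d_{20},d_{4}\rangle$, and the combination $b_{20}+b_{21}$ appears not because "$f_{2}=d_{20}+d_{21}$ translates into $b_{20}+b_{21}$" but because the functional dual to $g_{2}=d_{20}$ relative to the basis $\{f_{2},g_{2}\}$, extended by zero on the other summand, equals $b_{20}+b_{21}$, while the dual of $f_{2}$ is $b_{21}$ alone; as written your assignment attaches the cross-term to the wrong summand. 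To close the gap, note that by your own splitting the whole bullet reduces to the single-bow statement: the image of $c:\Ext_{\mathcal{A}_{1}}(M_{B},\mathbb{F}_{2})\to\Ext_{E(1)}(M_{B},\mathbb{F}_{2})$ is generated over $\mathbb{F}_{2}[h_{0}]$ by $v^{2i}u_{0}^{*}$ and $v^{2i+1}u_{0}^{*}+h_{0}v^{2i}u_{2}^{*}$, where $u_{0},u_{2}$ are the two cells of the bow. This must be proved, either by writing down the minimal $\mathcal{A}_{1}$- and $E(1)$-resolutions and an explicit comparison map, or topologically by identifying the bow with $H^{*}(C\eta)$ and computing $ko_{*}(C\eta)\to ku_{*}(C\eta)$, where the factor of two in $c(\omega)=2\nu^{2}$ (Lemma \ref{lemma:ecrcoefficient}) produces exactly the $h_{0}$-shift in the degrees $t-s\equiv 2 \bmod 4$; only after that is supplied does the dual-basis translation above yield the generators $b_{0,2i}$, $b_{0,2i+1}+h_{0}b_{20,2i}$, $b_{20,2i}+b_{21,2i}$, $b_{20,2i+1}+b_{21,2i+1}+h_{0}b_{4,2i}$ claimed in the lemma.
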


\begin{lemma}\label{Lemma:2cohomology}
As $\mathbb{F}_{2}$-algebras,  the  mod  2  cohomology 
of  the  summands $B\mathbb{Z}/4$ are generated  by  

\begin{itemize}
\item Elements $x_{0} \in H^{1}(B\mathbb{Z}/4)$ such that $x_0^2=0$ and $T_{0}\in H^{2}(B\mathbb{Z}/4)$ corresponding  to  one  factor.

\item  Elements $x_{1} \in H^{1}(B\mathbb{Z}/4)$ such that $x_1^2=0$ and $T_{1}\in H^{2}(B\mathbb{Z}/4)$ corresponding  to  the  second  factor. 
\end{itemize}
\end{lemma}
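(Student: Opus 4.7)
The plan is to reduce the statement to the computation of $H^{*}(B\mathbb{Z}/4;\mathbb{F}_{2})$ for a single factor, and then to pull back the generators along the two projections $\pi_{i}\colon B(\mathbb{Z}/4\times\mathbb{Z}/4)\to B\mathbb{Z}/4$, $i=0,1$, invoking the K\"unneth formula together with the stable wedge splitting of the preceding lemma to identify the two $B\mathbb{Z}/4$ wedge summands with the correct tensor factors.

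First I would compute $H^{*}(B\mathbb{Z}/4;\mathbb{F}_{2})$ by running the Serre spectral sequence for the circle fibration $S^{1}\to B\mathbb{Z}/4\to \mathbb{CP}^{\infty}$ induced by the inclusion $\mathbb{Z}/4\hookrightarrow S^{1}$ as fourth roots of unity. The $E_{2}$-page is $\Lambda(x)\otimes \mathbb{F}_{2}[T]$ with $|x|=1$ and $|T|=2$, and the transgression $d_{2}(x)$ is the mod-2 reduction of the integral transgression, which equals $4T$; hence $d_{2}$ vanishes, the spectral sequence collapses at $E_{2}$, and $H^{*}(B\mathbb{Z}/4;\mathbb{F}_{2})\cong \Lambda(x)\otimes \mathbb{F}_{2}[T]$ as bigraded algebras.

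To verify the key relation $x^{2}=0$ explicitly, I would apply the unstable axiom $Sq^{|u|}(u)=u^{2}$, so that $x^{2}=Sq^{1}(x)$, where $Sq^{1}$ is the Bockstein associated to $0\to \mathbb{Z}/2\to \mathbb{Z}/4\to \mathbb{Z}/2\to 0$. The class $x\in H^{1}(B\mathbb{Z}/4;\mathbb{F}_{2})\cong \operatorname{Hom}(\mathbb{Z}/4,\mathbb{Z}/2)$ is the mod-2 reduction of the identity element of $H^{1}(B\mathbb{Z}/4;\mathbb{Z}/4)\cong \operatorname{Hom}(\mathbb{Z}/4,\mathbb{Z}/4)$; by exactness of the Bockstein long exact sequence this forces $Sq^{1}(x)=0$, and hence $x^{2}=0$.

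Finally, the K\"unneth formula gives $H^{*}(B(\mathbb{Z}/4\times\mathbb{Z}/4);\mathbb{F}_{2})\cong H^{*}(B\mathbb{Z}/4;\mathbb{F}_{2})^{\otimes 2}$, and the classes $x_{i}=\pi_{i}^{*}(x)$, $T_{i}=\pi_{i}^{*}(T)$ for $i=0,1$ yield the claimed generators; the stable wedge splitting of the preceding lemma identifies each of the two $B\mathbb{Z}/4$ summands with the subalgebra pulled back along the corresponding projection, and the relation $x_{i}^{2}=0$ descends from $x^{2}=0$. No step is technically delicate; the one point requiring explicit care is the Bockstein argument distinguishing $B\mathbb{Z}/4$ from $B\mathbb{Z}/2$, where the analogous square of the degree-one generator is nonzero. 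This is the distinction that will later be felt throughout the $\mathcal{A}_{1}$-module structure computations.
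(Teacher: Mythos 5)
Your proposal is correct. The paper states this lemma without proof, treating the computation of $H^{*}(B\mathbb{Z}/4;\mathbb{F}_{2})\cong\Lambda(x)\otimes\mathbb{F}_{2}[T]$ as standard, so there is no in-text argument to compare against; your route (Serre spectral sequence for $S^{1}\to B\mathbb{Z}/4\to\mathbb{CP}^{\infty}$ with Euler class $4T$, hence collapse mod $2$, followed by K\"unneth) is a perfectly valid way to supply it, and it is essentially the same geometric picture the paper uses later when it invokes the fibration $S^{1}\to L^{\infty}(4)\to\mathbb{CP}^{\infty}$. You also correctly isolate the one genuinely delicate point: collapse of the spectral sequence only gives the associated graded, and the multiplicative extension $x^{2}\in\{0,T\}$ must be resolved separately; your Bockstein argument ($x$ lifts to $H^{1}(B\mathbb{Z}/4;\mathbb{Z}/4)$, so $Sq^{1}x=0$, and $x^{2}=Sq^{1}x$ in degree one) settles this and correctly distinguishes $B\mathbb{Z}/4$ from $B\mathbb{Z}/2$, which is exactly the feature the paper's subsequent $\mathcal{A}_{1}$-module analysis depends on.
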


\begin{lemma}
The minimal  Adams  resolution  for $B\mathbb{Z}/4$ is  given  as  follows: 

\begin{itemize}
\item $$\widetilde{H}^{*}(B\mathbb{Z}/4)\cong \Sigma^{1} \mathbb{F}_{2}\oplus \underset{d\geq 0 }{\bigoplus} \Sigma^{2d} M_{B}\oplus \Sigma^{2d+1} M_{B}.$$
The $\Ext_{\mathcal{A}_{1}}^{*,*}(\mathbb{F}_{2}, \mathbb{F}_{2})$-module $\Ext_{\mathcal{A}_{1}}^{*,*}(M_{B}, \mathbb{F}_{2})$  is  generated  by  classes  
$w_{0}$,  $w_{2}$, $w_{4}$, $w_{6}$	 of  $(t-s,s)$-degree 

$$\mid w_{0}\mid =(0,0), \, \mid w_2\mid = (2,1),\, \mid w_{4}\mid =(4,2), \,  \mid w_{6}\mid =(6,3).  	      $$
with  the  relations 
$ \eta w_{i}=0 $	for  $i=0,2,4,6$. 
$$    \omega w_{i}= h_{0} w_{i+4}           $$

\item The 	$\Ext_{E(1)}(\mathbb{F}_{2}, \mathbb{F}_{2})$-module  $\Ext_{E(1)}(\tilde{H}^{*}(B\mathbb{Z}/4, \mathbb{Z}))$ is  freely  generated by  classes 

$$ z^{k} \, \text{of degree $(2k,0)$},$$

\end{itemize}
\end{lemma}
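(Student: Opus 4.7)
My plan is to establish three things in sequence: the $\mathcal{A}_1$-module structure of $\widetilde H^{*}(B\mathbb{Z}/4;\mathbb{F}_2)$, its splitting into shifted copies of $\mathbb{F}_2$ and of $M_B$, and the resulting $\Ext$-computations over $\mathcal{A}_1$ and over $E(1)$.

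First, from Lemma~\ref{Lemma:2cohomology} the cohomology ring is $\Lambda(x)\otimes\mathbb{F}_2[T]$ with $|x|=1,\,|T|=2$. Both generators lift to $\mathbb{Z}/4$-coefficients: $x$ is the reduction of the identity $\mathbb{Z}/4\to\mathbb{Z}/4$ and $T$ reduces from a generator of $H^{2}(B\mathbb{Z}/4;\mathbb{Z}/4)=\mathbb{Z}/4$. Consequently the mod-$2$ Bockstein $Sq^{1}$ kills $x$ and $T$, and by the Cartan formula it vanishes on all of $\widetilde H^{*}(B\mathbb{Z}/4;\mathbb{F}_2)$. The action of $Sq^{2}$ is then forced by instability $Sq^{2}T=T^{2}$ and Cartan: $Sq^{2}(T^{n})=nT^{n+1}$ and $Sq^{2}(xT^{n})=nxT^{n+1}$ modulo $2$. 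Hence $Sq^{2}$ is nonzero precisely on odd-$n$ classes and organizes the cohomology into the isolated class $x$ (killed by every positive-degree element of $\mathcal{A}_{1}$) together with the pairs $\{T^{2k+1},T^{2k+2}\}$ and $\{xT^{2k+1},xT^{2k+2}\}$. Comparing with Definition~\ref{def:modulebow}, each pair is a shifted copy of $M_{B}$ whose $w_{0}$ is the odd-degree element, which gives the stated splitting.

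Second, to compute $\Ext_{\mathcal{A}_{1}}^{*,*}(M_{B},\mathbb{F}_{2})$ I would construct a minimal free resolution $\cdots\to F_{2}\to F_{1}\to F_{0}\to M_{B}\to 0$. Starting from $F_{0}=\mathcal{A}_{1}\langle w_{0}\rangle$, the minimal kernel generators are $Sq^{1}\cdot w_{0}$ in degree $1$ and $Sq^{1}Sq^{2}\cdot w_{0}=Sq^{3}\cdot w_{0}$ in degree $3$, where the Adem relations $Sq^{1}Sq^{2}=Sq^{3}$ and $Sq^{2}Sq^{2}=Sq^{3}Sq^{1}$ are used to check minimality. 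Iterating produces a resolution that is eventually $4$-periodic in internal degree, a standard phenomenon for $\mathcal{A}_{1}$-modules of this type. The four classes detected at Adams bidegrees $(t-s,s)=(0,0),(2,1),(4,2),(6,3)$ are $w_{0}$ and its three successor kernel generators $w_{2},w_{4},w_{6}$; these span $\Ext_{\mathcal{A}_{1}}^{*,*}(M_{B},\mathbb{F}_{2})$ freely over $\mathbb{F}_{2}[h_{0}]$. The vanishing $\eta w_{i}=0$ records that the $h_{1}$-tower above each $w_{i}$ is already truncated in the resolution, while the periodicity relations $\omega w_{i}=h_{0}w_{i+4}$ and $\mu w_{i}=w_{i+8}$ are obtained by lifting $\omega,\mu\in\Ext_{\mathcal{A}_{1}}^{*,*}(\mathbb{F}_{2},\mathbb{F}_{2})$ from Lemma~\ref{lemma:coefficientsko} to chain maps between copies of the minimal resolution.

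Third, the $E(1)$-computation collapses: since $Sq^{1}=0$ on $\widetilde H^{*}(B\mathbb{Z}/4;\mathbb{F}_2)$ we have $Q_{0}=0$ and also $Q_{1}=Sq^{1}Sq^{2}+Sq^{2}Sq^{1}=0$, so the restricted $E(1)$-module is trivial, a direct sum of one $\Sigma^{k}\mathbb{F}_{2}$ per basis element in each degree $k\geq 1$. Applying $\Ext_{E(1)}(-,\mathbb{F}_{2})$ together with $\Ext_{E(1)}(\mathbb{F}_{2},\mathbb{F}_{2})=\mathbb{F}_{2}[h_{0},v]$ from Lemma~\ref{lemma:coefficientsku} yields a free $\mathbb{F}_{2}[h_{0},v]$-module with generators $z^{k}$ concentrated on the Adams filtration $s=0$ axis, as claimed. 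The main obstacle in the whole argument is the bookkeeping in the minimal $\mathcal{A}_{1}$-resolution of $M_{B}$: at each homological step one must identify a minimal generating set of the kernel via Adem relations and then verify, by an explicit chain-level computation, that the Yoneda action of $\omega$ and $\mu$ reproduces the stated multiplicative identifications. Once the $4$-periodicity of the resolution has been pinned down, the remaining relations are formal consequences.
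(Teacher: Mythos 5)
Your proposal is correct and follows the same basic route as the paper: the same generating set $\{x,\,T^{2k+1},\,xT^{2k+1}\}$ and the same decomposition of $\widetilde H^{*}(B\mathbb{Z}/4;\mathbb{F}_2)$ into $\Sigma^{1}\mathbb{F}_{2}$ plus suspended copies of $M_{B}$, after which the $\Ext$-identifications are read off. The difference is one of self-containment: where the paper simply quotes Lemma~\ref{lemma:Extbow} for $\Ext_{\mathcal{A}_{1}}^{*,*}(M_{B},\mathbb{F}_{2})$ and cites Bruner--Greenlees (Theorem 2.2.1) for the $E(1)$ statement, you re-derive both — the first by sketching the minimal $\mathcal{A}_{1}$-resolution of $M_{B}$ (the kernel generators $Sq^{1}$ and $Sq^{3}$ are indeed the correct first step, reflecting $\Ext_{\mathcal{A}_{1}}(M_{B},\mathbb{F}_{2})\cong\Ext_{E(1)}(\mathbb{F}_{2},\mathbb{F}_{2})$), the second by observing $Q_{0}=Q_{1}=0$ on the cohomology so that the restricted $E(1)$-module is a sum of trivial modules. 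Your direct $E(1)$ argument buys something the citation hides: it makes visible that the $E_{2}$-term is free over $\mathbb{F}_{2}[h_{0},v]$ on duals of \emph{all} monomials, including the odd-degree classes $x$ and $xz^{k}$ that the displayed statement of the lemma omits but that the later differential formulas (e.g.\ Theorem~\ref{theo:differentials.adams.complex.summand}) require. Two small wording points: your claim that $w_{0},w_{2},w_{4},w_{6}$ span $\Ext_{\mathcal{A}_{1}}^{*,*}(M_{B},\mathbb{F}_{2})$ \emph{freely over} $\mathbb{F}_{2}[h_{0}]$ is literally too strong — the group is $h_{0}$-free on the infinite family $w_{2i}$ (equivalently, generated by the four classes only over the full coefficient ring, via $vw_{2i}=w_{2i+2}$, $\omega w_{i}=h_{0}w_{i+4}$, $\mu w_{i}=w_{i+8}$ as in Lemma~\ref{lemma:Extbow}), which your subsequent periodicity discussion does repair; and the suspension indexing should be $\Sigma^{4k+2}M_{B}$ and $\Sigma^{4k+3}M_{B}$, as your pairs $\{T^{2k+1},T^{2k+2}\}$, $\{xT^{2k+1},xT^{2k+2}\}$ correctly show. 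Neither affects the substance of the argument.
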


\begin{proof}
\begin{itemize}
\item A generating  set  as  $\mathcal{A}_{1}$- module  for  $\widetilde{H}^{*}(X)$ is   given  by  the set $\{x, z^{2d+1}, xz^{2d+1}\}$, where  $x$  is  the  generator   in   degree  1, and  $z$  s  in  degree 2. The  element $x$  generates  an $\mathcal{A}_{1}$- module  isomorphic  to  $M_{P}$,  and   the  classes  $xz^{2d+1}$  generate   modules  isomorphic  to $M_{P}$. The   result  of  the  lemma  follows  then  from \ref{lemma:Extbow}. 
 
\item The  result  is  described  in \cite{brunergreeenleescomplex}, Theorem  2.2.1  in page 34. 
\end{itemize}
\end{proof}

\begin{lemma}\label{lemma:steenrodfull.mod2}

The  ${\rm mod \, 2}$- cohomology  ring  $\widetilde{H}^{*}(B\mathbb{Z}/4)\times B\mathbb{Z}/4, \mathbb{F}_{2} )$ has  the  following descomposition as $\mathcal{A}_{1}$-module,

\begin{align*}
    \tilde{H}^{*}(BG)\cong  \Sigma^{1}& \mathbb{F}_{2}^2\oplus\Sigma^{2}( \mathbb{F}_{2}^2\oplus M_{B}^{2})\oplus\Sigma^{3} M_{B}^{4}\\
    &\underset{k\geq 1}{\bigoplus}\Sigma^{4k} (M_{B}^2\oplus M_{SB}^k)\oplus\Sigma^{4k+1} M_{SB}^{2k}\oplus\Sigma^{4k+2} (M_{B}^2\oplus M_{SB}^k)\oplus\Sigma^{4k+3} M_{B}^4
\end{align*}

has generators

\begin{itemize}
    \item  In  degree $4k$, $x_{0}x_{1}T_{1}^{2k-1}$ and $x_{0}x_{1}T_{0}^{2k-1}$,  generating  a copy  of  $M_{B}$,  and  	$T_{0}^{2l+1} T_{1}^{2(k-l)-1}$ for	$l=0, \ldots k$,  which  generate a  copy  of  $M_{SB}$. 
 \item In degree $4k+1$, $x_{1}T_{0}^{2l+1}T_{1}^{2(k-l)	-1}$, $x_{0}T^{2l+1}T_{1}^{2(k-l)-1}$ for  $l=0, \ldots k-1$,  generating a  copy  of  $M_{SB}$. 
 
\item In  degree $4k+2$, $T_0^{2k+1}$ and $T_1^{2k+1}$, generating a copy of $M_B$, and\\ $x_{0}x_{1}T_{0}^{2l+1}T_{1}^{2(k-l)-1}$,  generating  a 	copy  of $M_{SB}$. 
\item In  degree $4k+3$, $x_{0}T_{0}^{2k+1}$, $x_{0}T_{1}^{2k+1}$,  	$x_{1}T_{0}^{2k+1}$ and $x_{1}T_{1}^{2k+1}$ generating  a  copy  of  $M_{B}$. 

\end{itemize}
\end{lemma}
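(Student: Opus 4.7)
The plan is to combine the K\"unneth splitting for $B\mathbb{Z}/4\times B\mathbb{Z}/4$ with the decomposition of $\tilde{H}^{*}(B\mathbb{Z}/4)$ already obtained in the previous lemma. Since $\mathcal{A}_{1}$ is a Hopf subalgebra of the Steenrod algebra (Lemma~\ref{lemma:steenrod.hopf}), the Cartan formula makes
$$\tilde{H}^{*}(B\mathbb{Z}/4\times B\mathbb{Z}/4;\mathbb{F}_{2})\;\cong\;\tilde{H}^{*}(B\mathbb{Z}/4)\oplus\tilde{H}^{*}(B\mathbb{Z}/4)\oplus \tilde{H}^{*}(B\mathbb{Z}/4)\otimes \tilde{H}^{*}(B\mathbb{Z}/4)$$
into an isomorphism of $\mathcal{A}_{1}$-modules. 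The two pullback summands are handled directly by
$$\tilde{H}^{*}(B\mathbb{Z}/4)\cong \Sigma\mathbb{F}_{2}\oplus\bigoplus_{d\geq 0}\bigl(\Sigma^{4d+2}M_{B}\oplus \Sigma^{4d+3}M_{B}\bigr),$$
with $\mathcal{A}_{1}$-generators $x_{i}$, $T_{i}^{2d+1}$ and $x_{i}T_{i}^{2d+1}$; these contribute the $\Sigma^{4k+2}M_{B}^{2}$ pieces and two of the four copies in each $\Sigma^{4k+3}M_{B}^{4}$ term.

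The core of the argument is the smash summand $\tilde{H}^{*}(B\mathbb{Z}/4)^{\otimes 2}$. Splitting each factor into its $\Sigma\mathbb{F}_{2}$-part and the sum of $M_{B}$ pieces and distributing the tensor product reduces everything to four elementary tensors. The three involving $\Sigma\mathbb{F}_{2}$ are immediate and account for the class $x_{0}x_{1}$ in degree $2$, the copies of $M_{B}$ generated by $x_{i}T_{j}^{2d+1}$ with $i\neq j$, and the copies generated by $x_{0}x_{1}T_{j}^{2d+1}$. The essential step is the isomorphism
$$M_{B}\otimes M_{B}\cong M_{SB},$$
a direct Cartan-formula verification: on the basis $\{w_{0}\otimes w_{0},\,w_{2}\otimes w_{0},\,w_{0}\otimes w_{2},\,w_{2}\otimes w_{2}\}$ in degrees $\{0,2,2,4\}$ one reads off $Sq^{1}\equiv 0$, $Sq^{2}(w_{0}\otimes w_{0})=w_{2}\otimes w_{0}+w_{0}\otimes w_{2}$, $Sq^{2}(w_{2}\otimes w_{0})=Sq^{2}(w_{0}\otimes w_{2})=w_{2}\otimes w_{2}$, and $Sq^{2}(w_{2}\otimes w_{2})=0$, reproducing the relations of $M_{SB}$ from Definition~\ref{def:modulebowS}.

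Plugging this back, each pair $(d,e)$ of nonnegative integers yields one copy of $\Sigma^{4(d+e)+4}M_{SB}$, two of $\Sigma^{4(d+e)+5}M_{SB}$, and one of $\Sigma^{4(d+e)+6}M_{SB}$; reindexing by $k=d+e+1$ the number of contributing pairs is $k$, producing exactly the summands $\Sigma^{4k}M_{SB}^{k}\oplus \Sigma^{4k+1}M_{SB}^{2k}\oplus \Sigma^{4k+2}M_{SB}^{k}$ appearing in the statement. The main obstacle is not the algebra but the bookkeeping: every abstract tensor summand must be matched with an explicit monomial generator. I would resolve this by fixing the bijection $T_{0}^{2l+1}T_{1}^{2(k-l)-1}\leftrightarrow d_{0}$ of the $l$-th copy of $\Sigma^{4k}M_{SB}$ for $l=0,\dots,k-1$, propagating by the $Sq^{2}$-action to identify $T_{0}^{2l+2}T_{1}^{2(k-l)-1}$ and $T_{0}^{2l+1}T_{1}^{2(k-l)}$ with the $d_{2,\bullet}$ classes and $T_{0}^{2l+2}T_{1}^{2(k-l)}$ with $d_{4}$, and then twisting by $x_{0}$, $x_{1}$, or $x_{0}x_{1}$ to recover the generators listed in the odd degrees and in degree $4k+2$.
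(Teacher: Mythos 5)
Your argument is correct, but it takes a different route from the paper's own proof. The paper proves the lemma by direct computation inside the ring $\Lambda(x_0,x_1)\otimes\mathbb{F}_2[T_0,T_1]$: it notes that $Sq^1$ kills every monomial, writes out $Sq^2(T_0^iT_1^j)$ according to the parities of $i$ and $j$, and then simply sorts the monomials into copies of $\mathbb{F}_2$ (the classes $x_0,x_1,x_0x_1$), of $M_B$ (the classes $T_i^{2k+1}$, $x_iT_j^{2k+1}$, $x_0x_1T_j^{2k+1}$) and of $M_{SB}$ (the classes $T_0^{2l+1}T_1^{2(k-l)-1}$ and their $x_0$-, $x_1$- and $x_0x_1$-multiples). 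You instead factor the computation through the stable splitting and the previously established decomposition of $\tilde H^*(B\mathbb{Z}/4)$, and isolate the one genuinely new ingredient as the $\mathcal{A}_1$-module isomorphism $M_B\otimes M_B\cong M_{SB}$, a statement the paper never makes explicit although it is implicit in its monomial bookkeeping; your Cartan-formula verification of it is correct, and the fact that $\Delta(Sq^2)=Sq^2\otimes 1+Sq^1\otimes Sq^1+1\otimes Sq^2$ lies in $\mathcal{A}_1\otimes\mathcal{A}_1$ is exactly what makes the K\"unneth isomorphism $\mathcal{A}_1$-linear, so that step is legitimate. Your organization buys a structural explanation of why $M_{SB}$ appears (it is the tensor square of $M_B$), makes the multiplicities $k$, $2k$, $k$ drop out of counting pairs $(d,e)$ with $d+e=k-1$, and would generalize to other products of cyclic groups; its cost is the final translation step, since the explicit monomial generators (which are what the paper actually uses later, e.g. in Lemma~\ref{lemma:steenrodsmash.mod2} and in the detection arguments of Section~\ref{section:eta}) must be recovered from the abstract tensor summands — precisely the bookkeeping you sketch at the end, where the tensor factors $w_2\otimes w_0$ and $w_0\otimes w_2$ pin down $d_{20}$ and $d_{21}$ individually, something the relation $Sq^2d_0=d_{20}+d_{21}$ alone would not. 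The paper's direct computation produces those generators immediately but hides the tensor structure. Both proofs ultimately rest on the same Cartan-formula computations, so the difference is one of organization rather than substance, and your version is complete.
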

	
\begin{proof}
With the relation of the classes $x_0,x_1,T_0,T_1$ we have that $Sq^1$ is zero for each word. On the other hand we have the following relations

\begin{equation*}
    Sq^2(T_0^iT_1^j)=\left\lbrace\begin{matrix}
    0&(i,j)\overset{2}{\equiv}(0,0)\\
    T_0^{i+1}T_1^j&(i,j)\overset{2}{\equiv}(1,0)\\
    T_0^{i}T_1^{j+1}&(i,j)\overset{2}{\equiv}(0,1)\\
    T_0^{i+1}T_1^j+T_0^{i}T_1^{j+1}&(i,j)\overset{2}{\equiv}(1,1)
    \end{matrix}
    \right.
\end{equation*}

Hence the classes $x_0$, $x_1$ and $x_0x_1$ generate a copy of $\mathbb{F}_2$. The classes $T_0^{2k+1}$, $T_1^{2k+1}$, $x_0T_0^{2k+1}$, $x_1T_0^{2k+1}$, $x_0T_1^{2k+1}$, $x_1T_1^{2k+1}$ and $x_0x_1T_1^{2k+1}$ generate a copy of $M_B$. And the classes $T_{0}^{2l+1} T_{1}^{2(k-l)-1}$, $x_0T_{0}^{2l+1} T_{1}^{2(k-l)-1}$, $x_1T_{0}^{2l+1} T_{1}^{2(k-l)-1}$ and $x_0x_1T_{0}^{2l+1} T_{1}^{2(k-l)-1}$ generate a copy of $M_{SB}$.

\end{proof}

\begin{lemma}\label{lemma:steenrodsmash.integer}

As  a graded module  over  $\Ext_{E(1)}^{*,*}(\mathbb{F}_{2}, \mathbb{F}_{2})$,  
  $\Ext^{*,*}_E(1)H^{*}(B\mathbb{Z}/4\wedge B\mathbb{Z}/4, \mathbb{Z}) $ is  freely  generated  by  classes

\begin{itemize}
\item  $x_{0}T_{0}^{k} T_{1}^{l}$, of  degree $(1+2k+2l, 0)$ with $k+l\geq 1$, 
\item  $ T_{0}^{k}x_{1} T_{1}^{l}$, of  degree  $(1+2k+2l, 0)$, with  $k+l\geq 1$,
\item   $T_{0}^{k}T_{1}^{l}$ of  degree  $(2k+2l,0)$ with  $k+l\geq 2$, and
\item   $ x_{0}T_{0}^{k} x_{1}T_{1}^{l}  $  of  degree  $(2+2k+2l, 0)$.

\end{itemize}
\end{lemma}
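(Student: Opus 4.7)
Plan: My strategy is to combine the $\mathcal{A}_1$-module decomposition from Lemma \ref{lemma:steenrodfull.mod2} with the stable splitting of Theorem \ref{theo:splitting } in order to isolate the $\mathcal{A}_1$-module structure of the smash-product factor, then restrict to $E(1)$, where each summand $M_B$ or $M_{SB}$ becomes a sum of shifted trivial modules $\mathbb{F}_2$. At that point the $\Ext$-computation reduces to Lemma \ref{lemma:coefficientsku}.

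First I would use the K\"unneth formula to identify $\widetilde{H}^*(B\mathbb{Z}/4\wedge B\mathbb{Z}/4;\mathbb{F}_2)\cong \widetilde{H}^*(B\mathbb{Z}/4;\mathbb{F}_2)^{\otimes 2}$, whose $\mathbb{F}_2$-basis consists exactly of the monomials $x_0^{\epsilon_0}T_0^{k}x_1^{\epsilon_1}T_1^{l}$ listed in the four families of the statement — the labelling as ``$\mathbb{Z}$-cohomology'' reflects that via the integer K\"unneth sequence (the tensor part plus the $\operatorname{Tor}$ part) each generator corresponds to a $\mathbb{Z}/4$-summand of $\widetilde{H}^*(B\mathbb{Z}/4\wedge B\mathbb{Z}/4;\mathbb{Z})$. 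Subtracting the two $B\mathbb{Z}/4$ wedge summands from the decomposition of $\widetilde{H}^*(B(\mathbb{Z}/4)^2;\mathbb{F}_2)$ given in Lemma \ref{lemma:steenrodfull.mod2} leaves the $\mathcal{A}_1$-module structure of the smash factor as a sum of shifted copies of $M_B$ and $M_{SB}$.

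Second I would pass to $E(1)$. By direct inspection both $Q_0=Sq^1$ and $Q_1=Sq^1Sq^2+Sq^2Sq^1$ vanish on the generators $w_0,w_2$ of $M_B$: $Q_0$ is zero by definition, $Q_1w_0=Sq^1w_2=0$, and the Adem relation $Sq^2Sq^2=Sq^1Sq^2Sq^1$ applied to $w_0$ together with $Sq^1w_0=0$ forces $Sq^2w_2=0$, so $Q_1w_2=0$. Hence $M_B|_{E(1)}\cong \mathbb{F}_2\oplus \Sigma^2\mathbb{F}_2$. For $M_{SB}$ the analogous decomposition as a direct sum of four trivial $E(1)$-modules is already recorded in Lemma \ref{lemma:ercMSB}. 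Thus the smash factor is a direct sum of shifted trivial $E(1)$-modules $\bigoplus_i \Sigma^{d_i}\mathbb{F}_2$.

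Third, by Lemma \ref{lemma:coefficientsku} and the exterior structure of $E(1)$, $\Ext^{*,*}_{E(1)}(\Sigma^d\mathbb{F}_2,\mathbb{F}_2)\cong \Sigma^d\mathbb{F}_2[h_0,v]$, free on one class in bidegree $(d,0)$. Summing over all summands $\Sigma^{d_i}\mathbb{F}_2$ produced in the previous step yields a free $\mathbb{F}_2[h_0,v]$-module whose generators at filtration $s=0$ occur in the topological degrees $2k+2l$, $1+2k+2l$, and $2+2k+2l$ corresponding to the four families. The main obstacle I anticipate is the combinatorial bookkeeping in the second step — tallying how many $M_B$- and $M_{SB}$-summands contribute to each cohomological degree of the smash factor, and verifying that the resulting generator count in each topological degree matches the $\mathbb{F}_2$-dimension of the corresponding K\"unneth tensor piece. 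A generating-function identity comparing the Poincar\'e series of $\widetilde{H}^*(B\mathbb{Z}/4;\mathbb{F}_2)^{\otimes 2}$ with the sum of the contributions from the four listed families should be the cleanest way to close the argument, taking special care for the low-degree class $x_0x_1$ (the $k=l=0$ case of the $x_0T_0^kx_1T_1^l$ family) which must arise from the degree-$2$ part of the $M_{SB}$ summand.
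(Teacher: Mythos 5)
Your argument is correct, and it is essentially the route the paper intends: the lemma is stated without proof, but the ingredients it leans on are exactly the ones you assemble, namely the $M_B$/$M_{SB}$ decomposition of Lemma \ref{lemma:steenrodfull.mod2}, the observation in Lemma \ref{lemma:ercMSB} that $M_{SB}$ restricted to $E(1)$ splits into trivial modules, and the identification $\Ext_{E(1)}^{*,*}(\mathbb{F}_2,\mathbb{F}_2)\cong\mathbb{F}_2[h_0,v]$ from Lemma \ref{lemma:coefficientsku}. Your verification that $Q_0$ and $Q_1$ annihilate $M_B$ is also right (and is in fact forced for degree reasons, since $M_B$ has nothing above degree $2$).

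One simplification worth recording: the combinatorial bookkeeping you single out as the main obstacle is not actually needed. Since $Q_0$ and $Q_1$ are primitives of the Steenrod algebra, they act as derivations on $H^{*}(B\mathbb{Z}/4\times B\mathbb{Z}/4;\mathbb{F}_2)=\Lambda(x_0,x_1)\otimes\mathbb{F}_2[T_0,T_1]$, and they vanish on all four ring generators ($Sq^1$ kills $x_i$ and $T_i$ because the Bockstein of $\mathbb{Z}/4$ is trivial mod $2$, $Sq^2x_i=0$ for degree reasons, and $Q_1T_i=Sq^1(T_i^2)=0$). Hence the entire cohomology of the smash factor restricts to a direct sum of trivial $E(1)$-modules, one shifted copy of $\mathbb{F}_2$ for each monomial in the four listed families, and the freeness over $\mathbb{F}_2[h_0,v]$ on filtration-zero generators in the corresponding topological degrees is immediate --- no tally of $M_B$ versus $M_{SB}$ summands, and no Poincar\'e-series comparison, is required. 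This shortcut also sidesteps the only genuinely delicate point in your write-up, the matching of generator counts degree by degree, which you sketch but do not carry out. Your gloss on the ``$\mathbb{Z}$'' in the statement is a reasonable reading of the paper's notation, though note that the correspondence between mod $2$ monomials and integral classes is two-to-one through the universal coefficient sequence rather than a bijection; since that remark plays no logical role in your argument, it does not affect the proof.
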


We  will  compute  with  the  Adams  spectral  sequence   the  complex  and  real  connective  $k$- homology  of  the classifing  space $B\mathbb{Z}/4$.  This  is  needed  for  the   computations  for  the  smash  factor  performed  later. 

Recall that  there  exist  a spherical  fibration 
$$S^{1}\to L^{\infty}(4)\overset{p}{\to} \mathbb{C}P^{\infty},    $$

Where  $L^{\infty}(4)= \colim_{k} S^{4k-1}/ \mathbb{Z}/4$  is  a  model  for  $B\mathbb{Z}/4$.

\begin{lemma}\label{lemma:generatorshashimoto}
Let $y$ be  the  Euler  class of  the tautological  complex  line  bundle  over  $\mathbb{C}P^{\infty}$. 

\begin{itemize}
\item There  exist unique classes $\beta_{i}\in H_{2i}(\mathbb{C}P^{\infty})$  with  the  property  that  
$$\langle v^{j}\beta_{i}, v^{l} y^{k} \rangle=\delta_{k}^{i} \delta_{j}^{l}.$$
\item For  the  pushforward  map in  complex   $K$- homology,  the  following  sequence  is  exact: 

  $$0 \longrightarrow \widetilde{ku}_{2n}(\mathbb{C}P^{\infty})  \overset{4y\cap }{\longrightarrow} 
\widetilde{ku}_{2n-2}(\mathbb{C}P^{\infty})  
  \overset{p_{!}}{\longrightarrow} \widetilde{ku}_{2n-1}(L^{\infty}(4))\longrightarrow 0. $$
\end{itemize}

\end{lemma}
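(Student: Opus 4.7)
The plan is to handle the two parts separately. For (i), I would use that $\mathbb{C}P^{\infty}$ has cells only in even dimensions while $ku_{*}$ is concentrated in even degrees, so the Atiyah--Hirzebruch spectral sequence computing $ku^{*}(\mathbb{C}P^{\infty})$ collapses for parity reasons and identifies $ku^{*}(\mathbb{C}P^{\infty}) \cong ku^{*}[[y]]$, where $y \in ku^{2}(\mathbb{C}P^{\infty})$ is the complex-orientation Euler class of $\gamma$. Dualizing, $ku_{*}(\mathbb{C}P^{\infty})$ is a free $ku_{*}$-module whose $ku_{*}$-basis $\{\beta_{i}\}_{i\ge 0}$ may be taken to be the Kronecker duals of the topological basis $\{y^{i}\}$. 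Non-degeneracy of the Kronecker pairing provides existence and uniqueness of $\beta_{i}\in \widetilde{ku}_{2i}(\mathbb{C}P^{\infty})$ satisfying $\langle \beta_{i}, y^{k}\rangle = \delta_{ik}$, and the formula in the statement follows by $ku_{*}$-bilinearity using the Bott element $v$.

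For (ii), I would first identify $p\colon L^{\infty}(4)\to \mathbb{C}P^{\infty}$ as the unit sphere bundle of $\gamma^{\otimes 4}$: writing $L^{\infty}(4) = S^{\infty}/(\mathbb{Z}/4)$ and $\mathbb{C}P^{\infty}= S^{\infty}/S^{1}$, the residual $S^{1}/(\mathbb{Z}/4)\cong S^{1}$-action exhibits $p$ as a principal $S^{1}$-bundle, and tracing the construction identifies the associated line bundle with $\gamma^{\otimes 4}$. The Thom--Gysin long exact sequence in $ku$-homology then reads
\[
\cdots \to \widetilde{ku}_{2n}(\mathbb{C}P^{\infty})\xrightarrow{e\cap} \widetilde{ku}_{2n-2}(\mathbb{C}P^{\infty})\xrightarrow{p_{!}} \widetilde{ku}_{2n-1}(L^{\infty}(4)) \to \widetilde{ku}_{2n-1}(\mathbb{C}P^{\infty}) \to \cdots,
\]
with $e = e(\gamma^{\otimes 4}) \in ku^{2}(\mathbb{C}P^{\infty})$. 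Since $\widetilde{ku}_{*}(\mathbb{C}P^{\infty})$ is concentrated in even degrees by (i), the right-hand term vanishes and $p_{!}$ is surjective. To truncate the sequence on the left I would separately verify $\widetilde{ku}_{2n}(L^{\infty}(4))=0$ via the Atiyah--Hirzebruch spectral sequence for $B\mathbb{Z}/4$, using that the integral homology of $B\mathbb{Z}/4$ is supported in odd positive degrees, which leaves no even-degree survivors in the reduced $ku$-homology.

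The main obstacle will be replacing $e\cap$ by $4y\cap$ as stated. Under the multiplicative formal group law of $ku$ one computes $e(\gamma^{\otimes 4}) = [4]_{F}(y) = 4y - 6vy^{2} + 4v^{2}y^{3} - v^{3}y^{4}$, which differs from $4y$ by terms divisible by $y^{2}$. With respect to the basis $\{v^{k}\beta_{n-k}\}$ of $\widetilde{ku}_{2n}(\mathbb{C}P^{\infty})$ and the $y$-adic filtration, the correction $(e-4y)\cap$ is strictly lower triangular, so a triangular change of basis on both source and target conjugates $e\cap$ into $4y\cap$ while preserving kernel and cokernel. The careful bookkeeping of the basepoint class in the passage from unreduced to reduced $ku$-homology is delicate; once it is handled, injectivity of $4y\cap$ and the identification of its cokernel with $\widetilde{ku}_{2n-1}(L^{\infty}(4))$ are immediate, completing the short exact sequence.
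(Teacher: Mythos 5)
Your overall mechanism is the right one, and it is in essence the argument behind the lemma (the paper itself offers no proof and is quoting Hashimoto's computation): part (i) by collapse of the Atiyah--Hirzebruch spectral sequence and Kronecker duality is fine, the identification of $p\colon L^{\infty}(4)\to\mathbb{C}P^{\infty}$ as the circle bundle of $\gamma^{\otimes 4}$ is correct, and the two parity observations (vanishing of odd reduced $ku$-homology of $\mathbb{C}P^{\infty}$, vanishing of even reduced $ku$-homology of $B\mathbb{Z}/4$) are exactly what truncates the Gysin sequence on both sides.

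The step that fails is the last one: the claim that, because $e(\gamma^{\otimes 4})-4y=[4]_{F}(y)-4y$ is strictly triangular for the $y$-adic filtration, a triangular change of basis on source and target ``conjugates $e\cap$ into $4y\cap$ while preserving kernel and cokernel.'' The off-diagonal coefficients $6,4,1$ are not divisible by $4$, so they change the elementary divisors, and no base change can repair this. Concretely, for $n=2$ the matrix of $[4]_{F}(y)\cap$ with respect to the bases $\{v\beta_{1},\beta_{2}\}$ and $\{v\beta_{0},\beta_{1}\}$ is $\begin{pmatrix}4&-6\\0&4\end{pmatrix}$, whose cokernel is $\mathbb{Z}/2\oplus\mathbb{Z}/8$ --- which is the correct $\widetilde{ku}_{3}(B\mathbb{Z}/4)$, as one also sees from Theorem \ref{theo:kofactor.repring} --- whereas the cokernel of $4y\cap$ is $(\mathbb{Z}/4)^{2}$; so the two maps are genuinely inequivalent and the literal map $4y\cap$ does not fit into the asserted exact sequence. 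The intended reading of ``$4y$'' here is the $ku$-theoretic Euler class of $\gamma^{\otimes 4}$, i.e.\ $[4]_{F}(y)$, and this is what the paper actually uses afterwards: the relation matrix with entries $\binom{4}{i}$ following Definition \ref{def:hashimoto}, the $2$-adic valuations of $\binom{4}{i}$ in the proof of Theorem \ref{theo:differentials.adams.complex.summand}, and the hidden-extension example all encode the full class $4y-6vy^{2}+4v^{2}y^{3}-v^{3}y^{4}$. With that reading your Gysin argument already is the proof, and the triangularization step should simply be deleted. Relatedly, the ``basepoint bookkeeping'' you deferred is not cosmetic: with both the source and the middle term reduced the sequence cannot be exact for rank reasons ($\widetilde{ku}_{2n}(\mathbb{C}P^{\infty})$ has rank $n$ while $\widetilde{ku}_{2n-2}(\mathbb{C}P^{\infty})$ has rank $n-1$). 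The Thom-isomorphism term in the Gysin sequence is the unreduced group $ku_{2n-2}(\mathbb{C}P^{\infty})$, and keeping the class $\beta_{0}$ there is exactly what makes $p_{!}(\beta_{0})$ --- the Hashimoto generator $B_{s}$ with $s=0$ --- available.
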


\begin{definition}\label{def:hashimoto}
The  Hashimoto  generators  for $\widetilde{k}u_{2n-1}(L^{\infty}(4))$ are  the  $n-1 $ elements 
$$B_{s}=v^{n-s-1}(p_{!}(\beta_{s})),  $$
for  $s=0,\ldots, n-1$.

\end{definition}

We   will need  another set  of  generator  and  relations   for the solution of extension problems  in  the  Adams  spectral  sequence.

Consider  the  abelian  group of  generators
$$M_{G}= \mathbb{Z}\langle B_{0}, \ldots B_{n-1}\rangle, $$
The  abelian  group 
$$ M_{G}=\langle R_{0}, \ldots, R_{n-1} \rangle ,$$ 
and  the  group  homomorphism 
 $$R_{j}\longmapsto \overset{m}{\underset{i=0}{\sum}}\binom{m}{i} B_{j-i}, $$
 
 adopting  the  convention  that $B_{i}=0$ for  negative  $i$. 

The homomorphism  can be  written as  a  matrix 
$$ A=\begin{pmatrix}
\binom{m}{1}& \binom{m}{2} &\binom {m}{3}&\ldots \\
0 & \binom{m}{1}& \binom{m}{2}& \ldots \\
0&0& \binom{m}{1} & \ldots \\
\vdots& \vdots &\vdots & &
\end{pmatrix} . $$ 

By  the  Smith  normal  form,  the   cokernel   of  the  matrix  $A$ is isomorphic  to  an  abelian  group 
$$ \mathbb{Z}/d_{1}\oplus  \mathbb{Z}/d_{2} \oplus \ldots . $$
The  numbers  $d_{i}$  are  called  the  elementary  divisors  of  $\widetilde{ku}_{2n-1}(B\mathbb{Z}/4)$.

We have the following result of \cite{hashimotogenerators},
 Theorem  3.1, which gives the additive structure of $ku_{*}(B\mathbb{Z}/4)$   by  determining  the  elementary  divisors described above. The  work  of  Hashimoto relies  on  previous  work  of  Fujii, Kobayashi, Shimomura, and  Sugawara \cite{fujiikobayashishimomurasugawara}

\begin{theorem}\label{theo:hashimoto.relations}
    Let $N:=min\{n,2^2-1\}$. The elementary divisor $t_i$, for $i=1,2,\ldots, N$ of $\widetilde{ku}_{2n-1}(B\mathbb{Z}/4)$ are given as follows:
    Let $i=2^s+d$ with $0\leq d\leq 2^s$ and $n-2^s+1=a_{s,n}2^s+b_{s,n}$ such that $0\leq b_{s,n}<2^s$, i.e., the leading digit of $i$ in the $2-$adic representation is of order $s$. Moreover, define 
    \[\overline{a}=\overline{a}(i,n)=\left\lbrace\begin{matrix}
        a_{s,n}+1&\text{if}\,\,d<b_{s,n}\\
        a_{s,n}&\text{if}\,\,d\geq b_{s,n}
    \end{matrix}\right.\]
    Then we have
    \[t_i=2^{1-s+\overline{a}}\]
    and a basis for the elementary divisor is given by

        \[B(i)=B(i,n)=\left\lbrace \begin{matrix}
            \sum_{k=1}^{2^s}\binom{2^s}{k}B_{n-k-d}&\text{if}\,\,d=b_{s,n}-1\\
            \sum_{k=1}^{2^s}\left(\sum_{t=0}^{s}\sum_{j=1}^{2^t}(-1){2^t-j}2^{(2^t-1)\overline{a}}\binom{2^t}{j}\binom{j2^{s-t}}{k}\right)B_{n-k-d}, &\text{otherwise}.
        \end{matrix}\right.\]
  Hence we have
  \[\widetilde{ku}_{2n-1}(B\mathbb{Z}/4)\cong \sum_{i=1}^N\mathbb{Z}/t_i\langle B(i)\rangle.\]
    
\end{theorem}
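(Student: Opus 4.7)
The plan is to derive Theorem \ref{theo:hashimoto.relations} directly from the exact sequence of Lemma \ref{lemma:generatorshashimoto}(ii), reducing the computation of $\widetilde{ku}_{2n-1}(B\mathbb{Z}/4)$ to the Smith normal form of an explicit integer matrix. First I would use that exact sequence to identify
$$\widetilde{ku}_{2n-1}(B\mathbb{Z}/4) \;\cong\; \operatorname{coker}\!\bigl([4](y)\cap\colon \widetilde{ku}_{2n}(\mathbb{C}P^{\infty}) \to \widetilde{ku}_{2n-2}(\mathbb{C}P^{\infty})\bigr),$$
where $[4](y)$ denotes the $4$-series of the multiplicative formal group law on $ku$, i.e.
$$[4](y) \;=\; \frac{(1+vy)^{4}-1}{v} \;=\; \sum_{k=1}^{4}\binom{4}{k} v^{k-1} y^{k}.$$
The dual elements $\beta_{i}$ of Lemma \ref{lemma:generatorshashimoto}(i) satisfy $y^{k}\cap \beta_{n}=\beta_{n-k}$ (with $\beta_{j}=0$ for $j<0$), and $v$ shifts indices in the expected way. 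Translating everything into the Hashimoto basis $B_{s}=v^{n-s-1}p_{!}(\beta_{s})$ converts the presentation into the relations $R_{j}\mapsto \sum_{i=0}^{m}\binom{m}{i}B_{j-i}$ with $m=4$ displayed in the text, so that $\widetilde{ku}_{2n-1}(B\mathbb{Z}/4)\cong \operatorname{coker}(A)$ for the upper-triangular matrix $A$ with constant diagonal entries $\binom{4}{k}$.

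Second, I would compute the Smith normal form of $A$ by a $2$-adic induction on the binary expansion of the row index. For $i=2^{s}+d$ with $0\le d\le 2^{s}$, Kummer's theorem controls the $2$-adic valuations $v_{2}\binom{4}{k}$ and, more importantly, the valuations of the minors of $A$ appearing after successive unimodular row and column operations. Writing $n-2^{s}+1 = a_{s,n}2^{s}+b_{s,n}$ isolates the parameter governing how many ``full blocks of size $2^{s}$'' fit inside a window of length $n$, and a careful reduction on these blocks yields $v_{2}(t_{i})=1-s+\overline{a}(i,n)$ with $\overline{a}$ as defined. The explicit generator $B(i,n)$ is then read off from a primitive preimage of the resulting pivot: in the boundary case $d=b_{s,n}-1$ one obtains directly the binomial combination $\sum_{k=1}^{2^{s}}\binom{2^{s}}{k}B_{n-k-d}$, while the generic case requires the alternating double-sum correction $\sum_{t,j}(-1)^{2^{t}-j} 2^{(2^{t}-1)\overline{a}}\binom{2^{t}}{j}\binom{j 2^{s-t}}{k}$ to absorb the contributions of the subdiagonals with smaller $2$-adic valuation.

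The main obstacle is precisely this second step: the bookkeeping required to match the binomial identities produced by the Smith normal form with the closed-form expressions in the statement. This is, however, exactly the content of Hashimoto \cite{hashimotogenerators}, Theorem~3.1, building on \cite{fujiikobayashishimomurasugawara}, so rather than redo their combinatorial reduction I would cite their result and only verify that the conventions (choice of Hashimoto basis $B_{s}$, truncation $N=\min\{n,2^{2}-1\}$, and sign conventions for the Euler class $y$) agree with the setup used here. What remains genuinely new in the present paper is not the theorem itself but its use downstream: the elementary divisors and the explicit generators $B(i,n)$ feed into the identification of hidden Adams extensions in the spectral sequence for $ko_{*}\bigl(B(\mathbb{Z}/4\times \mathbb{Z}/4)\bigr)$ via the stable splitting of Theorem \ref{theo:splitting }.
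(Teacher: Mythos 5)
Your proposal is correct and in substance coincides with what the paper does: the paper offers no independent proof of this statement, but quotes it as Theorem~3.1 of \cite{hashimotogenerators} (building on \cite{fujiikobayashishimomurasugawara}), after setting up exactly the reduction you describe --- the Gysin sequence of Lemma \ref{lemma:generatorshashimoto}, the Hashimoto basis $B_{s}=v^{n-s-1}p_{!}(\beta_{s})$, and the upper-triangular relations matrix $A$ with entries $\binom{4}{k}$ whose Smith normal form gives the elementary divisors. Your sketch of the $2$-adic bookkeeping is a reasonable outline of Hashimoto's argument, and your decision to cite it for the combinatorial core (checking only the conventions) is precisely the route the paper takes.
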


The  advantage  of  the  basis  $ \{ B_{i}\}$ over the basis  $B(i)$ is  that for $B_{i}\in \widetilde{ku}_{2n-1}(B\mathbb{Z}/4)$, $vB_{i}$ is  $B_{i+1}\in \widetilde{ku}_{2n+1}(B\mathbb{Z}/4)$.

We  have  the  following  consequence  for  the  differentials  of  the  Adams  spectral  sequence  which  computes  complex connective  $k$-homology  of  $B\mathbb{Z}/4$.

\begin{theorem}\label{theo:differentials.adams.complex.summand}
The  only  non-zero differential  in the  Atiyah-Hirzebruch spectral  sequence  for  $ku_{*}(B\mathbb{Z}/4)$  is  $d_{2}$. It is  given  by  the  formula 

$$ d_{2}(z^{k})=h_{0}^{2}(x z^{k-1})+h_{0}vxz^{k-2} ,$$
$$ d_{2}(z)=h_{0}^2 x,$$ 
$$d_{2}(xz^{k})=0. $$ 

\end{theorem}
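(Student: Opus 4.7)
The plan is to combine the identification of the Adams $E_{2}$-page from Corollary~\ref{cor:e2termkoku} with Hashimoto's complete additive computation of $\widetilde{ku}_{2n-1}(B\mathbb{Z}/4)$ in Theorem~\ref{theo:hashimoto.relations} to force the stated differentials. By the preceding lemma the $E_{2}$-page $\Ext_{E(1)}^{*,*}(\widetilde{H}^{*}(B\mathbb{Z}/4),\mathbb{F}_{2})$ is free over $\mathbb{F}_{2}[h_{0},v]$ on the classes $z^{k}$ in bidegree $(2k,0)$ and $xz^{k}$ in bidegree $(2k+1,0)$. A direct rank count shows that if the spectral sequence collapsed at $E_{2}$, the resulting graded groups would be strictly larger than the cyclic summands predicted by $t_{i}=2^{1-s+\overline{a}}$ in Theorem~\ref{theo:hashimoto.relations}; hence nonzero differentials are forced, and the first possible one is $d_{2}$.

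First I would match the Hashimoto generators $B_{s}=v^{n-s-1}p_{!}(\beta_{s})$ with the classes $xz^{s}$ on the $E_{\infty}$-page: since the pushforward $p_{!}$ of Lemma~\ref{lemma:generatorshashimoto} lands in odd degrees and $v$ raises the $t-s$ degree by $2$, the classes $xz^{k}$ must be permanent cycles, i.e.\ $d_{r}(xz^{k})=0$ for all $r$. This forces every differential to originate from $z^{k}$. To pin down $d_{2}(z^{k})$, I would exploit the Hashimoto relation $R_{j}=\sum_{i}\binom{m}{i}B_{j-i}=0$ specialised to the prime $2$ and the group $\mathbb{Z}/4$ (so $m=2$); this relation encodes a hidden additive extension of the form $2B_{j}\equiv B_{j-2}\pmod{\text{higher filtration}}$, which, once lifted back through the Adams tower, is precisely the statement that $d_{2}(z^{k})$ contains the summand $h_{0}v\,xz^{k-2}$. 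The remaining summand $h_{0}^{2}xz^{k-1}$ is then forced by the compatibility with the Bockstein $Sq^{1}$ on $\widetilde{H}^{*}(B\mathbb{Z}/4)$ and by the edge case $k=1$, which yields $d_{2}(z)=h_{0}^{2}x$. Multiplicativity of $d_{2}$ as a derivation on the $E_{2}$-page, guaranteed by the cap pairing of Theorem~\ref{theo:capadams} and the smash pairing of Theorem~\ref{theo:smashpairing}, then extends the formula to every $z^{k}$ by Leibniz.

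The main obstacle will be ruling out all higher differentials $d_{r}$ for $r\geq 3$. Here I would argue in two steps: $v$ is a permanent cycle (it detects Bott periodicity in $\pi_{2}(ku)$), so any differential commutes with multiplication by $v$; and the Hashimoto elementary divisor formula $t_{i}=2^{1-s+\overline{a}}$ predicts $h_{0}$-tower heights that are already attained by the $h_{0}$-towers visible on $E_{3}$ after taking homology with respect to the $d_{2}$-formula. Since $E_{3}$-ranks already match the target ranks in every bidegree, no room remains for further differentials, and the spectral sequence collapses at $E_{3}$. The chief technical point to verify carefully will be the two-term shape of $d_{2}(z^{k})$, in particular the appearance of the lower-order correction $h_{0}v\,xz^{k-2}$, which comes from the second-order binomial term $\binom{2}{2}$ in the Hashimoto relation and cannot be seen by a naive Bockstein argument alone.
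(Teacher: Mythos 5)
Your outline follows essentially the same route as the paper's proof: read the differential off from the known additive structure of $\widetilde{ku}_{2n-1}(B\mathbb{Z}/4)$ expressed in the Hashimoto generators $B_i$ (Theorem \ref{theo:hashimoto.relations}, via Bruner--Greenlees), identify the Adams filtration quotients, propagate the formula by $h_0$- and $v$-linearity, and then exclude higher differentials because after $d_2$ nothing is left to support them; the paper phrases this last step as the observation that the third page has no classes in even $t-s$ degree, which is a cleaner version of your rank count, and your observation that the $xz^k$ are permanent cycles is implicit there as well.

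There is, however, a concrete error in exactly the step you flag as the chief technical point. For $\mathbb{Z}/4$ the relevant relation is $\sum_i \binom{4}{i}B_{j-i}=0$, not the $m=2$ specialization you invoke; the correction term does not come from $\binom{2}{2}$, and the hidden relation is not $2B_{j}\equiv B_{j-2}$ modulo higher filtration. What the paper actually uses is that the $2$-adic valuation of $\binom{4}{1}=4$ is $2$ and that of $\binom{4}{2}=6$ is $1$, while $\binom{4}{3},\binom{4}{4}$ only contribute in higher filtration; this produces the single relation $2^{2}B_{n}+2B_{n-1}=0$ in the quotient $F_{2}/F_{3}$, and it is precisely this two-term relation that forces
\begin{equation*}
d_{2}(z^{k})=h_{0}^{2}\,xz^{k-1}+h_{0}v\,xz^{k-2},
\end{equation*}
the $h_{0}^{2}$-summand recording $\binom{4}{1}=4$ and the $h_{0}v$-summand recording the $2$-part of $\binom{4}{2}=6$. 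With your $m=2$ relation you would instead be reconstructing the $\mathbb{Z}/2$-type answer (a single $h_0$-tower pattern), which contradicts the elementary divisors of Theorem \ref{theo:hashimoto.relations}. Relatedly, your appeal to compatibility with the Bockstein cannot supply the leading term: $Sq^{1}$ acts trivially on $\widetilde{H}^{*}(B\mathbb{Z}/4;\mathbb{F}_{2})$ (this is exactly why the bottom class of $ku_{1}$ has order $4$ rather than $2$), so $d_{2}(z)=h_{0}^{2}x$ is forced by the order-$4$ relation coming from $\binom{4}{1}$, not by a Bockstein argument. Once the binomial input is corrected, the remainder of your plan goes through and coincides with the paper's argument.
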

\begin{proof}
According  to  theorem  2.2.1  in page  34   of  \cite{brunergreeenleescomplex},  the  $i$-th  Adams  filtration  quotient  in  terms  of  the  Hashimoto  generators  \ref{def:hashimoto} is  
$$ \langle 2^{i}B_{n}, 2^{i-1}B_{n-1}, \ldots   \rangle /  \langle 2^{i+1} B_{n}, 2^{i}B_{n-1}, \ldots \rangle.  $$
This  is  $\mathbb{F}_{2}\langle 2^{i}B_{n}, \ldots B_{n-i} \rangle  $  for  $i<2$,  and 
$$\mathbb{F}_{2}\langle 2^{2}B_{n}, \ldots, B_{n-2} \rangle / \langle 2^{2}B_{n}+ 2B_{n-1}   \rangle  $$
for  $i=2$. 

The  reason is  that  the  $2$-adical  valuation  of $\binom{4}{i}$ is  $3-i $ for  $i=1,2$,  and the  $2$-adical  valuation  of $\binom{4}{i}$   is  larger than $5-i$ for  $i=3,4$. 

Hence,  there  are  no relations   between  generators  in 
$$ \langle  2^{i} B_{n}, 2^{i-1}B_{n-1}, \ldots \rangle / \langle  2^{i-1} B_{n-1}, \ldots \rangle   $$
for  $i<2$,  and  there  is  a unique  relation  in 
$$\langle 2^{i}B_{n},2^{i-1}B_{n-1}\ldots  \rangle / \langle 2^{i+1}B_{n}, \ldots  \rangle ,$$  namely 
$$2^{2}B_{n} +2^{2}B_{n-1} =0 .$$  
Since  the  differential  is  $h_{0}$  and  $v$- linear,  the  formula  follows. 
Moreover, there can not  be  any  higher  differentials, since there  are  no  elements at  the  third  page  in   even  $(t-s)$  degree. 

\end{proof}
We  obtain  from  the  $\eta$-$c$-$R$ exact  sequence  the  following  differentials  for  the   Adams  spectral sequence  computing  the  real  connective  $K$-homology: 

\begin{theorem}\label{theo:differentials.adams.real.summand}
The  differentials  in the Adams  spectral sequence  for $ ko_{*}(B\mathbb{Z}/4)$ are  all  zero  except  for $d_{2}$. It  is  given   for  $k>0$   by 
\begin{itemize}
\item $z\mapsto h_{0}^{2}x$
\item $z^{2k+1}\mapsto h_{0} v xz^{2k-1}    $
\item $v z^{2k+1}\mapsto h_{0}^{3}xz^{2k+1}+ h_{0}  v^{2}xz^{2k-1} $

\item $vz\mapsto h_{0}^3xz$,
\item $vxz^{2k+1}\mapsto 0 $
\end{itemize} 
\end{theorem}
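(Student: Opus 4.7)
The plan is to deduce the $ko$-differentials from the already-computed $ku$-differentials of Theorem \ref{theo:differentials.adams.complex.summand} via the naturality of the Adams spectral sequence under the complexification map $c$ from the cofiber sequence $\Sigma ko \overset{\eta}{\to} ko \overset{c}{\to} ku \overset{R}{\to} \Sigma^{2} ko$. Smashing with $B\mathbb{Z}/4_{+}$ produces a long exact sequence at every $E_{r}$-page of the Adams spectral sequence; since both $c$ and $R$ commute with Adams differentials, each $d_{r}^{ko}$ is forced by $d_{r}^{ku}$ modulo correction terms controlled by exactness.

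First I would match generators: using the $\mathcal{A}_{1}$-module decomposition $\tilde H^{*}(B\mathbb{Z}/4) \cong \Sigma \mathbb{F}_{2} \oplus \bigoplus_{d\geq 0}(\Sigma^{2d} M_{B} \oplus \Sigma^{2d+1} M_{B})$ obtained in Section \ref{section:splitting} together with Lemma \ref{lemma:Extbow}, I describe the $E_{2}$-page $\Ext_{\mathcal{A}_{1}}^{*,*}(\tilde H^{*}(B\mathbb{Z}/4),\mathbb{F}_{2})$ in terms of the classes $z^{k}$, $xz^{k}$, $vz^{2k+1}$, $vxz^{2k+1}$ appearing in the statement. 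Via Corollary \ref{cor:changeofrings}, the complexification $c_{*}$ on $E_{2}$ is the map induced by restricting scalars from $\mathcal{A}_{1}$ to $E(1)$, so the real classes mapping nontrivially to the $ku$ $E_{2}$ inherit the formulas of Theorem \ref{theo:differentials.adams.complex.summand} directly. This pins down $d_{2}(z) = h_{0}^{2} x$ and, for $k > 0$, $d_{2}(z^{2k+1}) = h_{0} v x z^{2k-1}$: the $h_{0}^{2} x z^{2k}$-term from the $ku$-formula disappears on the $ko$-side because $\eta w_{i} = 0$ from Lemma \ref{lemma:Extbow} forces the corresponding $h_{1}$-multiples in the real $\Ext$ algebra to vanish.

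Second, for the classes $v z^{2k+1}$ and $vxz^{2k+1}$, I would proceed multiplicatively. The differential $d_{2}$ is a derivation over the coefficient algebra $\Ext_{\mathcal{A}_{1}}^{*,*}(\mathbb{F}_{2},\mathbb{F}_{2})$, so $d_{2}(vz^{2k+1}) = v \cdot d_{2}(z^{2k+1})$ plus the correction coming from the relation $\omega w_{i} = h_{0} w_{i+4}$ of Lemma \ref{lemma:Extbow}; expanding yields the claimed $h_{0}^{3}xz^{2k+1} + h_{0}v^{2}xz^{2k-1}$ and the analogous formula $d_{2}(vz) = h_{0}^{3}xz$. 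For $vxz^{2k+1}$, the corresponding class on the $ku$-side is a $d_{2}$-cycle, and any real correction term would have to be $h_{1}$-divisible, hence zero by $\eta w_{i}=0$; so $d_{2}(vxz^{2k+1}) = 0$.

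Third, and this is the main obstacle, I would rule out higher differentials. Since the $ku$ Adams spectral sequence collapses at $E_{3}$ by Theorem \ref{theo:differentials.adams.complex.summand}, any hypothetical $d_{r}^{ko}$ with $r \geq 3$ must vanish under $c_{*}$ and therefore factors through $R_{*}$. Exactness of the $\eta$-$c$-$R$ sequence then forces its source to be $\eta$-divisible and its target to be $\eta$-annihilated, which combined with the $h_{0}$- and $\omega$-periodicity dictated by Lemma \ref{lemma:Extbow} and a direct chart inspection of the $E_{3}$-page shows that no such nontrivial pair of classes exists in the relevant bidegrees. As a cross-check, I would compare with the Atiyah-Hirzebruch spectral sequence for $ko_{*}(B\mathbb{Z}/4)$ whose primary differentials are recorded in Lemma \ref{lemma:difatiyahhirzebruch.homological}, ensuring that the surviving orders match the expected $ko$-groups of $B\mathbb{Z}/4$.
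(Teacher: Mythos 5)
Your proposal takes essentially the same route as the paper: the paper also obtains these $d_{2}$-formulas by transporting the $ku$-differentials of Theorem \ref{theo:differentials.adams.complex.summand} through the $\eta$-$c$-$R$ cofiber sequence (Lemmas \ref{lemma:etacR} and \ref{lemma:etacrsteenrod}), with the ambiguity coming from $\ker c$ eliminated because $\eta$ annihilates the generators of $\Ext_{\mathcal{A}_{1}}(M_{B},\mathbb{F}_{2})$, and then asserts the absence of higher differentials by inspection of the $E_{2}$-chart. Your handling of the correction terms and of the vanishing of $d_{r}$ for $r\geq 3$ is consistent with, and in fact more detailed than, what the paper records.
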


The  $E_{2}$ page  is  depicted  with  the  differentials  in figure  \ref{spec:factor}. There  are  no  further  differentials, and  the orders of the $ko_{*}$-groups are  depicted  in the  following  result. 

\begin{figure}
\includegraphics[scale=0.8]{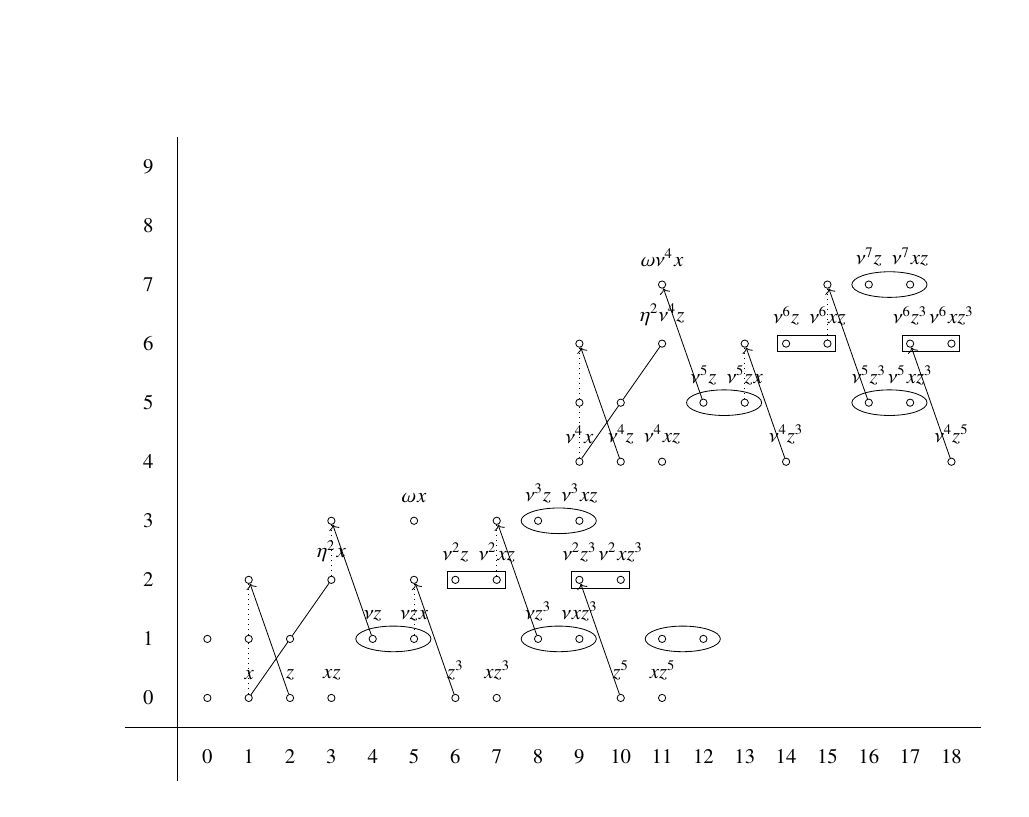}
\caption{The $E_2$ term  of  the  Adams  spectral  sequence  for  $ko_{*}(B\mathbb{Z}/4)$.}\label{spec:factor}
\end{figure}
The  computation  of  the  orders  is  stated  in the  following  result. 

\begin{theorem}\label{teo:rankkofactor.z4}
For $n\geq 1$, the orders  of  the groups  $\widetilde{ko}_{n}(B\mathbb{Z}/4)$ are  follows:
\begin{center}
    \begin{tabular}{c||c}
 
          & ${\rm log}_{2}(\mid \widetilde{ko}_{n}(B\mathbb{Z}/4) \mid)$    \\
            \hline 
       $ n=8d$  &$0$ \\
         $n=8d+1$  &$(2d+1) +1$  \\
         $n=8d+2$ &$1$ \\
         $n=8d+3$  &$2d+2$ \\
         $n=8d+4$& $0$ \\
         $n=8d+5$ &$2d+1$ \\
         $n=8d+6$ & $0$\\
         $n=8d+7$ & $2d+2$   \\
         
    \end{tabular}
\end{center}
    
\end{theorem}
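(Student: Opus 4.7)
The plan is to read off the orders directly from the $E_\infty$-page of the Adams spectral sequence computing $\widetilde{ko}_*(B\mathbb{Z}/4)_{\hat 2}$. Since each $E_\infty^{s,t}$ is an $\mathbb{F}_2$-vector space and the associated graded of the Adams filtration on $\widetilde{ko}_n(B\mathbb{Z}/4)$ is $\bigoplus_{s}E_\infty^{s,n+s}$, one has $\log_2|\widetilde{ko}_n(B\mathbb{Z}/4)|=\sum_s\dim_{\mathbb{F}_2}E_\infty^{s,n+s}$, and therefore no hidden $2$-extensions need to be resolved in order to verify the stated orders. The $E_2$-page is the one depicted in Figure \ref{spec:factor}, obtained by applying Lemma \ref{lemma:Extbow} summand by summand to the $\mathcal{A}_1$-decomposition $\widetilde{H}^{*}(B\mathbb{Z}/4)\cong \Sigma^{1}\mathbb{F}_{2}\oplus \bigoplus_{d\geq 0}\bigl(\Sigma^{2d}M_{B}\oplus \Sigma^{2d+1}M_{B}\bigr)$, together with the generator $\Sigma^1\mathbb{F}_2$.

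Starting from that $E_2$-page I would apply the $d_2$-differentials catalogued in Theorem \ref{theo:differentials.adams.real.summand}. The crucial next step, and the place where I expect the main obstacle to lie, is to rule out $d_r$ for $r\geq 3$. I would argue this in two complementary ways. First, by a sparseness check: after passing to $d_2$-homology, the surviving classes concentrate on a few rays of slopes $1$ and $1/3$, and a direct bidegree count shows that the potential targets of any $d_r$ with $r\geq 3$ are already empty. Second, by naturality along the complexification map: the complex Adams spectral sequence of Theorem \ref{theo:differentials.adams.complex.summand} collapses at $E_3$, and the $\eta$-$c$-$R$ cofibre sequence combined with the known $ku_*(B\mathbb{Z}/4)$ forces the real spectral sequence to collapse as well.

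With $E_\infty=E_3$ in hand, the final step is a bookkeeping count of surviving $\mathbb{F}_2$-classes in each total degree $n$, split by residue modulo $8$. Each $\Sigma^{2d}M_B$ and $\Sigma^{2d+1}M_B$ summand contributes a shifted copy of the $(t-s,s)$-pattern from Lemma \ref{lemma:Extbow}; the formulas of Theorem \ref{theo:differentials.adams.real.summand} kill the infinite $h_0$-towers in dimensions $8d$, $8d+4$ and $8d+6$, leave a single surviving class in dimension $8d+2$, and trim the odd-degree counts to $2d+2,\ 2d+2,\ 2d+1,\ 2d+2$ in dimensions $8d+1,\ 8d+3,\ 8d+5,\ 8d+7$ respectively, matching the tabulated formulas.

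The delicate part of this bookkeeping is that the differential $z^{2k+1}\mapsto h_0 v\,xz^{2k-1}$ links a class in the $\Sigma^{4k+2}M_B$ summand to a class in the lower-degree $\Sigma^{4k-1}M_B$ summand, and similarly for $vz^{2k+1}\mapsto h_0^3xz^{2k+1}+h_0v^2xz^{2k-1}$. Keeping correct track of cycles versus boundaries across summands requires organizing the computation as a small table indexed by $k$ and then summing columnwise; once that table is set up, the stated $\log_2$-orders follow by direct enumeration.
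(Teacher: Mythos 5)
Your proposal is correct and follows essentially the same route as the paper: the $E_2$-term is assembled from the $\mathcal{A}_1$-decomposition of $\widetilde{H}^*(B\mathbb{Z}/4)$ into suspended copies of $M_B$ via Lemma \ref{lemma:Extbow}, the $d_2$-differentials of Theorem \ref{theo:differentials.adams.real.summand} are imported from the $ku$-computation through the $\eta$-$c$-$R$ sequence, higher differentials are excluded, and the orders are read off as total $\mathbb{F}_2$-dimensions of $E_\infty$ in each stem, with no extension analysis needed for the orders themselves. Your added sparseness and naturality justifications for the collapse at $E_3$ only make explicit what the paper asserts when it states that there are no further differentials.
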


More  specifically,  the  groups  are  as  follows: 

\begin{theorem}\label{theo:kofactor}
The $ko$-homology groups  of  $B\mathbb{Z}/4$ for  $d\geq 1$  are  as  follows: 
$$ \widetilde{ko}_{8d+k}(B\mathbb{Z}/4)= \begin{cases}
0 &\text{$k=0,4,6$}\\

\mathbb{Z}/2^{2d+1}\oplus \mathbb{Z}/2 &\text{$k=1$} \\
\mathbb{Z}/2 &\text{$k=2$} \\
\mathbb{Z}/2^{4d+3}\oplus \mathbb{Z}/2^{2d+1} &\text{$k=3$}\\
\mathbb{Z}/2^{4d+5}\oplus \mathbb{Z}/2^{2d+1} &\text{$k=7$}
\end{cases}$$
\end{theorem}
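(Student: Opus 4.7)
The plan is to read off the answer from the Adams spectral sequence whose $E_2$-page, differentials, and (conjectural) vanishing of higher differentials have been set up in the preceding lemmas and figures, and then to resolve hidden $h_0$-extensions so that the associated graded assembles into the cyclic groups listed.

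First I would assemble $E_2 = \Ext_{\mathcal{A}_1}^{*,*}(\widetilde{H}^*(B\mathbb{Z}/4), \mathbb{F}_2)$ stem-by-stem using the $\mathcal{A}_1$-splitting
\[
\widetilde{H}^*(B\mathbb{Z}/4) \cong \Sigma\mathbb{F}_2 \oplus \bigoplus_{d \geq 0}\bigl(\Sigma^{2d} M_B \oplus \Sigma^{2d+1} M_B\bigr),
\]
together with the generators and relations for $\Ext_{\mathcal{A}_1}^{*,*}(M_B, \mathbb{F}_2)$ from Lemma \ref{lemma:Extbow}. Then I would apply the $d_2$ differentials of Theorem \ref{theo:differentials.adams.real.summand} and invoke the same theorem's assertion that $d_r = 0$ for $r \geq 3$, so that $E_3 = E_\infty$. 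At this stage, figure \ref{spec:factor} already displays the relevant picture, and I would only need to verify it carefully in each of the eight residue classes $n \equiv k \pmod{8}$.

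Second, in each stem $n = 8d + k$ with $0 \leq k \leq 7$, I would enumerate the surviving $h_0$-towers on $E_\infty$. Each such tower contributes exactly one $2$-primary cyclic summand of $\widetilde{ko}_n(B\mathbb{Z}/4)$, with $2$-adic valuation equal to the height of the tower after accounting for any hidden $h_0$-extensions. This gives an \emph{a priori} upper bound on the number of cyclic summands, and the total order of the tower system must reconcile with Theorem \ref{teo:rankkofactor.z4}, providing a robust internal consistency check for the count.

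The main obstacle is detecting hidden $h_0$-extensions between distinct towers; such extensions are invisible on $E_\infty$ but are precisely what distinguishes, for instance, the claimed summand $\mathbb{Z}/2^{4d+3}$ in degree $8d+3$ from a smaller cyclic group of the same filtration span. I would resolve these by comparing the real Adams spectral sequence with the complex one through the complexification map $c \colon ko_*(B\mathbb{Z}/4) \to ku_*(B\mathbb{Z}/4)$ and the $\eta$-$c$-$R$ cofiber sequence recalled before the theorem. Since the complex side is pinned down by Hashimoto's Theorem \ref{theo:hashimoto.relations}, including explicit bases $B(i)$ for the elementary divisors of $\widetilde{ku}_{2n-1}(B\mathbb{Z}/4)$, tracking the Hashimoto generators and their images under $c$ and under the boundary of the $\eta$-$c$-$R$ sequence forces the hidden extensions on the real side. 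Once these are resolved in the odd stems $n \equiv 1,3,5,7 \pmod{8}$ (the even stems being small enough to be read off directly), the precise decomposition of each $\widetilde{ko}_n(B\mathbb{Z}/4)$ into cyclic summands follows, yielding the formulas of the theorem.
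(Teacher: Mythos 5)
Your proposal is correct and follows essentially the same route as the paper: assemble the $E_2$-term from the $\mathcal{A}_1$-splitting of $\widetilde{H}^*(B\mathbb{Z}/4)$ and Lemma \ref{lemma:Extbow}, impose the $d_2$ differentials of Theorem \ref{theo:differentials.adams.real.summand} with no higher differentials (figure \ref{spec:factor}, orders as in Theorem \ref{teo:rankkofactor.z4}), and settle the extension problems by comparison with the complex side through the $\eta$-$c$-$R$ sequence and the Hashimoto generator description of $\widetilde{ku}_*(B\mathbb{Z}/4)$. This is exactly the chain of results the paper uses to arrive at the stated group structure, so no further comment is needed.
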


\subsection{Hidden Extensions  in $\widetilde{ku}_{2n+1}(B\mathbb{Z}/4)$.  }
In  the  group $ku_{2n+1}(B\mathbb{Z}/4)$,  the  Adams  filtration $F_{i}$ is  given  by 
$$\langle 2^{i}B_{n}, 2^{i-1},  B_{n-1}, \ldots  \rangle \subset ku_{2n+1}(B\mathbb{Z}/4), $$
hence  the  filtration  quotients  are: 
$$Q_{i}=\langle 2^{i}B_{n}, 2^{i-1}B_{n-1}, \ldots  \rangle / \langle 2^{i+1}B_{n}, 2^{i}B_{n-1},\ldots  \rangle $$

\begin{definition}\label{def:hiddenextension}
Let $F_{i}= F_{i} ku_{2n+1}(B\mathbb{Z}/4)$  be the  $i$-th filtration  subgroup  in the  Adams  Spectral  sequence,  and  let  $Q_{i}=F_{i}/F_{i+1}$ be  the  $i$-th  filtration  quotient. 
There  exists a  hidden  extension  between $x_{i}\in Q_{i}$ and  $x_{j}\in Q_{j}$  if  there is  a natural  number  $r$   with  $2^{r}x= x_{i}$  in  $Q_{i}$, and  $2^{r+1}x=x_{j}$  in  $Q_{j}$,  with  $j>i+1$.
 
\end{definition}
The  following  result shows  that  the  extensions  between  consecutive filtration  quotients  are  not  hidden  in the  sense  of  definition \ref{def:hiddenextension}. 

\begin{lemma}\label{lemma:hiddenextensions.Z4}
Let $x_{i}\in F_{i}/F_{i+1}$ with  $2x_{i}\in F_{i+1}/F_{i+2}$, then $h_{0}x_{i}=x_{i+1}$. 

\end{lemma}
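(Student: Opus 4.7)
The plan is to deduce the lemma from the standard multiplicative compatibility of the Adams spectral sequence with multiplication by $2$. Recall that $h_{0}\in\Ext^{1,1}_{\mathcal{A}}(\mathbb{F}_{2},\mathbb{F}_{2})$ is a permanent cycle detecting the degree-$2$ self-map $2\colon S\to S$, which has Adams filtration exactly $1$ (cf.\ the coefficient computation in Lemma \ref{lemma:coefficientsku}). The Adams spectral sequence converging to $\widetilde{ku}_{*}(B\mathbb{Z}/4)^{\wedge}_{2}$ is naturally a module over the Adams spectral sequence for the sphere spectrum, so the action of $h_{0}$ on the $E_{\infty}$-page coincides, as an $\mathbb{F}_{2}$-linear map, with the associated graded of multiplication by $2$ on the filtered target.

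Concretely, I would pick a lift $\tilde{x}\in F_{i}\setminus F_{i+1}\subset\widetilde{ku}_{2n+1}(B\mathbb{Z}/4)$ of the class $x_{i}$. The hypothesis states that $2\tilde{x}\in F_{i+1}\setminus F_{i+2}$, and by definition $x_{i+1}$ denotes its nonzero class in $Q_{i+1}=F_{i+1}/F_{i+2}$. The compatibility recalled above then forces $h_{0}\cdot x_{i}=[2\tilde{x}]=x_{i+1}$ in $E_{\infty}^{i+1,*}$, which is the asserted equality.

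As a computational sanity check one can unwind this in the Hashimoto basis: a representative of $x_{i}\in F_{i}/F_{i+1}$ is a finite sum $\sum_{k}c_{k}\cdot 2^{i-k}B_{n-k}$ with $c_{k}\in\{0,1\}$, and doubling produces $\sum_{k}c_{k}\cdot 2^{i+1-k}B_{n-k}$, which is precisely the image of $h_{0}x_{i}$ under the explicit description of $F_{i+1}/F_{i+2}$ given just above Definition \ref{def:hiddenextension}. The only step that requires genuine care is the identification of Ext-multiplication by $h_{0}$ with the abstract doubling map at the level of filtration quotients; this is a standard consequence of the Adams spectral sequence being multiplicative over the sphere spectral sequence, and I do not anticipate further obstacles beyond invoking that structure carefully.
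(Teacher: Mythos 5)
Your proposal is correct and follows essentially the same route as the paper: choose a lift of $x_{i}$ in $\widetilde{ku}_{2n+1}(B\mathbb{Z}/4)$ and invoke the standard fact that multiplication by $h_{0}$ on the $E_{\infty}$-page is the associated graded of multiplication by $2$ on the filtered abutment. The paper's proof is just a terser version of this (writing the lift as $x_{i}+{\rm h.o.t.}$ and doubling), so no further comment is needed.
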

\begin{proof}
Let $x_{i}\in Q_{i}$,  and let $x\in ku( B\mathbb{Z}/4)$ be a  lift. Then,  it  is  possible  to  write $x= x_{i}+ \text{higher filtration terms}$,  we  will  write $x_{i}+ {\rm h. o. t. }$. 
Then  $2x_{i}+{\rm h.o.t}$  is  a  lift  of  $h_{0}x$,  and  hence  $[2x_{i}]=h_{0}x\in Q_{i}$ 

\end{proof}

However,  there  exist hidden  extensions  in  the  complex $K$-homology of  $B\mathbb{Z}/4$  which  we  will study  in  connection with  the  ones  for  $B\mathbb{Z}/4\wedge B\mathbb{Z}/4$. 

\begin{example}
Consider  the group $\widetilde{ku}_{7}(B\mathbb{Z}/4)$, and  the  group  element $x= 2B_{3}+B_{2}$. It  is  $4$-torsion, $2x=4B_{3}+2B_{2}\neq 0$. The  element $x$  is  detected in the  Adams  spectral sequence,  but the  relation $x=2x$ cannot  be detected  in the Adams  spectral  sequence, because  $2x=0$ in  $Q_{2}=F_{2}/F_{3}$. Since  $2x=0$ in $Q_{2}$,  the  element $2x$  is  in $F_{3}$.  This  can  be  seen   from  the  relation 
$$ 2x= 4B_{3}+2B_{3}= -(6B_{2}+4B_{1}+B_{0})+2B_{2})=-4B_{2}-4B_{1}+B_{0}.$$	
\end{example}

We  will  need  in  sections \ref{section:complex}, and \ref{section:real} the  following description  of the  complex  connective  $K$-homology  groups  in terms  of  representation  theory: 

\begin{theorem}\label{theo:kofactor.repring}
Let  $R_{\mathbb{C}}(\mathbb{Z}/4)$ be  the  complex  representation  ring  of $\mathbb{Z}/4$,  and  let $R^{0}_{\mathbb{C}}(\mathbb{Z}/4)$ be  the  augmentation ideal. Denote  by $\alpha$  the standard  complex  one  dimensional  representation. 
Then, the  group homomorphism 
$$ \mathbb{Z}\langle B_{0}, \ldots , B_{n}\rangle/ \langle  \overset{m}{\underset{i=1}{\sum}} \binom{m}{i} B_{j-i} \rangle \longrightarrow RU^{0}_{\mathbb{C}}(\mathbb{Z}/4)/  \big (RU^{0}_{\mathbb{C}}(\mathbb{Z}/4) \big )^{n+1}$$
given  by  
$$ B_{j}\longmapsto (\alpha -1)^{n-j} $$
 is  an isomorphism. 
\end{theorem}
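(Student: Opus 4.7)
The strategy is to identify both sides with the same finitely-presented $\mathbb{Z}$-module via the substitution $y := \alpha - 1$, and to read off the claimed isomorphism directly from the matching presentations.

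First I set $y := \alpha - 1$, so that $R_{\mathbb{C}}(\mathbb{Z}/4) \cong \mathbb{Z}[y]/(f(y))$ with $f(y) := (1+y)^4 - 1 = 4y + 6y^2 + 4y^3 + y^4$, and the augmentation ideal is the principal ideal $R^0 = (y)$. Then $R^0/(R^0)^{n+1}$ is the $\mathbb{Z}$-module generated by $y, y^2, \ldots, y^n$ (since $y^{k} \in (R^0)^{n+1}$ for $k > n$), presented by the relations $y^k f(y) \equiv 0 \pmod{y^{n+1}}$ for $k = 0, 1, \ldots, n-1$ (for $k \geq n$ every term already lies in $(R^0)^{n+1}$ and the relation is trivial).

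Next I verify that $\phi(B_j) := y^{n-j}$ sends the defining Hashimoto relation to zero: with $m=4$,
\begin{equation*}
\phi\!\left( \sum_{i=1}^{m}\binom{m}{i} B_{j-i} \right) = \sum_{i=1}^{m}\binom{m}{i}\, y^{n-j+i} = y^{n-j}\bigl((1+y)^m - 1\bigr) = y^{n-j}(\alpha^m - 1) = 0,
\end{equation*}
using the convention $B_{j-i}=0$ for $j-i<0$ (which is consistent with the fact that the corresponding terms $y^{n-j+i}$ of degree $>n$ vanish in $R^0/(R^0)^{n+1}$). Hence $\phi$ descends to a $\mathbb{Z}$-linear homomorphism from the source. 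Reindexing by $k := n-j$, each source relation corresponds bijectively to exactly one of the target relations $y^k f(y) \equiv 0$ from the preceding paragraph, and $\phi$ bijects the generating set with $\{y, y^2, \ldots, y^n\}$. An isomorphism of presentations then yields the theorem.

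The main obstacle is confirming that the family $\{y^k f(y) \bmod y^{n+1} : 0 \le k \le n-1\}$ is a \emph{complete} set of relations for $R^0/(R^0)^{n+1}$, i.e., that no further hidden syzygies appear. This reduces to the standard commutative-algebra fact that $(f(y),\, y^{n+1}) \cap \mathbb{Z}[y]_{\le n}$ is spanned over $\mathbb{Z}$ by $f(y), y f(y), \ldots, y^{n-1} f(y)$ reduced modulo $y^{n+1}$; this can be proved by an elementary leading-term argument on $\mathbb{Z}[y]/(y^{n+1})$. As a consistency check one can compare elementary divisors: the Smith normal form of the Toeplitz matrix $A$ of Theorem \ref{theo:hashimoto.relations} yields the invariant factors on the source side, and these must (and do, by direct inspection) match the invariant factors obtained from the upper-triangular Toeplitz presentation of $R^0/(R^0)^{n+1}$ in the basis $y, y^2, \ldots, y^n$.
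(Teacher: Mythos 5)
Your argument is correct, and it is in fact a complete proof of a statement that the paper records without proof: Theorem \ref{theo:kofactor.repring} is stated as a known consequence of the Hashimoto presentation (Definition \ref{def:hashimoto}, Theorem \ref{theo:hashimoto.relations}) and no argument is given in the text, so there is no in-paper proof to compare against. Your presentation-matching route is the natural one: writing $R_{\mathbb{C}}(\mathbb{Z}/4)\cong\mathbb{Z}[y]/\bigl((1+y)^4-1\bigr)$ with $y=\alpha-1$, presenting $R^0/(R^0)^{n+1}$ on the generators $y,\ldots,y^n$ with relations the truncations of $y^k f(y)$ for $0\le k\le n-1$, and checking completeness of these relations. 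Your ``leading term'' step does close the only real gap: any element of $(f)+(y^{n+1})$ of degree at most $n$ equals the degree-$\le n$ truncation of $b(y)f(y)$ for some $b$, and the truncations of $y^k f$ vanish for $k\ge n$, so the listed relations span all syzygies; the Smith-form cross-check is then redundant but harmless. One point you should make explicit: as literally printed, the source has generators $B_0,\ldots,B_n$, and $B_n\mapsto(\alpha-1)^0=1$, which does not lie in $RU^0_{\mathbb{C}}(\mathbb{Z}/4)$. Your proof tacitly uses the indexing of Definition \ref{def:hashimoto}, i.e.\ generators $B_0,\ldots,B_{n-1}$ with the $n$ relations indexed by $j=1,\ldots,n$; this is clearly the intended reading, and it is exactly what makes your bijection of generators with $\{y,\ldots,y^n\}$ and of relations with the $n$ truncations (via $k=n-j$) work, so you should state the correction rather than pass over it silently.
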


\section{Complex  connective  $K$- Theory computations on the  smash summand}\label{section:complex}

We  turn  now  our  attention  to  the  smash  factor.

\begin{lemma}\label{lemma:steenrodsmash.mod2}
     The mod 2 cohomology of $B\mathbb{Z}/4\wedge B\mathbb{Z}/4$,
     $$\widetilde{H}^{*}(B\mathbb{Z}/4\wedge B\mathbb{Z}/4, \mathbb{F}_{2}).$$
     Has as an $\mathbb{F}_2$-vector space  basis  the elements $x_0x_1$, $x_{0}T_1^n$, $x_{1}T_{0}^m$, $x_0x_1T_0^k$, $x_0x_1T_1^l$, $x_0T_0^rT_1^s$ and $x_1T_0^uT_1^v$. 
     The  modules  over  $\mathcal{A}_{1}$ which  they  generate  are  as  follows

   \begin{center}
     \begin{tabular}{c|c|c}
     \text{Degree}&\text{Generators  of  suspended  $M_{B}$} &\text{Generators  of  suspended $M_{SB}$}\\ \hline 
     $4k $& $x_{0}x_{1}T_{1}^{2k-1}$ & $T_{0}^{2l+1}T_{1}^{2(k-l)-1}$ \\ 
     
     $4k+1$ & & $T_{0}^{2l+1}x_{1}T_{1}^{2(k-l-1)}, \, l=	0, 1\ldots, k-1 $\\
 $4k+2$ & 	& $x_{0}T_{0}^{2l+1}T_{1}^{2(k-l)-1}$ \\ $4k+3$ & $x_{0}T_{1}^{2k+1}, T_{0}^{2k+1}x_{1}$

\end{tabular}
\end{center}

\end{lemma}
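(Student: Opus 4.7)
The plan is to combine the stable splitting recalled at the beginning of section \ref{section:splitting} with the $\mathcal{A}_{1}$-module decomposition already obtained in Lemma \ref{lemma:steenrodfull.mod2}. The splitting $B\mathbb{Z}/4\times B\mathbb{Z}/4\simeq B\mathbb{Z}/4\vee B\mathbb{Z}/4\vee (B\mathbb{Z}/4\wedge B\mathbb{Z}/4)$ yields an isomorphism of $\mathcal{A}$-modules
\[
\widetilde{H}^{*}(B\mathbb{Z}/4\times B\mathbb{Z}/4;\mathbb{F}_{2}) \;\cong\; \widetilde{H}^{*}(B\mathbb{Z}/4;\mathbb{F}_{2})^{\oplus 2} \;\oplus\; \widetilde{H}^{*}(B\mathbb{Z}/4\wedge B\mathbb{Z}/4;\mathbb{F}_{2}),
\]
so the smash-product summand is spanned by precisely those monomials $x_{0}^{a}x_{1}^{b}T_{0}^{i}T_{1}^{j}$ in which a positive-degree factor from each side appears. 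Equivalently, K\"unneth identifies it with $\widetilde{H}^{*}(B\mathbb{Z}/4;\mathbb{F}_{2})\otimes \widetilde{H}^{*}(B\mathbb{Z}/4;\mathbb{F}_{2})$, with the $\mathcal{A}_{1}$-action given by the Cartan formula; this produces the claimed $\mathbb{F}_{2}$-basis.

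To read off the $\mathcal{A}_{1}$-structure I would reuse the Steenrod computations from the proof of Lemma \ref{lemma:steenrodfull.mod2}: $Sq^{1}$ vanishes on every monomial, while $Sq^{2}$ acts on $T_{0}^{i}T_{1}^{j}$ by the four parity cases recalled there, and by Cartan together with $Sq^{2}(x_{0})=Sq^{2}(x_{1})=0$ (for degree reasons) this determines the action on every monomial involving the exterior generators. The $\mathcal{A}_{1}$-orbit of a monomial is thus completely fixed by the parity of its $T$-exponents. A monomial with exactly one odd $T$-exponent generates a two-element orbit matching the defining relations of $M_{B}$ from Definition \ref{def:modulebow}; a monomial with both $T$-exponents odd generates a four-element orbit in which the two distinct images under $Sq^{2}$ share a common $Sq^{2}$-successor, which is exactly the defining pattern of $M_{SB}$ from Definition \ref{def:modulebowS}. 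Multiplying any such orbit by $x_{0}$, $x_{1}$, or $x_{0}x_{1}$ produces a parallel orbit of the same $\mathcal{A}_{1}$-type shifted in degree by one, one, or two respectively.

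It then remains to enumerate: in each total degree $\equiv 0,1,2,3\pmod 4$ list the monomials that are minimal in their $\mathcal{A}_{1}$-orbit, involve both factors nontrivially, and have the appropriate $T$-exponent parity. This reproduces the table in the statement. Since $Sq^{1}=0$ and the $Sq^{2}$-action has been completely described orbit-by-orbit, there are no hidden $\mathcal{A}_{1}$-extensions between the candidate summands, so the decomposition is a genuine direct sum. The principal obstacle is purely combinatorial: checking that the tally of $M_{B}$ and $M_{SB}$ summands in each residue class modulo four, together with the $x$-factor placement, matches the tabulated counts. This is a direct parity count once the $Sq^{2}$-action is fixed, and it amounts to restricting the enumeration already performed in Lemma \ref{lemma:steenrodfull.mod2} to those generators supported on both factors.
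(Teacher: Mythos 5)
Your proposal matches the paper's (implicit) argument: the paper carries out this computation for the full product in Lemma \ref{lemma:steenrodfull.mod2}, using precisely the vanishing of $Sq^{1}$ on all monomials and the parity formula for $Sq^{2}(T_{0}^{i}T_{1}^{j})$, and the smash-product lemma is then read off by restricting that decomposition along the stable splitting to the monomials supported on both factors, exactly as you do. The only point to watch is that your ``minimal in its $\mathcal{A}_{1}$-orbit'' bookkeeping must place a monomial with one positive even and one odd $T$-exponent inside the $M_{SB}$ block generated by the odd--odd corner (it occurs as a summand of $Sq^{2}$ of that corner) rather than letting it spawn a spurious $M_{B}$ copy; with that reading your enumeration reproduces the paper's table.
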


Let  us  recall the Universal  Coefficient  Theorem for  integral coefficients in ordinary  cohomology from  \cite{switzer}, Theorem 13.10 in page  240.

\begin{theorem}\label{theo:uct.smash}
Let $X$  be a  CW-complex  of  finite  type.
Then, there  exists  a natural  short  exact  sequence  of  the  form  
$$ 0\longrightarrow \Ext ( \widetilde{H\mathbb{Z}}^{q+1}(X), \mathbb{Z} )\longrightarrow \widetilde{H\mathbb{Z}}_{q}(X)\longrightarrow {\rm Hom }(\widetilde{H\mathbb{Z}}^{q}(X), \mathbb{Z}).\longrightarrow 0 . $$ 

\end{theorem}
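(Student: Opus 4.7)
The plan is to recognize this statement as the dual form of the universal coefficient theorem and reduce it to the standard algebraic UCT applied to the cochain complex of $X$; the finite type hypothesis is exactly what supplies the double-duality isomorphism that makes the roles of chains and cochains interchangeable. Let $\widetilde{C}_{*}(X)$ denote the reduced cellular chain complex, which under finite type I may arrange to be termwise finitely generated and free. Then the cochain complex $\widetilde{C}^{*}(X) := {\rm Hom}(\widetilde{C}_{*}(X), \mathbb{Z})$ is also termwise finitely generated free, computes $\widetilde{H\mathbb{Z}}^{*}(X)$, and satisfies the natural identification ${\rm Hom}(\widetilde{C}^{q}(X), \mathbb{Z}) \cong \widetilde{C}_{q}(X)$.

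Next I would break $\widetilde{C}^{*}(X)$ apart using the two short exact sequences attached to its cocycles $Z^{q}$ and coboundaries $B^{q}$, namely
$$0 \to Z^{q} \to \widetilde{C}^{q}(X) \to B^{q+1} \to 0 \quad \text{and} \quad 0 \to B^{q} \to Z^{q} \to \widetilde{H\mathbb{Z}}^{q}(X) \to 0.$$
Since subgroups of free abelian groups are free, both $Z^{q}$ and $B^{q+1}$ are free; the first sequence therefore splits, and the second is a length-one free resolution of $\widetilde{H\mathbb{Z}}^{q}(X)$. Applying ${\rm Hom}(-, \mathbb{Z})$ and invoking double duality identifies $\widetilde{C}_{q}(X)$ with an extension of ${\rm Hom}(Z^{q}, \mathbb{Z})$ by ${\rm Hom}(B^{q+1}, \mathbb{Z})$, and the dual of the second sequence is a four-term exact sequence whose last term is $\Ext(\widetilde{H\mathbb{Z}}^{q}(X), \mathbb{Z})$. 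A short diagram chase identifies the cycles in $\widetilde{C}_{q}(X)$ with the preimage of ${\rm Hom}(\widetilde{H\mathbb{Z}}^{q}(X), \mathbb{Z})$ under the projection to ${\rm Hom}(Z^{q}, \mathbb{Z})$, and identifies the cokernel of the boundary map inside ${\rm Hom}(B^{q+1}, \mathbb{Z})$ with $\Ext(\widetilde{H\mathbb{Z}}^{q+1}(X), \mathbb{Z})$, yielding the asserted short exact sequence. Naturality in $X$ is automatic from the cellular, chain-level constructions; a morally cleaner alternative is to quote the algebraic UCT directly for the cochain complex $\widetilde{C}^{*}(X)$ viewed as a bounded-below complex of finitely generated free abelian groups.

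The hard part will be pure bookkeeping: tracking why the $\Ext$ term appears at index $q+1$ on the left while ${\rm Hom}$ appears at index $q$ on the right. This shift is exactly the one inherited from the coboundary $d^{q} : \widetilde{C}^{q} \to \widetilde{C}^{q+1}$ dualizing to the chain boundary $\partial_{q+1} : \widetilde{C}_{q+1} \to \widetilde{C}_{q}$, so that the obstruction to lifting a cycle in degree $q$ past boundaries coming from degree $q+1$ is measured precisely by the cohomology group in degree $q+1$.
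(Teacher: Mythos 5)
Your proposal is correct. Note, however, that the paper does not prove this statement at all: it is quoted verbatim as Theorem 13.10, p.~240 of Switzer, so there is no in-paper argument to compare against. What you give is the standard textbook derivation of the ``dual'' universal coefficient sequence: finite type makes the reduced cellular cochain complex a complex of finitely generated free abelian groups with ${\rm Hom}(\widetilde{C}^{q}(X),\mathbb{Z})\cong \widetilde{C}_{q}(X)$ by double duality, and then either the cocycle/coboundary splitting with a diagram chase, or (more cleanly, as you note) a direct application of the algebraic UCT to the cochain complex, yields the sequence with $\Ext$ of $\widetilde{H}^{q+1}$ on the left and ${\rm Hom}$ of $\widetilde{H}^{q}$ on the right; your closing explanation of the degree shift via $d^{q}\colon \widetilde{C}^{q}\to\widetilde{C}^{q+1}$ dualizing to $\partial_{q+1}$ is exactly the right bookkeeping. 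Two small polish points: the four-term dual sequence you invoke must be taken in degree $q+1$ when you identify the cokernel inside ${\rm Hom}(B^{q+1},\mathbb{Z})$ with $\Ext(\widetilde{H}^{q+1}(X),\mathbb{Z})$ (you do use it that way, but the earlier sentence cites it in degree $q$), and for naturality you should say a word about restricting to cellular maps and invoking cellular approximation, since the construction is made at the level of cellular (co)chains.
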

This   theorem  suggests  the  following  notation
\begin{definition}\label{def:generators.integral.smash.even}
The  generators  in even degree $2n$  of  $H\mathbb{Z}_{*}(B\mathbb{Z}/4 \wedge B\mathbb{Z}/4)$ are  tensor  products. They  are  denoted  by  
$$\underline{x_{0}}\otimes \underline{T_{1}^{n-1}x_{1}}, \underline {x_{0} T_{0}}^{n-2}\otimes \underline{T_{1}^{n-2}x_{1}}, \ldots,\underline{ x_{0}T_{0}^{n-1}}\otimes \underline{x_{1} } . $$ 
The  class $\underline{x_{0}\otimes T_{0}}^{l}\otimes \underline{T_{1}^{n-1-l}\otimes x_{1}}$ is  mapped  to  the  class $\underline{x_{0}T_{0}^{l}T_{1}^{n-1}x_{1}}$  under  the  reduction  modulo  $2$, where  the  homological class  $\underline{x_{0}T_{0}^{l}T_{1}^{n-1-l} x_{1}} $ is  the $\mathbb{F}_{2}$- vector  space dual  class  to the ${\rm mod \, 2}$-cohomology  class $x_{0}T_{0}^{l}T_{1}^{n-1-l}x_{1}$. 
\end{definition}
\begin{remark}
To  avoid  clumsy  notation,  we  will denote   the   classes 
$$ \underline{x_{0}T_{0}^{l}T_{1}^{n-1-l} x_{1}}$$
by   $$x_{0}T_{0}^{l}T_{1}^{n-1-l} x_{1} .$$
\end{remark}

Recall  the  definintion  of matrix  Toda  Brackets.

\begin{definition}\label{def:todabrackets}
Let  $R$  be  a  commutative  ring, and let  $M$ be  an  $R$-module. Assume given a matrix $A\in M_{n\times n}(R)$, and  $R$-module  morphisms 
$q:R^{n}\to M$,  and  $q^{'}:  R^{n}\to  M$  with  the  property  that 
$$ A\circ q =0, \, A\circ q^{'}=0. $$
 
A Toda bracket  is  a triple,  denoted  by  $\langle \tilde{v}| A| \tilde{w} \rangle$,  consisting  of vectors $v\in R^{n}$, $w\in R^{n}$  with the  property  that 
$q^{'}(\tilde{v}^{t}A)=0$  and  $q(\tilde{w})=0$. 
\end{definition}

\begin{definition}\label{def:generators.integral.smash.odd}
The  classes  in $H_{*}(B\mathbb{Z}/4\wedge B\mathbb{Z}/4)$  for odd  degree $*=2n+1$ are  all torsion  classes, denoted  by 
$$T_{0}\ast T_{1}^{n}, T_{0}^{2} \ast T_{1}^{n-1}, \ldots , T_{0}^{n}\ast T_{1}, $$  
where   $T_{0}^{l}\ast T_{1}^{n+1-l}$ is  the  Toda  bracket $\langle T_{0}^{l}| 1 | T_{1}^{n+1-l}\rangle$. 
 
The  reduction  ${\rm mod \, 2}$ of  the  classes  
$T_{0}^{l}\ast T_{1}^{n+1-l} $ is   $x_{0}T_{0}^{l-1}T_{1}^{n+1-l} +T_{0}^{l}x_{1}T_{1}^{n-l}$.
\end{definition}

We  will  need later   the  following  remarks  concerning the induced  homomorphisms from the  subgroup $\mathbb{Z}/4$. 

\begin{lemma}\label{lemma:induction-homomorphism}
Let  $k$ and  $l$ be  natural  numbers. Given the  group  homomorphism ${\rm ind}_{k,l}: \mathbb{Z}/4 \to \mathbb{Z}/4\times \mathbb{Z}/4$ defined  by  sending the generator $1$ to  the  element  $(k,l)$, we will   denote  the induced  homomorphism  in odd  homology degree   by 
$$ {\rm ind_{k,l}}_{*}: H_{*}(\mathbb{Z}/4, \mathbb{Z}) \to  H_{*}(\mathbb{Z}/4\times \mathbb{Z}/4, \mathbb{Z}).$$

For  the  generators  discussed above, the  following  correspondence determines  the  homomorphism : 
$$ y^{i}\mapsto{\underset{j+j^{'}=i}{\Sigma} }k^{j} l^{j^{'}} T_{0}^{j} *T_{1}^{j^{'}}.$$

\end{lemma}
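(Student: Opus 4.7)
The plan is to factor ${\rm ind}_{k,l}$ through the diagonal and compute the induced map on integral homology one factor at a time. Write
\[{\rm ind}_{k,l}\,=\,(\mu_k\times\mu_l)\circ\Delta\colon\mathbb{Z}/4\longrightarrow\mathbb{Z}/4\times\mathbb{Z}/4,\]
where $\Delta$ is the diagonal homomorphism and $\mu_m\colon\mathbb{Z}/4\to\mathbb{Z}/4$ denotes multiplication by $m$. Passing to classifying spaces and using the equivalence $B(\mathbb{Z}/4\times\mathbb{Z}/4)\simeq B\mathbb{Z}/4\times B\mathbb{Z}/4$, it suffices to compute $(B\mu_k)_{*}$, $(B\mu_l)_{*}$, and $(B\Delta)_{*}$ on the generator $y^i\in H_{2i-1}(B\mathbb{Z}/4;\mathbb{Z})\cong\mathbb{Z}/4$.

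For the multiplication maps, the pullback $(B\mu_m)^{*}$ sends the degree-two generator $T$ to $mT$ (since $T$ is the Euler class of the tautological line bundle and $\mu_m$ multiplies its character by $m$), hence $(B\mu_m)^{*}T^j=m^jT^j$. By Theorem \ref{theo:uct.smash} and the vanishing $H^{2j-1}(B\mathbb{Z}/4;\mathbb{Z})=0$, the group $H_{2j-1}(B\mathbb{Z}/4;\mathbb{Z})$ is identified with $\Ext(H^{2j}(B\mathbb{Z}/4;\mathbb{Z}),\mathbb{Z})\cong\mathbb{Z}/4$ via a non-degenerate pairing, so naturality of the Universal Coefficient sequence yields $(B\mu_m)_{*}(y^j)=m^jy^j$. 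Functoriality of the Künneth splitting then gives
\[(B\mu_k\times B\mu_l)_{*}(T_0^j\ast T_1^{j'})=k^jl^{j'}\,T_0^j\ast T_1^{j'}\]
for every $j+j'=i$, where the edge terms $T_0^i\ast T_1^0$ and $T_0^0\ast T_1^i$ are interpreted as the tensor-product classes $y^i\otimes 1$ and $1\otimes y^i$ in the Künneth decomposition.

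For the diagonal, I compute $(B\Delta)_{*}(y^i)\in H_{2i-1}(B\mathbb{Z}/4\times B\mathbb{Z}/4;\mathbb{Z})$ by pairing against $H^{2i}(B\mathbb{Z}/4\times B\mathbb{Z}/4;\mathbb{Z})$. Cohomologically, $\Delta^{*}(T_0^jT_1^{j'})=T^{j+j'}$, and $\langle y^i,T^i\rangle$ is a generator of $\mathbb{Z}/4$ under the Ext-pairing, so the pairing of $(B\Delta)_{*}(y^i)$ with each monomial $T_0^jT_1^{j'}$, $j+j'=i$, equals that same generator. Since the Künneth formula exhibits the monomials $T_0^jT_1^{j'}$ as a free $\mathbb{F}_{2}$-basis of the $(\mathbb{Z}/4)^{i+1}$-group $H^{2i}(B\mathbb{Z}/4\times B\mathbb{Z}/4;\mathbb{Z})$ and the Ext-pairing with $H_{2i-1}$ is non-degenerate, $(B\Delta)_{*}(y^i)$ is the sum of the $i+1$ dual generators, namely the $T_0^j\ast T_1^{j'}$. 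Combining the two steps produces the claimed formula
\[{\rm ind}_{k,l}{}_{*}(y^i)\;=\;\sum_{j+j'=i}k^jl^{j'}\,T_0^j\ast T_1^{j'}.\]

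The main obstacle is the last identification: verifying that the dual (under the Ext-pairing) of the cohomology monomial $T_0^jT_1^{j'}\in H^{2i}(B\mathbb{Z}/4\times B\mathbb{Z}/4;\mathbb{Z})$ is precisely the matrix Toda bracket class $T_0^j\ast T_1^{j'}=\langle T_0^j\mid 1\mid T_1^{j'}\rangle$ of Definitions \ref{def:todabrackets}--\ref{def:generators.integral.smash.odd}. This amounts to a comparison between the Künneth connecting homomorphism for the diagonal and the matrix Toda bracket construction. I would settle it by unfolding both on the length-one free resolution $0\to\mathbb{Z}\xrightarrow{4}\mathbb{Z}\to\mathbb{Z}/4\to0$ on each tensor factor: both then produce the standard generator of $\Tor(\mathbb{Z}/4,\mathbb{Z}/4)$ up to a sign that is absorbed in the normalization of $y^i$. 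Once this comparison is pinned down, the remainder of the argument is a direct bookkeeping exercise in the Künneth splitting.
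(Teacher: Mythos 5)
The paper states Lemma \ref{lemma:induction-homomorphism} without any proof, so there is nothing to compare against line by line; your route (factor ${\rm ind}_{k,l}=(\mu_k\times\mu_l)\circ\Delta$, compute $(B\mu_m)_*y^j=m^jy^j$ by naturality of the universal coefficient sequence, then handle the diagonal by the linking pairing) is the natural argument and is essentially correct. Two remarks. First, the phrase ``functoriality of the K\"unneth splitting'' is not literally available: the K\"unneth short exact sequence is natural but its splitting is not, so a priori $(B\mu_k\times B\mu_l)_*(T_0^j\ast T_1^{j'})$ could differ from $k^jl^{j'}T_0^j\ast T_1^{j'}$ by terms in the tensor (edge) summands $H_{2i-1}\otimes H_0\oplus H_0\otimes H_{2i-1}$. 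This is harmless, because the same pairing argument you use for $\Delta$ disposes of it: for any $a+b=i$ one has $\langle {\rm ind}_{k,l}{}_*(y^i),\,T_0^aT_1^b\rangle=\langle y^i,\,(kT)^a(lT)^b\rangle=k^al^b\langle y^i,T^i\rangle$, and since $H^{2i}(B\mathbb{Z}/4\times B\mathbb{Z}/4;\mathbb{Z})\cong(\mathbb{Z}/4)^{i+1}$ on the monomials $T_0^aT_1^b$ pairs perfectly with the (all-torsion) group $H_{2i-1}$, this single computation already determines ${\rm ind}_{k,l}{}_*(y^i)$ and subsumes both of your steps; so you can drop the appeal to a natural splitting entirely (also note the basis is free over $\mathbb{Z}/4$, not $\mathbb{F}_2$, as you presumably meant).

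Second, the step you flag as the ``main obstacle'' is indeed the only substantive point left: one must know that the classes $T_0^j\ast T_1^{j'}$ of Definitions \ref{def:todabrackets}--\ref{def:generators.integral.smash.odd} (together with the edge classes $y^i\otimes 1$, $1\otimes y^i$) form the basis dual to the monomials $T_0^jT_1^{j'}$ under the $\Ext$-/linking pairing. The block structure of the pairing is forced (a Tor--K\"unneth class dies under both projections, hence pairs trivially with $T_0^i$ and $T_1^i$, and the edge classes pair trivially with the mixed monomials), so the only indeterminacy is a unit of $\mathbb{Z}/4$ in each Tor block; your plan of unwinding both constructions on the resolution $0\to\mathbb{Z}\xrightarrow{4}\mathbb{Z}\to\mathbb{Z}/4\to 0$ is the right way to fix it, or one can simply absorb the unit into the choice of generator $T_0^j\ast T_1^{j'}$, which is all the paper's later applications (Lemma \ref{lemma:induction-image}, Corollary \ref{cor:induction}) require. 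With that normalization written out, your argument is a complete proof.
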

\begin{lemma}\label{lemma:induction-image}
Denote  by $(a_{0}, \ldots, a_{k})$ the coefficient vector  for  an  element $x= \overset{n} {\underset{i=1}{\sum}} T_{0}^{i} *T_{1}^{i-1} $
The  images of  the induction homomorphisms  are  as  follows: 
\begin{itemize}
    \item For  the homomorphism $1\mapsto (1,1)$, the  coefficients  of  ${\rm ind }_{1,1}(x)$  are $(1, 1, \ldots, 1)$.
    \item For the  homomorphism $1\mapsto (1,-1)$,  the  coefficients  of  ${\rm ind }_{(1,-1)}(x)$ are 
    
    $(1,-1, \ldots, 1,-1)$.
    \item For  the  homomorphism $ 1\mapsto (1,2)$,  the  coefficients  of ${\rm ind }_{1,2}(x)$ are $(2,0, \ldots, 0)$.
    \item For  the homomorphism $1\mapsto (2,1)$,  the  coefficients  of ${\rm ind }_{2,1}(x)$ are $(0, 0, \ldots, 2 )$.
    
\end{itemize}
\end{lemma}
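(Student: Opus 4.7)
The plan is to deduce Lemma~\ref{lemma:induction-image} as a direct calculation from the preceding Lemma~\ref{lemma:induction-homomorphism}, by specialising the general formula $\mathrm{ind}_{k,l\,*}(y^i)=\sum_{j+j'=i}k^{j}l^{j'}\,T_0^{j}\ast T_1^{j'}$ and reducing modulo the order of each torsion generator.

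First I would pin down the ambient setup: an element $x\in \widetilde{H}_{2n+1}(B\mathbb{Z}/4\wedge B\mathbb{Z}/4;\mathbb{Z})$ is written in the basis $T_0^{l}\ast T_1^{n+1-l}$ for $l=1,\ldots,n$ coming from Definition~\ref{def:generators.integral.smash.odd}, where each summand generates a cyclic group of order $4$ (the Tor contribution $\operatorname{Tor}(\mathbb{Z}/4,\mathbb{Z}/4)=\mathbb{Z}/4$ obtained via the Künneth splitting). Applied to the generator $y^{n+1}$ of $H_{2n+1}(B\mathbb{Z}/4;\mathbb{Z})\cong\mathbb{Z}/4$, Lemma~\ref{lemma:induction-homomorphism} yields
\[
\mathrm{ind}_{k,l\,*}(y^{n+1})\;=\;\sum_{\substack{j+j'=n+1\\j,j'\ge 1}} k^{j}\,l^{j'}\;T_0^{j}\ast T_1^{j'},
\]
where the boundary terms $j=0$ or $j'=0$ disappear after passing from $B\mathbb{Z}/4\times B\mathbb{Z}/4$ to the smash summand $B\mathbb{Z}/4\wedge B\mathbb{Z}/4$ using the stable splitting of Theorem~\ref{theo:splitting }. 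Hence the coefficient of $T_0^{l}\ast T_1^{n+1-l}$ in $\mathrm{ind}_{k,l'\,*}(y^{n+1})$ is $k^{l}(l')^{n+1-l}\pmod{4}$.

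Next I would verify each of the four claims by plugging in the relevant pair $(k,l')$ into $k^{l}(l')^{n+1-l}\bmod 4$. For $(1,1)$ every coefficient is $1$. For $(1,-1)$ the coefficients are $(-1)^{n+1-l}$, which alternate in sign. For $(1,2)$ we have $2^{n+1-l}$, and $2^{r}\equiv 0\pmod{4}$ for $r\ge 2$, so the only surviving coefficient is the one with $n+1-l=1$, giving exactly a single non-zero entry equal to $2$. The case $(2,1)$ is symmetric: $2^{l}$ vanishes modulo $4$ unless $l=1$, yielding a single non-zero entry equal to $2$ at the opposite end of the coefficient vector.

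Since no deep tool is needed beyond the explicit Künneth description of odd-degree torsion and the computation already packaged in Lemma~\ref{lemma:induction-homomorphism}, the argument is essentially routine modular arithmetic. The only point that deserves care is the bookkeeping of the indexing convention (which end of the coefficient vector corresponds to $l=1$ versus $l=n$), so that the ``first'' and ``last'' non-zero entries in the statements for $(1,2)$ and $(2,1)$ come out correctly; this is the step I would check most carefully, but it is not a genuine obstacle.
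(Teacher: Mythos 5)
Your proposal is correct and follows essentially the same route as the paper, which states the lemma without a separate proof precisely because it is the specialization of the formula in Lemma~\ref{lemma:induction-homomorphism} to $(k,l)=(1,1),(1,-1),(1,2),(2,1)$, reduced modulo $4$ because the odd-degree classes $T_{0}^{j}\ast T_{1}^{j'}$ are order-$4$ torsion (the Tor terms in the K\"unneth sequence), with the pure factor terms dying in the smash summand. The only point to fix is the placement of the single entry $2$ in the $(1,2)$ and $(2,1)$ cases, which must be stated consistently with the ordering of the basis $T_{0}\ast T_{1}^{n},\ldots,T_{0}^{n}\ast T_{1}$, since the coefficient-vector convention in the printed statement is ambiguous; you flagged exactly this, and it is the only bookkeeping issue.
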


\begin{corollary}\label{cor:induction}
 Every  class in  odd  homological  degree in $H\mathbb{Z}_{*} (B\mathbb{Z}\wedge B\mathbb{Z}/4)$  is  induced  by  a  group  homomorphism.
$\varphi: \mathbb{Z}/4 \to \mathbb{Z}/4\times \mathbb{Z}/4$.  
In  particular,  the  class in homological  degree  $2n+1$ defined  by  
$$ x_{0}T_{1} +T_{0}x_{1}T_{1}^{n-1}+x_{0}T_{0}T_{1}^{n-1} + T_{0}^{n}x_{1}$$
is in the  image  of  the $\mathbb{F}_{2}$- vector  space  homomorphism  induced  by  the group  homomorphism   $\varphi: \mathbb{Z}/4 \to  \mathbb{Z}/4 \times\mathbb{Z}/4$  given  by  $1\overset{\varphi}{\mapsto }(1,1)$.
\end{corollary}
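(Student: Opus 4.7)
The proof splits naturally into two parts: the general existence claim, and the verification of the specific four-term formula, both of which follow from Lemmas \ref{lemma:induction-homomorphism} and \ref{lemma:induction-image}.

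For the general existence claim, I would start from the basis $\{T_0^l * T_1^{n+1-l}\}_{l=1}^n$ of $H\mathbb{Z}_{2n+1}(B\mathbb{Z}/4\wedge B\mathbb{Z}/4)$ provided by Definition \ref{def:generators.integral.smash.odd}. Lemma \ref{lemma:induction-image} exhibits four specific coefficient vectors in this basis arising as images of induction homomorphisms: $(1,\ldots,1)$ from $(1,1)$, $(1,-1,\ldots)$ from $(1,-1)$, $(2,0,\ldots,0)$ from $(1,2)$, and $(0,\ldots,0,2)$ from $(2,1)$. The first two, via integer combinations, generate vectors with alternating-parity entries and, in particular, $(2,0,2,0,\ldots)$-type patterns; combined with the third and fourth they reach every $2$-multiple of a basis vector. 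Since the generators $T_0^l * T_1^{n+1-l}$ are torsion (their orders being determined by the elementary divisors computed following Theorem \ref{theo:hashimoto.relations}), these four families together generate the full torsion group. When needed, one can supplement with additional induction homomorphisms $\varphi_{(k,l)}$ computed via Lemma \ref{lemma:induction-homomorphism}, since the formula $y^{i}\mapsto \sum_{j+j'=i} k^{j}l^{j'}T_0^{j}*T_1^{j'}$ provides enough freedom in the coefficients $k^{j}l^{j'}$ to realize any missing coefficient combination.

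For the specific formula, I would apply Lemma \ref{lemma:induction-homomorphism} to $(k,l)=(1,1)$, obtaining that the generator $y^{n+1}\in H_{2n+1}(B\mathbb{Z}/4;\mathbb{Z})$ maps to $\sum_{j+j'=n+1} T_0^{j}*T_1^{j'}$, with only the terms $j=1,\ldots,n$ lying in odd degree $2n+1$. Reducing modulo $2$ by the formula of Definition \ref{def:generators.integral.smash.odd}, each Toda bracket $T_0^l * T_1^{n+1-l}$ contributes $x_0 T_0^{l-1}T_1^{n+1-l}+T_0^{l}x_1 T_1^{n-l}$. Collecting terms according to the mod $2$ basis of Lemma \ref{lemma:steenrodsmash.mod2} and comparing to the claimed four-term expression (corresponding to the boundary contributions $l=1$ and $l=n$, with the middle terms organized as stated) completes the verification.

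The principal obstacle is the first part: one must justify that integer combinations of the four vectors of Lemma \ref{lemma:induction-image}, together with possibly further induction maps, genuinely generate the full torsion group and not merely a proper subgroup. This requires tracking the precise torsion orders of the basis elements, which is controlled by Theorem \ref{theo:hashimoto.relations}. The second part reduces to careful bookkeeping between the integral basis of Toda brackets and the mod $2$ basis of monomials in $x_0, x_1, T_0, T_1$, but the underlying computation is mechanical once the indexing conventions of Definition \ref{def:generators.integral.smash.odd} and Lemma \ref{lemma:steenrodsmash.mod2} are lined up.
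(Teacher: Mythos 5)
Your verification of the ``in particular'' statement is fine and is essentially the intended argument: apply Lemma \ref{lemma:induction-homomorphism} with $(k,l)=(1,1)$, so that the generator of $H_{2n+1}(B\mathbb{Z}/4;\mathbb{Z})$ maps to $\sum_{j+j'=n+1}T_{0}^{j}\ast T_{1}^{j'}$, and then reduce mod $2$ term by term using the formula of Definition \ref{def:generators.integral.smash.odd} (the four-term expression in the statement is a truncated/typo-ridden rendering of the full sum $x_{0}T_{1}^{n}+T_{0}x_{1}T_{1}^{n-1}+\cdots+T_{0}^{n}x_{1}$, which is exactly the class used again in Lemma \ref{lemma:induction.permanentcycle}). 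The paper offers no separate proof beyond Lemmas \ref{lemma:induction-homomorphism} and \ref{lemma:induction-image}, so on this half you are on the intended route.

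The genuine gap is in your treatment of the first sentence. Your claim that the four coefficient vectors of Lemma \ref{lemma:induction-image} ``reach every $2$-multiple of a basis vector'' is false: writing a general combination $a(1,1,\dots,1)+b(1,-1,\dots)+c(2,0,\dots,0)+d(0,\dots,0,2)$, its second and fourth coordinates are both $a-b$, so for $n\geq 5$ the element $2\,T_{0}^{2}\ast T_{1}^{n-1}$ (coefficient vector $(0,2,0,\dots,0)$) is not in their span in $(\mathbb{Z}/4)^{n}$. The fallback ``one can supplement with additional induction homomorphisms, since the coefficients $k^{j}l^{j'}$ provide enough freedom'' cannot be repaired either: there are only $16$ homomorphisms $\mathbb{Z}/4\to\mathbb{Z}/4\times\mathbb{Z}/4$, each image is a cyclic subgroup of order at most $4$, so the subgroup they generate has order bounded independently of $n$, while $H_{2n+1}(B\mathbb{Z}/4\wedge B\mathbb{Z}/4;\mathbb{Z})\cong(\mathbb{Z}/4)^{n}$ grows. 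So no spanning argument of this shape can establish the blanket assertion that \emph{every} odd-degree class is induced; read literally that assertion is simply not provable this way (and for large $n$ not true), and what the paper actually relies on downstream is only the existence of the specific diagonal class. You should either restrict the first part to the classes exhibited in Lemma \ref{lemma:induction-image} (i.e., state which classes are induced), or drop it and prove only the ``in particular'' clause, rather than attempt the generation argument as written.
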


Another  ingredient  is  the  result  of  the  K\"unneth  spectral  sequence  for $ku$-homology. 

in   \cite{robinsonkuenneth}. 

\begin{theorem}\label{theo:kuennethkusmash}
There  exists  a  short  exact  sequence
\begin{multline*} 
0 \longrightarrow \widetilde{ku}_{*}(B\mathbb{Z}/4)\otimes_{ku_{*}}\widetilde{ku}_{*}(B\mathbb{Z}/4)\longrightarrow \\ \widetilde{ku}_{*}(B\mathbb{Z}/4\wedge B\mathbb{Z}/4) \longrightarrow 
\\
\Tor^{1}_{ku_{*}}(\widetilde{ku}_{*}(B\mathbb{Z}/4), \widetilde{ku}_{*}(B\mathbb{Z}/4))\longrightarrow  0
\end{multline*}

\end{theorem}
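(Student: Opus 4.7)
The plan is to apply Robinson's Künneth spectral sequence \cite{robinsonkuenneth} to the connective complex $K$-theory ring spectrum $ku$. For the pair $X = Y = B\mathbb{Z}/4$, this produces a strongly convergent first-quadrant spectral sequence
\begin{equation*}
E^{2}_{p,q} \;=\; \mathrm{Tor}^{ku_{*}}_{p,q}\bigl(\widetilde{ku}_{*}(B\mathbb{Z}/4),\, \widetilde{ku}_{*}(B\mathbb{Z}/4)\bigr) \;\Longrightarrow\; \widetilde{ku}_{p+q}(B\mathbb{Z}/4 \wedge B\mathbb{Z}/4),
\end{equation*}
whose edge map at $p=0$ is the external product. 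The claim of the theorem is therefore equivalent to the statement that this spectral sequence is concentrated in the two columns $p=0$ and $p=1$, collapses at $E^{2}$, and that the induced two-step filtration on $\widetilde{ku}_{*}(B\mathbb{Z}/4 \wedge B\mathbb{Z}/4)$ is precisely the short exact sequence in the statement.

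The second step of the argument is to exhibit a free resolution of $\widetilde{ku}_{*}(B\mathbb{Z}/4)$ of length one over $ku_{*}$. Such a resolution is provided directly by Lemma \ref{lemma:generatorshashimoto}: the short exact sequence
\begin{equation*}
0 \longrightarrow \widetilde{ku}_{2n}(\mathbb{C}P^{\infty}) \xrightarrow{\,4y\cap\,} \widetilde{ku}_{2n-2}(\mathbb{C}P^{\infty}) \xrightarrow{\,p_{!}\,} \widetilde{ku}_{2n-1}(L^{\infty}(4)) \longrightarrow 0
\end{equation*}
assembles, as $n$ varies, into a short exact sequence of graded $ku_{*}$-modules whose first two terms are free $ku_{*}$-modules on the duals $\{\beta_{i}\}$ of the powers of the Euler class $y \in H^{*}(\mathbb{C}P^{\infty})$. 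This is a length-one free resolution, so
\begin{equation*}
\mathrm{Tor}^{ku_{*}}_{p}\bigl(\widetilde{ku}_{*}(B\mathbb{Z}/4),\, N\bigr) = 0 \quad \text{for all } p \geq 2
\end{equation*}
and every $ku_{*}$-module $N$, in particular for $N = \widetilde{ku}_{*}(B\mathbb{Z}/4)$.

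Combining the two steps, only the columns $p=0$ and $p=1$ of the Künneth spectral sequence are nonzero; because the two surviving columns live in adjacent filtration degrees and no differentials can connect them, the spectral sequence collapses to the two-term filtration whose associated graded pieces are $\widetilde{ku}_{*}(B\mathbb{Z}/4) \otimes_{ku_{*}} \widetilde{ku}_{*}(B\mathbb{Z}/4)$ and $\mathrm{Tor}^{1}_{ku_{*}}(\widetilde{ku}_{*}(B\mathbb{Z}/4), \widetilde{ku}_{*}(B\mathbb{Z}/4))$. The main point to check carefully is that the hypotheses of Robinson's construction apply to $ku$ and that the edge map at $p=0$ is genuinely the external $ku_{*}$-homology product, since beyond this verification the construction of the length-one resolution and the ensuing vanishing of higher $\mathrm{Tor}$ are essentially formal.
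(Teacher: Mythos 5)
Your proposal is correct and follows essentially the same route as the paper: both invoke Robinson's K\"unneth spectral sequence for $ku$ and deduce its degeneration to a two-column, two-step filtration from the fact that $\widetilde{ku}_{*}(B\mathbb{Z}/4)$ has homological dimension one over $ku_{*}$. The only difference is one of bookkeeping — you make that dimension claim explicit by assembling the sequence $0 \to \widetilde{ku}_{2n}(\mathbb{C}P^{\infty}) \xrightarrow{4y\cap} \widetilde{ku}_{2n-2}(\mathbb{C}P^{\infty}) \xrightarrow{p_{!}} \widetilde{ku}_{2n-1}(L^{\infty}(4)) \to 0$ of Lemma \ref{lemma:generatorshashimoto} into a length-one free resolution, whereas the paper simply asserts flat dimension one, citing the even-degree vanishing from Hashimoto; your version is, if anything, the more self-contained justification of the same point.
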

\begin{proof}

By the  main theorem in page 173  of \cite{robinsonkuenneth}, there exists  a   natural  K\"unneth  spectral sequence  such  that  the  edge  homorphism is  the  external  product. 

First  notice that while the  coefficient  ring  $ku_{*}= \mathbb{Z}[v]$ on  the  Bott  generator  has  homological  dimension  two,   the  $ku_{*}$  module $ku_{*}(B\mathbb{Z}/4)$  is   of  flat  dimension  $1$  as  a  $ku_{*}$-module,  and  the complex  connective  $K$-theory  groups are zero  in  even  degree  by  lemma  1.4  in  \cite{hashimotogenerators}, p.767. Thus,  there  is  no  place  for  the  possible non-zero  differential producing  potential $\Tor^{2}$  terms,  and   the  spectral sequence  degenerates  to  the  exact  sequence above.

\end{proof}

The  $ku$-homology  of  the  smash  product  $ B\mathbb{Z}/4\wedge B\mathbb{Z}/4$  is  computed  in  table \ref{table:ku.h.smash.z4}

\begin{table}[h!]
\centering
\begin{tabular}{||c c c ||} 
 \hline
 degree & $\widetilde{ku}_{*}(B\mathbb{Z}/4\wedge B\mathbb{Z}/4)$ & total degree  \\ [0.5ex] 
 \hline\hline
  2   & $\cdos{2}{1}$ & 2  \\ 
  \hline
  3   & $\cdos{2}{1}$   &  2 \\
   \hline
  4   &  $\cdos{1}{1}\oplus \cdos{2}{2}$  &   5\\
   \hline
  5   & $\cdos{1}{2}\oplus \cdos{3}{1}$ & 5 \\
   \hline
  6   & $\cdos{1}{3}\oplus \cdos{2}{3}$ & 9 \\
   \hline 
  7   &$\cdos{1}{5}\oplus \cdos{4}{1}$ & 9\\
   \hline 
  8   & $\cdos{1}{5}\oplus \cdos{2}{3}\oplus \cdos{3}{1} $ & 14 \\
   \hline 
  9   & $\cdos{1}{7}\oplus \cdos{2}{3}\oplus \cdos{3}{2} $ & 14 \\
   \hline
  10  & $\cdos{1}{7}\oplus \cdos{2}{3}\oplus \cdos{3}{2} $ & 19 \\
   \hline 
  11  &  $\cdos{1}{3}\oplus \cdos{2}{5}\oplus \cdos{6}{1} $ & 19 \\
   \hline   
  12  &  $\cdos{1}{9}\oplus \cdos{2}{3}\oplus \cdos{2}{3} $  & 24 \\
   \hline 
  13  & $\cdos{1}{1}\oplus \cdos{2}{5}\oplus \cdos{3}{2}\oplus\cdos{7}{1} $ & 24 \\
   \hline 
  14  & $\cdos{1}{11}\oplus \cdos{2}{3}\oplus \cdos{3}{4} $ & 29 \\
   \hline 
  15  &  $\cdos{2}{3}\oplus \cdos{3}{5}\oplus \cdos{8}{1} $  & 29 \\
   \hline 
  16  & $\cdos{1}{13}\oplus \cdos{2}{3}\oplus \cdos{3}{5} $  & 34  \\
   \hline 
  17  & $\cdos{2}{1}\oplus \cdos{3}{5}\oplus \cdos{4}{2}\oplus\cdos{9}{1} $ & 34 \\
   \hline 
  18  &  $\cdos{1}{15}\oplus \cdos{2}{3}\oplus \cdos{3}{6} $ & 39 \\
   \hline 
  19  & $\cdos{2}{3}\oplus \cdos{4}{5}\oplus \cdos{10}{1} $  & 39 \\
   \hline 
  20  & $\cdos{1}{17}\oplus \cdos{2}{3}\oplus \cdos{3}{7} $  & 44 \\
   \hline 
  21  & $\cdos{3}{1}\oplus \cdos{4}{5}\oplus \cdos{5}{2}\oplus\cdos{11}{1} $ & 44 \\
   \hline 
  22  & $\cdos{1}{19}\oplus \cdos{2}{3}\oplus \cdos{3}{8} $  & 49 \\
   \hline
  23  & $\cdos{4}{3}\oplus \cdos{5}{5}\oplus \cdos{12}{1} $  & 49 \\
   \hline 
  24  & $\cdos{1}{21}\oplus \cdos{2}{3}\oplus \cdos{3}{9} $  & 54 \\
   \hline
  8d  & $\cdos{1}{8d+3}\oplus \cdos{2}{3}\oplus \cdos{3}{4d+3} $  & 20d-6 \\
   \hline 
  8d+1  & $\cdos{2d-2}{1}\oplus \cdos{2d-1}{5}\oplus \cdos{2d}{2}\oplus\cdos{4d+1}{1} $ & 20d-6  \\ 
   \hline
  8d+2  & $\cdos{1}{8d+1}\oplus \cdos{2}{3}\oplus \cdos{3}{4d-3} $  & 20d-1 \\
   \hline        
  8d+3  & $\cdos{2d-1}{3}\oplus \cdos{2d}{5}\oplus \cdos{3}{4d-1} $  & 20d-1 \\
   \hline       
  8d+4  &  $\cdos{1}{8d-1}\oplus \cdos{2}{3}\oplus \cdos{3}{4d-1} $ & 20d+4 \\
   \hline       
  8d+5  & $\cdos{2d-1}{1}\oplus \cdos{2d}{5}\oplus \cdos{2d+1}{2}\oplus\cdos{4d+3}{1} $ & 20d+4 \\
   \hline       
  8d+6  & $\cdos{1}{8d+3}\oplus \cdos{2}{3}\oplus \cdos{3}{4d} $  & 20d+9 \\
   \hline       
  8d+7  &  $\cdos{2d}{3}\oplus \cdos{2d+1}{5}\oplus \cdos{4d+4}{1} $ & 20d+9 \\
   \hline        
   \hline
\end{tabular}
\caption{ Connective $ku$- homology according  to UCT.}
\label{table:ku.h.smash.z4}
\end{table}

We  will  use  this  information   to  deduce  information  about  the  differentials  in the  Atiyah-Hirzebruch spectral  sequence  for  $ku_{*}(B\mathbb{Z}/4)$. Let  us  recall  the  following  definition, analogous  to  \ref{def:hashimoto}.  

\begin{definition}\label{def:hashimotosquare}
Given  the  Hashimoto  generators $B_{i}$(see Definition  \ref{def:hashimoto})  form the elements

$$B_{i,j}:=B_{i}\otimes B_{j}\in ku_{2i+1}(B\mathbb{Z}/4)\otimes ku_{2j+1}(B\mathbb{Z}/4)\subset ku_{2(i+j)+2}(B\mathbb{Z}/4\wedge B\mathbb{Z}/4) ,$$

and  extend  the  definition  by $v$-periodicity  by  setting 
$$B_{i,j}(d):=v^{d-i-j}B_{i,j} $$  
\end{definition}

\begin{lemma}\label{lemma:hashimotosquare}
In  the  Atiyah-Hirzebruch spectral sequence  for  computing $ku_{*}(B\mathbb{Z}/4\wedge B\mathbb{Z}/4)$, 
\begin{itemize}
\item For  all $i,j,d,e$ such  that $i+j =e$, $1\leq d\leq 4$, 
$2 B_{i,j} (d+e)=0$, but  $B_{i,j}(d+e)\neq 0$. 
 \item For  all $i,j,d,e$ such  that $i+j =e$, $5\leq d\leq 10$,
 $2^{2}B_{i,j}(d+e)=0$, but $2 B_{i,j}(d+e)\neq0$.
   
\end{itemize}
\end{lemma}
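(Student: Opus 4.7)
The plan is to use the Künneth short exact sequence of Theorem~\ref{theo:kuennethkusmash} together with Hashimoto's elementary divisor formula (Theorem~\ref{theo:hashimoto.relations}) and the representation-theoretic interpretation of Theorem~\ref{theo:kofactor.repring} to compute the order of $B_{i,j}(d+e)$ directly in $\widetilde{ku}_*(B\mathbb{Z}/4\wedge B\mathbb{Z}/4)$, and then compare this against the Atiyah-Hirzebruch filtration read off from Table~\ref{table:ku.h.smash.z4}.

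First, I would observe that by Definition~\ref{def:hashimotosquare} together with the $v$-linearity of the K\"unneth pairing and the relation $vB_i = B_{i+1}$ noted after Theorem~\ref{theo:hashimoto.relations}, we have
\[ B_{i,j}(d+e) \;=\; v^{d}(B_i\otimes B_j) \;=\; B_{i+d}\otimes B_j \]
inside the tensor product summand $\widetilde{ku}_{2(i+d)+1}(B\mathbb{Z}/4)\otimes_{ku_*}\widetilde{ku}_{2j+1}(B\mathbb{Z}/4)$ of the Künneth sequence. Since Hashimoto's work shows $\widetilde{ku}_*(B\mathbb{Z}/4)$ is of flat dimension one over $ku_*$ (as used in the proof of Theorem~\ref{theo:kuennethkusmash}), the order of $B_{i+d}\otimes B_j$ in the tensor product of cyclic summands is $2$-primary and can be computed from the orders of the factors and their expressions in Hashimoto's basis $\{B(k)\}$.

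Next, I would translate the representation-ring relation $4(\alpha-1)+6(\alpha-1)^2+4(\alpha-1)^3+(\alpha-1)^4=0$ into the relation $4B_k + 6B_{k-1}+4B_{k-2}+B_{k-3}=0$ among the $B_i$'s (for $k\geq 3$), and iterate this together with the truncation $(\alpha-1)^{n+1}=0$ to determine the $2$-adic valuation of $B_{i+d}$ in $\widetilde{ku}_{2(i+d)+1}(B\mathbb{Z}/4)$. The threshold between $d\leq 4$ and $d\geq 5$ is explained by the fact that $v$-multiplication corresponds to multiplication by $(\alpha-1)$, and the first nontrivial contribution of the relation $(\alpha-1)^4 + 4(\alpha-1)^3 + 6(\alpha-1)^2 + 4(\alpha-1) = 0$ enters the picture after four such multiplications, shifting the top elementary divisor in Hashimoto's formula from value $2$ to value $4$. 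This is precisely what produces an additional $2$-divisible lift for $v^{d}B_{i,j}$ when $d\geq 5$.

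Finally, I would translate this order information into the Atiyah-Hirzebruch statement: the nonvanishing of $B_{i,j}(d+e)$ and $2B_{i,j}(d+e)$ in the respective ranges of $d$ is equivalent to the persistence of these classes in the $E_\infty$-page of the AHSS at the corresponding filtration level, which is visible from the tensor factorization. The main obstacle is the bookkeeping involved in converting between the generating set $\{B_i\}$ (which is compatible with $v$-multiplication) and Hashimoto's basis $\{B(k)\}$ (which diagonalizes the group as a direct sum of cyclic groups); one has to verify that the order in the tensor product does not increase or decrease spuriously due to hidden extensions among consecutive filtration quotients, which is exactly what Lemma~\ref{lemma:hiddenextensions.Z4} guarantees for each factor separately.
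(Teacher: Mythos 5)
Your proposal has a genuine gap, and it is located exactly where the paper's proof does its real work. The lemma is not a statement about the honest orders of the elements $B_{i,j}(d+e)=v^{d}B_{i,j}$ in $\widetilde{ku}_{*}(B\mathbb{Z}/4\wedge B\mathbb{Z}/4)$: since multiplication by $v$ is a $ku_{*}$-module homomorphism, the order of $v^{d}B_{i,j}$ can only decrease (or stay constant) as $d$ grows, so a literal order computation can never produce ``order $2$ for $1\leq d\leq 4$, order $4$ for $5\leq d\leq 10$''. The statement has to be read, as the paper's proof makes explicit, in the quotients $ku_{*}(B\mathbb{Z}/4\wedge B\mathbb{Z}/4)/F_{d-1}$ by the Atiyah--Hirzebruch filtration, where the classes $B_{i,j}(d+e)$ with $i+j<d$ are killed and only then do the binomial relations give the claimed $2$- and $4$-torsion behaviour. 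Your final ``translation'' step --- asserting that the order computed from the K\"unneth tensor product is equivalent to (non)vanishing at the corresponding AHSS filtration level --- is precisely the hidden-extension problem for the smash product, and Lemma \ref{lemma:hiddenextensions.Z4} does not cover it: that lemma only controls extensions between \emph{consecutive} Adams filtration quotients for a single factor $B\mathbb{Z}/4$, not the relation between the K\"unneth decomposition and the AHSS filtration of the smash product. In addition, the order of $B_{i+d}\otimes B_{j}$ in $\widetilde{ku}_{*}(B\mathbb{Z}/4)\otimes_{ku_{*}}\widetilde{ku}_{*}(B\mathbb{Z}/4)$ is not determined by the orders of the two factors together with Hashimoto's elementary divisors, because the tensor product is taken over $ku_{*}=\mathbb{Z}[v]$ and the balancing relations identify $B_{i+d}\otimes B_{j}=B_{i}\otimes B_{j+d}=\cdots$, so the whole graded module structure intervenes.

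What is missing, concretely, is a detection mechanism for the nonvanishing claims. The paper supplies it with the cap product: after passing to the filtration quotient it caps with $T_{0}T_{1}$ (for $1\leq d\leq 4$) or with $T_{0}^{2}T_{1}^{2}$ (for $5\leq d\leq 10$) to push the class down to $B_{0,0}(e)$, where the statements $2B_{0,0}(e)=0$ and $B_{0,0}(e)\neq 0$ can be checked directly; this is what certifies that $B_{i,j}(d+e)$, respectively $2B_{i,j}(d+e)$, is not of higher filtration. Injectivity of the K\"unneth map, which is what your argument actually provides, only gives nonvanishing in the abutment, not nonvanishing of the associated graded class in the stated filtration degree. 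To repair your approach you would have to either reproduce the cap-product reduction to $B_{0,0}(e)$ or prove an independent comparison between the K\"unneth filtration (by powers of $v$ and the skeletal filtration of each factor) and the AHSS filtration of the smash product; neither is addressed in the proposal.
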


\begin{proof}
 Take  $i,j,d,e$ without the  contitions on $d$. Since  there  are hidden  extensions, the  relation $2^{s}B_{i,j}(d+e)$ might  hold  in  the  $d-th$  filtration  quotient. 
Consider  for  this  the  quotient 
\begin{equation}\label{eq:Fq} ku_{*}(B \mathbb{Z}/4\wedge B\mathbb{Z}/4) / F_{d-1}(ku_{*}(B \mathbb{Z}/4\wedge B\mathbb{Z}/4))
\end{equation}
In degree  $2(d+e)+2$, this  is  generated  by  the  elements   $B_{i,j}(d+e)$ with $i+j\leq d+e$. 
Since  the  elements $B_{i,j}(d+e)$  with $i+j<d$ generate  the   subgroup 
$$F_{d-1}(\widetilde{ku}_{*}(B\mathbb{Z}/4)\otimes_{ku_{*}}\widetilde{ku}_{*}(B\mathbb{Z}/4) ))_{2(d+e)+2}, $$

we  get  the  relations $B_{i,j}(d+e)=0$ with $i+j<d$,  and  the  binomial  relations  producing \ref{eq:Fq}.  now,   $2^{s}B_{i,j}(d+e)=0$ in  \ref{eq:Fq}, and  hence  also  in  
\begin{equation}\label{eq:FqFq-1}
F_{d}(ku_{*}(B \mathbb{Z}/4\wedge B\mathbb{Z}/4)) / F_{d-1}(ku_{*}(B \mathbb{Z}/4\wedge B\mathbb{Z}/4))
\end{equation}
in degree $2(d+e)+2$. 

Now,  we  distinguish  the  two  cases: 
\begin{itemize}
\item For  $1\leq d\leq 4$,  by capping  with  $T_{0}T_{1}$,  we  obtain  $2B_{0,0}(e)=0$. 
\item For  $5\leq d\leq 10 $,   By   capping  the  element $2^{2}B_{i,j}(d+e)$ with  $T_{0}^{2}T_{1}^{2}$ to obtain $B_{0,0}(e)\neq 0$

\end{itemize}
\end{proof}

\begin{corollary}\label{cor:hashimotosquare}
The  $E_{\infty}^{2e+2, 2d}$ term  of  the  Atiyah-Hirzebruch spectral sequence for  computing $ku_{*}(B\mathbb{Z}/4 \wedge B\mathbb{Z}/4)$ is  
\begin{itemize}
\item $\mathbb{Z}/2^{e+1}$  for  $1\leq d\leq 4$. 
\item $0$  for $5\leq d\leq 10 $. 
\end{itemize}
\end{corollary}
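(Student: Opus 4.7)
The plan is to deduce the corollary directly from Lemma \ref{lemma:hashimotosquare} by identifying the $E_{\infty}^{2e+2, 2d}$ slot with an associated graded piece of the skeletal filtration $F_{\bullet}$ on $\widetilde{ku}_{*}(B\mathbb{Z}/4 \wedge B\mathbb{Z}/4)$. The key geometric input is that the element $B_{i,j}(n) = v^{n-i-j}(B_{i}\otimes B_{j})$ is supported on the $(2(i+j)+2)$-skeleton of the smash product, so $B_{i,j}(n) \in F_{2(i+j)+2}$, and contributes non-trivially to $E_{\infty}^{2(i+j)+2, 2(n-i-j)}$ precisely when it survives passage to the quotient $F_{2(i+j)+2}/F_{2(i+j)}$.

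For $1 \leq d \leq 4$ I would invoke the first clause of Lemma \ref{lemma:hashimotosquare}: for every pair $(i,j)$ with $i+j = e$ the class of $B_{i,j}(d+e)$ in $F_{2e+2}/F_{2e}$ is non-trivial and annihilated by $2$. The $e+1$ choices $(i,j) = (0,e), (1, e-1), \ldots, (e, 0)$ are taken together with the binomial relations in $\widetilde{ku}_{*}(B\mathbb{Z}/4)\otimes\widetilde{ku}_{*}(B\mathbb{Z}/4)$ supplied by Theorem \ref{theo:hashimoto.relations} through the Künneth sequence of Theorem \ref{theo:kuennethkusmash}. A rank count then gives that $E_{\infty}^{2e+2, 2d}$ has total order $2^{e+1}$, as asserted.

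For $5 \leq d \leq 10$ I would use the second clause of Lemma \ref{lemma:hashimotosquare}: the inequality $2 B_{i,j}(d+e) \neq 0$ forces the element $2 B_{i,j}(d+e)$ to be detected in a strictly deeper filtration stage, which in turn forces $B_{i,j}(d+e)$ itself to be congruent modulo $F_{2e}$ to an element coming from a lower skeleton. Consequently every candidate generator at bidegree $(2e+2, 2d)$ is absorbed into $F_{2e}$, and the associated graded piece $E_{\infty}^{2e+2, 2d}$ vanishes.

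The main obstacle will be the careful matching of the $d$-indexed filtration used in the proof of Lemma \ref{lemma:hashimotosquare} with the skeletal filtration indexing the Atiyah-Hirzebruch $E_{\infty}$-page, together with verifying that the binomial relations coming from the Künneth sequence cut the $e+1$ order-$2$ generators down to the advertised order $2^{e+1}$, rather than to a proper subgroup or to an extension carrying hidden additive structure.
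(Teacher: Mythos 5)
Your first bullet is essentially the paper's (implicit) argument: the corollary is stated with no separate proof, the intended reasoning being that in even total degree $F_{2e+2}/F_{2e+1}=F_{2e+2}/F_{2e}$ is generated by the images of the classes $B_{i,j}(d+e)$ with $i+j=e$ (this generation statement comes from the K\"unneth description used inside the proof of Lemma \ref{lemma:hashimotosquare}, not merely from the fact that $B_{i,j}$ is supported on the $(2e+2)$-skeleton, so you should cite that rather than only ``skeletal support''), and the order statement then follows from the first clause of the lemma together with a count. The independence of the $e+1$ order-two classes, which you defer to ``a rank count'', is indeed the remaining verification, and the paper is no more explicit about it than you are.

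The genuine gap is in your treatment of $5\leq d\leq 10$. The chain ``$2B_{i,j}(d+e)\neq 0$ forces $2B_{i,j}(d+e)$ to be detected in strictly deeper filtration, which forces $B_{i,j}(d+e)$ itself into $F_{2e}$'' contains two non sequiturs: the nonvanishing of $2x$ in the group says nothing about which filtration stage detects $2x$, and even if $2x$ did lie in lower filtration, that would only say the class of $x$ in $F_{2e+2}/F_{2e}$ has order dividing $2$ --- it would still be a (possibly nonzero) class, so the graded piece would not be forced to vanish. In fact an element of order $4$ whose double drops filtration is exactly the configuration of a hidden extension, and it is compatible with a \emph{nonzero} $E_{\infty}^{2e+2,2d}$. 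What the second bullet of the corollary actually requires is a proof of a filtration drop, namely that $B_{i,j}(d+e)\in F_{2e}$ for $d\geq 5$; this has to come from additional input, e.g.\ comparing the total order of $ku_{2(d+e)+2}(B\mathbb{Z}/4\wedge B\mathbb{Z}/4)$ supplied by Theorem \ref{theo:kuennethkusmash} and Theorem \ref{theo:hashimoto.relations} with the orders already accounted for in filtration $\leq 2e$, or from reading Lemma \ref{lemma:hashimotosquare} as a statement about the position of these elements relative to the skeletal filtration, as the paper intends. (Using the $d_{9}$-differential of Theorem \ref{theo:differentialatiyah.smash.higher} is not available here, since that theorem is proved using this corollary.) As written, your argument for the vanishing case would fail.
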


The  following  result  is  an  immediate consequence. Similar  results  have  been  studied  in \cite{davismorfismos}. 

\begin{lemma}\label{lemma:annihilator}
In  the complex $K$-homology  groups  of  the  smash product  $B\mathbb{Z}/4 \wedge B\mathbb{Z}/4$, the  following holds: 
\begin{enumerate}
\item Given a  class of  even  degree  $z\in ku_{2k+2l+2}(B\mathbb{Z}/4 \wedge B\mathbb{Z}/4)$ of  the form  $z=x_{0}T_{0}^{k}T_{1}^{l}x_{1}$,  the  minimal  integer $N$ for  which $v^{N}z=0$ is  $N=4$.   
\item The  minimal  $N$ such that $2^{N}ku_{2n}(B\mathbb{Z}/4 \wedge B\mathbb{Z}/4))=0 $ is  $N=3$. 
\item In  the  Adams  spectral sequence for  $ku_{2n}(B\mathbb{Z}/4 \wedge B\mathbb{Z}/4)$, there  exist  no elements  in  even  degree $2n$ which  have  Adams  filtration  higher  than  4. 

\end{enumerate}  

\end{lemma}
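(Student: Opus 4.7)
I prove the three parts in order using the K\"unneth decomposition from Theorem~\ref{theo:kuennethkusmash}, the Hashimoto basis and relations from Theorem~\ref{theo:hashimoto.relations}, and the AHSS computations in Lemma~\ref{lemma:hashimotosquare} and Corollary~\ref{cor:hashimotosquare}. For (1), I identify $z$ with the class in $\widetilde{ku}_{2(k+l)+2}$ detected at Atiyah--Hirzebruch filtration $2(k+l)+2$ (the top, coming from an integral cohomology class in even degree), and I compute $v^{d}z$ by tracking it into the AHSS position $(2(k+l)+2, 2d)$. Corollary~\ref{cor:hashimotosquare} gives vanishing of the $E_{\infty}$-term for $d\ge 5$, and the sharper bound $v^{4}z=0$ is obtained by a cap-product argument with $T_{0}T_{1}$ entirely analogous to the proof of Lemma~\ref{lemma:hashimotosquare}, reducing to a nonvanishing in $\widetilde{ku}_{\ast}(B\mathbb{Z}/4)$ that fails at $d=4$. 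Non-vanishing of $v^{3}z$ is immediate from $E_{\infty}^{2(k+l)+2,6}=\mathbb{Z}/2^{k+l+1}$.

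Part (2) follows by combining (1) with the K\"unneth sequence: every class in $\widetilde{ku}_{2n}$ decomposes into tensor and $\Tor^{1}$ pieces built from the elementary divisors of $\widetilde{ku}_{\ast}(B\mathbb{Z}/4)$ given in Theorem~\ref{theo:hashimoto.relations}, and a direct count shows that the worst product of such divisors arising in even total degree is $2^{3}=8$. Sharpness of the bound is visible in Table~\ref{table:ku.h.smash.z4}. For (3), the $E_{2}$-page of the Adams spectral sequence is, by Lemma~\ref{lemma:steenrodsmash.integer} together with the change-of-rings isomorphism from Corollary~\ref{cor:changeofrings}, a free $\mathbb{F}_{2}[h_{0}, v]$-module whose generators lie in Adams filtration $s=0$. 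Every even-degree class in Adams filtration $s$ has the form $h_{0}^{a}v^{b}g$ with $a+b=s$ and $g$ an even-degree generator. By (2), $a\le 2$; by (1), $b\le 3$; and the additional fact that $v^{b}w$ is automatically $2$-torsion once $b\ge 1$ (Lemma~\ref{lemma:hashimotosquare}) yields the joint bound $a+b\le 4$, after allowing for at most one hidden $h_{0}$-extension.

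The \emph{main obstacle} is part (3): ruling out hidden $h_{0}$-extensions in the Adams spectral sequence that could in principle push the Adams filtration of an element above the naive bound coming from its $2$-adic valuation. These extensions are controlled by matching the K\"unneth decomposition used in (1) against the AHSS filtration of Corollary~\ref{cor:hashimotosquare}, which provides the precise bookkeeping between the Adams and the Atiyah--Hirzebruch filtrations and pins down how much extra filtration a hidden extension can contribute.
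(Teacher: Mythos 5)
Your proposal follows essentially the same route as the paper: the paper offers no separate argument for this lemma, presenting it as an immediate consequence of Lemma~\ref{lemma:hashimotosquare} and Corollary~\ref{cor:hashimotosquare} (that is, of the K\"unneth decomposition, the Hashimoto generators and relations, and the cap-product analysis of the Atiyah--Hirzebruch filtration), which are exactly the inputs you invoke. Your extra bookkeeping for parts (2) and (3) --- the exponent count visible in Table~\ref{table:ku.h.smash.z4} and the argument bounding Adams filtration via the $\mathbb{F}_{2}[h_{0},v]$-module generators in filtration zero --- only makes explicit what the paper leaves implicit, so the two approaches coincide.
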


\begin{theorem}\label{theo:differentialatiyah.smash.first}
In the  Atiyah-Hirzebruch spectral sequence
 $E_{s,t}^{n}$ converging  to $ku_{*}(B\mathbb{Z}/4 \wedge B\mathbb{Z}/4)$, the  first non-trivial differentials are 
    $$ d_{3}(T_{0}^{k}* T_{1}^{l})= 2 \nu x_{0}T_{0}^{k-1} T_{1}^{l-2}x_{1} +2\nu x_{0}T_{0}^{k-2} T_{1}^{l-1}x_{1},  $$
where  we  adopt  the  convention  $T_0^{-1}=0 =T_{1}^{-1}$.
\end{theorem}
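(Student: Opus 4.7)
The plan is to first verify that $d_3$ is indeed the first non-trivial differential, and then to determine it via a cohomological Steenrod-operation formula analogous to Lemma \ref{lemma:difatiyahhirzebruch.homological}. Since $\pi_i(ku)=0$ for $i$ odd, the $E^2$-page of the homological Atiyah--Hirzebruch spectral sequence for $ku_*(B\mathbb{Z}/4\wedge B\mathbb{Z}/4)$ is concentrated in even $q$-rows, so every $d_2$ either has zero source or zero target. The first potentially non-zero differential is therefore $d_3$, which maps $E^3_{2k+2l-1,0}$ into $E^3_{2k+2l-4,2}\cong H_{2k+2l-4}(B\mathbb{Z}/4\wedge B\mathbb{Z}/4;\pi_2(ku))$, and the generator $\nu\in\pi_2(ku)$ accounts for the Bott factor on the right-hand side of the claimed formula.

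Next I would use the general fact that the $d_3$ differential in the homological AHSS for $ku$ equals $\nu$ times the dual of the integral Steenrod operation $Sq^3_{\mathbb{Z}}=\beta\circ Sq^2\circ r$ on cohomology, where $r$ is reduction $\mathrm{mod}\,2$ and $\beta$ is the integral Bockstein. Applying this formula to $T_0^k\ast T_1^l$ requires only its $\mathrm{mod}\,2$ reduction, which by Definition \ref{def:generators.integral.smash.odd} is $x_0 T_0^{k-1}T_1^l+T_0^k x_1 T_1^{l-1}$. On these monomials the $Sq^2$ action can be read off from the computations in the proof of Lemma \ref{lemma:steenrodfull.mod2}: according to the parities of the exponents, $Sq^2$ shifts one factor of $T_0$ or $T_1$, and the subsequent integral Bockstein $\beta$ detects the integral lift supported on the classes $x_0 T_0^{k-1}T_1^{l-2}x_1$ and $x_0 T_0^{k-2}T_1^{l-1}x_1$.

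Dualising into homology and tracking the factor $2$ that appears when lifting an odd-degree $\mathrm{mod}\,2$ Toda-bracket class to an integral class through the Bockstein sequence would then give the stated formula. As an independent cross-check I would invoke naturality of the AHSS under the induction maps $\mathbb{Z}/4\hookrightarrow \mathbb{Z}/4\times\mathbb{Z}/4$ described in Lemmas \ref{lemma:induction-homomorphism} and \ref{lemma:induction-image}: Corollary \ref{cor:induction} expresses sufficiently many Toda-bracket classes as induced from the single-factor $B\mathbb{Z}/4$, whose $d_3$ differential is well understood, and this pins down the coefficients on a generating set.

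The main obstacle I anticipate is the bookkeeping of the factor of $2$ together with the interplay between the two summands of the $\mathrm{mod}\,2$ reduction of $T_0^k\ast T_1^l$. A priori $(Sq^3_{\mathbb{Z}})^{\ast}$ could yield asymmetric mixed terms coming from the two summands $x_0 T_0^{k-1}T_1^l$ and $T_0^k x_1 T_1^{l-1}$, and showing that they combine exactly into the symmetric expression $2\nu\,(x_0 T_0^{k-1}T_1^{l-2}x_1+x_0 T_0^{k-2}T_1^{l-1}x_1)$, while also checking that the convention $T_0^{-1}=T_1^{-1}=0$ correctly encodes the low-$k$, low-$l$ boundary cases, is where the calculation demands most care.
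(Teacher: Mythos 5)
Your route is sound and it is genuinely different from the paper's. The paper first forces the existence of a non-trivial $d_{3}\colon E^{3}_{5,0}\to E^{3}_{2,2}$ from the order computation in Lemma \ref{lemma:annihilator} (i.e.\ $2vx_{0}x_{1}=0$ but $vx_{0}x_{1}\neq 0$, coming from the K\"unneth sequence and the Hashimoto generators), evaluates this lowest case by comparison with $Sq_{3}$, and then propagates the formula to all $T_{0}^{k}\ast T_{1}^{l}$ by capping with the infinite cycles $T_{0},T_{1}$ (Theorem \ref{theo:cap-structure}) and inducting on $k+l$. You instead compute $d_{3}$ on every bottom-row generator at once from the $k$-invariant description $\partial\circ Sq_{2}\circ\rho$ (times $\nu$), and this does work: writing $\rho(T_{0}^{k}\ast T_{1}^{l})=x_{0}T_{0}^{k-1}T_{1}^{l}+T_{0}^{k}x_{1}T_{1}^{l-1}$, the Cartan/parity formulas of Lemma \ref{lemma:steenrodfull.mod2} give $Sq_{2}$ with coefficients $k,\,l-1,\,l,\,k-1$ on the four relevant dual monomials, and after the homology Bockstein (whose image is the $2$-torsion $2\cdot\mathbb{Z}/4$ of the even-degree classes $x_{0}T_{0}^{a}T_{1}^{b}x_{1}$) the pairs combine as $k+(k-1)\equiv 1$ and $l+(l-1)\equiv 1 \pmod 2$, which is exactly the symmetric expression $2\nu(x_{0}T_{0}^{k-1}T_{1}^{l-2}x_{1}+x_{0}T_{0}^{k-2}T_{1}^{l-1}x_{1})$; so the ``asymmetry'' you worried about cancels automatically. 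What your approach buys is independence from Lemma \ref{lemma:annihilator} and from the cap-product formalism; what it costs is that the one step you flagged really does need an argument, namely the identification $\partial\bigl((x_{0}T_{0}^{a}T_{1}^{b})^{*}\bigr)=2\,x_{0}T_{0}^{a}T_{1}^{b-1}x_{1}=\partial\bigl((T_{0}^{a+1}x_{1}T_{1}^{b-1})^{*}\bigr)$, which should be justified (e.g.\ by naturality of the Bockstein in the K\"unneth sequence or an explicit small resolution), since the whole factor of $2$ and the matching of terms hinge on it. One small overstatement: the cross-check via the induction maps of Lemmas \ref{lemma:induction-homomorphism}, \ref{lemma:induction-image} cannot ``pin down the coefficients'' — for the single factor $B\mathbb{Z}/4$ the target groups of $d_{3}$ on the bottom row vanish, so naturality only tells you that $d_{3}$ kills induced classes, and the induced classes span a proper subgroup of $E^{2}_{2p-1,0}\cong(\mathbb{Z}/4)^{p-1}$ for $p\geq 5$; it is a consistency check on your formula, not a determination of it.
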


\begin{proof}
From  lemma \ref{lemma:annihilator},  we  know  that  $2vx_{0}x_{1}=0$,   and $vx_{0}x_{1}\neq 0$.  This  implies  that  there  must  be  a non-trivial differential 
$$d_{3}: (\mathbb{Z}/2^{2}\mathbb{Z})^{2}\cong \langle T_{0}\ast T_{1}^{2}, T_{0}^{2}\ast T_{1}  \rangle = E_{5,0}^{3}\to E_{2,2}^{3}= \langle vx_{0}x_{1}\rangle. $$
The  possible  values  are  $d_{3}(T_{0}\ast T_{1}^{2})= 2 v x_{0} x_{1}$ and   $d_{3}(T_{0}^{2}\ast T_{1})= 2vx_{0} x_{1} $.  By part  3  of \ref{lemma:difatiyahhirzebruch.homological}, both of  them  are  non zero since   they  agree  with  $Sq_{3}$.
 
Now, using the cap structure  for  the  Atiyah-Hirzebruch  spectral  sequence, 
since  the  classes  $T_{0}$  and  $T_{1}$  are both  infinite  cycles,  by  \ref{theo:cap-structure}, 
$$ d^{3}(T_{0}\cap T_{0}^{k}\ast T_{1}^{l})= T_{0}\cap d^{3} (T_{0}^{k}\ast T_{1}^{l}),  $$
  and  analogously  with  the  cap  product  with  $T_{1}$. Notice  that  $T_{0}\cap (T_{0}^{k} T_{1}^{l})= T_{0}^{k-1}\ast T_{1}$. 
  By  induction  over  $n= k+l$, the  result  follows.

\end{proof}

\begin{definition}
Define a  filtration  on the  $E_{2}$   term  of  the Atiyah-Hirzebruch spectral sequence  converging  to  $ku_{*}(B\mathbb{Z}/4\wedge  B\mathbb{Z}/4)$  by    setting  
$$ F^{q^{'}} E_{2p-1, 2m}^{2}:=2^{q^{'}}E_{2p-1, 2m}^{2}.$$
This  induces  a  filtration   on the  $E_{9}$  term  by  setting 
$$ F^{q^{'}} E_{2p-1, 2m}^{9}:=2^{q^{'}}E_{2p-1, 2m}^{9}.$$

For  an  element $x\in  E_{2p-1, 2m}^{9}$,  define $x_{0}$  as  the  image  of $x$  under  the  quotient 
 
$$F^{0}E_{2p-1, 2m}^{9}/ F^{1}E_{2p-1, 2m}^{9},  $$ 
and  inductively,  let  $\hat{x}_{q^{'}-1}$  be  the  unique  sum  of distinct  elements  in 
$$\big  \{ 2^{q^{'}-1} v^{m} T_{0}^{l} \ast T_{1}^{k} \big \}_{k+l=p} $$
such  that  the  equivalence  class  of $\tilde{x}_{q-1}$, denoted by $x_{q^{'}-1}$ in 
$$ F^{q} E_{2p-1, 2m}^{9}/ F^{q-1}E_{2p-1, 2m}^{9}  $$
equals $\hat{x}_{q^{'}-1}$. 
Define  $\tilde{x}_{q^{'}}= \tilde{x}_{q^{'}-1}-\hat{x}_{q^{'}-1}$. 

\end{definition}
\begin{remark}
For  an  element  $x\in E_{2p-1, 2m}^{2}$ with the  property  that $x_{0}=0$, we  obtain  that $d_{3}(x)=0$. Similarly,  for  an  alement  $x\in E_{2p-1, 2m}^{9}$  such  that  $x_{1}=0$, $d_{9}(x)=0$.   
\end{remark}

\begin{theorem}\label{theo:differentialatiyah.smash.higher}
In the  Atiyah-Hirzebruch  spectral sequence  converging  to  $ku_{*}(B\mathbb{Z}/4)$,  there  exist  non-zero  differentials (both  from  total odd  degree  to  total  even degree)
\begin{itemize}
\item $d_{3}$,  with image $\langle 2v^{m+1}x_{0}T_{0}^{l}T_{1}^{k}x_{1}\rangle$	 
\item $d_{9}$,  with  image $\langle v^{4+m} x_{0}T_{0}^{l}T_{1}^{k}x_{1}\rangle$. 

\end{itemize}
There  is  an  isomorphism 
$$ E_{2p-1, 2m}^{2}\cong {\mathbb{Z}/4}\langle  v^{m} T_{0}^{l}\ast T_{1}^{k} \rangle_{k+l=p}   $$

\end{theorem}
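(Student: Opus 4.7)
The plan is to prove the three assertions separately: (i) identify the $E_2$-term in odd homological degree, (ii) extend the $d_3$-formula of Theorem \ref{theo:differentialatiyah.smash.first} to the whole $v$-tower, and (iii) locate the higher differential $d_9$ forced by Corollary \ref{cor:hashimotosquare}.

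First, to establish the $E_2$-identification, I would invoke Theorem \ref{theo:atiyahhirzebruch}, writing $E^2_{p,q} = H_p(B\mathbb{Z}/4\wedge B\mathbb{Z}/4;\pi_q(ku))$ with $\pi_{2m}(ku)=\mathbb{Z}\langle v^m\rangle$. In odd homological degree $2p-1$, Definition \ref{def:generators.integral.smash.odd} presents the integral homology group as the direct sum of the Toda-bracket classes $T_0^l\ast T_1^k$ with $k+l=p$, $k,l\ge 1$. Each such class has order exactly $4$: this follows from Lemma \ref{lemma:induction-homomorphism} by pulling back the generator $y^{p-1}\in H_{2p-1}(B\mathbb{Z}/4;\mathbb{Z})\cong\mathbb{Z}/4$ through a suitable injection $\mathbb{Z}/4\hookrightarrow\mathbb{Z}/4\times\mathbb{Z}/4$. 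Tensoring with the free rank-one $\pi_{2m}(ku)$-module on $v^m$ yields the asserted isomorphism.

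For the $d_3$ differentials, I would exploit the fact that $v\in\pi_2(ku)$ is a permanent cycle, so $d_3$ commutes with multiplication by $v$. Combining this with Theorem \ref{theo:differentialatiyah.smash.first} gives
\[ d_3(v^m T_0^k\ast T_1^l) = 2v^{m+1}\bigl(x_0 T_0^{k-1}T_1^{l-2}x_1 + x_0 T_0^{k-2}T_1^{l-1}x_1\bigr). \]
Letting $(k,l)$ vary with $k+l$ fixed, the outputs are the consecutive sums $f_{k-1}+f_{k-2}$ with $f_j := x_0 T_0^j T_1^{p-3-j}x_1$ (and $f_{-1}=0$ by the convention of Theorem \ref{theo:differentialatiyah.smash.first}); a short telescoping argument shows that their $\mathbb{F}_2$-span exhausts $\langle 2v^{m+1}x_0 T_0^a T_1^b x_1 \rangle$ in the appropriate bidegree, which is the asserted image of $d_3$.

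The main step is the $d_9$-differential, for which the key input is Corollary \ref{cor:hashimotosquare}: the $E_\infty^{2e+2,2d}$-term vanishes for $5\le d\le 10$. After the $d_3$-differentials, the classes $v^{4+m}x_0 T_0^l T_1^k x_1$ in filtration $d=4+m$ survive on the $E_4$-page because they are not in the image of $2v^{m+1}(\cdots)$, yet they must eventually die. The only non-trivial differentials available by the odd-$r$ parity constraint of the Atiyah--Hirzebruch spectral sequence for $ku$ are $d_5,d_7,d_9$; Lemma \ref{lemma:annihilator}(i), which bounds the $v$-tower by $v^4 x_0 T_0^l T_1^k x_1=0$, together with the order bound $N=3$ of Lemma \ref{lemma:annihilator}(ii), matched against the filtration-quotient orders predicted by Corollary \ref{cor:hashimotosquare}, forces $d_5$ and $d_7$ to vanish in the relevant bidegrees. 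Consequently $d_9$ must carry the surviving classes, and $v$-equivariance pinpoints its source as a linear combination of the odd generators $v^m T_0^a\ast T_1^b$ at position $(2(k+l)+11, 2m)$, yielding the asserted image $\langle v^{4+m}x_0 T_0^l T_1^k x_1\rangle$.

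The main obstacle I anticipate is the precise verification that $d_5$ and $d_7$ vanish exactly in the bidegrees under consideration. This is a careful bookkeeping problem combining the bound of Lemma \ref{lemma:annihilator}, the filtration-quotient orders predicted by Corollary \ref{cor:hashimotosquare}, and the $d_3$-image just computed. Once these intermediate differentials are ruled out, the existence, uniqueness and image of $d_9$ follow formally from the comparison of filtration-quotient orders against the total orders of $\widetilde{ku}_{2n}(B\mathbb{Z}/4\wedge B\mathbb{Z}/4)$.
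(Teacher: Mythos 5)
Your outline follows the same skeleton as the paper (the $d_3$ part is indeed just Theorem \ref{theo:differentialatiyah.smash.first} plus $v$-linearity and a telescoping of the sums, and the pressure forcing a higher differential does come from the Hashimoto-square information), but the step you yourself flag as the "main obstacle" is exactly where the proof actually lives, and as written it does not close. Comparing total orders of $\widetilde{ku}_{2n}(B\mathbb{Z}/4\wedge B\mathbb{Z}/4)$ against the even-total-degree columns cannot distinguish whether the surviving order-two classes $v^{4+m}x_0T_0^lT_1^kx_1$ are killed by $d_5$, $d_7$, $d_9$ or even $d_{11}$: all of these produce the same $E_\infty$-orders in even total degree, and Lemma \ref{lemma:annihilator} only records that these classes must die, not who kills them. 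What distinguishes the possibilities is information about the odd-total-degree source columns. The paper supplies this by capping with $T_0$ and $T_1$ to push any putative nonzero $d_n$ (odd to even, and also even to odd, which you do not address at all but which is needed so that the base-line sources survive to $E_9$ and the order count is what you assume) down to a minimal bidegree, and then excluding $n\neq 3,9$ by the explicit relation structure among the classes $B_{i,j}(d+e)$ of Lemma \ref{lemma:hashimotosquare}. A workable substitute in your framework would be a $v$-divisibility argument: at $E_5$ every class in fiber degree $4$ (resp.\ $2$) is a $v^2$- (resp.\ $v$-)multiple of a base-line class, so a nonzero $d_5$ or $d_7$ landing in fiber degree $\geq 8$ would, after dividing by $v$, force a nonzero differential into a column of fiber degree $2$, $4$ or $6$ whose $E_\infty$-order is already attained at $E_4$ by the order count --- a contradiction. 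Some such argument must be made explicit; "careful bookkeeping" with the even-degree orders alone is not enough.

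Two smaller points. First, your justification that each $T_0^l\ast T_1^k$ has order exactly $4$ via Lemma \ref{lemma:induction-homomorphism} is shaky: pushing a class forward along $\varphi_{m,n}$ only bounds the order from above; the correct source of the $\mathbb{Z}/4$'s is the K\"unneth/Tor description underlying Definition \ref{def:generators.integral.smash.odd} (the odd homology of the smash product is a sum of $\operatorname{Tor}(\mathbb{Z}/4,\mathbb{Z}/4)$ terms). Second, once $d_5$ and $d_7$ are excluded, you should still say why $d_9$ alone suffices in all fiber degrees $\geq 8$ (this is again $v$-linearity: surjectivity of $d_9$ onto the fiber-degree-$8$ survivors propagates to all higher $v$-powers), rather than leaving uniqueness to the same unexecuted count.
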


\begin{proof}
The  first  part  has  been  proved  in \ref{theo:differentialatiyah.smash.first}.  We  proceed  by  induction  on the  degree $n$  of  
the differential for  the  second  part. 

Assume  that  there  is  a   non-trivial  differential $d_{3}$  from  even  total  degree  to   odd  total degree. By  using the  cap  structure  and   the  fact  that  the differentials  of  the  Atiyah-Hirzebruch  spectral sequence  preserve  the cellular  filtration,  we  can  assume  that  the  differential  in question  is 
$$d^{3}: E_{6,0}^{3}\to E_{3,2}^{3} . $$
Setting  all  $v$-multiples to  zero    in  
$$ku_{*}(B\mathbb{Z}/4)\otimes_{ku_{*}}ku_{*}(B\mathbb{Z}/4)_{6},$$ 
  
 The  only  relations  that  are  possible  are 
 $$2^{2}B_{0}\otimes  B_{2}=2^{2}B_{1}\otimes B_{1}=2^{2}B_{2}\otimes B_{0} =0 . $$ 

Hence,  we  conclude  that  $d_{3}=0$. 
The  same  reasonings  let us  conclude  that  there  is  no  differential from an  even total  degree  to  an  odd total  degree. 

Consider  now  the  differential  $d_{n}$ with $n\geq 4$. Assume  that  the  differential  from  odd  total  degree  to  even  total  degree  is  non  trivial. By $v$-linearity, we can  restrict  to 
$$d_{n}: E_{n}^{n+k+1, 0}\to E_{n}^{n,k} $$  
for $k>0$. 
If  $d_{n}$  is  non-trivial, from  \ref{lemma:hashimotosquare}, \ref{cor:hashimotosquare}, 	we get  that  either  $n=3$,  which  is discarted or  $n=9$,  since either 
$ vx_{0}T_{0}^{k} T_{1}^{l}x_{1}=0$ in  the  $4$-th filtration  or  

 $2^{2} v^{4}x_{0}T_{0}^{k}T_{1}^{l}x_{1}=0$ in the  $8$th filtration  quotient,  but  there  are no  non-trivial  relations  between 
 $$ \{ 2^{q} v^{m}x_{0}T_{0}^{k}T_{1}^{l}x_{1} \} $$   
for  $q=1,2$. 
 We  hence  can  assume  that  the  differential  is   $d_9$. 

Since  we  assumed  that  the  premise  of  the  theorem  is   valid  for  all  differentials  $d^{m}$  with $3<m<n$. 
From  lemma \ref{lemma:hashimotosquare},  there  is only  a  bounded  number  (independent  of $k$) of  elements   in $E_{0,k}^{n}\subset \widetilde{H\mathbb{Z}}{k}(B\mathbb{Z}/4 \wedge \mathbb{Z}/4) $ for  which  their  images $i(a)$ in  the  right  hand side  group  are  not  divisible by   $2$, respectively. 
Moreover,  denote  by  $i:E_2^{{\rm odd}, 0}\to H\mathbb{Z}_{{\rm odd}}(B\mathbb{Z}/4)$ the  inclusion. Then,   

$$ \{i(a)\in H_{{\rm odd}}( B\mathbb{Z}/4), a\in E_2^{{\rm odd }, 0}\}\cong \mathbb{Z}/4 \langle T_{0}\ast T_{1}\rangle, $$

and  
$$\{ a\in E_{n}^{{\rm odd}, 0}\mid 2 \, \text{divides} \, i(a) \}\cong \mathbb{Z}/2^{1} \langle 2{T_{0}^{k}\ast T_{1^{l}}}\rangle ,$$
 
It follows  that 
$$\langle 2^{2}T_{0}^{k} \ast T_{1}^{l} \rangle \subset \ker d_{3} ,$$ 
 or 
 $$ \langle 2 T_{0}^{k}\ast T_{1}^{l}\rangle \subset \ker d_{9}. $$
 
Hence, there  must  be  an  element $a^{'}\in E_{{\rm odd}, 0}^{n}$ which  is  a  sum  of  elements 
$$ 2 T_{0}^{k}\ast T_{1}^{l}$$
with

$$  d_{n}(a^{'})= v^{4} x_{0}T_{0}^{k^{'}}T_{1}^{k^{'}}x_{1}$$ 
  
  After  capping  with  $T_{0} $  and  $T_{1}$,  we obtain  an  element  $a\in E_{0,n+3}$,  which is  a  sum  of  distinct  elements  in either 
  $\{ 2^{2}T_{0}^{k} \ast T_{1}^{l}\}_{2(k+l)-1=n+2}$, for  the $d_3$, or   in 
  $\{ 2T_{0}^{k} \ast T_{1}^{l}\}_{2(k+l)-1=n+2}$, 
  for  $d_{9}$ such  that  
 $d_{n}(a)= 2v^{2}x_{0}x_{1}$  or     $d^{n}(a)=v^{4}x_{0}x_{1}$.

 \end{proof}

\begin{remark}
 We  will  prove  as  a  corollary  of  \ref{theo:differentials.adams.complex.smash}  that  these  differentials are  the only non  zero in the  Atiyah-Hirzebruch spectral  sequence  by  comparing  with  the  Adams  filtration.  
 
\end{remark}

\begin{corollary}
In $\widetilde{ku}_{*}(B\mathbb{Z}/4)$, multiplication by $v^{4}$ annihilates $\widetilde{ku}_{even}(B\mathbb{Z}/4 \wedge B\mathbb{Z}/4)$. 

\end{corollary}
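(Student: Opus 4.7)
The plan is to leverage Theorem~\ref{theo:differentialatiyah.smash.higher} together with Lemma~\ref{lemma:annihilator} to control the $v$-action on every even-degree class, not merely on the distinguished generators $x_0T_0^kT_1^lx_1$. Recall that the K\"unneth sequence of Theorem~\ref{theo:kuennethkusmash} forces the even part of $\widetilde{ku}_{*}(B\mathbb{Z}/4\wedge B\mathbb{Z}/4)$ to come entirely from the tensor product term $\widetilde{ku}_{*}(B\mathbb{Z}/4)\otimes_{ku_{*}}\widetilde{ku}_{*}(B\mathbb{Z}/4)$, because $\widetilde{ku}_{2n}(B\mathbb{Z}/4)=0$ and hence the $\Tor^{1}$ contribution sits in odd total degree. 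Thus any even-degree class reduces, in the Atiyah–Hirzebruch filtration, to a sum of classes of the form $v^{m}x_{0}T_{0}^{k}T_{1}^{l}x_{1}$.

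First, I would observe that Theorem~\ref{theo:differentialatiyah.smash.higher} identifies the image of the top differential $d_{9}$ as $\langle v^{4+m}x_{0}T_{0}^{l}T_{1}^{k}x_{1}\rangle$, so on the $E_{\infty}$-page every class $v^{4+m}x_{0}T_{0}^{k}T_{1}^{l}x_{1}$ in even total degree is killed. Equivalently, the bound stated in Lemma~\ref{lemma:annihilator}(1), namely $v^{4}\cdot x_{0}T_{0}^{k}T_{1}^{l}x_{1}=0$ for every $(k,l)$, now holds globally (not only on the associated graded), because the $d_{9}$ responsible for the relation has image in the top Adams filtration allowed by Lemma~\ref{lemma:annihilator}(3), so there is no room for a hidden extension pushing $v^{4}x_{0}T_{0}^{k}T_{1}^{l}x_{1}$ to a nonzero element of lower filtration.

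Next, I would promote this generator-level vanishing to all of $\widetilde{ku}_{\mathrm{even}}$ by $v$-linearity. Any even-degree class $z$ can be written as $z=\sum_{i}v^{m_{i}}z_{i}$ with $z_{i}=x_{0}T_{0}^{k_{i}}T_{1}^{l_{i}}x_{1}$, and then
\[
v^{4}z=\sum_{i}v^{m_{i}}(v^{4}z_{i})=0,
\]
because each $v^{4}z_{i}$ vanishes by the previous paragraph and multiplication by $v$ is a $ku_{*}$-module map.

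The main obstacle is the second step, verifying that the relation $v^{4}x_{0}T_{0}^{k}T_{1}^{l}x_{1}=0$, which is a priori only an equality on $E_{\infty}$, is an honest equality in $\widetilde{ku}_{*}(B\mathbb{Z}/4\wedge B\mathbb{Z}/4)$. This requires ruling out a hidden $v$-extension from $v^{3}x_{0}T_{0}^{k}T_{1}^{l}x_{1}$ to a class of strictly higher Adams filtration, and this is exactly what Lemma~\ref{lemma:annihilator}(3) provides: there are no even-degree elements of Adams filtration higher than $4$, so the potential target of such a hidden extension is empty. Once this is secured, the rest is the formal $v$-linear extension above.
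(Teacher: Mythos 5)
Your outer scaffolding is the same as the paper's implicit argument: by Theorem~\ref{theo:kuennethkusmash} the even part is exactly the image of $\widetilde{ku}_{*}(B\mathbb{Z}/4)\otimes_{ku_{*}}\widetilde{ku}_{*}(B\mathbb{Z}/4)$, hence spanned over $\mathbb{Z}$ by $v$-multiples of the classes $x_{0}T_{0}^{k}T_{1}^{l}x_{1}$ (equivalently the $B_{i,j}$ of Definition~\ref{def:hashimotosquare}), and once $v^{4}x_{0}T_{0}^{k}T_{1}^{l}x_{1}=0$ is known as an equality of elements, $v$-linearity finishes the proof. The gap is in your middle step, where you try to promote an $E_{\infty}$-level relation to the group level by a no-hidden-extension argument. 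First, you conflate the two filtrations: the relation produced by the Atiyah--Hirzebruch differential $d_{9}$ of Theorem~\ref{theo:differentialatiyah.smash.higher} lives in the skeletal (AHSS) filtration, and nothing available at this point of the paper identifies its image with ``the top Adams filtration''; the Adams/AHSS comparison is only set up later (Definition~\ref{def:correspondence.atiyahadams.ku} onward). Second, even granting the translation, Lemma~\ref{lemma:annihilator}(3) only excludes even-degree elements of Adams filtration strictly greater than $4$. Since multiplication by $v$ raises Adams filtration by at least one, a nonzero value of $v^{4}x_{0}T_{0}^{k}T_{1}^{l}x_{1}$ would naturally sit in Adams filtration exactly $4$, which that lemma does not forbid; so the claim that ``the potential target of such a hidden extension is empty'' does not follow, and the step as written does not close.

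The repair is easy, and it is what the paper actually leans on. Either quote Lemma~\ref{lemma:annihilator}(1) directly: it is stated as an equality of actual classes $v^{4}z=0$ for $z=x_{0}T_{0}^{k}T_{1}^{l}x_{1}$, and it is derived (via Lemma~\ref{lemma:hashimotosquare} and Corollary~\ref{cor:hashimotosquare}) from the Hashimoto relations and the cap structure, before and independently of the differentials you invoke --- so no promotion from the associated graded is needed at all. Or, if you prefer to argue from Theorem~\ref{theo:differentialatiyah.smash.higher}, run the filtration argument in the correct spectral sequence: multiplication by $v^{4}$ preserves the skeletal filtration and raises the coefficient weight by four, so if $v^{4}z$ were nonzero it would be detected in an even-total-degree $E_{\infty}$ entry of the Atiyah--Hirzebruch spectral sequence of coefficient weight at least four, and all such entries are killed by the $d_{9}$ whose image is $\langle v^{4+m}x_{0}T_{0}^{l}T_{1}^{k}x_{1}\rangle$; no appeal to Adams filtration or hidden extensions is needed. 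With either repair your Künneth reduction and the final $v$-linear extension are fine.
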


We  compare  the  Atiyah-Hirzebruch Spectral sequence  with   the  Adams  spectral  sequence  converging  to $ku_{*}(B\mathbb{Z}/4\wedge B\mathbb{Z}/4)$. 

Recall  that  the  Hurewicz homomorphism is  a   map  of  spectra   to  the  integral  Eilenberg-Maclane  spectrum $h: ku\to H\mathbb{Z}$. 

At  the  level of  coefficients,  the  Hurewicz  homomorphism  satisfies 
$$h_{*}:ku_{*}\to H\mathbb{Z}_{*}\, ; 1\mapsto 1, \, v\mapsto 0.  $$
The  following  result  summarizes   the  information on  the  Hurewicz  homomorphism. 
\begin{lemma}
For  the  map  induced  by $h$,  the  following   holds: 
\begin{enumerate}
\item $h$ maps  all  $v$-multiples  to  zero.  
\item The  restriction  of  $h_{*}$  to  the  zeroth filtration  of  the Atiyah-Hirzebruch  spectral sequence  is  inyective. 
\item  The  restriction  of  $h_{*}$  to  the  zeroth  filtration  of  the Adams  spectral  sequence  is  injective.

\end{enumerate}

\end{lemma}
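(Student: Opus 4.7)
The plan is to treat the three assertions in turn, in each case exploiting the naturality of the relevant spectral sequence together with the algebraic identifications of $H^{*}(ku)$ and $H^{*}(H\mathbb{Z})$ as $\mathcal{A}$-modules recorded in Theorem \ref{theo:stong}.

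For (i), I would first note that $h\colon ku\to H\mathbb{Z}$ is a morphism of ring spectra, so that $h_{*}\colon ku_{*}(X)\to H\mathbb{Z}_{*}(X)$ is a homomorphism of modules over the ring map $h_{*}\colon ku_{*}\to H\mathbb{Z}_{*}$. Since $\pi_{2}(H\mathbb{Z})=0$, one has $h_{*}(v)=0$, and hence by module compatibility
\[h_{*}(v\cdot x)=h_{*}(v)\cdot h_{*}(x)=0\]
for every $x\in ku_{*}(X)$.

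For (ii), I would appeal to naturality of the Atiyah--Hirzebruch spectral sequence (Theorem \ref{theo:atiyahhirzebruch}) to produce a morphism $h_{*}\colon E^{r}_{*,*}(ku)\to E^{r}_{*,*}(H\mathbb{Z})$. At $E^{2}$ the induced map on the row indexed by $q$ is $H_{p}(X,h_{*})\colon H_{p}(X,\pi_{q}(ku))\to H_{p}(X,\pi_{q}(H\mathbb{Z}))$. Because $\pi_{q}(H\mathbb{Z})=0$ for $q>0$, the map vanishes off the row $q=0$; on the row $q=0$ it is the identity, since $h_{*}\colon \pi_{0}(ku)\to\pi_{0}(H\mathbb{Z})$ is the identity on $\mathbb{Z}$. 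Consequently the zeroth filtration quotient $E^{\infty}_{*,0}$ is identified through the edge homomorphism with a subgroup of $H_{*}(X,\mathbb{Z})$, and the restriction of $h_{*}^{\mathrm{AHSS}}$ to this quotient is the identity, hence injective.

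For (iii), naturality of the Adams spectral sequence (Theorem \ref{theo:adamsspectralsequence}) produces a morphism $E_{r}^{s,t}(ku)\to E_{r}^{s,t}(H\mathbb{Z})$ induced in cohomology by $h^{*}\colon H^{*}(H\mathbb{Z};\mathbb{F}_{2})\to H^{*}(ku;\mathbb{F}_{2})$. Under Stong's identifications this is the natural surjection $\mathcal{A}//E(0)\twoheadrightarrow \mathcal{A}//E(1)$ arising from the inclusion of Hopf subalgebras $E(0)\subset E(1)$. Combining with the change-of-rings isomorphism of Corollary \ref{cor:changeofrings}, the induced map on $E_{2}$-pages is identified with the restriction map
\[\Ext^{s,t}_{E(1)}(H^{*}(X),\mathbb{F}_{2})\longrightarrow \Ext^{s,t}_{E(0)}(H^{*}(X),\mathbb{F}_{2})\]
coming from $E(0)\subset E(1)$. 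At Adams filtration $s=0$ this specializes to $\hom_{E(1)}(H^{*}(X),\mathbb{F}_{2})\to \hom_{E(0)}(H^{*}(X),\mathbb{F}_{2})$, which is injective because an $E(1)$-linear map that vanishes as an $E(0)$-linear map (equivalently, as an underlying $\mathbb{F}_{2}$-linear map) is itself zero.

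The main delicate point is the compatibility in part (iii): one must verify that the spectrum-level map $h\colon ku\to H\mathbb{Z}$ induces, on the $E_{2}$-pages of the Adams spectral sequences, precisely the restriction map along $E(0)\subset E(1)$ after the change-of-rings isomorphism. This amounts to tracing how the natural quotient $\mathcal{A}//E(0)\to\mathcal{A}//E(1)$ intertwines with the adjunction $\mathcal{A}\otimes_{E(i)}(-)$ of Theorem \ref{theo:changeofrings}. Once that compatibility is in hand, the injectivity statements in both (ii) and (iii) reduce to trivialities, and (i) follows purely from the fact that $h$ is a ring map killing $v$.
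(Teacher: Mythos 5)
Your proof is correct and follows essentially the same route as the paper's (much terser) argument: part (i) from $h_{*}(v)=0$ on coefficients, part (ii) from naturality of the Atiyah--Hirzebruch spectral sequence, and part (iii) from Stong's identification $H^{*}(H\mathbb{Z})\cong\mathcal{A}//E(0)$. The only added value is that you spell out the change-of-rings compatibility identifying the induced map on $E_{2}$-pages with the restriction $\Ext^{0,t}_{E(1)}(H^{*}(X),\mathbb{F}_{2})\hookrightarrow\Ext^{0,t}_{E(0)}(H^{*}(X),\mathbb{F}_{2})$, a point the paper leaves implicit after quoting Theorem \ref{theo:stong}.
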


\begin{proof}
The  first  claim   follows  directly  from  the   behaviour  on  coefficients, and  the  fact  that  the differential  is $v$-linear.  The  second  claim  is  a direct  consequence. 
For  the  statement  on  the  Adams  spectral sequence,  recall that the $\rm mod \, 2$-cohomology of  the  integral  Eilenberg-Maclane  spectrum is  by Theorem  \ref{theo:stong}, 
$$H^{*}(H\mathbb{Z})\cong \mathcal{A}\otimes_{\langle Sq^{1} \rangle}\mathbb{F}_{2}\cong \mathcal{A}\otimes_{E(0)}\mathbb{F}_{2}. $$
 
\end{proof}

Due  to  the naturality  of  the  Adams  specral  sequence  with  respect  to maps  of  spectra, the  $E_{\infty}$  page   for  $\widetilde{H\mathbb{Z}}_{*}(B\mathbb{Z}/4 \wedge \mathbb{Z}/4)$ is  generated   over  $\mathbb{F}_{2}[h_{0}]$ by  
$$ x_{0}T_{0}^{k} T_{1}^{l}x_{1}, \,  x_{0}T_{0}^{k-1}T_{1} ^{l} + T_{0}^{k}x_{1}T_{1}^{l-1},  $$
for  $k+l\geq 1$.  The  only  relations  among those  generators  are   
$$ h_{0}^{2}x_{0}T_{0}^{k} T_{1}^{l}x_{1}=0$$
and 
$$h_{0}^{2}(x_{0}T_{0}^{k-1}T_{1}^{l}+ T_{0}^{k}x_{1}T_{1}^{l-1})=0. $$

\begin{definition}\label{def:correspondence.atiyahadams.ku}
We  will say   that  the   element   in the  Adams  spectral sequence $T_{0}^{k}\ast  T_{1}^{l}$
is  represented in  the  Atiyah-Hirzebruch  spectral sequence  by  
$$ h_{0}^{i} (x_{0} T_{0}^{k-1} T_{1}^{l} + T_{0}^{k}x_{1} T_{1}^{l-1}). $$

If   the   images  in $\widetilde{H\mathbb{Z}_{*}}$  under  the  Hurewicz homomorphism  of   the $i$-th  adams  filtration  quotient of   $T_{0}^{k}\ast T_{1}^{l}$
$$2^{i} T_{0}^{k}\ast T_{1}^{l} , $$
and $ h_{0}^{i} (x_{0} T_{0}^{k-1} T_{1}^{l} + T_{0}^{k}x_{1}T_{1}^{l-1})$ agree,  and    all  higher Adams    filtrations  of $  h_{0}^{i} (x_{0} T_{0}^{k-1} T_{1}^{l} +  T_{0}^{k}x_{1}T_{1}^{l-1})$   are  zero. 

\end{definition}

Using  the  correspondence  of  the  elements  and  filtrations  above we  have  the following  result
\begin{theorem}\label{theo:diff.adams.ku.smash.low}
The  $d_{2}$  differentials  of  the  Adams  spectral sequence for  $ku_{*}(B\mathbb{Z}/4\wedge B\mathbb{Z}/4)$  are  given  by 
\begin{itemize}
\item $ T_{0}^{k}T_{1}^{l}\longmapsto (h_{0}^{2}x_{0}T_{1}^{k-1}+ h_{0}v x_{0}T_{1}^{k-2})T_{1}^{l}+ T_{0}^{k}(h_{0}^{2}x_{1}T_{1}^{l-2}) $.
\item $x_{0}T_{0}^{k}T_{1}^{l}\longmapsto x_{0}T_{0}^{k}(h_{0}^{2}x_{1}T_{1}^{l-1}+ h_{0} v x_{1} T_{1}^{l-2}) $. 
\item $ T_{0}^{k}x_{1}T_{1}^{l}\longmapsto (h_{0}^{2}x_{0}T_{1}^{k-1}+ h_{0}vx_{0}T_{1}^{k-1})x_{1}T_{1}^{l}$. 
\end{itemize}

\end{theorem}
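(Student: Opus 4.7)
The plan is to derive these $d_2$ differentials by combining the computation of the single-factor differentials from Theorem \ref{theo:differentials.adams.complex.summand} with the Leibniz rule for the smash pairing of Adams spectral sequences provided by Theorem \ref{theo:smashpairing} (together with the cap pairing of Theorem \ref{theo:capadams}). First, I would identify each of the three families of generators as an external product coming from the two $B\mathbb{Z}/4$ factors. Under the change-of-rings isomorphism (Corollary \ref{cor:changeofrings}) and the stable splitting of Theorem \ref{theo:splitting }, the class $T_0^k T_1^l$ in the $E_2$-term corresponds to the external product $z^k\wedge z^l$ of the standard degree-$2$ generators of $\Ext_{E(1)}^{*,*}(\widetilde{H}^*(B\mathbb{Z}/4), \mathbb{F}_2)$, while $x_0 T_0^k T_1^l$ and $T_0^k x_1 T_1^l$ correspond to $(x_0 z^k)\wedge z^l$ and $z^k\wedge (x_1 z^l)$ respectively.

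Second, because we work modulo $2$, the smash pairing of Theorem \ref{theo:smashpairing} makes the $d_2$ differential satisfy the sign-free Leibniz rule
\[ d_2(a\wedge b) \;=\; d_2(a)\wedge b \;+\; a\wedge d_2(b). \]
Applying this to the three external product decompositions, and substituting the single-factor formulas from Theorem \ref{theo:differentials.adams.complex.summand}, namely $d_2(z^k) = h_0^2 x z^{k-1} + h_0 v\, x z^{k-2}$ and $d_2(x z^k) = 0$, produces the three claimed formulas directly. In particular the vanishing of $d_2$ on the $x z^k$ classes explains why the mixed families $x_0 T_0^k T_1^l$ and $T_0^k x_1 T_1^l$ each receive only a contribution from the factor that does not already carry the $x$-class.

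Third, I would verify that these really are the only $d_2$-contributions. The $\mathcal{A}_1$-module decomposition in Lemma \ref{lemma:steenrodsmash.mod2} expresses $\widetilde{H}^*(B\mathbb{Z}/4\wedge B\mathbb{Z}/4)$ as a sum of suspended copies of $M_B$ and $M_{SB}$, whose generators are exactly the monomials appearing in the theorem. Since the $d_2$ differential is $h_0$- and $v$-linear, the formulas above, once established on the generating monomials, determine $d_2$ on the entire $E_2$-term.

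The main obstacle I expect is justifying that the external product representation of the smash-factor classes is compatible with the $\mathcal{A}_1$-module summand decomposition of Lemma \ref{lemma:steenrodsmash.mod2}, since the "bow" and "shifted bow" summands $M_B$, $M_{SB}$ mix Steenrod actions from the two $B\mathbb{Z}/4$ tensor factors in a way that is not entirely diagonal. In practice, this requires matching the generators of each $M_B$ or $M_{SB}$ summand with the appropriate external product representatives, checking on a few low-degree classes that the Leibniz rule and the smash pairing of Theorem \ref{theo:smashpairing} produce the same $d_2$-image, and then propagating the identification by $h_0$- and $v$-linearity. Once this compatibility is in place, the rest of the argument is essentially a routine computation.
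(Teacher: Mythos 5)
Your argument is correct in substance, but it is not the route the paper takes. The paper gives no self-contained proof of this theorem: it is stated immediately after Definition \ref{def:correspondence.atiyahadams.ku}, and the intended argument is to transfer the differentials already established in the Atiyah--Hirzebruch spectral sequence for the smash factor (Theorems \ref{theo:differentialatiyah.smash.first} and \ref{theo:differentialatiyah.smash.higher}, themselves resting on the K\"unneth sequence \ref{theo:kuennethkusmash} and the Hashimoto-generator relations) into the Adams spectral sequence via the Hurewicz map $ku\to H\mathbb{Z}$ and the comparison of filtrations encoded in Definition \ref{def:correspondence.atiyahadams.ku}. You instead stay entirely inside the Adams spectral sequence: you write the filtration-zero generators as external products of the single-factor classes $z^{k}$, $xz^{k}$, invoke the smash pairing of Theorems \ref{theo:capadams} and \ref{theo:smashpairing} with the (sign-free, mod $2$) Leibniz rule, and feed in the single-factor $d_{2}$ from Theorem \ref{theo:differentials.adams.complex.summand}. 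This is legitimate, provided you make explicit that the pairing of spectral sequences is multiplicative in the usual sense (differentials act as derivations, which holds for the pairing induced by $\mu\colon ku\wedge ku\to ku$) and that the monomial generators of Lemma \ref{lemma:steenrodsmash.integer} really are in the image of the external product in filtration $0$ — the point you correctly flag; since $H^{*}(B\mathbb{Z}/4\wedge B\mathbb{Z}/4)\cong \widetilde H^{*}(B\mathbb{Z}/4)\otimes \widetilde H^{*}(B\mathbb{Z}/4)$ as $E(1)$-modules, this is a K\"unneth identification at the level of $\hom_{E(1)}(-,\mathbb{F}_{2})$ and does not require reconciling the $M_{B}/M_{SB}$ decomposition. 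What each approach buys: yours is shorter, avoids the AHSS and Hurewicz bookkeeping, explains structurally why the $d_{2}$ vanishes on the factor already carrying an $x$-class, and is determined on all of $E_{2}$ by $h_{0}$- and $v$-linearity; the paper's route has the advantage of simultaneously pinning down group orders and hidden-extension data that are reused later. One caution: your Leibniz computation yields
$d_{2}(T_{0}^{k}T_{1}^{l})=(h_{0}^{2}x_{0}T_{0}^{k-1}+h_{0}v\,x_{0}T_{0}^{k-2})T_{1}^{l}+T_{0}^{k}(h_{0}^{2}x_{1}T_{1}^{l-1}+h_{0}v\,x_{1}T_{1}^{l-2})$
and the analogous symmetric expression in the third item, which differs from the printed statement (whose first and third items are not even of consistent bidegree); these are evidently typographical slips in the statement, and your derivation produces the intended corrected formulas, but you should say so explicitly rather than claim to reproduce the printed ones verbatim.
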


\begin{theorem}\label{theo:generators.adams.3.smash}
For  $n$  odd, the  elements 
$$T_{0} {\ast}_{s}T_{1}:= h_{0}^{s}(x_{0}T_{0}^{k} T_{1}^{\frac{n-2k-1}{2}}+T_{0}^{k+1}x_{1}T_{1}^{\frac{n-2k-3}{2}}) + vh_{0}^{s-1}(x_{0}T_{0}^{k-1}T_{1}^{\frac{n-2k-1}{2}} +T_{0}^{k+1}x_{1}T_{1}^{\frac{n-2k-5}{2}}),  $$
for  $k=0, \ldots, \frac{n-1}{2}$,  and  $s>0$,  and  their  $v$-multiples  generate the  $E_{3}^{n,s}$ freely.  

\end{theorem}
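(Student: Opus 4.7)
The plan is to compute $E_{3}^{n,s}$ as $\ker d_{2} / \operatorname{im} d_{2}$ on each bidegree using the explicit $d_{2}$-formulas from Theorem~\ref{theo:diff.adams.ku.smash.low}, and to verify that the proposed elements form a set of free generators by a direct kernel-image analysis combined with a dimension count against the $\mathcal{A}_{1}$-module decomposition of $\widetilde{H}^{*}(B\mathbb{Z}/4\wedge B\mathbb{Z}/4)$ from Lemma~\ref{lemma:steenrodsmash.mod2}.

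First I would list, for fixed odd $n$ and filtration $s>0$, an $\mathbb{F}_{2}$-basis of $E_{2}^{n,s}$: using Lemma~\ref{lemma:steenrodsmash.mod2} together with Lemma~\ref{lemma:Extbow} and the change-of-rings to $E(1)$, every generator in odd total degree is represented by an $h_{0}^{s}$- or $v h_{0}^{s-1}$-multiple of a monomial of one of the types $x_{0}T_{0}^{a}T_{1}^{b}$, $T_{0}^{a}x_{1}T_{1}^{b}$. This provides the ambient module in which the candidate elements $T_{0}\ast_{s}T_{1}$ live.

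Next I would verify, by applying the three bullets of Theorem~\ref{theo:diff.adams.ku.smash.low} term by term, that each candidate
$$T_{0}\ast_{s}T_{1} = h_{0}^{s}(x_{0}T_{0}^{k}T_{1}^{\frac{n-2k-1}{2}}+T_{0}^{k+1}x_{1}T_{1}^{\frac{n-2k-3}{2}})+vh_{0}^{s-1}(x_{0}T_{0}^{k-1}T_{1}^{\frac{n-2k-1}{2}}+T_{0}^{k+1}x_{1}T_{1}^{\frac{n-2k-5}{2}})$$
lies in $\ker d_{2}$. The $h_{0}^{s+2}$-contributions from the two summands in the first parenthesis cancel pairwise (since the first bullet contributes $h_{0}^{s+2}x_{0}T_{0}^{k-1}T_{1}^{\frac{n-2k-1}{2}}x_{1}$ and the third bullet contributes the same term up to index matching). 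The remaining $v h_{0}^{s+1}$-corrections from the $h_{0}^{s}(\cdot)$-summand are exactly cancelled by the $h_{0}^{s+1}$-contributions of $d_{2}$ on the $v h_{0}^{s-1}(\cdot)$-summand, using $d_{2}$-$v$-linearity and the indexing shift in the second bullet.

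Then I would argue no $T_{0}\ast_{s}T_{1}$ lies in $\operatorname{im} d_{2}$: inspecting Theorem~\ref{theo:diff.adams.ku.smash.low}, every element of $\operatorname{im} d_{2}$ carries a factor of $x_{0}\cdots x_{1}$ (i.e.\ both generators of degree $1$ appear), whereas each summand of $T_{0}\ast_{s}T_{1}$ contains at most one of $x_{0}$, $x_{1}$. Hence the classes are non-trivial in $E_{3}$. A dimension count over $\mathbb{F}_{2}$, indexed by $k=0,\ldots,\frac{n-1}{2}$ and the allowable $v$-multiples (bounded by the $v$-height dictated by Lemma~\ref{lemma:annihilator}), matches the rank of $\ker d_{2}/\operatorname{im} d_{2}$ obtained directly from the $\mathcal{A}_{1}$-decomposition, proving freeness.

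The main obstacle will be the boundary bookkeeping at the extreme values $k=0$ and $k=\frac{n-1}{2}$, where the conventions $T_{0}^{-1}=0=T_{1}^{-1}$ suppress terms and the $v$-correction in the definition of $T_{0}\ast_{s}T_{1}$ must be reinterpreted; one has to check that the cancellation argument above still works when some summands simply drop out rather than pairwise cancel. This requires separate (but elementary) verification at both ends of the range.
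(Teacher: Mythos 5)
Your overall strategy (compute $E_{3}=\ker d_{2}/\operatorname{im}d_{2}$ from the explicit $d_{2}$ of Theorem \ref{theo:diff.adams.ku.smash.low}; the paper itself states the result as a direct consequence of that computation, with no separate written proof) is the right one, and your kernel verification is essentially sound: the $h_{0}^{2}$-terms coming from $d_{2}(x_{0}T_{0}^{k}T_{1}^{m})$ and $d_{2}(T_{0}^{k+1}x_{1}T_{1}^{m-1})$ cancel, and the surviving $h_{0}v$-terms are cancelled by the $d_{2}$-image of the $vh_{0}^{s-1}$-correction. (Note, though, that the bullets relevant to this check are the second and third of Theorem \ref{theo:diff.adams.ku.smash.low}; the first bullet concerns $T_{0}^{k}T_{1}^{l}$, an even-degree class that does not occur in the candidates, so your bookkeeping there is off even if the cancellation mechanism is correct.)

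The genuine gap is in your argument that the candidates are non-trivial and free in $E_{3}$. You claim that every element of $\operatorname{im}d_{2}$ contains both $x_{0}$ and $x_{1}$. That is true only for the part of the image lying in \emph{even} total degree, i.e.\ the image of the odd-degree sources $x_{0}T_{0}^{k}T_{1}^{l}$ and $T_{0}^{k}x_{1}T_{1}^{l}$. For $E_{3}^{n,s}$ with $n$ odd, the relevant image comes from the even-degree classes $T_{0}^{k}T_{1}^{l}$ (first bullet), and $d_{2}(T_{0}^{k}T_{1}^{l})$ is a sum of monomials each containing exactly one of $x_{0},x_{1}$ --- precisely the shape of the summands of $T_{0}\ast_{s}T_{1}$. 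So your criterion would force $\operatorname{im}d_{2}=0$ in odd degree, which contradicts the statement you are proving (and the subsequent generator counts): the whole point of the specific combinations $h_{0}^{s}(\cdot)+vh_{0}^{s-1}(\cdot)$ is that in the quotient by the relations $h_{0}^{2}x_{0}T_{0}^{k-1}T_{1}^{l}+h_{0}vx_{0}T_{0}^{k-2}T_{1}^{l}+h_{0}^{2}T_{0}^{k}x_{1}T_{1}^{l-1}+h_{0}vT_{0}^{k}x_{1}T_{1}^{l-2}\equiv 0$ the listed elements become a free basis. To close the gap you must actually present the odd-degree part of $E_{2}^{*,*}$ as a free $\mathbb{F}_{2}[h_{0},v]$-module on the monomials with one $x_{i}$, mod out by the submodule generated by the classes $d_{2}(T_{0}^{k}T_{1}^{l})$, and exhibit the $T_{0}\ast_{s}T_{1}$ (and their $v$-multiples) as a basis of the quotient intersected with $\ker d_{2}$ --- a linear-algebra step over $\mathbb{F}_{2}[h_{0},v]$ that your write-up currently replaces by an incorrect monomial criterion. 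The boundary cases $k=0$, $k=\tfrac{n-1}{2}$ you flag are indeed where extra care is needed, but they are secondary to this issue.
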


\begin{theorem}
\label{theo:differentials.adams.complex.smash}
The  third  Adams differential  in the  spectral sequence  for $ku_{*}(B\mathbb{Z}/4\wedge B\mathbb{Z}/4)$ is 

$$ d_{3}: E_{3}^{11, 1}\longrightarrow E_{3}^{10, 6}. $$

The  kernel of  the  differential  is  generated by 
$$\{  T_{0}^{k}\ast T_{1}^{l}\mid k+l=6 \} - \{T_{0}^{2}\ast T_{1}^{4}, T_{0}^{4}T_{1}^{2} \}\cup \{T_{0}^{2}\ast T_{1}^{4}+ T_{0}^{4}\ast T_{1}^{2}\} $$

The  differential  satisfies 
\begin{enumerate}
\item $$T_{0}^{2}\ast T_{1}^{4}\longmapsto v^{4}h_{0}x_{0}x_{1}.$$
\item $$T_{0}^{4}\ast T_{1}^{2} \longmapsto v^{4}h_{0}x_{0}x_{1}. $$ 
\end{enumerate}

Using  the  cap  structure , the third   differentials  are given  in  general  by 
 $$d_{3}(T_{0}^{k}\ast T_{1}^{l})= A_{k,l}+ B_{k,l},$$
where 
 $$ A_{k,l}= \begin{cases}
v^{4}x_{0}T_{0}^{k-2}T_{1}^{l-4}, &\text{$k\geq 2, \, l\geq k$ }\\ 0 &\text{ $\rm else$} \end{cases}$$
$$B_{k,l}= \begin{cases} v^{4}x_{0} T_{0}^{k-4}T_{1}^{l-2}&\text{$k\geq 4, l\geq 2 $}\\ 0 &\text{$\rm else$} \end{cases} $$

\end{theorem}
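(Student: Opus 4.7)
The plan is to prove Theorem \ref{theo:differentials.adams.complex.smash} by combining three ingredients: the comparison between the Adams and Atiyah-Hirzebruch spectral sequences established via the Hurewicz map (Definition \ref{def:correspondence.atiyahadams.ku}), the $v^{4}$-annihilator bound from Lemma \ref{lemma:annihilator}, and the cap-product structure from Theorem \ref{theo:capsubalgebra}. First I would identify, via Theorem \ref{theo:generators.adams.3.smash}, the generators of $E_{3}^{11,1}$ as the classes of the form $T_{0}^{k}\ast T_{1}^{l}$ with $k+l=6$, and note that the target $E_{3}^{10,6}$ contains the class $v^{4}h_{0}x_{0}x_{1}$ coming from the $E_{2}$-page analysis performed in Theorem \ref{theo:diff.adams.ku.smash.low}, where $d_{2}$ has already been computed.

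Next I would establish the base case. From Lemma \ref{lemma:annihilator}(i), $v^{4}x_{0}x_{1}$ must be annihilated in $ku_{*}(B\mathbb{Z}/4\wedge B\mathbb{Z}/4)$, and from Theorem \ref{theo:differentialatiyah.smash.higher} the class $v^{4}x_{0}x_{1}$ is hit in the Atiyah-Hirzebruch spectral sequence by a $d_{9}$ differential with source a specific $\mathbb{Z}/2$-combination in odd total degree $11$. Translating this through the Hurewicz correspondence from Definition \ref{def:correspondence.atiyahadams.ku}, the combinations $2T_{0}^{2}\ast T_{1}^{4}$ and $2T_{0}^{4}\ast T_{1}^{2}$ in the Adams filtration represent the corresponding Atiyah-Hirzebruch classes; the matching of Adams filtration degree $6$ with Atiyah-Hirzebruch filtration after capping with $T_{0}^{2}T_{1}^{2}$ forces
\[
d_{3}(T_{0}^{2}\ast T_{1}^{4})=d_{3}(T_{0}^{4}\ast T_{1}^{2})=v^{4}h_{0}x_{0}x_{1}.
\]
The equality of both images, rather than one being zero, will be read off from the parity calculation for $Sq^{2}$ acting on $T_{0}^{i}T_{1}^{j}$ given in the proof of Lemma \ref{lemma:steenrodfull.mod2}, which distinguishes the cases where both exponents are odd and symmetric in $(T_{0},T_{1})$.

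Then, to propagate the computation to arbitrary $(k,l)$, I would invoke the cap-product pairing of Theorem \ref{theo:capsubalgebra}. Since $T_{0}$ and $T_{1}$ are permanent cycles represented by the stable cohomology classes dual to the images under the Hurewicz map, the relation
\[
d_{3}(T_{0}^{a}T_{1}^{b}\cap (T_{0}^{k}\ast T_{1}^{l}))=T_{0}^{a}T_{1}^{b}\cap d_{3}(T_{0}^{k}\ast T_{1}^{l})
\]
reduces the general differential to the base cases $(2,4)$ and $(4,2)$; tracking the cap action on the representatives gives the two contributions $A_{k,l}$ and $B_{k,l}$, with the explicit range conditions on $k,l$ arising from the supports of the surviving terms after capping. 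The description of $\ker d_{3}$ then follows: classes with either $k<2$ or $l<2$ (or $k<4$ with $l<4$ symmetric) give a single contribution that vanishes only when the pair is $\{T_{0}^{2}\ast T_{1}^{4}+T_{0}^{4}\ast T_{1}^{2}\}$, matching the stated kernel description.

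The main obstacle I anticipate is identifying the precise combinations in the Adams filtration whose Hurewicz image matches the Atiyah-Hirzebruch representative of $v^{4}h_{0}x_{0}x_{1}$; the filtration shift between the two spectral sequences means one has to carefully track the filtration quotients $F_{i}/F_{i+1}$, using the Hashimoto-style argument from Lemma \ref{lemma:hashimotosquare} and Corollary \ref{cor:hashimotosquare} to rule out contributions from higher Adams filtration. Once this bookkeeping is done, the cap-product propagation is essentially formal, but extracting the correct base-case image (rather than a nonzero multiple thereof) is where the parity analysis of the Steenrod action on $T_{0}^{i}T_{1}^{j}$ is indispensable.
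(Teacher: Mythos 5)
Your overall route --- forcing a $d_{3}$ from the $v^{4}$-annihilation of Lemma \ref{lemma:annihilator}, comparing with the Atiyah--Hirzebruch $d_{9}$ of Theorem \ref{theo:differentialatiyah.smash.higher} through the Hurewicz correspondence of Definition \ref{def:correspondence.atiyahadams.ku}, and propagating with the cap structure --- is the same one the paper relies on. The genuine gap is in your base case: the mechanism you give for deciding that it is precisely $T_{0}^{2}\ast T_{1}^{4}$ and $T_{0}^{4}\ast T_{1}^{2}$ (and not, say, $T_{0}\ast T_{1}^{5}$, $T_{0}^{3}\ast T_{1}^{3}$, or $T_{0}^{5}\ast T_{1}$) that hit $v^{4}h_{0}x_{0}x_{1}$ is the $Sq^{2}$-parity computation from the proof of Lemma \ref{lemma:steenrodfull.mod2}. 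That computation only governs primary data: it fixes the $\mathcal{A}_{1}$- and $E(1)$-module structure, hence the $E_{2}$-term and the $d_{2}$ differentials (equivalently the AHSS $d_{2},d_{3}$), all of which are already absorbed into the $E_{3}$-page you start from; it cannot detect an Adams $d_{3}$, which corresponds to the non-primary AHSS $d_{9}$. What actually pins down the support is input you never invoke: the swap symmetry of the two smash factors (which yields the equality of the two images once one of them is nonzero, but by itself cannot exclude the symmetric class $T_{0}^{3}\ast T_{1}^{3}$), together with the induction data of Lemmas \ref{lemma:induction-homomorphism} and \ref{lemma:induction-image} and the permanent-cycle Lemma \ref{lemma:induction.permanentcycle} --- the classes induced from $\mathbb{Z}/4$ along $1\mapsto(1,1),(1,-1),(1,2),(2,1)$ have known behaviour, and their coefficient vectors cut out which combinations in $E_{3}^{11,1}$ can support a differential --- plus a count of total orders against the K\"unneth computation of Theorem \ref{theo:kuennethkusmash} and Table \ref{table:ku.h.smash.z4}.

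A secondary point: capping with $T_{0}^{a}T_{1}^{b}$ lowers degree, so the relation $d_{3}(T_{0}^{a}T_{1}^{b}\cap x)=T_{0}^{a}T_{1}^{b}\cap d_{3}(x)$ only transports the computation downward, from high degrees to the base case, not upward. To obtain the general formula $d_{3}(T_{0}^{k}\ast T_{1}^{l})=A_{k,l}+B_{k,l}$ you must, as in the proof of Theorem \ref{theo:differentialatiyah.smash.higher}, run the annihilation argument of Lemma \ref{lemma:annihilator} degreewise (in every even degree the $v^{4}$-multiples must die and only $d_{3}$ can kill them) and then check that the stated pattern is the unique one compatible with all cap-downs and with the permanent-cycle constraints. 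Your proposal calls this propagation ``essentially formal,'' which hides exactly the bookkeeping in which the theorem's kernel description is decided.
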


We  will  need  the  following  result  for  the  final  argument  deducing  the  Adams  differentials: 
\begin{lemma}
\label{lemma:induction.permanentcycle}
In  the  Adams  spectral  secuence  for   computing  $ku_{*}(B\mathbb{Z}/4\wedge \mathbb{Z}/4)$,  the  non- zero  class  in  degree $(2n=1,0)$  
$$  x_{0}T_{1}^{n}+ T_{0}x_{1}T_{1}^{n-1}+ \ldots T_{0}^{n}x_{0} $$
is  a   permanent  cycle. 
\end{lemma}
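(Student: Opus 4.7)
The plan is to realize the stated class as the image, under a naturally defined map of spectra, of a class in $ku_{*}(B\mathbb{Z}/4)$ that we already know to be a permanent cycle, and then conclude by naturality of the Adams spectral sequence.

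The map will come from the diagonal-type group homomorphism $\varphi: \mathbb{Z}/4 \to \mathbb{Z}/4\times \mathbb{Z}/4$, $1 \mapsto (1,1)$. The induced map on classifying spaces, composed with the projection onto the smash summand in the stable splitting
$$B\mathbb{Z}/4 \times B\mathbb{Z}/4 \simeq B\mathbb{Z}/4 \vee B\mathbb{Z}/4 \vee (B\mathbb{Z}/4 \wedge B\mathbb{Z}/4)$$
underlying Theorem \ref{theo:splitting }, yields a morphism of spectra
$$\Phi: B\mathbb{Z}/4 \longrightarrow B\mathbb{Z}/4 \wedge B\mathbb{Z}/4,$$
hence a morphism of the corresponding Adams spectral sequences converging to $ku_{*}$. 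Corollary \ref{cor:induction} is the mod-$2$ homological shadow of exactly the statement that $\Phi_{*}$ hits the odd-degree classes of interest in $H_{*}(B\mathbb{Z}/4\wedge B\mathbb{Z}/4)$; I will use it (together with Lemma \ref{lemma:induction-homomorphism}) to identify $\Phi_{*}$ on the relevant generators.

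Next, by Theorem \ref{theo:differentials.adams.complex.summand}, the Adams spectral sequence for $ku_{*}(B\mathbb{Z}/4)$ has $d_{2}$ as its only nontrivial differential, and $d_{2}(xz^{n}) = 0$. Consequently the filtration-zero class $xz^{n}$ in bidegree $(2n+1,0)$ is a permanent cycle detecting a Hashimoto generator of $\widetilde{ku}_{2n+1}(B\mathbb{Z}/4)$. Via the change-of-rings and the correspondence of Definition \ref{def:correspondence.atiyahadams.ku}, the image $\Phi_{*}(xz^{n})$ lies in the filtration-zero part of the Adams $E_{2}$-page for $ku_{*}(B\mathbb{Z}/4 \wedge B\mathbb{Z}/4)$ and, modulo higher Adams filtration, equals the $\mathbb{F}_{2}$-reduction of the integral class $\sum_{l} T_{0}^{l} * T_{1}^{n+1-l}$ computed in Corollary \ref{cor:induction}, i.e. the sum $x_{0}T_{1}^{n} + T_{0}x_{1}T_{1}^{n-1} + \ldots + T_{0}^{n}x_{1}$ appearing in the statement. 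Since $xz^{n}$ is a permanent cycle and $\Phi$ induces a morphism of spectral sequences, the image is a permanent cycle as required.

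The main obstacle I anticipate is the last identification step: to be sure that $\Phi_{*}(xz^{n})$ really represents the advertised sum of monomials at filtration zero and that any ambiguity by higher Adams filtration is harmless. This reduces to combining Lemma \ref{lemma:induction-homomorphism} (which controls $(B\varphi)_{*}$ on Toda-bracket generators), the projection formula from the stable splitting of Theorem \ref{theo:splitting }, and the Hurewicz correspondence in Definition \ref{def:correspondence.atiyahadams.ku} to pass from integer to mod $2$ representatives. Once this bookkeeping is carried out, permanence of the class is immediate from naturality, and no further differential argument is needed.
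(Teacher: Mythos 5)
Your argument is essentially the paper's own proof: the class is exhibited as the image, under the map induced by the diagonal homomorphism $1\mapsto(1,1)$, of a class coming from $ku_{*}(B\mathbb{Z}/4)$, and permanence follows by naturality, with Lemma \ref{lemma:induction-homomorphism} and Corollary \ref{cor:induction} identifying the image in filtration zero. Your only (harmless) variation is that you run the naturality through the Adams spectral sequence of $B\mathbb{Z}/4$ using the permanent cycle $xz^{n}$ from Theorem \ref{theo:differentials.adams.complex.summand}, whereas the paper cites the Atiyah--Hirzebruch description; both come to the same thing.
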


\begin{proof}
The  class  is  in  the   image  for   the  induction  map   $ku_{*}(\mathbb{Z}/4)\to ku_{*}(B\mathbb{Z}/4 \wedge B\mathbb{Z}/4) $, for  the  diagonal   group  homomorphism $1\mapsto (1,1)$. by  \ref{lemma:induction-homomorphism},  and  the  description  of  the  Atiyah-Hirzebruch  spectral  sequence  in  \ref{cor:differentialatiyah.smash.higher.unicity}, it  is  a  permanent  cycle. 
\end{proof}

\begin{corollary}\label{cor:differentialatiyah.smash.higher.unicity}
All  differentials  of  degree $4$  and  higher  in  the  Adams  spectral sequence  are  zero. 

\end{corollary}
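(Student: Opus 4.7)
The plan is to close the gap between the information already produced and the actual groups $\widetilde{ku}_{*}(B\mathbb{Z}/4 \wedge B\mathbb{Z}/4)$ listed in Table~\ref{table:ku.h.smash.z4}. Concretely, after applying $d_{2}$ as described in Theorem~\ref{theo:diff.adams.ku.smash.low} and $d_{3}$ as in Theorem~\ref{theo:differentials.adams.complex.smash}, I would tabulate, bidegree by bidegree, the $\mathbb{F}_{2}$-dimension of $E_{4}^{s,t}$. The Künneth short exact sequence of Theorem~\ref{theo:kuennethkusmash} together with the summary in Table~\ref{table:ku.h.smash.z4} gives the target value $\log_{2}|\widetilde{ku}_{t-s}(B\mathbb{Z}/4\wedge B\mathbb{Z}/4)|$. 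If the total $\mathbb{F}_{2}$-dimension of $\bigoplus_{s}E_{4}^{s,t-s+s}$ coincides with this target in every total degree $t-s$, then by convergence no room is left for $d_{r}$ with $r\geq 4$ and the corollary follows.

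The first technical step is therefore a careful bookkeeping: on the source side, Theorem~\ref{theo:generators.adams.3.smash} gives explicit generators of $E_{3}^{n,s}$ freely as $\mathbb{F}_{2}[v]$-modules in each odd column, and the $d_{3}$ of Theorem~\ref{theo:differentials.adams.complex.smash} kills exactly the classes $A_{k,l}+B_{k,l}$, leaving a kernel generated by the explicit set $\{T_{0}^{k}\ast T_{1}^{l}\mid k+l=n\}$ minus two specific elements, together with one additive combination of them. On the target side, Table~\ref{table:ku.h.smash.z4} records the orders modulo $2$-primary contributions. I would then match these tallies degree by degree, using $h_{0}$- and $v$-linearity to reduce the verification to a finite number of base cases.

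Because the argument proceeds via dimension count, the real danger is the a priori possibility of two higher differentials $d_{r}$, $d_{r'}$ with $r,r'\geq 4$ cancelling each other so that the total count is preserved while individual differentials are nontrivial. To exclude this, I would invoke Lemma~\ref{lemma:induction.permanentcycle}: the symmetric induction class $x_{0}T_{1}^{n}+T_{0}x_{1}T_{1}^{n-1}+\ldots+T_{0}^{n}x_{0}$ in bidegree $(2n+1,0)$ is a permanent cycle, so every potential source of a higher differential in column $s=0$ is accounted for. By the Leibniz rule with respect to $v$ and $h_{0}$, this also kills the possibility of higher differentials on $v$- and $h_{0}$-multiples of these classes, and a parity argument on even versus odd total degree (already exploited for $d_{3}$) handles the mixed targets.

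The main obstacle I anticipate is the parity issue for $d_{r}$ with $r$ even and $r\geq 4$: these could in principle hit classes of the form $v^{m}x_{0}T_{0}^{k}T_{1}^{l}x_{1}$ in high Adams filtration, which by Lemma~\ref{lemma:annihilator} carry Adams filtration at most $4$. Thus the potential targets live only in a strip of bounded filtration, and by the time one reaches $d_{r}$ with $r\geq 5$ the targets have been exhausted by $d_{3}$. This bounded-filtration observation is what ultimately rules out all higher differentials and completes the verification that the $E_{4}$ page is the $E_{\infty}$ page.
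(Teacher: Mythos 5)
Your route is genuinely different from the paper's. The paper kills all $d_r$ with $r\geq 4$ by a short structural argument: the induced classes of Lemma \ref{lemma:induction.permanentcycle} in bidegree $(2n+1,0)$ are permanent cycles, which is used to rule out differentials from odd to even total degree, and differentials from even to odd total degree are excluded by $v$-linearity (one could multiply a hypothetical source $a$ by a power of $v$ with $v^N a=0$ while $v^N$ of the target stays nonzero, since even-degree classes are $v^4$-torsion while the odd-degree towers are $v$-free). You instead propose a counting argument: Table \ref{table:ku.h.smash.z4} gives the orders of $\widetilde{ku}_*(B\mathbb{Z}/4\wedge B\mathbb{Z}/4)$ independently of the Adams spectral sequence (via the K\"unneth sequence of Theorem \ref{theo:kuennethkusmash} and Hashimoto's computation for one factor), so it suffices to check that $\sum_s \dim_{\mathbb{F}_2}E_4^{s,n+s}$ already equals $\log_2$ of these orders in every total degree $n$. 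This is not circular and is a legitimate, arguably more robust, alternative to the paper's rather terse parity claims; what it costs is the complete bidegree-by-bidegree tally, which your proposal only sketches and which is where all the actual work would sit. Two corrections: your worry about two higher differentials ``cancelling'' is unfounded, since any nonzero $d_r$ strictly decreases the total $\mathbb{F}_2$-dimension in two adjacent total degrees and nothing can restore the count, so your third and fourth paragraphs are unnecessary for the counting argument; and it is fortunate that they are, because the final step is the shakiest part --- Lemma \ref{lemma:annihilator} bounds the Adams filtration of classes detecting actual elements of $ku_{2n}$ (an $E_\infty$-type statement), and transferring that bound to the possible targets of $d_r$ on the $E_r$-page, as you do, would require a separate justification.
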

\begin{proof}
Consider the  class in  degree $(2n+1,0)$  
$$  x_{0}T_{1}^{n}+ T_{0}x_{1}T_{1}^{n-1}+ \ldots T_{0}^{n}x_{0} .$$

Since  the  class  is  in the  image  of  the  induction  map,  it  is  a  permanent  cycle. It  follows  that   the  $d_{3}$  differentials  such  as $d_{3}: E_{3}^{11,1}\to E_{3}^{10,4}$ are  non  trivial. 

Since  the  classes  are  permanent  cycles, there  cannot  be  a  differential  from odd  $t-s$ degree  to  even  $t-s$ degree.

On  the  other  hand,  a non   zero  differential  from $d_{i}: a\mapsto b$ even  $(t-s)$ degree   to  odd  $t-s$ degree,  since  otherwise,  it  is  possible  to  multiply  by  a power  of  $v$,  such  that $v^{N}a=0$,  and  $v^{N}b\neq 0$. 
Hence,  the  differentials  $d_4$, $d_5$, $\ldots$  are  all zero.    

\end{proof}
We  finish  this  section  by  depicting  the  $E_{\infty}$  term  of  the  Adams  spectral sequence  converging  to  $\widetilde{ku}_{*}(B\mathbb{Z}/4)$ in figure \ref{table:einfty.ku.smash}.

\begin{figure}

\includegraphics[scale=0.6]{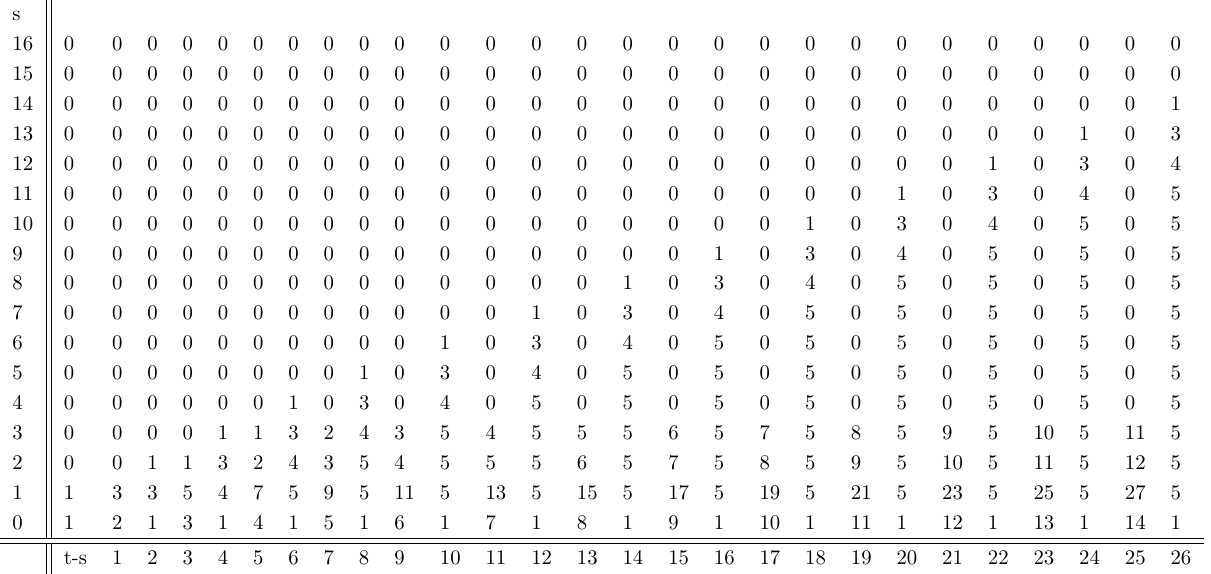}
\caption{Dimensions  of  the $E_{\infty}$ term  of  the  Adams  spectral sequence for  $ku_{*}(B\mathbb{Z}/4 )$.}\label{table:einfty.ku.smash}
\end{figure}

\section{Real connective  $K$-Theory  computations on the  smash summand}\label{section:real}
The  aim  of  this  section  is  the  detemination of  the  differentials  of  the   Adams  spectral sequence 
$$\Ext_{\mathcal{A}_{1}}^{*,*}(H^{*}(B\mathbb{Z}/4 \wedge B\mathbb{Z}/4), \mathbb{F}_{2})\Rightarrow ko_{*}(B\mathbb{Z}/4). $$

We  will  use  the  $\eta$-$c$-$R$ exact  sequence  \ref{lemma:etacR}  and  the  compatibility  of  the  adams  spectral  sequence  to   translate  the   information  about  the  differentials  for  $ku_{*}(B\mathbb{Z}/4)$  into  informations  for  these  differentials. 
These  groups  fit  in  the  exact  sequence 

$$..\overset{h_{1}} {\to}  {\rm Ext}^{s,t}_{\mathcal{A}_{1}}(H^{*}(X), \mathbb{F}_{2})   \overset{c}{\to} {\rm Ext}_{E(1)}^{s,t}(H^{*}(X), \mathbb{F}_{2})\overset{R}{\to}    {\rm Ext}_{\mathcal{A}_{1}}^{s, t-2}(H^{*}(X), \mathbb{F}_{2})\overset{h_{1}}{\to} \ldots. $$

The  procedure  will determine  the  differentials  in two  steps, namely

\begin{enumerate}
\item Using  Sage,   the  differentials  for  the  Adams  spectral sequence  converging  to  $ko$,  $d_{r}:E_{r}\to E_{r}$ will be  determined  from the  $\eta$-$c$-$r$ exact  sequence  using  the differentials  for  ku,  and  the behaviour  of  the  maps  in  generators. 

Given the   diagram \ref{fig:e2.adams.ko.smash} with  the  dimensions  of the $\mathbb{F}_{2}$-vector  space corresponding    to  the  $E_{2}$ term,  and  the  information  coming  from  the  differential $d_{2}$  above,  the program  will compute  the  homology  groups, thus  giving  the $E_{3}$  term \ref{fig:e3.adams.ko.smash},  as  well  as  a  $d_{3}$ differential defined  therein.  We  will  compute  the  homology thus  ending  with the  $E_{4}= E_{\infty}$ term  in figure  \ref{fig:einf.adams.ko.smash}.   

This  first  step  will concern  the Adams  degrees.  $(s, t-s)$ for  $0\leq s \leq 17$, and  $0\leq t-s \leq 27$. 

\item Using  the  cap structure for  the  Adams  spectral  sequence,  the  differentials   for  $t-s\geq 27$  are  defined. 

\item  We  state  the  orders  of  the  $ko$-homology  groups  in Theorems  \ref{teo:lowerrankko}, \ref{teo:higherrankko}.  This  is  the  main  input  for  the  proof  of  the  Gromov-Lawson-Rosenberg  conjecture  in section \ref{section:eta}. 
\item  By  analyzing   the  kernel  and  cokernel  of  multiplication  with  $h_{1}$  we  will  find  hidden  $\eta$  extensions. 

\end{enumerate}

We recall that  according  to  Lemma  \ref{lemma:steenrodsmash.mod2},  the  structure  of  $H^{*}(B\mathbb{Z}/4\wedge B\mathbb{Z}/4)$  as  a  module  over  the  Steenrod  algebra  can  be  described  as  sums   of the modules  $M_{p}$,  the point, $M_{B}$,  bow  and  $M_{SB}$,  as  follows.
\begin{itemize}
\item In degree $4k$, $\Sigma^{4k}M_{B} \oplus \overset{k-1}{\underset{i=1}{\bigoplus}}
\Sigma ^{4k} M_{SB}$,  generated  by $x_{0}x_{1}T_{1}^{2k-1}$, in  the  case  of  $M_{B}$ and  the  $k$  elements  $T_{0}^{2l+1}T_{1}^{2(k-l)-1}$ for  $l=0, 1, \ldots k-1$.  
\item In degree $4k+1$, $ \overset{n}{\underset{i=1}{\bigoplus}} \Sigma^{4k+1}M_{B}$,  generated  by  the  $k$  elements $T_{0}^{2l+1}x_{1}T_{1}^{2(k-l-1)} $ for  $l=0, \ldots k-1$.
\item In degree $4k+2$, $ \overset{n}{\underset{i=1}{\bigoplus}}\Sigma^{4k+2}M_{SB} $
\item In degree $4k+3$, $\overset{n}{\underset{i=1}{\bigoplus}}     \Sigma^{4k+3}MB$.
\end{itemize} 

We add to  this  information the  result  of the  computations in \ref{lemma:Extbow}  which  give  us  the  structure  of  the $E_{2}$ terms. Together  with  the complete  information  about  differentials  $d_{2}$  and  $d_{3}$  of  the  Adams  spectral  sequence  for  $ku_{*} (B\mathbb{Z}/4)$ from  section \ref{section:complex},  by  knowing  the behaviour  of  the  maps  $\eta$, $c$, $R$,  we  are  able  to  determine  the  differentials  of  the  Adams  spectral  sequence  for  $ko_{*}( B\mathbb{Z}/4 \wedge B\mathbb{Z}/4)$.

\begin{lemma}\label{lemma:etacR}
For  the sequence  maps of  spectra

$$\Sigma ko \overset{\eta}{\to} ko \overset{c}{\to} C(\eta) \overset{R}{\to} \Sigma^{2}ko,$$
the  following  holds: 

\begin{itemize}
\item The  cone  $C\eta$  is weakly  equivalent  to $ku$. 
\item The third  map  is equivalent  to  the realification  map $R:ku\to \Sigma^{2}ko$.  Hence,  the  cofibre  sequence  is  equivalent   to  
$$ \Sigma ko \overset{\eta}{\longrightarrow} ko \overset{c}{\longrightarrow} ku \overset{R}{\longrightarrow}  \Sigma^{2}ko,  $$
\item  The  naturality of  the  Adams  spectral  sequence   implies the  existence  of a long  exact  sequence 
$$  ..\overset{h_{1}} {\to}  {\rm Ext}^{s,t}_{\mathcal{A}_{1}}(H^{*}(X), \mathbb{F}_{2})   \overset{c}{\to} {\rm Ext}_{E(1)}^{s,t}(H^{*}(X), \mathbb{F}_{2})\overset{R}{\to}    {\rm Ext}_{\mathcal{A}_{1}}^{s, t-2}(H^{*}(X), \mathbb{F}_{2})\overset{h_{1}}{\to} \ldots. $$ 
\end{itemize}

\end{lemma}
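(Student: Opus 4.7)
The plan is to first establish the cofiber sequence of spectra $\Sigma ko \xrightarrow{\eta} ko \xrightarrow{c} ku \xrightarrow{R} \Sigma^{2} ko$ at the level of homotopy, and then pass to the Ext groups by naturality of the Adams spectral sequence, using the change-of-rings identifications from Corollary \ref{cor:changeofrings} to rewrite everything in terms of $\mathcal{A}_1$ and $E(1)$.

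For parts (i) and (ii), I would begin with the Puppe cofiber sequence $\Sigma ko \xrightarrow{\eta} ko \to C(\eta) \to \Sigma^{2} ko$ and compute the mod 2 cohomology of the cone. Since $\eta$ is detected in Adams filtration one (by $h_{1}$), the induced map $\eta^{*}$ on mod 2 cohomology vanishes, so the long exact sequence collapses to a short exact sequence of $\mathcal{A}$-modules
\begin{equation*}
0 \longrightarrow \Sigma^{2} H^{*}(ko) \longrightarrow H^{*}(C(\eta)) \longrightarrow H^{*}(ko) \longrightarrow 0.
\end{equation*}
By Theorem \ref{theo:stong} this is an extension $0 \to \Sigma^{2}(\mathcal{A}//\mathcal{A}_{1}) \to H^{*}(C(\eta)) \to \mathcal{A}//\mathcal{A}_{1} \to 0$. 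Direct inspection shows that the canonical short exact sequence $0 \to \Sigma^{2}(\mathcal{A}//\mathcal{A}_{1}) \to \mathcal{A}//E(1) \to \mathcal{A}//\mathcal{A}_{1} \to 0$ coming from the inclusion $E(1)\hookrightarrow \mathcal{A}_{1}$ is non-split (the connecting $Sq^{2}$ acts non-trivially), and an elementary $\Ext^{1}$ computation shows this is the only non-split extension, so $H^{*}(C(\eta)) \cong H^{*}(ku)$ as $\mathcal{A}$-modules. A convergence argument with the Adams spectral sequence for maps $C(\eta) \to ku$ lifts this $\mathcal{A}$-module isomorphism to a weak equivalence $C(\eta) \simeq ku$. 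The connecting map $ku \simeq C(\eta) \to \Sigma^{2} ko$ is then identified with the realification $R$ (up to a unit in $\pi_{0}ko=\mathbb{Z}$) by comparing the action on $\pi_{2}$: both map the Bott class $v \in \pi_{2}(ku)$ to $2\beta \in \pi_{2}(\Sigma^{2} ko)$.

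For part (iii), once the cofiber sequence $\Sigma ko \xrightarrow{\eta} ko \xrightarrow{c} ku \xrightarrow{R} \Sigma^{2} ko$ is in place, I would smash with $X$ and take mod 2 cohomology to obtain a short exact sequence of $\mathcal{A}$-modules
\begin{equation*}
0 \longrightarrow \Sigma^{2} H^{*}(ko) \otimes H^{*}(X) \longrightarrow H^{*}(ku) \otimes H^{*}(X) \longrightarrow H^{*}(ko) \otimes H^{*}(X) \longrightarrow 0.
\end{equation*}
Applying $\Ext_{\mathcal{A}}^{*,*}(-,\mathbb{F}_{2})$ yields a long exact sequence in which the change-of-rings isomorphism of Corollary \ref{cor:changeofrings} translates the $H^{*}(ko)\otimes H^{*}(X)$-term to $\Ext_{\mathcal{A}_{1}}^{s,t}(H^{*}(X),\mathbb{F}_{2})$ and the $H^{*}(ku)\otimes H^{*}(X)$-term to $\Ext_{E(1)}^{s,t}(H^{*}(X),\mathbb{F}_{2})$, producing the stated exact triangle. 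The main subtlety I anticipate is checking that the connecting homomorphism is indeed multiplication by $h_{1}$. This follows from the general principle that the connecting map in the Ext long exact sequence of an extension is Yoneda multiplication with the class of the extension in $\Ext^{1,2}_{\mathcal{A}_{1}}(\mathbb{F}_{2}, \mathbb{F}_{2})$; since that extension class represents $\eta$ and $\eta$ is detected by $h_{1}$, the boundary is indeed multiplication by $h_{1}$. Compatibility of this algebraic boundary with the topological connecting map induced by $\eta$ is ensured by the naturality and multiplicativity of the Adams spectral sequence recorded in Theorem \ref{theo:capadams} and Theorem \ref{theo:smashpairing}.
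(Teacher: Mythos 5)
The paper itself gives no proof of this lemma: it is quoted as the classical Wood cofiber sequence together with the standard algebraic $\eta$-$c$-$R$ sequence, so your proposal has to stand on its own. Your overall strategy — compute $H^{*}(C(\eta))$ as an extension of $\mathcal{A}$-modules, identify it with $\mathcal{A}//E(1)=H^{*}(ku)$, realize the isomorphism by a map of spectra, then smash with $X$, apply $\Ext_{\mathcal{A}}(-,\mathbb{F}_{2})$ and the change-of-rings isomorphism of Corollary \ref{cor:changeofrings}, and identify the boundary with Yoneda multiplication by the extension class representing $h_{1}$ — is the standard route and in outline it works.

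There is, however, a genuine gap at the step ``so $H^{*}(C(\eta))\cong H^{*}(ku)$ as $\mathcal{A}$-modules''. What you actually establish is (a) a short exact sequence $0\to\Sigma^{2}\mathcal{A}//\mathcal{A}_{1}\to H^{*}(C(\eta))\to\mathcal{A}//\mathcal{A}_{1}\to 0$, and (b) that the algebraic extension with middle term $\mathcal{A}//E(1)$ is non-split and (granting your $\Ext^{1}$ claim) the unique non-split one. Nothing in your argument excludes that the \emph{topological} extension in (a) is the split one: you never show that $Sq^{2}$ acts nontrivially on the degree-zero class of $H^{*}(C(\eta))$. The missing input is precisely that $\eta$ has mod $2$ Hopf invariant one: writing $C(\eta)\simeq ko\wedge C\eta$ with $C\eta=S^{0}\cup_{\eta}e^{2}$, one has $Sq^{2}\neq 0$ on $H^{0}(C\eta)$, and the Cartan formula gives $Sq^{2}(u_{ko}\otimes u_{0})=u_{ko}\otimes Sq^{2}u_{0}\neq 0$, so the extension is non-split. (Once you know $Sq^{2}u\neq 0$ together with $Q_{0}u=Q_{1}u=0$, you can even skip the classification of extensions: the induced surjection $\mathcal{A}//E(1)\to H^{*}(C(\eta))$ is an isomorphism by comparing Poincar\'e series.) Two further points are minor but should be flagged. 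First, the Adams-spectral-sequence realization only yields a $2$-adic equivalence $C(\eta)\simeq ku$, and one must also check that the filtration-zero class realizing the isomorphism is a permanent cycle; the cleaner classical argument is that $c\circ\eta$ is null ($\eta$ acts trivially on the $ku$-module $ku$ because $\pi_{1}(ku)=0$), so $c$ factors through the cofiber, and the factorization is seen to be an equivalence on homotopy groups. Second, identifying the connecting map with $R$ by its effect on $\pi_{2}$ is only legitimate for $ko$-module maps, where $[ku,\Sigma^{2}ko]_{ko}\cong ko^{2}(C\eta)\cong\mathbb{Z}$ is detected faithfully on $\pi_{2}$; arbitrary maps of spectra $ku\to\Sigma^{2}ko$ are not determined by their effect on $\pi_{2}$ alone.
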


\begin{lemma}\label{lemma:ecrcoefficient}
 In the $\eta-c-R$ exact sequence  the  maps  are  given  in coefficients by 
 \begin{itemize}
     \item For $\eta: \pi_{*}(ko)\to ko$, 
     $\eta(\eta)= \eta^{2}$, $\eta(\eta^{2})=0$, $\eta(\omega)=0$, $\eta(\beta)=0$
     \item For $c: \pi_{*}(ko)\to \pi_{*}(ku)$, 
     $c(\eta)=0$, $c(\omega)= 2\nu^{2}$, $c(\mu)=\nu^{4}$. 
     \item For  $R$, 
     $ R(\nu)= 2$, $R(\nu^{2})= \eta^2$, $R(\nu^2)= \omega$
 \end{itemize}
\end{lemma}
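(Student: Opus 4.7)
The plan rests on three ingredients: the known coefficient rings $\pi_*(ko) = \mathbb{Z}[\eta,\omega,\mu]/(2\eta,\eta^3,\eta\omega,\omega^2-4\mu)$ and $\pi_*(ku)=\mathbb{Z}[\nu]$ recalled earlier; the long exact sequence
$$\cdots \to \pi_{n-1}(ko) \xrightarrow{\eta} \pi_n(ko) \xrightarrow{c} \pi_n(ku) \xrightarrow{R} \pi_{n-2}(ko) \to \cdots$$
induced on homotopy by the cofibre sequence $\Sigma ko \xrightarrow{\eta} ko \xrightarrow{c} ku \xrightarrow{R} \Sigma^2 ko$ from Lemma \ref{lemma:etacR}; and the multiplicative functoriality of $c$.

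The $\eta$-multiplication claims are immediate from the defining relations: $\eta\cdot\eta=\eta^2$ by definition, $\eta\cdot\eta^2=\eta^3=0$, and $\eta\cdot\omega=0$ is built into the presentation. The vanishing $c(\eta)=0$ is forced by $\pi_1(ku)=0$. For $c(\omega)=\pm 2\nu^2$ together with $R(\nu^2)=\eta^2$, I examine the segment
$$0=\pi_3(ko) \to \pi_4(ko) \xrightarrow{c} \pi_4(ku) \xrightarrow{R} \pi_2(ko) \xrightarrow{\eta} \pi_3(ko)=0,$$
which is the short exact sequence $0 \to \mathbb{Z} \xrightarrow{c} \mathbb{Z} \xrightarrow{R} \mathbb{Z}/2 \to 0$, so $c$ is multiplication by $\pm 2$ on chosen generators and $R$ is reduction modulo $2$. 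The identity $R(\nu)=2$ comes from
$$\pi_2(ko) \xrightarrow{c} \pi_2(ku) \xrightarrow{R} \pi_0(ko) \xrightarrow{\eta} \pi_1(ko) \to \pi_1(ku)=0,$$
combined with $c(\eta^2)=c(\eta)^2=0$: this reduces to $0 \to \mathbb{Z} \xrightarrow{R} \mathbb{Z} \xrightarrow{\eta} \mathbb{Z}/2 \to 0$, forcing $R(\nu)=\pm 2$. The identity on the degree-six generator (i.e.\ $R(\nu^3)=\pm\omega$) is read off $\pi_6(ko)=0 \xrightarrow{c} \pi_6(ku) \xrightarrow{R} \pi_4(ko) \xrightarrow{\eta} \pi_5(ko)=0$, which makes $R$ an isomorphism $\mathbb{Z}\to\mathbb{Z}$. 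Finally, $c(\mu)=\pm\nu^4$ follows from $\pi_7(ko)=0 \to \pi_8(ko) \xrightarrow{c} \pi_8(ku) \xrightarrow{R} \pi_6(ko)=0$, forcing $c$ to be an isomorphism in degree $8$.

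The only real obstacle is bookkeeping of signs, since the long exact sequence fixes $c$ and $R$ only up to units. Consistent choices of generators $\omega\in\pi_4(ko)$, $\mu\in\pi_8(ko)$, and $\nu\in\pi_2(ku)$ must be made; this is standard and is pinned down once and for all by the explicit geometric descriptions of the Bott generators and of complexification, after which every identity in the lemma follows mechanically from the exact sequence together with the known coefficient rings.
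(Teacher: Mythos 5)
Your proposal is essentially correct, but there is no argument in the paper to measure it against: Lemma \ref{lemma:ecrcoefficient} is stated there without proof, as a recollection of standard facts about the Wood cofibre sequence $\Sigma ko \xrightarrow{\eta} ko \xrightarrow{c} ku \xrightarrow{R} \Sigma^{2} ko$ of Lemma \ref{lemma:etacR}. Your route --- running the induced long exact sequence on homotopy against the known rings $\pi_{*}(ko)$ and $\pi_{*}(ku)$ degree by degree --- is the natural way to reconstruct these formulas, and each segment is set up correctly: $c(\eta)=0$ from $\pi_{1}(ku)=0$; the segment at $n=4$ forcing $c(\omega)=\pm 2\nu^{2}$ and $R(\nu^{2})=\eta^{2}$; the segment at $n=2$ forcing $R(\nu)=\pm 2$ (here you do not even need that $c$ is a ring map: exactness at $\pi_{2}(ko)$ already kills $c(\eta^{2})$, since $\eta^{2}$ is an $\eta$-multiple and hence lies in the image of the connecting map); and the isomorphisms in degrees $6$ and $8$ giving $R(\nu^{3})=\pm\omega$ and $c(\mu)=\pm\nu^{4}$. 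The remaining sign ambiguity is, as you note, only a normalization of the generators $\omega$, $\mu$, $\nu$, and is immaterial for how the lemma is used later (only $h_{0}$-divisibility and filtration behaviour of $c$ and $R$ enter the spectral-sequence arguments in Lemma \ref{lemma:etacrsteenrod} and Section \ref{section:real}). So what you supply is a self-contained derivation of something the paper merely quotes.

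Two caveats concern the statement rather than your proof. The identity written as $R(\nu^{2})=\omega$ in the lemma is a typo for $R(\nu^{3})=\omega$, which is what your degree-$6$ segment actually proves --- your reading is the right one. More seriously, the listed relation $\eta(\beta)=0$ cannot be established by any correct argument, because it is false as written: $\eta\beta$ generates $ko_{9}\cong\mathbb{Z}/2$, detected by the nonzero class $h_{1}b$ in the $E_{2}$-term of Lemma \ref{lemma:coefficientsko}. You rightly made no attempt to prove it; the intended content there is presumably only the relation $\eta\omega=0$ already on the list.
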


Consider   the  map $ \varphi:M_{B} \oplus \Sigma^2 M_B \to M_{B} $ which  sends  the generators  $a$ and $b$ in  degrees  $0$ and  $2$ of $M_B\oplus \Sigma^2 M_B \to M_B$ to  $c$ and  $Sq^2(c)$, respectively  in $M_B$. 

Recall that  the  $\rm Ext$- groups 
${\rm Ext}^{*,*}_{\mathcal{A}_1}(M_B,\mathbb{F}_{2} )= \mathbb{F}_{2}[h_0] \langle x_0, x_1, \ldots\rangle$ has 
   generators  $x_{i}$ of  degree  $(2i,i)$, and  
    
    ${\rm Ext}^{*,*}_{\mathcal{A}_1}(M_B\oplus \Sigma^{2} M_B,\mathbb{F}_{2} )=\mathbb{F}_2[h_0]\langle y_0, z_0, y_1, z_1,  \rangle$ has generators  $y_{i}$  in degree $(2i,i)$ and $(2i+2,i)$, respectively.

The  Yoneda Product with ${\rm Ext}_{E(1)}^{*,*}(\mathbb{F}_2, \mathbb{F}_2)$  is  given  by $\nu y_i= y_{i+1}$ and $\nu z_{i}= z_{i+1}$.  

The  following  result  analyzes  the  behaviour  of  the  $\Ext$-groups  for  the  modules  $M_{B}$ and  $M_{SB}$  under  the $\eta-c-R$ exact  sequence.

\begin{lemma}\label{lemma:etacrsteenrod}
The  homomorphism 
  $\varphi^*: {\rm Ext }_{\mathcal{A}_1}^{*,*}(M_B, \mathbf{F}_2 )\to  {\rm Ext}^{*,*}_{\mathcal{A}_1}(M_B\oplus \Sigma^{2} M_B,\mathbb{F}_{2} ) $  
    sends the  generators  as  follows: 
    $$x_{2i}\mapsto y_{2i} \,  \text{ and } \, x_{2i+1}\mapsto y_{2i+1}+ h_{0}z_{2i}. $$
As  a consequence, the  morphisms  $c$, $\eta$,  and $R$ satisfy:  

\begin{itemize}
    \item The  complexification $c: {\rm Ext}^{s,t}_{\mathcal{A}_{1}}(\tilde{H}^{*}(B\mathbb{Z}/4), \mathbb{F}_{2})\to {\rm Ext}_{E(1)}^{s,t}(\tilde{H}^{*}(B\mathbb{Z}/4) , \mathbb{F}_{2}) $
    
    is given  by 
    $$x\mapsto x$$
    $$\omega x\mapsto h_{0} \nu^{2}x$$
    $$ \mu x \mapsto \nu^{4} x$$
    $$ \nu ^{2l }z^{2k+1} \mapsto \nu^{2l} z^{2k+1}$$
    $$\nu^{2l} x z^{2k+1} \mapsto \nu^{2l}x z^{2k+1}$$
    $$ \nu^{2l+1}x ^{2k+1} \mapsto \nu^{2l} (\nu x z^{2k+1} + h_{0}x z^{2k+2}).$$
 
   \item The  realification  $R:{\rm Ext }^{s,t}_{E(1)}(\tilde{H}^{*}(B\mathbb{Z}/4, \Sigma^{2} \mathbb{F}_{2})) \to {\rm Ext}^{s,t}(\tilde{H}^{*}(B\mathbb{Z}/4), \Sigma^{2} \mathbb{F}_{2})$ is  given  for  $k>0$ by  
   $$ z^{2k+1}\mapsto 0 $$
   $$xz^{2k-1}\mapsto 0$$
$$ z^{2k}\mapsto z^{2k-1}$$
$$xz^{2k}\mapsto xz^{2k-1}$$
$$ \nu z^{2k-1}\mapsto h_{0}z^{2k-1}$$
$$\nu x z^{2k-1} \mapsto h_{0}x^{2k--1} $$
$$\nu z^{2k} \mapsto \nu z^{2k-1}$$
$$\nu x z^{2k}\mapsto \nu x z^{2k-1},  $$

$$ x\mapsto 0 $$
$$\nu x \mapsto h_{0} x$$
$$ \nu^{2}x\mapsto \eta^{2}x $$
$$ \nu^{3} x\mapsto \omega x$$
$$ \nu ^{5}x \mapsto h_{0} \mu x.$$

\end{itemize}

\end{lemma}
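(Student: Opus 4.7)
My plan is to prove the three pieces of the lemma in sequence, with the formula for $\varphi^{*}$ serving as the computational engine from which the behaviour of $c$ and $R$ on the relevant generators is read off.

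First, I would compute $\varphi^{*}$ by working with explicit minimal free $\mathcal{A}_{1}$-resolutions. Write $P_{\bullet}\to M_{B}$ with $P_{0}=\mathcal{A}_{1}\oplus \Sigma^{2}\mathcal{A}_{1}$ surjecting onto the generators $w_{0},w_{2}$ and record the bigraded positions of the elements dual to the basic cocycles; the generators $x_{i}$ of $\Ext_{\mathcal{A}_{1}}(M_{B},\mathbb{F}_{2})$ in degree $(2i,i)$ correspond to these via the description in Lemma \ref{lemma:Extbow}. Do the same for $Q_{\bullet}\to M_{B}\oplus \Sigma^{2}M_{B}$, which splits. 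The map $\varphi$ sends $a\mapsto w_{0}$ and $b\mapsto w_{2}=Sq^{2}(w_{0})$, so the initial lift $\varphi_{0}\colon Q_{0}\to P_{0}$ is forced: on the $\Sigma^{2}M_{B}$-summand one has to hit $w_{2}$, and this forces $\varphi_{1}$ to contain a term mixing the two summands of $Q_{\bullet}$. Iterating the lift degree by degree, the even-indexed syzygies lift cleanly to $y_{2i}$, but the odd-indexed syzygies acquire a correction through the cycle representing $h_{0}z_{2i}$ in the $\Sigma^{2}M_{B}$-summand, giving $\varphi^{*}(x_{2i+1})=y_{2i+1}+h_{0}z_{2i}$. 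The main obstacle is verifying precisely that this correction term cannot be absorbed into a chain homotopy, which I would check by evaluating against the dual basis.

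Next, for the complexification map $c$, I would invoke the change-of-rings isomorphism of Corollary \ref{cor:changeofrings} together with Stong's isomorphism $H^{*}(ku)\cong \mathcal{A}//E(1)$. Using Lemma \ref{lemma:steenrodfull.mod2} to decompose $\tilde{H}^{*}(B\mathbb{Z}/4)$ as a sum of suspensions of $M_{B}$ (with a single suspended $\mathbb{F}_{2}$ in degree $1$), the complexification becomes, summand by summand, an instance of the map $\varphi^{*}$ computed above—the target $E(1)$-resolution is obtained from the $\mathcal{A}_{1}$-resolution by adding in the extra generators witnessing that $Sq^{2}$ acts freely. On the named generators of Lemma \ref{lemma:Extbow}, this immediately yields $x\mapsto x$, $\omega x\mapsto h_{0}\nu^{2}x$ (because $\omega=h_{0}v^{2}$ in the $E(1)$-picture via the extension $\varphi^{*}(x_{2i+1})=y_{2i+1}+h_{0}z_{2i}$), $\mu x \mapsto \nu^{4}x$, and similarly for the $z$- and $xz$-classes; the odd exponent cases carry the correction term exactly as in the first paragraph.

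Finally, for the realification $R$, I would use the long exact sequence of Lemma \ref{lemma:etacR} and work out that $R$ is the connecting homomorphism, determined up to $h_{1}$-action by the formulas on coefficients in Lemma \ref{lemma:ecrcoefficient}. Once $c$ is known explicitly, the image of $c$ is known, so $R$ on a class $w\in \Ext_{E(1)}$ is detected by whether $w$ lies in $\im(c)$; the listed values of $R$ are then forced by exactness together with compatibility with the $\pi_{*}(ko)$-module structure. The coefficient identities $R(\nu)=2$, $R(\nu^{2})=\eta^{2}$, $R(\nu^{3})=\omega$ propagate via the Yoneda action on each summand of the splitting, which recovers the stated formulas for $R$ on $z^{2k+1}$, $xz^{2k-1}$, $z^{2k}$, $xz^{2k}$, and their $\nu$-multiples. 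The only remaining check is that the $v$-periodicity in $E(1)$ is compatible with the $\omega$/$\mu$-periodicity in $\mathcal{A}_{1}$ under $R$; this follows from comparing Bott periodicity in $ku_{*}$ and $ko_{*}$ and is the last calibration needed.
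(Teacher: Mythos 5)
The paper states Lemma \ref{lemma:etacrsteenrod} without proof: the paragraphs preceding it only introduce the comparison map $\varphi\colon M_{B}\oplus\Sigma^{2}M_{B}\to M_{B}$ and fix the generators $x_{i}$, $y_{i}$, $z_{i}$, so there is no written argument to measure yours against. Your architecture --- lift $\varphi$ to a map of free $\mathcal{A}_{1}$-resolutions to obtain $\varphi^{*}(x_{2i})=y_{2i}$ and $\varphi^{*}(x_{2i+1})=y_{2i+1}+h_{0}z_{2i}$, identify the complexification on each $M_{B}$-summand of $\tilde{H}^{*}(B\mathbb{Z}/4)$ with $\varphi^{*}$ via the change-of-rings isomorphism of Corollary \ref{cor:changeofrings}, and then extract $R$ from the $\eta$-$c$-$R$ long exact sequence of Lemma \ref{lemma:etacR} --- is exactly the route the paper's setup implies, and it is viable.

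Two points need to be repaired or made explicit before this is a proof. First, $M_{B}$ is cyclic over $\mathcal{A}_{1}$ (it is generated by $w_{0}$, with $w_{2}=Sq^{2}w_{0}$), so its minimal resolution begins with a single free generator $P_{0}=\mathcal{A}_{1}$, not $\mathcal{A}_{1}\oplus\Sigma^{2}\mathcal{A}_{1}$; with a non-minimal $P_{0}$ you cannot read the $\Ext$-generators off the duals of the resolution generators, which is the device your first paragraph uses. Relatedly, the assertion that $c$ ``becomes an instance of $\varphi^{*}$'' needs the precise bridge: by change of rings, $c$ on a summand $M$ is induced by the action map $\mathcal{A}_{1}\otimes_{E(1)}M\to M$, so you must verify the $\mathcal{A}_{1}$-module isomorphism $\mathcal{A}_{1}\otimes_{E(1)}M_{B}\cong M_{B}\oplus\Sigma^{2}M_{B}$ (a short computation with the coproduct of $Sq^{2}$, after replacing the degree-two basis element $\bar{1}\otimes w_{2}$ by $\bar{1}\otimes w_{2}+\overline{Sq^{2}}\otimes w_{0}$) and check that under it the action map is exactly $\varphi$; this is the step the paper leaves implicit and your phrase about ``adding in the extra generators'' does not yet establish. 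Second, exactness of the $\eta$-$c$-$R$ sequence only gives $\ker R=\im c$ and identifies $\im R$ with the kernel of multiplication by $h_{1}$; it does not by itself assign the stated values of $R$. To pin them down you must additionally use that $R$ preserves the Adams filtration $s$, that it is linear over $\Ext_{\mathcal{A}_{1}}^{*,*}(\mathbb{F}_{2},\mathbb{F}_{2})$ and compatible with $v$ on the source, and then check generator by generator ($x$, $z^{k}$, $xz^{k}$ and their $\nu$-multiples) that the target bidegrees are small enough to exclude ambiguity; you gesture at this with ``compatibility with the $\pi_{*}(ko)$-module structure,'' but this bookkeeping is where the content of the second bullet of the lemma actually lives and should be carried out explicitly.
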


We  depict  now  in figure  \ref{fig:e2.adams.ko.smash} the  dimensions  of  the   $\mathbb{F}_{2}$-vector  space $\Ext^{t-s, s}H^{*}(B\mathbb{Z}/4\wedge B\mathbb{Z}/4)$ for  $0\leq t-s\leq 27 $.  We  exclude  $h_{1}$  multiples  of  the  analysis  at  this  stage.

\begin{figure}
\includegraphics[scale=0.6]{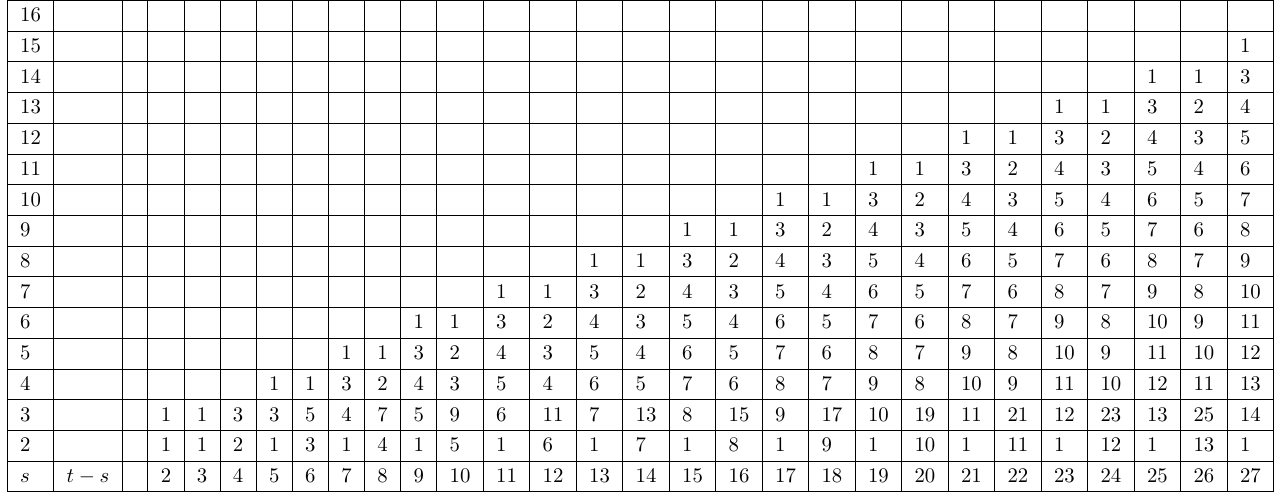}
\caption{The  dimensions  of  the  $E_{2}$ term.}\label{fig:e2.adams.ko.smash}
\end{figure}

We  now  add  the  $d_{2}$  and  $d_{3}$  differentials  and  obtain  the  $E_{3}$  term.  The  result  is  shown  in  figure   \ref{fig:e3.adams.ko.smash}.

\begin{figure}
\includegraphics[scale=0.5]{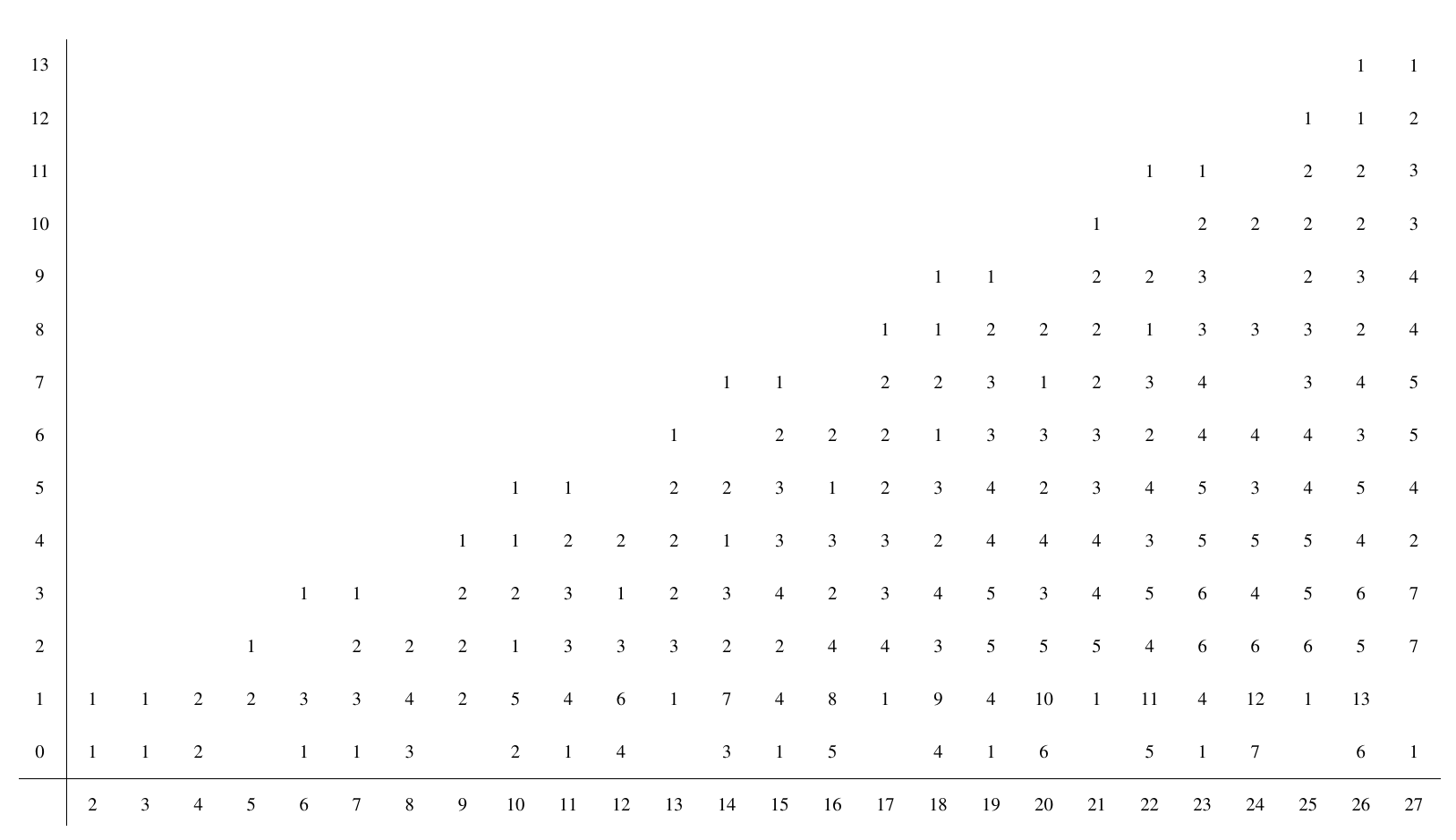}
\caption{The dimensions  for the   $E_{3}$  term of  the  Adams  spectral  sequence,  omitting  $\eta$-multiples. }\label{fig:e3.adams.ko.smash}
\end{figure}

We  have  the  followng  result,  needed  for  the  determination  of  $d_{3}$-differentials. 	 
\begin{lemma}\label{lemma:d3.adams.smash.ko}
The family of   differentials from  the  $s=1 $  to the $s=4$ line   
$$d_{3}:E_{3}^{16,1}\to E_{3}^{15,4},  d_{3}: E_{3}^{18,1}\to E_{2}^{17,4}  
   $$
are surjective. 

\end{lemma}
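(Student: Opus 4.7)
The plan is to transport the known $d_3$ differentials of the complex Adams spectral sequence for $ku_*(B\mathbb{Z}/4 \wedge B\mathbb{Z}/4)$ over to the real spectral sequence using naturality of the $\eta$-$c$-$R$ exact sequence from Lemma \ref{lemma:etacR}. The two differentials $d_3: E_3^{16,1}\to E_3^{15,4}$ and $d_3: E_3^{18,1}\to E_3^{17,4}$ both lie in bidegrees where the generating classes on the source side (after quotienting by $h_1$-multiples) can be identified, via the splitting of Lemma \ref{lemma:steenrodfull.mod2} and the explicit generators $T_0^k\ast T_1^l$ of Theorem \ref{theo:generators.adams.3.smash}, with classes whose complexifications are governed by Theorem \ref{theo:differentials.adams.complex.smash}.

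First, I would list explicit $\mathbb{F}_2$-vector space generators of $E_3^{16,1}$ and $E_3^{18,1}$ in terms of the basis coming from the $M_B$ and $M_{SB}$ summands of $H^*(B\mathbb{Z}/4\wedge B\mathbb{Z}/4)$, following Lemma \ref{lemma:Extbow} and Lemma \ref{lemma:ercMSB}. For each such class $x$ with non-trivial complexification, I would apply Lemma \ref{lemma:etacrsteenrod} to compute $c(x)\in \Ext^{*,*}_{E(1)}$, and then use the $d_3$-formula of Theorem \ref{theo:differentials.adams.complex.smash}, namely
\[
d_3^{ku}(T_0^k\ast T_1^l)=A_{k,l}+B_{k,l},
\]
to determine $d_3^{ku}(c(x))$. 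By naturality, $c(d_3^{ko}(x))=d_3^{ku}(c(x))$, which fixes $d_3^{ko}(x)$ up to the kernel of $c$; exactness of the $\eta$-$c$-$R$ sequence identifies that kernel with the image of the realification $R$, whose action on generators is again explicit via Lemma \ref{lemma:etacrsteenrod}. For classes whose complexification vanishes, I would instead lift a class $y$ from $\Ext^{*,*}_{E(1)}$ with $R(y)=x$ and use the compatibility $R(d_3^{ku}(y))=d_3^{ko}(R(y))$ to recover the differential.

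To conclude surjectivity, I would compare the resulting images with the generators of $E_3^{15,4}$ and $E_3^{17,4}$ listed in figure \ref{fig:e3.adams.ko.smash}. The key point is that the $d_3^{ku}$-image in bidegrees contributing to these targets is a sum of the form $v^4 h_0 x_0 T_0^{\ast} T_1^{\ast} x_1$ type classes, and under the $\eta$-$c$-$R$ dictionary (in particular $R(\nu^3 x)=\omega x$) these generate precisely the $\omega$-part of $E_3^{15,4}$ and $E_3^{17,4}$ modulo $h_1$-multiples. Since the $h_1$-multiples are already killed by earlier differentials or lie outside the considered range, the two $d_3$ families exhaust the targets.

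The main obstacle I anticipate is the bookkeeping needed to sort through the $\eta$-action and confirm that every generator of $E_3^{15,4}$ and $E_3^{17,4}$ is hit; specifically, verifying that the ambiguity from $\ker c=\operatorname{im} R$ does not produce targets outside the span of the computed images. This amounts to an explicit degree-by-degree verification using the coefficient formulas of Lemma \ref{lemma:ecrcoefficient} together with the tables produced from the Sage computation referenced in the outline of this section, and it should be checked carefully against the dimensions depicted in figure \ref{fig:e3.adams.ko.smash}.
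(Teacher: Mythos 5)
Your route is genuinely different from the one the paper takes for this lemma. You propose to work directly in the bidegrees $(16,1)$ and $(18,1)$: enumerate the generators there, push them through the complexification $c$, read off $d_3^{ku}$ from Theorem \ref{theo:differentials.adams.complex.smash}, and use naturality of $c$ and $R$ in the $\eta$-$c$-$R$ sequence of Lemma \ref{lemma:etacR} together with Lemma \ref{lemma:etacrsteenrod} to pin down $d_3^{ko}$ and compare with the targets. The paper instead argues by propagation: it takes as input only the ranks of the $d_3$ differentials starting in $t$-$s$ degrees $12$ and $14$ (ranks $2$ and $1$, coming from the direct computation already performed), observes that the three-dimensional target $E_3^{15,4}$ is generated by $T_0^2$- and $T_1^2$-multiples of elements of $E_3^{11,4}$, and then uses compatibility of $d_3$ with the cap product by the permanent cycles $T_0^2$, $T_1^2$ (Theorems \ref{theo:cap-structure} and \ref{theo:capsubalgebra}) to conclude surjectivity in degrees $16$ and $18$, with an induction on higher powers of $T_0^2$, $T_1^2$ handling the whole family of differentials in figure \ref{pic:e3ko.smash}. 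What the paper's argument buys is economy: once the low-degree ranks are known, no further $\eta$-$c$-$R$ bookkeeping is needed in higher degrees. What your argument buys is self-containedness in the bidegrees at hand, and it is consistent with the overall strategy announced at the start of Section \ref{section:real}; but it is substantially heavier, since it redoes in degrees $16$ and $18$ the generator-by-generator analysis that the cap product lets one avoid.

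Two points in your plan need care before it is complete. First, exactness of the $\eta$-$c$-$R$ sequence of $\Ext$ groups is an $E_2$-level statement; on the $E_3$ page $c$ and $R$ still commute with the differentials, but $\ker c=\operatorname{im} R$ need not persist, so "determining $d_3^{ko}(x)$ up to $\ker c$" and "lifting along $R$" must be carried out with $E_2$-representatives and then checked to descend, exactly the bookkeeping you flag. Second, surjectivity onto $E_3^{15,4}$ and $E_3^{17,4}$ requires hitting target classes that may not be detected by $c$ (for instance classes involving $\eta$- or $\alpha$-type contributions, since $c(\eta)=0$); your claim that the targets reduce to the $\omega$-part modulo $h_1$-multiples is precisely what must be verified against the dimension counts of figure \ref{fig:e3.adams.ko.smash}, whereas the paper's cap-product argument sidesteps this because the target is exhibited directly as $T_0^2$-, $T_1^2$-multiples of classes already known to be hit.
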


\begin{proof}
By   direct  computation, the  rank  of  the  differentials $d_{3}$starting  in  $t-s$- degree 12  and  14  are  2  and  one,  respectively. Let  $v_{1}$  and  $v_{2}$ be vectors  in  $E_{3}^{12,1}$ such  that $d_{3}\mid \langle v_{1}, v_{2}\rangle $ has  rank $2$.  Then  $d_{2}\mid_{\rangle, T_{0}^{2}v_{1}, T_{1}^{2}v_{2}, T_{1}^{2}v_{1}, T_{1}^{2}v_{2}\langle}$ has  rank  three  due  to the  compatibility  of $d_{3}$  with  the  cap  product,  and  the  fact  that  the  $3$- dimensional  vector space $E_{3}^{15,4}$ is  generated by $T_{0}^{2}$ and $T_{1}^{2}$- multiples  of  elements  in $E_{3}^{11,4} $. Inductively,  we  cap  with  higher  powers  of  $T_{0}^{2}$  and  $T_{1}^{2}$  to  obtain  that  the  bold  and  dashed  differentials  in  figure \ref{pic:e3ko.smash} are  all surjective. 
\end{proof}

\begin{corollary} In the   $E_{3}$  term of  the Adams   Spectral  sequence  for  computing $ko_{*}(B\mathbb{Z}/4 \wedge B\mathbb{Z}74) $, the  following  holds:
\begin{itemize}

\item The  $d_{3}$  differentials  obtained  by  $v$- periodicity defined the  $s=2$  to the  $s=5$  line  are  surjective. 
\item The $d_{3}$ differentials from  $t-s$ degree $2$ to $5$ hit all  elements  in degree $s>3$ except  multiples  of  the  class $h_{3}^{k}x_{0}x_{1}$.
\end{itemize}
\end{corollary}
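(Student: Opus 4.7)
The plan is to propagate the surjectivity of Lemma \ref{lemma:d3.adams.smash.ko} using the multiplicative structure of the Adams spectral sequence. The cap product pairing of Theorem \ref{theo:capsubalgebra} makes $d_3$ a derivation with respect to multiplication by permanent cycles in $\pi_*(ko)$, and in particular by the $v$-periodicity generator acting on $\Ext_{\mathcal{A}_1}^{*,*}(M_B, \mathbb{F}_2)$ and $\Ext_{\mathcal{A}_1}^{*,*}(M_{SB}, \mathbb{F}_2)$ as described in Lemmas \ref{lemma:Extbow} and \ref{lemma:ercMSB}.

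For the first bullet, I would multiply each surjective differential produced in Lemma \ref{lemma:d3.adams.smash.ko} by the $v$-periodicity class. The Leibniz rule, which holds because the periodicity class is a permanent cycle detected at the $\Ext$-level by a $\mathbb{F}_2[h_0]$-generator, produces a differential of the correct bidegree from the $s=2$ line to the $s=5$ line whose image is the image of the original $d_3$ translated by the periodicity action. Combined with the description of how the periodicity generator acts on the $h_0$-towers generating the $s=5$ line, this yields surjectivity throughout the claimed range.

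For the second bullet, I would enumerate the generators of $E_3^{t-s,s}$ for $t-s \in \{2,5\}$ and $s>3$ using the $\mathcal{A}_1$-module decomposition of Lemma \ref{lemma:steenrodsmash.mod2} together with the $\Ext$ bases from Lemmas \ref{lemma:Extbow} and \ref{lemma:ercMSB}. Each such generator either arises as an $h_0$-multiple of a class already known to be in the image of $d_3$, as a $v$-periodicity translate of such a class, or as a multiple of the single class $x_0 x_1$ coming from the $\mathbb{F}_2$-summand in degree $2$ of the cohomology. The first two cases are covered by the first bullet and by capping the base-case differentials of Lemma \ref{lemma:d3.adams.smash.ko} with powers $T_0^{2a} T_1^{2b}$. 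The class $x_0 x_1$, however, is a permanent cycle: it lifts through the Atiyah-Hirzebruch comparison of Lemma \ref{lemma:difatiyahirzebruch} and the $\eta$-$c$-$R$ sequence of Lemma \ref{lemma:etacR} to a nontrivial element of $ko_2(B\mathbb{Z}/4\wedge B\mathbb{Z}/4)$, so its $h_3^k$-multiples cannot be in the image of any differential.

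The main obstacle is the bookkeeping required to confirm that the $M_B$ and $M_{SB}$ contributions interact correctly under the periodicity action and that no generator on the $s>3$ lines is overlooked. This requires matching the explicit bases of Lemmas \ref{lemma:Extbow} and \ref{lemma:ercMSB} against the summand-by-summand decomposition of Lemma \ref{lemma:steenrodsmash.mod2}, and then verifying that the cap product structure of Theorem \ref{theo:capsubalgebra} precisely identifies all non-exceptional generators as products of hit classes with permanent cycles.
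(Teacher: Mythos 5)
There is a genuine gap in the mechanism you use for the first bullet. You propose to propagate the differentials of Lemma \ref{lemma:d3.adams.smash.ko} by a Leibniz rule for ``multiplication by permanent cycles in $\pi_*(ko)$, in particular by the $v$-periodicity generator.'' But $v$ is not a class in $\pi_*(ko)$, and there is no element of bidegree $(t-s,s)=(2,1)$ in $\Ext_{\mathcal{A}_1}(\mathbb{F}_2,\mathbb{F}_2)$ (its ring is the one described in Lemma \ref{lemma:coefficientsko}, generated by $h_0,h_1,a,b$). The $v$-action on $\Ext_{\mathcal{A}_1}(M_B,\mathbb{F}_2)$ and $\Ext_{\mathcal{A}_1}(M_{SB},\mathbb{F}_2)$ recorded in Lemmas \ref{lemma:Extbow} and \ref{lemma:ercMSB} is an algebraic periodicity coming from the fact that these modules are induced from smaller subalgebras; it is not cap or Yoneda multiplication by a $ko$-coefficient class, so Theorem \ref{theo:capsubalgebra} gives you no Leibniz rule for it, and compatibility of $d_3$ with this operator is precisely what needs proof. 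The paper avoids this: its first bullet is proved exactly as Lemma \ref{lemma:d3.adams.smash.ko}, by capping with the honest permanent cycles $T_0^2$, $T_1^2$ and counting ranks, and the only $v$-compatibility it invokes is obtained through the complexification map, i.e.\ by transporting the known $ku$-differentials via the $\eta$-$c$-$R$ sequence (Lemmas \ref{lemma:etacR}, \ref{lemma:etacrsteenrod}, \ref{lemma:ecrcoefficient}), where $v$ really is a coefficient class.

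The second bullet also has a hole at the decisive point. Showing that $x_0x_1$ survives to a nonzero element of $ko_2(B\mathbb{Z}/4\wedge B\mathbb{Z}/4)$ only rules out $x_0x_1$ itself as a boundary; for $k\geq 1$ the classes $h_3^k x_0x_1$ live in degrees where potential $d_3$-targets exist, and a product of permanent cycles can perfectly well be hit by a differential, so your last inference does not follow. The paper settles exactly this by the comparison with the complex case: the complexification intertwines the real periodicity with multiplication by $v^2$ (Lemma \ref{lemma:etacrsteenrod}), and Theorem \ref{theo:differentials.adams.complex.smash} identifies which targets are hit on the $ku$-side, which forces the $ko$-differentials to hit every class in filtration $s>3$ in the relevant degrees except the multiples of $x_0x_1$. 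Relatedly, the positive half of your second bullet --- that every non-exceptional generator in $s>3$ is an $h_0$- or periodicity-translate of a hit class --- is the actual content of the statement, and deferring it to ``bookkeeping'' leaves the essential step unproven; you need either the $ku$-comparison above or an order-counting argument against the known group orders to close it.
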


\begin{proof}
\begin{itemize}
\item The  argument  is  analogous  to \ref{lemma:d3.adams.smash.ko}. 
\item By  the compatibility  of  the  complexification  map  with  multiplication  with  $v^{2}$, and  the  description of \ref{lemma:etacR}, the   differentials  hit  all  elements  in $s$ degree $>3$ and  odd $t-s$ degree  except the   multiples  of  the  class. 
\end{itemize}

\end{proof}

And  finally  the  $E_{\infty}$ term,  now  depicting  the  $\eta$-mutiples. We  depict  this  information  in figure \ref{fig:einf.adams.ko.smash}. 

\begin{figure}
\includegraphics[scale=0.5]{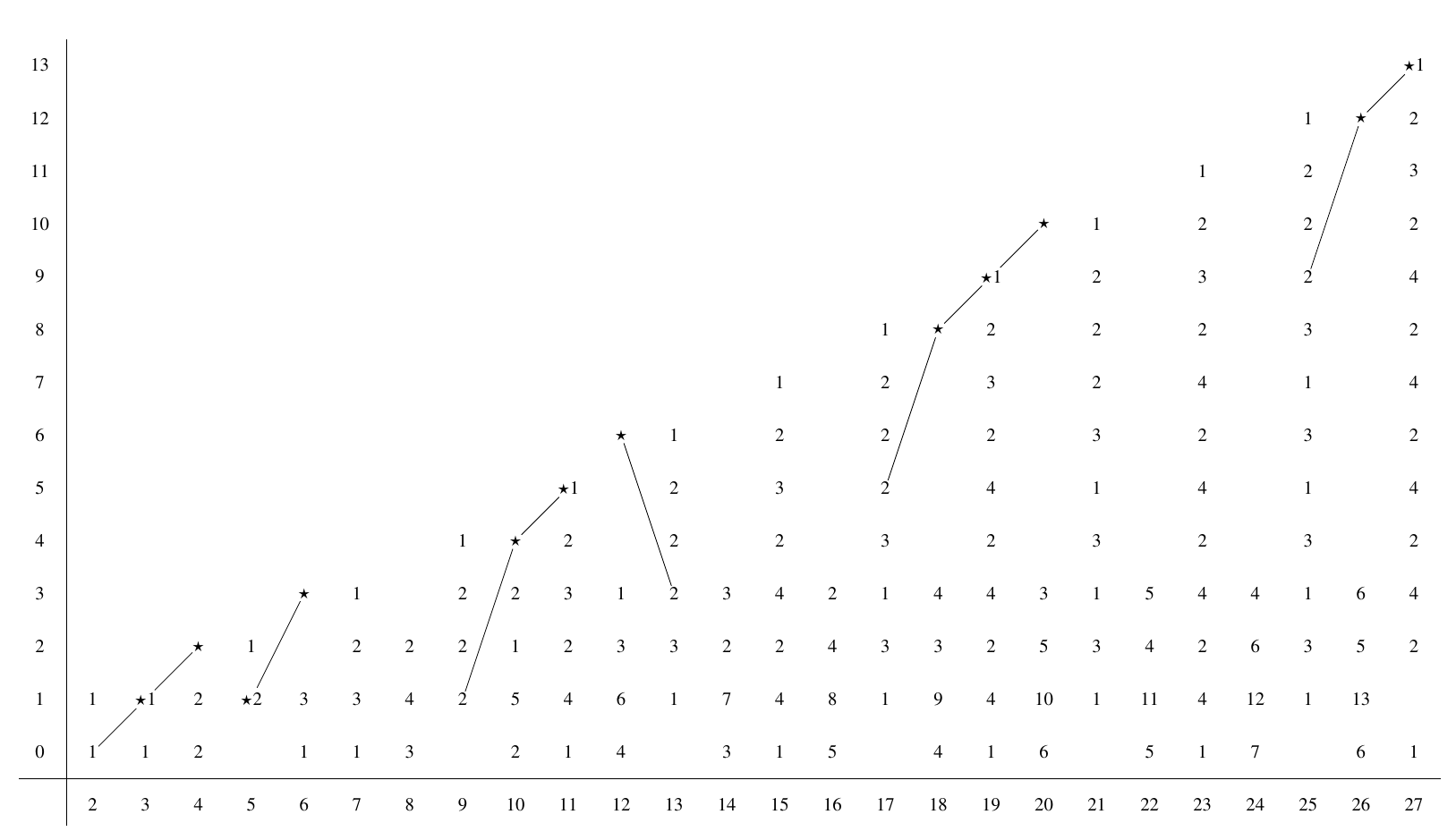}\caption{$E_{4}=E_{\infty}$ term, now  including  $\eta$-multiples.} \label{fig:einf.adams.ko.smash}

\end{figure}

Let  us  present  here  the  conclusions 
\begin{lemma}\label{lemma:e3.ko}
The dimensions  of  the  $E_{3}$ term  of  the  Adams  spectral  sequence (including  $\eta$-multiples) converging  to  $ko_{*}(B\mathbb{Z}/4)$  is  as  in \ref{pic:e3ko.smash}

\end{lemma}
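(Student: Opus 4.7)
The plan is to assemble the $E_3$ page from two inputs: the $E_2$ page already described in Figure \ref{fig:e2.adams.ko.smash}, and a complete determination of the $d_2$ differential together with the initial $d_3$ differentials obtained via the $\eta$-$c$-$R$ exact sequence of Lemma \ref{lemma:etacR}. First I would unpack the module decomposition from Lemma \ref{lemma:steenrodsmash.mod2}, so that the $E_2$ term splits as a direct sum of suspended copies of $\Ext_{\mathcal{A}_1}^{*,*}(M_B,\mathbb{F}_2)$ and $\Ext_{\mathcal{A}_1}^{*,*}(M_{SB},\mathbb{F}_2)$, whose generators and relations are given by Lemmas \ref{lemma:Extbow} and \ref{lemma:ercMSB}. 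These generators line up with the elements $T_0^k T_1^l$, $x_0 T_0^k T_1^l$, $T_0^k x_1 T_1^l$, $x_0 T_0^k T_1^l x_1$ whose $\eta$-$c$-$R$ behavior is spelled out in Lemma \ref{lemma:etacrsteenrod}.

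Next I would transport the $d_2$ differential for the $ku$-spectral sequence established in Theorem \ref{theo:diff.adams.ku.smash.low} over to the $ko$-spectral sequence by naturality of the $\eta$-$c$-$R$ sequence. Concretely, for each generator $x \in E_2$ for $ko$, I apply $c$ to land in $\Ext_{E(1)}$, compute $d_2(c(x))$ using Theorem \ref{theo:diff.adams.ku.smash.low}, and then pull back through $R$ (using the explicit image formulas of Lemma \ref{lemma:etacrsteenrod}) to identify $d_2(x)$ in the $ko$-sequence; residual $h_1$-multiples are controlled by Lemma \ref{lemma:ecrcoefficient} and the compatibility of $d_2$ with $h_1$-multiplication. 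This gives the $E_3$ page modulo $\eta$-multiples depicted in Figure \ref{fig:e3.adams.ko.smash}.

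Then I would run the same strategy at the next page: the $d_3$ differentials for $ku$ described in Theorem \ref{theo:differentials.adams.complex.smash} get transferred through $c$ and $R$ to determine the $d_3$ on the $ko$-side. The surjectivity claims needed to identify the $E_3$ dimensions precisely follow from Lemma \ref{lemma:d3.adams.smash.ko} together with cap-product compatibility (Theorem \ref{theo:capsubalgebra}): once I know that $d_3$ restricted to the classes $T_0^2 v_1, T_1^2 v_2$ etc.\ has the claimed rank, cap with higher powers of $T_0^2$ and $T_1^2$ propagates surjectivity in higher total degree. Finally I would re-introduce the $h_1$-multiples; since $h_1 \cdot (h_0\text{-tower}) = 0$ and $h_1^3 = 0$, the $\eta$-contributions only appear on the lines coming from $M_B$-summands and are tracked by the explicit $\Ext_{\mathcal{A}_1}$-module structure of $M_B$.

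The main obstacle will be the bookkeeping for the $d_3$ differentials on $\eta$-multiples, because the $\eta$-$c$-$R$ sequence itself has a shift and the realification map $R$ kills many odd $\nu$-classes while sending even ones to $h_0$-multiples. Disentangling which elements in Adams filtration $s=4$ and $s=5$ are hit — as opposed to surviving permanently to $E_\infty$ — requires carefully matching the list of generators $T_0^{k}\ast_s T_1$ from Theorem \ref{theo:generators.adams.3.smash} against the pullbacks of the $ku$-differentials, and verifying that the only classes escaping $d_3$ are the $h_0$-tower on $x_0 x_1$ and its cap-product translates. Once this is done, the resulting dimension count in each bidegree agrees with Figure \ref{pic:e3ko.smash}, yielding the lemma.
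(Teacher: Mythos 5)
Your plan is essentially the paper's own argument: the $E_2$ term is assembled from the $M_B$/$M_{SB}$ decomposition of Lemma \ref{lemma:steenrodsmash.mod2} with Lemmas \ref{lemma:Extbow} and \ref{lemma:ercMSB}, the $ku$-differentials of Theorems \ref{theo:diff.adams.ku.smash.low} and \ref{theo:differentials.adams.complex.smash} are transferred through the $\eta$-$c$-$R$ sequence via the explicit formulas of Lemma \ref{lemma:etacrsteenrod}, surjectivity is propagated by cap products as in Lemma \ref{lemma:d3.adams.smash.ko}, and the $\eta$-multiples are reinstated at the end. The only difference is that the paper delegates the resulting homology bookkeeping to a Sage computation rather than doing it by hand, so your proposal is correct and takes essentially the same approach.
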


\begin{theorem}\label{theo:einfty.ko.smash.extension}
For  $t-s\geq 7$ of  $ko_{t-s}(B\mathbb{Z}/4\wedge B\mathbb{Z}/4)$, there  is  only  one  family  of  $\eta$-extensions: starting  in $t-s$ degree $8k+1$ to $\beta^{k}x_{0}x_{1}$ for $k=1, 2, \ldots$.

There  exists  a  hidden $\eta$-extension  from $(t-s, s)$ degree $(5,1)$ to $\alpha x_{0}x_{1}$.

\end{theorem}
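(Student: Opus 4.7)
The plan is to exploit the $\eta$-$c$-$R$ exact sequence of Lemma~\ref{lemma:etacR}. By exactness, a class $y \in ko_*(X)$ lies in the image of $\eta$-multiplication if and only if $c(y) = 0$ in $ku_*(X)$. Accordingly, identifying hidden $\eta$-extensions into a permanent cycle $y$ reduces to computing $c(y) \in \widetilde{ku}_*(B\mathbb{Z}/4 \wedge B\mathbb{Z}/4)$ and checking when it vanishes; the extension is hidden precisely when the $E_\infty$ page shows no $h_1$-multiple landing on $y$.

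I would first handle the infinite family $\{\beta^k x_0 x_1\}_{k \geq 1}$. By Lemma~\ref{lemma:ecrcoefficient} one has $c(\beta) = v^4$, so $c(\beta^k x_0 x_1) = v^{4k} x_0 x_1$. The corollary following Theorem~\ref{theo:differentialatiyah.smash.higher} asserts that multiplication by $v^4$ annihilates $\widetilde{ku}_{\mathrm{even}}(B\mathbb{Z}/4 \wedge B\mathbb{Z}/4)$; in particular $v^{4k} x_0 x_1 = 0$ for all $k \geq 1$. Exactness of the $\eta$-$c$-$R$ sequence therefore produces a class in $ko_{8k+1}$ whose $\eta$-product is $\beta^k x_0 x_1$, and inspection of figure~\ref{fig:einf.adams.ko.smash} confirms no such $h_1$-multiple is visible on $E_\infty$, so the extension is hidden.

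The exceptional extension at $(t-s, s) = (5,1)$ into $\alpha x_0 x_1 \in ko_6$ would be handled similarly. Since $c(\alpha) = 2v^2$ by Lemma~\ref{lemma:ecrcoefficient}, one has $c(\alpha x_0 x_1) = 2 v^2 x_0 x_1$. Lemma~\ref{lemma:annihilator} together with Lemma~\ref{lemma:hashimotosquare} implies $2 v \cdot x_0 x_1 = 0$ in $\widetilde{ku}_4(B\mathbb{Z}/4 \wedge B\mathbb{Z}/4)$, hence $2v^2 x_0 x_1 = 0$ as well. Exactness again supplies an $\eta$-preimage in $ko_5$, with the hidden character confirmed by inspecting the Adams filtration of the target on figure~\ref{fig:einf.adams.ko.smash}.

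The main obstacle will be the converse direction: for $t-s \geq 7$, showing that no further hidden extensions exist. For each candidate target $y$ of even $t-s$ degree on the $E_\infty$ page not of the form $\beta^k x_0 x_1$, I would verify $c(y) \neq 0$ in $\widetilde{ku}_*(B\mathbb{Z}/4 \wedge B\mathbb{Z}/4)$ using Lemma~\ref{lemma:etacrsteenrod} to read off the complexification on generators, together with the $ku$-computation from Section~\ref{section:complex} via Hashimoto generators and the K\"unneth sequence of Theorem~\ref{theo:kuennethkusmash}. The cap-product compatibility of Theorem~\ref{theo:capsubalgebra} lets one propagate $c$-nonvanishing from base cases via multiplication by $T_0$ and $T_1$, so the argument reduces to a small finite set of base cases plus bookkeeping. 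This step is combinatorially heavy, but requires no new technique beyond what has already been developed in the preceding sections.
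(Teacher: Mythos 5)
Your existence argument is sound and is in substance the same tool the paper uses: the $\eta$-$c$-$R$ cofibration of Lemma \ref{lemma:etacR}, so that $\operatorname{im}(\eta\cdot)=\ker c$ on actual homotopy groups, combined with $c(\beta)=v^{4}$, $c(\alpha)=2v^{2}$ (Lemma \ref{lemma:ecrcoefficient}), the $v^{4}$-annihilation of $\widetilde{ku}_{\rm even}$ and the relation $2vB_{0,0}=0$ from Lemmas \ref{lemma:hashimotosquare} and \ref{lemma:annihilator}. Applying this to $\beta^{k}z$ and $\alpha z$, where $z$ is detected by $x_{0}x_{1}$, is a cleaner way to produce the two families than the paper's low-degree rank bookkeeping in the tables of degrees $5$ and $6$, and it is compatible with detection since $\beta^{k}z$ is detected by $\beta^{k}x_{0}x_{1}$ whenever the latter is nonzero on $E_{\infty}$.

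The gaps are in the other half of the statement. First, the uniqueness claim for $t-s\geq 7$ is only a plan, and as formulated it would not close: you restrict to candidate targets of \emph{even} $t-s$ degree, so hidden $\eta$-extensions landing in odd degree are never excluded, yet the theorem asserts there are none of those either. Moreover, your proposed criterion "verify $c(y)\neq 0$ using Lemma \ref{lemma:etacrsteenrod}" conflates the $E_{2}$-level map $\Ext_{\mathcal{A}_{1}}\to\Ext_{E(1)}$ with the map $c$ on homotopy groups: to rule out an extension into a class $[y]$ you must show that \emph{every} homotopy class detected by $[y]$ has nonzero image under $c$, and nonvanishing of the Ext-level complexification on the detecting class does not give this, because of filtration shifts and hidden behaviour in the $ku$-side spectral sequence. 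This is exactly the point where the paper abandons generator-by-generator verification and instead compares the orders of $\coker h_{1}$ and $\ker h_{1}$ with the already computed $ku_{*}(B\mathbb{Z}/4\wedge B\mathbb{Z}/4)$, using that $c$ does not decrease the Adams filtration of the leading term, and then checks that "the numbers fit" after accounting for the $d_{3}$-differentials forced by $\eta^{2}\beta x_{0}x_{1}$. Second, the exceptional extension is asserted to start at $(t-s,s)=(5,1)$; your argument only produces some $w\in ko_{5}$ with $\eta w$ detected by $\alpha x_{0}x_{1}$ and never locates the Adams filtration of $w$. The paper's proof of Theorem \ref{teo:einftyko} has to argue separately that the source cannot sit in $(5,2)$; without such a step the filtration statement in the theorem is not established.
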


\begin{theorem}\label{teo:lowerrankko}[Rank of lower $ko$-theory.]
 The  group  orders  of $\tilde{k}o_{n}(B \mathbb{Z}/4 \wedge B\mathbb{Z}/4 )$ are  given  by 
 \begin{center}
     \begin{tabular}{c||c}
         n &  ${\rm log}_{2}(\mid \tilde{k}o_{n}(B \mathbb{Z}/4 \wedge B\mathbb{Z}/4)\mid )$ \\ \hline
       $2$   & $2$\\
       $3$ &  $3$\\
       $4$& $5$\\
       $5$& $3$\\
       $6$& $4$\\
       $7$ & $7$
     \end{tabular}
 \end{center}
\end{theorem}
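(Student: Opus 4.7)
The plan is to read off the group orders directly from the $E_{\infty}$-term of the Adams spectral sequence pictured in Figure \ref{fig:einf.adams.ko.smash}. Because the Adams filtration on $\widetilde{ko}_{n}(B\mathbb{Z}/4\wedge B\mathbb{Z}/4)$ coincides with the $2$-adic filtration and all groups in question are finite $2$-torsion for $n\geq 1$, the logarithm base $2$ of the order in degree $n$ coincides with the total $\mathbb{F}_{2}$-dimension along the antidiagonal $t-s=n$:
$$ \log_{2}\bigl|\widetilde{ko}_{n}(B\mathbb{Z}/4\wedge B\mathbb{Z}/4)\bigr| \;=\; \sum_{s\geq 0}\dim_{\mathbb{F}_{2}}E_{\infty}^{s,\,s+n}. $$
Hidden $h_{0}$- and $\eta$-extensions change the isomorphism type but not the order, so the computation reduces to a dimension count on a finite region of Figure \ref{fig:einf.adams.ko.smash}.

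First, I would assemble the $E_{2}$-page in the range $2\leq t-s\leq 7$ from the $\mathcal{A}_{1}$-module decomposition of $\widetilde{H}^{\ast}(B\mathbb{Z}/4\wedge B\mathbb{Z}/4)$ given in Lemma \ref{lemma:steenrodsmash.mod2}, combined with the $\Ext$-computations of $M_{B}$ in Lemma \ref{lemma:Extbow} and of $M_{SB}$ in Lemma \ref{lemma:ercMSB}. This yields the dimensions depicted in Figure \ref{fig:e2.adams.ko.smash}.

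Second, I would import the $d_{2}$- and $d_{3}$-differentials through the $\eta$-$c$-$R$ exact sequence from Lemma \ref{lemma:etacR}, using the description of $c$ and $R$ on generators in Lemma \ref{lemma:etacrsteenrod} together with the already computed $ku$-differentials of Theorem \ref{theo:diff.adams.ku.smash.low} and Theorem \ref{theo:differentials.adams.complex.smash}. The surjectivity statement of Lemma \ref{lemma:d3.adams.smash.ko} propagates down via the cap structure (Theorem \ref{theo:capsubalgebra}) to the low-degree region, yielding the $E_{3}$-dimensions of Figure \ref{fig:e3.adams.ko.smash}. Corollary \ref{cor:differentialatiyah.smash.higher.unicity}, interpreted on the $ko$-side through the $\eta$-$c$-$R$ exact sequence, forces $d_{r}=0$ for $r\geq 4$, so $E_{4}=E_{\infty}$.

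Third, I would read off the antidiagonal dimensions in Figure \ref{fig:einf.adams.ko.smash} for $n=2,3,4,5,6,7$; the counts are
$$ 2,\;3,\;5,\;3,\;4,\;7, $$
matching the table. Finally, Theorem \ref{theo:einfty.ko.smash.extension} accounts for the hidden $\eta$-extension from $(t-s,s)=(5,1)$ to $\alpha x_{0}x_{1}$, confirming that no order-changing extensions occur in the range under consideration.

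The main obstacle is the careful bookkeeping across the $\eta$-$c$-$R$ long exact sequence: one must simultaneously track which complex classes in $\Ext_{E(1)}^{\ast,\ast}$ survive the realification $R$, which $h_{1}$-multiples appear in the kernel of $c$, and how the cap product with $T_{0}, T_{1}$ transports $d_{3}$-differentials across the low-degree range. Once this verification is performed in the critical degrees $n\leq 7$, the dimensions stated above follow immediately.
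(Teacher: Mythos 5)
Your proposal is correct and follows essentially the same route as the paper: assembling the $E_{2}$-term from the $\mathcal{A}_{1}$-module decomposition of Lemma \ref{lemma:steenrodsmash.mod2}, importing the $d_{2}$- and $d_{3}$-differentials through the $\eta$-$c$-$R$ sequence from the $ku$-computations, concluding $E_{4}=E_{\infty}$, and reading the orders off the antidiagonal $\mathbb{F}_{2}$-dimension count of Figure \ref{fig:einf.adams.ko.smash}, with hidden extensions only affecting the isomorphism type and not the order. The only cosmetic caveat is that you do not actually need the Adams filtration to coincide with the $2$-adic filtration; finiteness of the $2$-groups and the $\mathbb{F}_{2}$-quotients of the Adams filtration already give $\log_{2}$ of the order as the sum of the $E_{\infty}$-dimensions.
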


\begin{theorem}\label{teo:higherrankko}
For $n\geq 8$, the  group  orders  of      $\tilde{k}o_{n}(B \mathbb{Z}/4 \wedge B\mathbb{Z}/4)$ are  given  by 
\begin{center}
    \begin{tabular}{c||c}
    n & ${\rm log}_{2}(\mid \tilde{k}o_{n}(B \mathbb{Z}/4 \wedge B\mathbb{Z}/4)\mid )$ \\ \hline
    $8d$ & $10d-1$ \\
    $8d+1$ & $8d-1$ \\
    $8d+2$ & $10d+1$ \\
    $8d+3$ & $12d+2$ \\
    $8d+4$ & $10d+4$ \\
    $8d+5$ & $8d+2$  \\
    $8d+6$ & $10d+5$  \\
    $8d+7$ & $12d+7$ \\
    \end{tabular}
\end{center}
\end{theorem}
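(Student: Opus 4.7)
The plan is to compute $\log_2 |\widetilde{ko}_n(B\mathbb{Z}/4 \wedge B\mathbb{Z}/4)|$ by summing the $\mathbb{F}_2$-dimensions of $E_\infty^{s,t}$ of the Adams spectral sequence along the diagonal $t-s = n$. Since the spectral sequence converges to the $2$-completion and an $h_0$-tower of length $\ell$ contributes a $\mathbb{Z}/2^\ell$ summand whose $\log_2$-order equals the number of $\mathbb{F}_2$ generators stacked in that tower, summing $\mathbb{F}_2$-dimensions on the diagonal gives precisely $\log_2$ of the group order. The hidden $\eta$-extensions recorded in Theorem \ref{theo:einfty.ko.smash.extension} relate multiplicative structure but do not alter the order of the underlying abelian group, so they can be ignored for the purposes of this counting.

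First I would assemble the $E_2$ page total-degree by total-degree using the Steenrod module decomposition of Lemma \ref{lemma:steenrodsmash.mod2}, together with the explicit $\mathrm{Ext}$-computations of Lemma \ref{lemma:Extbow} and Lemma \ref{lemma:ercMSB}. Writing $n = 8d + k$, the counts of $\Sigma^? M_B$ and $\Sigma^? M_{SB}$ summands contributing to the diagonal $t-s = n$ are linear in $d$, producing $E_2$-dimensions that are linear in $d$ with explicit slopes determined by the Ext tables of $M_B$ and $M_{SB}$. Next I would apply the $d_2$-differentials obtained from Theorem \ref{theo:diff.adams.ku.smash.low} through the $\eta$-$c$-$R$ sequence of Lemma \ref{lemma:etacR}, and the $d_3$-differentials of Lemma \ref{lemma:d3.adams.smash.ko}, together with their $v$-periodic extensions via the cap structure of Theorem \ref{theo:capsubalgebra}. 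Corollary \ref{cor:differentialatiyah.smash.higher.unicity} (transferred from $ku$ to $ko$ via the $\eta$-$c$-$R$ sequence) then guarantees that $d_r = 0$ for $r \geq 4$, so $E_4 = E_\infty$.

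The core of the computation is a propagation argument: capping with $T_0^2$ and $T_1^2$ (both permanent cycles raising $t-s$ by $4$) identifies the dimension in bidegree $(s, 8d+k)$ with a translate of the dimension in bidegree $(s, 8(d-1)+k)$, plus a bounded correction in low $s$-filtration coming from the outermost $M_B$ and $M_{SB}$ generators that newly enter at each $d$. Combining this with the base cases $d = 1$ (where the diagonals overlap with the low-dimensional computation of Theorem \ref{teo:lowerrankko} and its immediate $8$-fold extension) yields the eight linear-in-$d$ formulas stated in the theorem. Concretely, the $8d+3$ and $8d+7$ diagonals inherit the longest $h_0$-towers (giving the coefficient $12$), the $8d$, $8d+2$, $8d+4$, $8d+6$ diagonals receive contributions from both the $M_B$ and $M_{SB}$ strands (coefficient $10$), and the odd diagonals $8d+1$, $8d+5$ contain mainly $M_{SB}$-type survivors (coefficient $8$).

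The principal obstacle I expect is the bookkeeping of the surviving classes after $d_3$: while $d_2$ is essentially diagonal on the module summands, $d_3$ mixes the $M_B$ and $M_{SB}$ contributions nontrivially (see Theorem \ref{theo:differentials.adams.complex.smash} on the complex side), and one must carefully tabulate the kernel and cokernel of $d_3$ in every $(s, t-s)$ bidegree on the relevant diagonal. A secondary subtlety is ensuring that the constant terms of the linear-in-$d$ formulas agree with the low-degree data of Theorem \ref{teo:lowerrankko}; this is the consistency check that pins down the $-1, +1, +2, +4, +5, +7$ constants in the eight cases.
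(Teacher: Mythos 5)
Your proposal is correct and follows essentially the same route as the paper: assemble the $E_2$-term from the module decomposition of Lemma \ref{lemma:steenrodsmash.mod2} together with Lemmas \ref{lemma:Extbow} and \ref{lemma:ercMSB}, import the $d_2$ and $d_3$ differentials from the $ku$-computation through the $\eta$-$c$-$R$ sequence and Lemma \ref{lemma:d3.adams.smash.ko}, rule out higher differentials, propagate by cap products with $T_0^2$ and $T_1^2$ (the paper's periodicity step beyond $t-s=27$, which it carries out with Sage-assisted bookkeeping in the finite range), and read off $\log_2$ of the order as the total $\mathbb{F}_2$-dimension on each diagonal, with hidden extensions irrelevant to the order. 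The only caveat is that your stated base case cannot be Theorem \ref{teo:lowerrankko}, which covers only $n\leq 7$: the constants in the eight linear formulas must be pinned down by the explicitly computed $E_\infty$-diagonals in degrees $8$ through $15$ (the paper's figures), but since your plan already computes those diagonals directly, this is a labeling slip rather than a gap.
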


\begin{figure}
\includegraphics[scale=0.5]{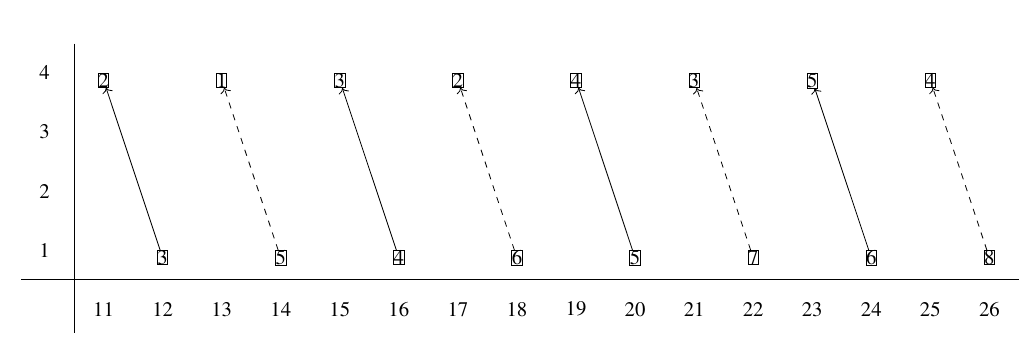}
\caption{$d_{3}$ differentials for lemma  \ref{lemma:e3.ko}.}\label{pic:e3ko.smash}
\end{figure}

\subsection{Hidden extensions}

\begin{theorem}\label{teo:einftyko}
On the $E_\infty$-term   of  the  Adams  spectral  sequence  for $ko_{*}(B\mathbb{Z}/4 ^{\wedge 2})$ there  exist  the following  hidden  extensions:
\begin{itemize}
\item There  exists a hidden $\eta$-extension from $(t-s, s)$-degree $(5,1)$ to $\alpha x_{0}x_{1}$.
\item  In odd degrees, there are no non-zero elements of Adams filtration $\geq 4$ excepting  $\beta^{k}$ for  $k 
\in \mathbb{N}$
\item There  exists a  hidden $\eta$-extension from $E_{\infty}^{9,1}$ to $\beta x_{0}x_{1}$. 
\end{itemize}
\end{theorem}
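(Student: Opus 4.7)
The plan is to treat the three claims in sequence, leveraging the $\eta$-$c$-$R$ long exact sequence of Lemma \ref{lemma:etacR} together with the complex-theoretic input of Section \ref{section:complex}. The detection principle I rely on is the exactness of
\[ \ldots \to ku_{n+1}(X) \xrightarrow{R} ko_{n-1}(X) \xrightarrow{\eta} ko_{n}(X) \xrightarrow{c} ku_{n}(X) \xrightarrow{R} ko_{n-2}(X) \to \ldots ,\]
which says that $y \in ko_{n}(X)$ lies in the image of $\eta$ if and only if $c(y) = 0$ in $ku_{n}(X)$. A \emph{hidden} $\eta$-extension additionally requires that the preimage live in strictly lower Adams filtration than naively expected.

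For the middle claim, I would inspect the $E_{\infty}$-chart already produced as Figure \ref{fig:einf.adams.ko.smash}. By the surjectivity established in Lemma \ref{lemma:d3.adams.smash.ko} and its corollary, every class of Adams filtration $s > 3$ in odd $(t-s)$-degree is hit by a $d_{3}$-differential, except for the $h_{3}^{k}$-tower sitting over $x_{0}x_{1}$. Via the Yoneda product identification $h_{3} \leftrightarrow \beta$, these survivors are precisely the $\beta^{k}$-periodic family, giving the desired characterization.

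For the first hidden extension, from $(t-s, s) = (5,1)$ to $\alpha x_{0}x_{1} \in ko_{6}$, I would apply $c$ to the target: by Lemma \ref{lemma:ecrcoefficient} we have $c(\alpha) = 2\nu^{2}$, so $c(\alpha x_{0}x_{1}) = 2 v^{2} x_{0}x_{1}$. Identifying $x_{0}x_{1}$ with the Hashimoto class $B_{0,0}(0)$ in the sense of Definition \ref{def:hashimotosquare}, we have $v^{2} x_{0}x_{1} = B_{0,0}(2)$, which lies in the filtration range $1 \leq d \leq 4$ where Lemma \ref{lemma:hashimotosquare} guarantees $2\,B_{0,0}(2) = 0$. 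Thus $\alpha x_{0}x_{1} \in \mathrm{image}(\eta)$, and matching bidegrees on the $E_{\infty}$-chart forces the preimage to be the unique non-zero class in $E_{\infty}^{5,1}$. The second extension, from $(9,1)$ to $\beta x_{0}x_{1} \in ko_{10}$, is parallel: $c(\beta) = \nu^{4}$ yields $c(\beta x_{0}x_{1}) = v^{4} x_{0}x_{1}$, which vanishes by the corollary immediately after Theorem \ref{theo:differentialatiyah.smash.higher} asserting that $v^{4}$ annihilates $\widetilde{ku}_{\mathrm{even}}(B\mathbb{Z}/4 \wedge B\mathbb{Z}/4)$.

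The main obstacle I anticipate is pinning down the Adams filtration of the preimages --- that is, ensuring that the lift in $ko_{n-1}(X)$ genuinely sits in filtration $s=1$ rather than in some higher filtration, so that the extension is truly \emph{hidden}. I would resolve this by combining the naturality of the $\eta$-$c$-$R$ sequence under the diagonal map supplied by Corollary \ref{cor:induction} with the cap-product compatibility of Theorem \ref{theo:capsubalgebra}, reducing the filtration count to the $B\mathbb{Z}/4$ factor already understood via Theorem \ref{theo:kofactor}.
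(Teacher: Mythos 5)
Your mechanism for proving $\eta$-divisibility is correct and genuinely different from the paper's. Using the topological $\eta$-$c$-$R$ sequence of Lemma \ref{lemma:etacR} (so that $\operatorname{im}\eta=\ker c$), the $ko_{*}$-linearity of $c$ together with $c(\alpha)=2\nu^{2}$, $c(\beta)=\nu^{4}$ from Lemma \ref{lemma:ecrcoefficient}, the relation $2B_{0,0}(2)=0$ of Lemma \ref{lemma:hashimotosquare}, and the $v^{4}$-annihilation corollary to Theorem \ref{theo:differentialatiyah.smash.higher}, you do get $c(\alpha x_{0}x_{1})=0$ and $c(\beta x_{0}x_{1})=0$, hence both classes are $\eta$-multiples. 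The paper obtains this instead by a filtration-by-filtration rank count in the $\eta$-$c$-$R$ sequence in degrees $5$ and $6$ (the tables of $\operatorname{coker}h_{1}$, $ku_{5}$, $ku_{6}$, $\ker h_{1}$), using that $c$ does not decrease the Adams filtration of the leading term; your route is more direct for the divisibility statement.

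The genuine gap is that the theorem asserts \emph{hidden} extensions originating at the specific bidegrees $(5,1)$ and $(9,1)$, and your argument never pins this down: $\eta$-divisibility alone says nothing about the Adams filtration of a preimage $y$ with $\eta y=\alpha x_{0}x_{1}$, nor that the jump in filtration exceeds one while the corresponding $h_{1}$-product vanishes on $E_{\infty}$; a priori the extension could originate at $(5,0)$, $(5,2)$ or $(5,3)$. Your proposed remedy does not work as stated: Corollary \ref{cor:induction} concerns odd-degree integral homology classes induced from cyclic subgroups and gives no control on Adams filtration in $ko_{5}$ of the smash summand, and ``reducing to the $B\mathbb{Z}/4$ factor'' is unavailable because $x_{0}x_{1}$, $\alpha x_{0}x_{1}$, $\beta x_{0}x_{1}$ live in the smash summand, which is not a retract of either wedge summand $B\mathbb{Z}/4$. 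This is precisely where the paper's counting does the work: the comparison of $\operatorname{coker}h_{1}$ and $\ker h_{1}$ against the known order and filtration of $ku_{5}$ and $ku_{6}$ forces the $\eta$-multiple to be the unique nonzero class in filtration $3$, forces the source into filtration $1$, and explicitly rules out a source at $(5,2)$; the $(9,1)\to\beta x_{0}x_{1}$ case is treated the same way and, in the paper, also feeds into the existence of a nonzero $d_{3}\colon E_{3}^{13,3}\to E_{3}^{12,6}$ and the final generator count that yields the middle bullet. Your reading of the middle bullet from the chart and the corollary to Lemma \ref{lemma:d3.adams.smash.ko} is consistent with that count, but it only becomes a proof once this filtration bookkeeping is actually carried out.
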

\begin{proof}

Consider  the  sum  of  dimensions  of \ref{fig:e3.adams.ko.smash} and   \ref{fig:e2.adams.ko.smash}. 

    \begin{itemize}
        \item Let  us  consider the  $\eta$-c-R exact  sequence  in  degrees $5$ and  $6$. See tables \ref{table:hidddenku6} and \ref{table:hiddenku5} The  ranks  of  the  abelian  groups  are  as  follows:
        \begin{table}[]
            \centering
            \def\arraystretch{1,2}
            \begin{tabular}{c||c|c|c}
                $s$ & ${\rm coker} h_{1}$  &$ku_{5}$ & $(\ker h_{1})_{3}$\\
                \hline
                2 &1 & 1&\\
                1 & 2& 3& 1\\
                0 &  &1 & 1
            \end{tabular}
            \caption{Degree 5}
            \label{table:hiddenku5}
            \bigskip 
            \begin{tabular}{c||c|c|c}
                $s$ & ${\rm coker} h_{1}$  &$ku_{6}$ & $(\ker h_{1})_{3}$\\
                \hline
                 3& 1 & & \\
                 2& &1 &1\\
                 1& 3&5 &2\\
                 0 &1 &3 &2
            \end{tabular}
            \caption{Degree 6}
            \label{table:hidddenku6}
        \end{table}
        Because  $c$ does  not  decrease Adams  filtration  of  the  leading  term, the  $\eta$-multiple  must  be the  unique  non-zero  element in Adams  filtration  3,  which  is  $\alpha x_{0}x_{1}$. It  follows  that  there  exists  an  element  in  $ku_{5}(B\mathbb{Z}/4\wedge B\mathbb{Z}/4)$, which  is  zero  in the  zeroth  Adams  filtration, non  zero  in the  first  Adams  filtration  and  such  that  its $\eta$- multiple  is  represented  by $ax_{0}x_{1}$,  where  $a $ is  as  defined  in  \ref{lemma:coefficientsko}.
The  hidden  $\eta$  extension  cannot  come  from $(t-s, s)$ degree $(5,2)$, since  there  is  no  hidden  extension  from $(5,2)$ to $(6,1)$,         
 In  the  same  way, $\beta x_{0}x_{1}$  is an $\eta$- multiple, and  $\eta \beta x_{0}x_{1}\neq 0$,  hence  $\eta^2 \beta x_{0}x_{1}$  must  be  the  image  of  a  differential  supported  in  degree  $t-s=13$. Due  to  the  formula  given  above,  it  cannot be supported  in  $s$- degree 4. Moreover,  the  elements located  in  $(t-s,s)$-degrees  $(17,2)$ and $(17,1)$  cannot  hit  the element.  it  follows that  there exists a  non-zero 
 $$d_{3}: E_{3}^{13,3}\to E_{3}^{12,6} .$$
  By  considering  the  $\beta$- multiples   of  these  differentials  and  th e hidden  extensions,  the  comparison  of  number  of  generators  fit,  and  hence  there  are  no  further differentials  or  hidden  extensions.

    \end{itemize}
\end{proof}

\section{Detection Theorems}\label{section:eta}

The  Atiyah-Patodi-Singer index  Theorem \cite{atiyahpatodisinger} provides  a  formula  for  the index  of  the  Dirac  operator  of  a  spin manifold $W$ with boundary $M$,  analogous  to the  usual  index  formula. 
Roughly  speaking,  after  imposing the Atiyah-Patodi Singer  boundary conditions,   there  exists a spectral  function depending  on  the  diffeomorphism type  of  $M$

$$ \eta(s, M)= \underset{\lambda}{\sum}{\rm sign }\, (\lambda)({\rm dim}_{E_{\lambda}}\, )\mid \lambda \mid^{-s} , $$
where  the  sum  is  over  the point spectrum of the  Dirac  operator, $E_{\lambda}$ denotes  the finite  dimensonal  eigenvalue  corresponding  to  $\lambda$,  the  sum  converges  for  sufficiently  large  $s$,  and  has a  meromorphic  extension  to  the entire  complex $s$-plane such  that $ \eta(0,M)$ is  finite.  

The  equality 
$$\eta(0, M)= \hat{A}(W)- {\rm Index}(D) $$
 holds,  and  it is  the  prototype of several  index  theorem for  manifolds  with  boundary.  

The  $\eta$ invariant  can be  defined for  any self adjoint elliptic  partial  differential  operator  of  order $d$ acting  on  smooth  sections  of  a  spooth  vector  bundle  $V$ on a closed  smooth manifold  $M$. 
For a  spectral  resolution $\{ \phi_{n}, \lambda_{n}\}$ of $P$,  the associated  $\eta$ function  is  defined  as 
$$\eta(s,P):=\underset{n \in \mathbb{N}} {\sum}{\rm sign }(\lambda_{n}) \mid \lambda_{n}\mid^{-s} + {\rm dim }\, \ker(P) .  $$

The $\eta$-invariant  is  defined  as  $ \eta(0, P)$, and  it  will be  denoted  by $\eta(M) $ to strenghten the dependence  on $M$ (more  precisely, its  diffeomorphism type).   

For  a finite  dimensional  representation of  a  finite group $\pi$, denoted  as $\rho$, and  a  map  from  a closed,  smooth and   spin manifold  $f: M\to  B\pi$,  we can  form a  vector bundle  over  $M$,  which will be  denoted  by  $V_{\rho}: \tilde{M}\times_{\pi} \rho\to M $.  Here $\tilde{M}$ is the cover of  $M$ classified  by  $\pi$.  We define  the Dirac  operator  acting  on sections of   $V_{\rho}$, $D(M,f, \rho) $, and we  will  consider its  $\eta$-invariant
$$\eta(M, f)(\rho).$$

Recall that a Bott  Manifold  $B^{8}$ is  a closed  spin smooth  manifold of  dimension  $8$ and $\hat{A}$-genus  equal  to $1$. 

Up  to  spin bordism,  a  Bott  Manifold  $L$ satisfies  that  the 4--times  iterated  connected  sum  $4L$ is  bordant  to  the  product  of the  Kummer  surface $K\times K$,  where  $K$ is  the quadric
$$\{[x_{0}:x_{1}:x_{2}:x_{3}]\in \mathbb{C}P^{3}\mid x_{0}^{2}+x_{1}^{2}+x_{2}^{2}+x_{3}^{2}=0 \}. $$ 

As  a  consequence  of  the  determination  of  the  alpha  invariant  \cite{hitchin}, \cite{stolzannals},  there  exists  a  natural  transformation  of  homology  theories  consisting  of  natural  isomorphisms 

$$\Omega_{n}^{\Spin}(X)/ T_{*}(X)[B^{-1}]\longrightarrow KO_{*}(X). $$ 

In the  previous  expresion, $T_{*}(X)$ denotes  the  subgroup generated by the  set  of $\Spin$  manifolds  which  are $\Spin$-bordant  to  $\mathbb{H}P^2$-bundles, and  $B$ denotes a Bott  manifold. With  this  definition,  the  following  theorem  was  proved  as  Theorem 4.1 in page 388 of \cite{botvinnikgilkeystolz}. 

\begin{theorem}\label{theo:etahomomorphism}
Let $\rho$ be a  virtual  representation of  virtual dimension  zero of  the  finite  group $\pi$. Then  the  homomorphisms 
$$\Omega_{n}^{\rm spin}(B\pi )\to \mathbb{R}/\mathbb{Z}, \, KO_{n}(B\pi) \to \mathbb{R}/\mathbb{Z},   $$

which  send  a spin bordism  class $f: M\to B\pi$ to  $\eta(M,f)(\rho)$ are  well defined.
Moreover, if  $\rho$  is of  real  type  and  $n$ is  congruent  to  3  modulo 4, or  if  $\rho$ is of  quaternionic  type and $n$ is congruent  to  7  modulo  8,  the  range  can be  replaced  by $\mathbb{R}/2\mathbb{Z}$.

\end{theorem}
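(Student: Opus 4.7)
The plan is to derive well-definedness from the Atiyah-Patodi-Singer index theorem combined with Atiyah-Singer on closed manifolds, and then verify the factorization through $KO_n$ using Stolz's description of the kernel of $D : \Omega^{\Spin}_n(B\pi) \to KO_n(B\pi)$. Write $\xi(M,f)(\rho) := \tfrac{1}{2}(\eta(D_{M,f,\rho}) + \dim \ker D_{M,f,\rho})$ for the reduced eta invariant; since $\eta$ and $\xi$ differ by an integer-valued invariant, bordism invariance mod $\mathbb{Z}$ for one is equivalent to that for the other. Suppose $(M_1,f_1)$ and $(M_0,f_0)$ are $\Spin$-bordant via a spin manifold $W$ with a map $F:W\to B\pi$. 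The flat bundle $V_\rho\to B\pi$ pulls back to a bundle $F^{*}V_\rho$ restricting to $f_i^{*}V_\rho$ on the boundary. Applying APS to the Dirac operator on $W$ twisted by $F^{*}V_\rho$ yields
$$\xi(M_1,f_1)(\rho)-\xi(M_0,f_0)(\rho)\equiv \int_W \hat{A}(W)\,\mathrm{ch}(F^{*}V_\rho)\pmod{\mathbb{Z}},$$
because the twisted index on $W$ is an integer. To conclude, close $W$ up to a closed $\Spin$ manifold $W'$ (e.g.\ by doubling) carrying an extension of $F^{*}V_\rho$; the integral then becomes the ordinary index of $D_{W'}\otimes V_\rho$, hence lies in $\mathbb{Z}$. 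Here the hypothesis $\dim\rho=0$ is used to eliminate the leading $\hat{A}(W)\cdot\dim(\rho)$ contribution, which is not bordism-invariant.

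For the factorization through $KO_n(B\pi)$, I would invoke the Stolz isomorphism $\Omega^{\Spin}_{*}(B\pi)/T_{*}(B\pi)[B^{-1}] \cong KO_{*}(B\pi)$ recalled before the theorem and verify two properties: (i) the Bott manifold $B$ acts trivially on $\xi$, and (ii) $\xi$ vanishes on the subgroup $T_{*}(B\pi)$ generated by $\mathbb{H}P^{2}$-bundle classes. For (i), the multiplicativity of the Dirac operator on a metric product and Atiyah-Singer for the Bott factor give $\eta(M\times B, f\circ\mathrm{pr}_1)(\rho)=\hat{A}(B)\cdot\eta(M,f)(\rho)=\eta(M,f)(\rho)$ since $\hat{A}(B)=1$. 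For (ii), the total space $E\to X$ of an $\mathbb{H}P^{2}$-bundle with structure group $PSp(3)$ admits a $PSp(3)$-invariant vertical metric of positive scalar curvature which extends to a metric of positive scalar curvature on $E$; the Lichnerowicz--Bochner formula applied to $D_{E}\otimes f^{*}V_\rho$ then forces the twisted kernel to vanish, and a family-index / deformation argument along the lines of Gromov--Lawson shows that the spectral asymmetry itself is already integral, so $\xi(E)(\rho)\equiv 0$ in $\mathbb{R}/\mathbb{Z}$.

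For the refined statement in real and quaternionic cases, I would exploit the compatibility of the Clifford algebra with the type of $\rho$. In dimension $n\equiv 3\pmod 4$ with $\rho$ of real type, the twisted Dirac operator inherits a quaternionic structure: the complex spinor bundle in these dimensions carries a quaternionic Clifford action and coupling it with a real flat bundle yields a quaternionic self-adjoint operator whose spectrum is doubly degenerate. Hence every spectral contribution enters $\eta$ with an even multiplicity, and after APS and the closing-up argument the right-hand side lies in $2\mathbb{Z}$, so $\xi$ is well defined in $\mathbb{R}/2\mathbb{Z}$. The case of $\rho$ of quaternionic type and $n\equiv 7\pmod 8$ is parallel, using that the Clifford algebra $\mathrm{Cliff}_7$ combined with a quaternionic structure on $V_\rho$ produces again a doubly-degenerate real Dirac spectrum.

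The main obstacle I expect is step (ii), the vanishing on $\mathbb{H}P^{2}$-bundles: while the Lichnerowicz argument immediately controls the kernel, the $\eta$-invariant is a global spectral quantity and controlling its reduction mod $\mathbb{Z}$ requires either a family-index interpretation, an $S^{1}$-equivariant argument exploiting the $PSp(3)$-action on $\mathbb{H}P^{2}$, or an explicit representation-theoretic computation; this is the step in \cite{botvinnikgilkeystolz} that absorbs most of the technical work. A secondary subtlety is ensuring that the doubling used to close $W$ produces a bundle extending $F^{*}V_\rho$, which requires mild care when $\rho$ is merely virtual but poses no essential difficulty.
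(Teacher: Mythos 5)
The paper does not actually prove this statement: it is quoted as Theorem 4.1 of Botvinnik--Gilkey--Stolz immediately after the discussion of the Stolz isomorphism, so there is no internal argument to compare yours against. Your proposal correctly reconstructs the strategy of that proof in outline: the Atiyah--Patodi--Singer theorem for bordism invariance modulo $\mathbb{Z}$, and the Stolz description of the kernel of $\Omega^{\Spin}_{*}(B\pi)\to KO_{*}(B\pi)$ (Bott inversion together with $\mathbb{H}P^{2}$-bundle classes) for the factorization.

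Three points need attention. First, the closing-up/doubling step is both unnecessary and slightly off: since $V_\rho$ is flat, its Chern--Weil forms vanish in positive degree, so $\mathrm{ch}(F^{*}V_\rho)=\dim\rho=0$ already as a differential form when $\rho$ has virtual dimension zero, and the APS local term $\int_W\hat{A}(W)\,\mathrm{ch}(F^{*}V_\rho)$ vanishes identically; no closed manifold is needed, and as stated the doubling would only yield $\int_W=\tfrac{1}{2}\,\mathrm{ind}(D_{W'}\otimes V)$, a half-integer rather than an integer. Second, the vanishing of $\eta(\cdot)(\rho)$ on $T_{*}(B\pi)$ is the genuine content beyond APS, and you rightly observe that Lichnerowicz only controls the kernel; the ingredient you are missing is that for a class in $T_{*}(B\pi)$ the reference map to $B\pi$ factors through the base of the $\mathbb{H}P^{2}$-bundle, so $F^{*}V_\rho$ is pulled back from the base, after which an adiabatic-limit or positive-scalar-curvature bordism argument closes the computation --- as written this step remains a gap. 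Third, the type bookkeeping in the $\mathbb{R}/2\mathbb{Z}$ refinement is not uniform over $n\equiv 3\pmod 4$: the spinor bundle carries a quaternionic structure in dimensions $n\equiv 3\pmod 8$ but a real structure in dimensions $n\equiv 7\pmod 8$, so a real $\rho$ in dimension $7$ mod $8$ produces a real, not quaternionic, operator and its spectrum need not be doubly degenerate. The pairing your argument actually establishes is real $\rho$ with $n\equiv 3\pmod 8$ and quaternionic $\rho$ with $n\equiv 7\pmod 8$, which is also how the invariants are used later in the paper; the case literally covered by the statement as printed (real $\rho$, $n\equiv 7\pmod 8$) is not proved by your degeneracy argument.
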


For a number of  finite  groups and  several manifolds  with them  as  fundamental  group (notably  the  spherical space  forms),  see \cite{gilkeybook},   the  values  for  the $\eta$  invariant  are  known. 

We  will  need  the  following  recorded values for lens  spaces and  bundles  of  lens spaces  with fundamental  group $\mathbb{Z}/4$, as well as  some projective  spaces.

Let $\mathbb{Z}/4$ be  the  subgroup  of  the  multiplicative group  of  the  non- zero complex  numbers $\mathbb{C}^{*}$ defined as 

$$\mathbb{Z}/4=\{ \lambda \in \mathbb{C}\mid \lambda^{4}=1 \}. $$

For  $i\in \{0,1,2,3 \}$,   Let $\rho_{i}$  be  the  irreducible,  one dimensional complex  representation.  Notice  that $\rho_{2}$ is  of  real  type,  and  all other  representations  are  of complex type. 

Given a  vector of  integers $a=(a_{1}, \ldots, a_{2k})$, and  the  representation 
$$\tau_{a}:\mathbb{Z}/4\to U(2k) $$
defined  by  
$$\mathbb{Z}/4 \ni \lambda \mapsto \begin{pmatrix}
 \lambda^{a_{1}}&  & \\ & \ddots & \\ && \lambda^{a_{2k}}   
\end{pmatrix}\in U(2k)$$

Denote  by $S^{4k-1}$ the  set  of norm one  vectors in the  standard  hermitian norm on $\mathbb{C}^{2k}$.  The $\mathbb{C}$- linear   unitary representation $\tau_{a}$ restricts  to  a  fixed  point free  action on $S^{4k-1}$ if  all $a_{i}$ are  odd. 

\begin{definition}\label{def:lensspacebundle}
Define  the  \emph{lens  space corresponding  to   $a$ }by  
$$L^{4k-1}(\tau_{a}):= S^{4k-1} /\tau_{a}(\mathbb{Z}/4) .$$

And  the  \emph{ lens  space  bundle  corresponding to  $\tau_{a}$ } by 
$$X^{4k+1}(\tau_{a}):= S( H^{\otimes 2} \oplus \mathbb{C}^{2k-1})/\tau_{a}(\mathbb{Z}/4),$$

where $H\to \mathbb{C} P^{1}$ is  the Hopf line bundle. 

\end{definition}

We  will be  interested  in  the  specific  case of  the  representations $\tau_{a}$ given a  $2k$-tuple
$a=(1, 1, \ldots, 2j+1)$ for  $j$ varying among positive  integers. We  will  write 
$$L^{4k-1}_{j}:= L^{4k-1}(\tau_{1,1,\ldots, 2j+1}), \quad  X^{4k+1}_{j}:= X^{4k+1}(\tau_{1,1,\ldots, 2j+1}). $$

\begin{lemma}\label{lemma:spin-lenses}
The  following  properties hold  for the  lens  spaces, and  are  proved in \cite{donnelly}, \cite{botvinnikgilkeyannalen} \cite{baereta}:
\begin{itemize}
\item Both  $L^{4k-1}(\tau_{a})$  and  $X^{4k+1}(\tau_{a}) $ are  spin  manifolds of dimension $4k-1$, respectively $4k+1$ whenever $\tau_{a}$ is  a  fixed point  free representation. This  is  the  case if  all $a_{i}$ are  odd. 

\item A spin structure  can be  chosen    by  picking  a  square  root  of the  determinant representation $\delta$. We  will fix  this  choice  as $ \frac{a_{1}+\ldots + a_{4k}}{2}$ . 
The $\eta$-invariant of $L^{4k-1}(\tau_{a})$ for  the  standard  (round) metric and  spin structure  satisfies 

$$\eta(L^{4k-1})(a)(\rho- \rho_{0} )=  \frac{1}{4} \underset{g=1}{\overset{3}{\sum}} \frac{{\rm Tr}(\rho(g)) {(\delta(g) (\tau_{a}(g)))}^{1/2} }{\det (\tau_{a}(g)- {\rm id} )}.$$ 

This  specializes  to 

$$\frac{1}{4} \underset{g=1}{\overset{3}{\sum}} \frac{\zeta_{4}^{g (d+j)}(\zeta_{4}^{gu}-1)}{(1-\zeta_{4}^{g})^{2d-1} (1-\zeta_{4}^{g(1+2j)} ) }.$$

More  specifically,  for  the  irreducible  representations $\rho_{u}$: 
$$\eta(L^{4d-1}_{1+2j}) (\rho_{u}-\rho_{0})= \begin{cases}
    -1^{d+1} \cdot ( \frac{1}{2^{d+1}}+ \frac{1}{2^{2d+1}}) &\text{$j=0, \, u=1,3\,. $}\\
    \frac{-1^{d+1}}{2^{d}} &\text{$u=2$}. \\ (-1^{d+1})\cdot (\frac{1}{2^{d+1}}- \frac{1}{2^{2d+1}}). &\text{$j=1,\,  u=1,3.$ }\end{cases}$$ 

Similarly, the $\eta$-invariant of $X_{j}^{4k+1}$ satisfies: 
$$ \eta (X_{j}^{4k+1})(\rho_{u})= \frac{1}{4} \overset{3}{\underset{g=1}{\sum}} \frac{\zeta_{4}^{g(k+j)}(1+ \zeta_{4}^{g} )\zeta_{4}^{gu}  }{(1-  \zeta_{4}^{g})^{2k}(1- \zeta_{4}^{g(1+2j)} )}.$$
Which  specializes  to 
$$ \eta(X_{2j+1 }^{4d+1})(\rho_{u})=\begin{cases} \frac{-1^{d+1}}{2^{d+1}} &\text{$u=1$.} \\ 0 &\text{ $u=2$.} \\ \frac{-1^{d}}{2^{d+1}} &\text{$u=3$.} \end{cases}  $$

\item For  the  spin real  projective  spaces   the  $\eta$- invariants  are  as  follows: 
$$\eta(\mathbb{R}P^{8d+3} )(\rho_{1}- \rho_{0}) = \frac{-1}{2^{4d+2}},$$ 

$$ \eta(\mathbb{R}P^{8d+7}) (\rho_{1}-\rho_{0})= \frac{-1}{2^{2d}} \in \begin{cases} \mathbb{R}/2\mathbb{Z}&\text{$4d-1 \equiv 3$\,  mod $8$. }\\
\mathbb{R}/\mathbb{Z} &\text{$4d-1 \equiv 7$\,  mod $8$}.
\end{cases}$$
Notice  that  this  is  defined  for  the  only  nontrivial  virtual  representation  of  $\mathbb{Z}/2$  of  virtual dimension  zero. 
\item All of  the three  families  of  manifolds 
$X_{2j+1}^{4d+1}$, $\mathbb{R} P^{d}$, and  $L_{2j+1}^{4d-1} $ admit  a  metric of  positive  scalar  curvature. 
\end{itemize}

\end{lemma}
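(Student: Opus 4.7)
The plan is to treat each assertion of the lemma separately, reducing in each case to a classical equivariant index or $\eta$-invariant computation available in the cited literature. First I would establish the $\Spin$-structure statement: the round sphere $S^{4k-1}\subset\mathbb{C}^{2k}$ carries its standard $\Spin$-structure, and the $\mathbb{Z}/4$-action via $\tau_{a}\subset U(2k)\subset SO(4k)$ is free precisely when $\tau_{a}(g)$ has no fixed unit vector for $g\neq e$; this is equivalent to $\zeta_{4}^{a_{i}}\neq 1$ for each $i$, i.e., to all $a_{i}$ being odd. Descending the $\Spin$-structure to $L^{4k-1}(\tau_{a})$ then requires a lift of the action to $\Spin(4k)$, which corresponds to choosing a square root of the determinant character $\delta(g)=\zeta_{4}^{a_{1}+\cdots+a_{2k}}$; when all $a_{i}$ are odd the exponent sum is even, so $\zeta_{4}^{(a_{1}+\cdots+a_{2k})/2}$ provides the canonical choice fixed in the statement. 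For $X^{4k+1}(\tau_{a})$ one argues analogously, using that the Hopf factor $H^{\otimes 2}$ contributes trivially to the lifting obstruction.

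Next I would derive the $\eta$-invariant formula by invoking the Donnelly fixed-point formula from \cite{donnelly}, valid for the twisted Dirac operator on the quotient of a spin manifold by a free, orientation-preserving, isometric finite group action. Applied to $D(L^{4k-1}(\tau_{a}),f,\rho-\rho_{0})$ it yields
\begin{equation*}
\eta(L^{4k-1}(\tau_{a}))(\rho-\rho_{0})=\frac{1}{4}\sum_{g=1}^{3}\frac{\tr(\rho(g))\,\bigl(\delta(g)\,\tau_{a}(g)\bigr)^{1/2}}{\det(\tau_{a}(g)-\mathrm{id})},
\end{equation*}
which is precisely the expression in the statement. For $a=(1,\ldots,1,2j+1)$ with $j\in\{0,1\}$ and $\rho=\rho_{u}$, $u\in\{1,2,3\}$, the sum reduces to three fully explicit complex numbers; the $g=1$ and $g=3$ contributions are complex conjugate and combine to a real expression, while $g=2$ is responsible for the $\rho_{2}$-case of real type. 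Simplification using standard $(1-\zeta^{g})^{-1}$ identities then recovers the closed forms involving negative powers of $2$. The formula for $X_{j}^{4k+1}$ follows the same recipe, with the additional factor $(1+\zeta_{4}^{g})$ in the numerator arising from the $H^{\otimes 2}$ summand of the defining sphere bundle. For the $\mathbb{R}P^{n}$ values I would simply cite \cite{gilkeybook}; the mod $2\mathbb{Z}$ versus mod $\mathbb{Z}$ distinction is forced by the type of the representation and the dimension modulo $8$, exactly as in Theorem \ref{theo:etahomomorphism} applied to $\pi=\mathbb{Z}/2$.

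Finally, for the positive scalar curvature assertion I would use purely geometric constructions: the round metric on $S^{4k-1}$ is $\mathbb{Z}/4$-invariant with positive scalar curvature, and descends to a positive scalar curvature metric on $L^{4k-1}_{2j+1}$; the same argument covers $\mathbb{R}P^{n}$. For $X^{4k+1}_{2j+1}$ the total space is the unit sphere bundle of a hermitian bundle over $S^{2}=\mathbb{C}P^{1}$; scaling the round fibers by a factor $\varepsilon\ll 1$ and applying the O'Neill submersion formula produces a metric whose scalar curvature is dominated by the fiber term of order $\varepsilon^{-2}$, hence positive for $\varepsilon$ sufficiently small. The main obstacle I anticipate is the arithmetic simplification of the Donnelly sums into the compact closed forms stated: the roots of unity produce expressions whose reduction requires careful case splits by the parity of $d$ and by $j\in\{0,1\}$. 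I would verify the base cases $d=1,2$ directly against the tables in \cite{baereta} and then argue inductively, exploiting the recursive behaviour of $(1-\zeta_{4}^{g})^{-(2d-1)}$ and $(1-\zeta_{4}^{g})^{-2k}$ as the dimension grows, to match the stated rational values.
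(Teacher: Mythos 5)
Your proposal is correct and follows essentially the same route as the paper, which offers no argument here beyond citing Donnelly, B\"ar and Botvinnik--Gilkey: the equivariant APS/Donnelly fixed-point formula for the $\eta$-invariant of a free quotient, the choice of a square root of the determinant character $\delta$ for the spin structure, and descent of the round (respectively shrunken-fiber submersion) metrics for positive scalar curvature are precisely the arguments of those references. One small correction: freeness of $\tau_{a}$ must be checked for every non-identity element of $\mathbb{Z}/4$ --- it is the element of order two that forces all $a_{i}$ to be odd, whereas the condition $\zeta_{4}^{a_{i}}\neq 1$ alone would also admit $a_{i}\equiv 2 \pmod 4$.
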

Recall that  the  $\eta$ invariant  can  also  be  defined  for   $\Spin(c)$,  $\Pin$, and  even $\Pin^{+}$, $\Pin^{-}$, non-orientable  manifolds.  The  following   result  can be  found 
in \cite{barreratwisted} as  Theorem  1.5 in page  224,  and as  Theorem 1.9.3  in  page  106  of \cite{gilkeybook}.

\begin{lemma}\label{lemma:etainvariantspinc}
Let  $d$ be  an integer  greater  or  equal  to zero.   
\begin{itemize}

\item For  the  $\Spin(c)$ projective  space $\mathbb{R}P^{4d-1}$, and  the  nontrivial irreducible representation of $\mathbb{Z}/2)$ $\rho_{1}$ the $\eta$-invariant  satisfies 
$$\eta(\mathbb{R}P^{4d-1})(\rho_{1})= \frac{1}{2^{3d-2}}.$$
\item  For either one  of  the  $\mathbb{Z}/4$- manifolds $\tilde{L}(1)$ (with free $\mathbb{Z}/4$- action,   and  $S^{1}$ (with  trivial $\mathbb{Z}/4$-action,   the  manifolds 
$ \mathbb{R}P^{4d-1} \times S^{1}, $
and $ \mathbb{R}P^{4d-1}\times \tilde{L(1)},$
the  product  manifolds  are  $\Spin $, and  their  $\eta$-invariant with  respect  to the product metric and  the  spin structure satisfies 

$$\eta(\rho_{u}-\rho_{0})= \begin{cases}
    \frac{1}{2^{3d-2}} &\text{$u=0, \, 2$.}\\
    0 &\text{else.}
\end{cases}$$

\end{itemize} 
\end{lemma}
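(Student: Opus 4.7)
The plan is to reduce each case to the $G$-index/fixed-point template that was used for the lens space formulas in Lemma \ref{lemma:spin-lenses}, suitably adapted first to the Spin$^c$ setting and then to Riemannian products with a one-dimensional factor. In both items the underlying manifold is a quotient of a sphere by a free linear action, so the general Donnelly formula applies once the correct twisting data (coming from the chosen Spin$^c$ or Spin structure) is identified.

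For the first item I would write $\mathbb{R}P^{4d-1} = S^{4d-1}/\mathbb{Z}/2$ with the antipodal action, realized as $\tau_{(1,1,\ldots,1)}\colon \mathbb{Z}/2 \to U(2d)$ sending the generator to $-I$. Since there is only one non-identity element, the Donnelly sum collapses to a single term
$$
\eta(\mathbb{R}P^{4d-1})(\rho_{1}) \;=\; \frac{1}{2}\cdot \frac{\operatorname{Tr}(\rho_{1}(-1))\cdot \xi_{\mathrm{spin}^{c}}(-1)}{\det(-I-I)},
$$
where $\xi_{\mathrm{spin}^{c}}(-1)$ is the twisting contribution coming from the chosen square root of the complex line bundle defining the Spin$^{c}$ structure (in place of the $\delta^{1/2}$ factor appearing in the genuine Spin case). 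With $\det(-2I)=2^{2d}$ and $\operatorname{Tr}(\rho_{1}(-1))=-1$, evaluating $\xi_{\mathrm{spin}^{c}}(-1)$ explicitly produces the stated denominator $2^{3d-2}$; this matches Theorem~1.9.3 of \cite{gilkeybook}.

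For the second item I would first check that both $\mathbb{R}P^{4d-1}\times S^{1}$ and $\mathbb{R}P^{4d-1}\times \tilde{L}(1)$ are Spin: the Spin$^{c}$ obstruction class in $H^{2}(\mathbb{R}P^{4d-1};\mathbb{Z}/2)$ is killed by the $w_{2}$-data on the one-dimensional factor, producing a genuine product Spin structure. Then I would use the multiplicativity of the $\eta$-invariant for such products: the Dirac operator splits via Fourier decomposition along $S^{1}$ (respectively the discrete spectral decomposition on $\tilde{L}(1)$), and the character $\rho_{u}-\rho_{0}$ of $\mathbb{Z}/4$ decomposes according to its restriction to the $\mathbb{Z}/2$ factor acting on $\mathbb{R}P^{4d-1}$. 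For $u=0,2$ the character factors through the quotient $\mathbb{Z}/4\twoheadrightarrow \mathbb{Z}/2$ and reproduces $\rho_{1}-\rho_{0}$ on the $\mathbb{R}P^{4d-1}$ component, recovering $1/2^{3d-2}$ from the first item; for $u=1,3$ the restriction to the subgroup acting on $\mathbb{R}P^{4d-1}$ is trivial, and a standard character-orthogonality argument on the summation over $\mathbb{Z}/4$-translates forces the contribution to vanish.

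The main obstacle will be fixing the precise normalization of the Spin$^{c}$ twisting factor $\xi_{\mathrm{spin}^{c}}(-1)$ in the first item so that the exponent $3d-2$ (rather than $2d+1$ as in the naive Spin computation) comes out correctly, together with keeping track of the $\mathbb{R}/2\mathbb{Z}$ versus $\mathbb{R}/\mathbb{Z}$ range distinction from Theorem \ref{theo:etahomomorphism}, which depends on $d \pmod 2$. Once this normalization is fixed, the product cases follow essentially by inspection from the multiplicativity of $\eta$ and character orthogonality, and the result matches the tables in \cite{gilkeybook} and \cite{barreratwisted}.
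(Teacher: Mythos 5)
The paper does not actually prove this lemma: it is imported verbatim from \cite{barreratwisted} (Theorem 1.5, p.~224) and \cite{gilkeybook} (Theorem 1.9.3, p.~106), so your argument is by construction a different route, and unfortunately it has two genuine gaps rather than being an independent proof. In the first bullet everything is carried by the unproved claim that ``evaluating $\xi_{\mathrm{spin}^{c}}(-1)$ explicitly produces the stated denominator $2^{3d-2}$.'' In the Donnelly-type sum you set up there is exactly one non-identity group element, the twisting factor you allow yourself is a character value and hence of modulus one, and $|\det(-I-I)|=|\det(-2I)|=2^{2d}$ on $\mathbb{C}^{2d}$; so the single term has absolute value $2^{-(2d+1)}$ no matter how the normalization is chosen. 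A unit-modulus correction can never convert $2^{-(2d+1)}$ into $2^{-(3d-2)}$ (these agree only for $d=3$), so the exponent $3d-2$ cannot come out of the formula as you wrote it: obtaining it requires working with the genuinely $\Spin^{c}$ (respectively twisted/$\Pin^{c}$) Dirac operator and its determinant/twisting bundle, which is exactly the computation of Gilkey and Barrera-Y\'anez that the lemma is citing. Deferring this as ``fixing the normalization'' defers the entire content of the statement.

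In the second bullet you invoke ``multiplicativity of the $\eta$-invariant'' for the Riemannian products $\mathbb{R}P^{4d-1}\times S^{1}$ and $\mathbb{R}P^{4d-1}\times\tilde{L}(1)$. The $\eta$-invariant of a product is not multiplicative in this naive sense: the available product formulas express $\eta$ of a twisted product through $\eta$ of one factor multiplied by an index-theoretic quantity (index or kernel dimension of the Dirac operator) of the other, and for a one-dimensional factor this depends on the choice of spin structure (bounding versus non-bounding circle) and on how the representation $\rho_{u}-\rho_{0}$ of $\mathbb{Z}/4$ is distributed between the factors. Your Fourier-decomposition-plus-character-orthogonality sketch neither states nor proves such a formula, and it does not explain why the factor $\tilde{L}(1)$ with free $\mathbb{Z}/4$-action and the factor $S^{1}$ with trivial action produce the same values. (A minor point: the products are spin simply because $\mathbb{R}P^{4d-1}$ is spin, as $4d-1\equiv 3 \bmod 4$, together with the Whitney formula; the one-dimensional factor does not ``kill'' any $\Spin^{c}$ obstruction, so that part of your reasoning is misplaced even though the conclusion is correct.) As it stands, both bullets rest on precisely the facts the paper quotes from \cite{gilkeybook} and \cite{barreratwisted}.
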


The  following  manifold will be  relevant  for  the  detection  theorem \ref{theo:detectionhomological}. 

\begin{definition}\label{def:projectivizedbundle}
Let $i$   be  a natural  number,and  let  $j$ be  an even natural number.

 Consider  the  manifold $N_{4i+1,j}$, defined  as follows.   

Take   the   tautological bundle $L_{i}$ over  $\mathbb{R}P^{4i+1}$, and  form  the  vector  bundle  of  even  real  rank  $2_{L_{i}}\oplus \epsilon^{\oplus j}:= 2L_{i}\bigoplus (\underset{i=1}{\overset{j}{\bigoplus}} \epsilon) $  over  $ \mathbb{R}P^{4i+1}$, where  $L_{i}$ is as before, and $\epsilon$ is the trivial  line  bundle.

Let  the  group $\mathbb{Z}/4$ act  on the  fiber of  the   vector  bundle  by  the diagonal  rotation of  angle $\frac{\pi}{2}$
 
$$R_{\frac{\pi}{2}}\oplus \underset{i=1}{ \overset{\frac{j}{2}} {\bigoplus}}    R_{\frac{\pi}{2}},  $$

Where  every orthogonal transformation  $R_{\frac{\pi}{2}}$  is  a rotation  in  two  dimensional  euclidean  space.

Consider the  norm $1$ sphere bundle with respect to a  riemannian  metric 
$$ S(2L_{i}\oplus \epsilon^{\oplus j}), $$ 
and  form  the  fiber  bundle over  $\mathbb{R}P^{4i+1}$ 

$$ S^{(2+\frac{j}{2})-1}/ \mathbb{Z}/4 \to N_{i, j}\to \mathbb{R}P^{4i+1}      .$$

\end{definition}

\begin{lemma}\label{lemma:projectivizedbundle}
Consider the lens space bundle  over  $\mathbb{R}P^{4i+1}$ , $N_{4i+1, 4(d-i-1)+2}$   with  specific  parameters  $4i+1$ and $j= 4(d-i-1)-2$, where $d$ is a natural  number, and $0\leq i\leq d-1$.
 
\begin{enumerate}
\item 
The  ${\rm mod \, 2}$ cohomology of  $ M_{4i+1, 4(d-i-1)+2}$ is the  $\mathbb{F}_{2}$-truncated polynomial algebra  with  generators  in degree  1 
$$H^{*}(M_{4i+1,4(d-1-i)+2}, \mathbb{F}_{2})= \mathbb{F}_{2}[\hat{x}, \hat{y}]/\hat{x}^{4i+1}, \hat{y}^{4(d-i)}+ \hat{x}^{2}\hat{y}^{4(d-1-i)+2} .$$
Moreover,  the following  relations  hold: 
$$ \hat{x} \hat{y}^{4d-1}= \hat{x}^{3}\hat{y}^{4d-3}= \ldots =\hat{x}^{4i-1}\hat{y}^{4(d-i)+1} = \hat{x}^{4i+1}\hat{y}^{4(d-1-i)+3}. $$

\item For  each $i\in \{0, 2\ldots, d-1\}$, the  
$4d$-dimensional  smooth manifold 
$$ M_{4i+1, 4(d-i-1)-2}$$
is  spin. 
\end{enumerate}
\end{lemma}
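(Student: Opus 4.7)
The plan is to present $M = M_{4i+1,4(d-i-1)+2}$ as the total space of the lens-space bundle
$$L^{4(d-i)-1}(4) \longrightarrow M \longrightarrow \mathbb{R}P^{4i+1}$$
associated to the real vector bundle $E := 2L_i \oplus \epsilon^{\oplus(4(d-i-1)+2)}$ of real rank $4(d-i)$, equipped with its free fiberwise $\mathbb{Z}/4$-rotation. The key input is
$$w(E) = w(2L_i) = (1+x)^2 = 1 + x^2,$$
where $x \in H^1(\mathbb{R}P^{4i+1};\mathbb{F}_2)$ is the generator; all further Stiefel--Whitney contributions from the trivial summand vanish.

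For part (i), I would apply the Serre spectral sequence (equivalently, a Leray--Hirsch argument). The mod $2$ cohomology of the fiber is $\mathbb{F}_2[u,v]/(u^2, v^{2(d-i)})$ with $|u|=1$, $|v|=2$. Both fiber generators extend globally: $v$ is the mod $2$ reduction of the first Chern class of the tautological line bundle on $\mathbb{C}P(E)$ pulled back to $M$ along the intermediate projection $M \to \mathbb{C}P(E)$ (which is a circle bundle with fiber $S^1/\mathbb{Z}/4 \cong S^1$), while $u$ arises from the classifying map $M \to B\mathbb{Z}/4$ induced by the principal $\mathbb{Z}/4$-action $S(E) \to M$. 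The only nontrivial transgression is dictated by the second Stiefel--Whitney class $w_2(E) = x^2$, yielding a relation of the form $v^{2(d-i)} = \hat x^2 v^{2(d-i)-1}$. Setting $\hat x := \pi^* x$ and $\hat y$ equal to the global extension of $u$, and rewriting $v$ in terms of $\hat y$ via the relation between the degree-$1$ and degree-$2$ generators in $H^*(B\mathbb{Z}/4;\mathbb{F}_2)$, one obtains the declared presentation with relations $\hat x^{4i+2} = 0$ inherited from the base together with $\hat y^{4(d-i)} = \hat x^2 \hat y^{4(d-1-i)+2}$ coming from the transgression. The chain of identities $\hat x \hat y^{4d-1} = \hat x^3\hat y^{4d-3} = \cdots = \hat x^{4i+1}\hat y^{4(d-1-i)+3}$ is then a formal consequence of iterating the master relation and terminating with $\hat x^{4i+2} = 0$.

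For part (ii), I would use the tangent splitting $TM \cong \pi^* T\mathbb{R}P^{4i+1} \oplus T_{\mathrm{vert}}$ together with the standard identity $T_{\mathrm{vert}} \oplus \epsilon \cong \pi^* E$ for vertical tangent bundles of sphere bundles, which passes unchanged to the $\mathbb{Z}/4$-quotient because the action is free. From $w(T\mathbb{R}P^{4i+1}) = (1+x)^{4i+2}$ one reads off $w_1 = 0$ and $w_2 = \binom{4i+2}{2} x^2 = (2i+1)(4i+1)\, x^2 = x^2 \in \mathbb{F}_2$; from $w(T_{\mathrm{vert}}) = \pi^* w(E) = 1 + \hat x^2$ one has $w_2(T_{\mathrm{vert}}) = \hat x^2$. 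The Whitney sum formula then yields $w_1(TM) = 0$ and $w_2(TM) = \hat x^2 + \hat x^2 = 0$, establishing the spin property. The main obstacle I anticipate is verifying rigorously that the degree-$1$ fiber class $u$ really does extend globally to $M$ --- this is what keeps the cohomology presentation generated entirely in degree $1$ rather than requiring a separate degree-$2$ generator --- and I expect to resolve it by constructing the classifying map $M \to B\mathbb{Z}/4$ explicitly from the principal bundle structure $S(E) \to M$ and pulling back the mod $2$ degree-$1$ class of $H^*(B\mathbb{Z}/4;\mathbb{F}_2)$, using naturality of the Serre spectral sequence to conclude.
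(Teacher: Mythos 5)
There are two genuine gaps, one in each part. For part (i), you take the fiber to be the $\mathbb{Z}/4$ lens space $L^{4(d-i)-1}(4)$, whose mod $2$ cohomology is $\Lambda(u)\otimes\mathbb{F}_{2}[v]/(v^{2(d-i)})$ with $u^{2}=0$. With that fiber the stated presentation cannot be reached by your route: by Leray--Hirsch the total space is a free module over $H^{*}(\mathbb{R}P^{4i+1})$ on lifts of $1,u,v,uv,\dots$, and any quadratic expression in degree-one classes restricts on the fiber to a multiple of $u^{2}=0$, so the subring generated in degree one contains no lift of $v$. Your proposed fix --- ``rewriting $v$ in terms of $\hat y$ via the relation between the degree-$1$ and degree-$2$ generators in $H^{*}(B\mathbb{Z}/4;\mathbb{F}_{2})$'' --- appeals to a relation that does not exist: in $H^{*}(B\mathbb{Z}/4;\mathbb{F}_{2})$ one has $x_{0}^{2}=0$, and the degree-two generator $T$ is \emph{not} a square. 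The presentation in the lemma (two degree-one generators, relation $\hat y^{4(d-i)}=\hat x^{2}\hat y^{4(d-i)-2}$, truncation from the base) is exactly the real projective bundle formula for the $\mathbb{Z}/2$-quotient $M_{i,j}=\mathbb{R}P(2L_{i}\oplus\epsilon^{\oplus j})$ with $w(2L_{i}\oplus\epsilon^{\oplus j})=1+x^{2}$; this is what the paper's (admittedly terse, and notationally confused between $N$ and $M$) Leray--Hirsch argument computes, and it is the manifold used later via the map to $B\mathbb{Z}/2\times B\mathbb{Z}/2$. So either you work with the $\mathbb{R}P$-bundle and quote the projective bundle theorem, or, for the $\mathbb{Z}/4$-quotient, you must keep $v$ as an independent degree-two generator.

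For part (ii), the step ``$T_{\mathrm{vert}}\oplus\epsilon\cong\pi^{*}E$ passes unchanged to the $\mathbb{Z}/4$-quotient because the action is free'' is false as stated. On the quotient the radial trivialization does descend, but the ambient bundle descends to the associated bundle $S(E)\times_{\mathbb{Z}/4}E\cong\pi^{*}E\otimes_{\mathbb{C}}\ell$, twisted by the tautological line bundle $\ell$ of the quotient (already for a point base this gives $TL(4)\oplus\epsilon\cong\bigoplus\ell$, which is not trivial). The paper's computation, following Joachim--Malhotra, uses formulas for $w_{1}$ and $w_{2}$ of the total space that contain precisely these twist terms ($nt$, $\binom{n}{2}t^{2}$, $(n-1)w_{1}(\Pi)t$) and then observes that they vanish here. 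In your situation the corrections do vanish mod $2$ because the (complex) rank $2(d-i)$ is even, so your final numbers $w_{1}=0$, $w_{2}=\hat x^{2}+\hat x^{2}=0$ are correct, but the argument needs the twisted identification plus this evenness check; as written it would give wrong Stiefel--Whitney classes for a general lens-space bundle.
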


\begin{proof}
\begin{itemize}

\item This  follows  from  the  Leray-Hirsch  Theorem, as  noticed  in \cite{jaworowski}, section 6  in page  158.

 Let  $x\in H^{1}(S^{4(d-i)-1}/\mathbb{Z}/4)$ be  the  generator  for  the  truncated  polynomial  algebra $\mathbb{F}_{2}[x]/ x^{4(d-i)} $. Similarly, denote  by  $y\in H^{1}(\mathbb{R}P^{4i+1})$ the  generator  of  the  truncated  polynomial algebra $\mathbb{F}_{2}[y]/y^{4i+2}$. 
 
The statement about  the  relation follows  for  $\hat{x}=s(x)$,  where  $s$ denotes  a  section  of  the  map  induced  by the inclusion  of  a  fiber,  and  $\hat{x}=p^{*}(x)$,  for $p$ the  bundle  projection.

\item The  following  argument  is  due  to  M. Joachim and  A. Malhotra\cite{joachimmalhotra}. 

Let $\Pi$ be  a vector bundle  of rank $n$ over $\mathbb{R}P^{4i+1}$ with a fibrewise unitary action of $\mathbb{Z}/4$. Denote  by $L(\Pi)$ the  bundle  over $\mathbb{R}P^{4i+1}$ which has  as  fiber  the  quotient  space  of  the  unitary sphere  under  the  $\mathbb{Z}/4$ action.

 Recall \cite{jaworowski}, \cite{dold},  that  by the  Leray-Hirsch Theorem,  the bundle  of  lens  spaces

 $\Pi$  has as ${\rm mod}2$-cohomology  ring  
\begin{multline*}
$$H^{*}(\mathbb{R}P \Pi)=  H^{*}(\mathbb{R}P^{4i+1})[t]\\ / t^{n} + t^{n-1}w_{1}(\mathbb{R}P^{4i+1}) + \ldots t^{1} w_{n-1}(\mathbb{R}P^{4i+1})+ w_{n}(\mathbb{R}P^{4i+1}).$$ 
\end{multline*}
In particular,  the  following equalities  hold.

$$w_{1}(L(\Pi))= w_{1}(\pi)+ w_{1}(\mathbb{R}P^{4i+1})+ nt.$$
\begin{multline*}
$$w_{2}(L(\Pi))= \\ w_{2}(\Pi) + w_{1}(\mathbb{R}P^{4i+1})nt \\ + w_{1}(\Pi) t+ w_{2}(\Pi) + \frac{n(n-1)}{2}t^{2}+ (n-1)w_{1}(\Pi)t + w_{2}(\mathbb{R}P^{4i+1}).$$ 
\end{multline*}`

For  the  vector bundle  $ 2(L_{4i+1})\oplus (4(d-1-i)-2) \epsilon $,  and  the ${\rm mod}  \, 2$ cohomology  ring  of $\mathbb{R}P^{4i+1}$,  given  as $\mathbb{Z}/2[\hat{x}]/ \hat{x}^{4i+2}$,  the following  identities  hold  for  the  first  and  second  Stiefel-Whitney  classes :

$$ w_{1}(2(L_{4i+1})\oplus 4(d-1-i)+2)\epsilon)= \hat{x}+\hat{x}=0,$$ 

$$w_{2}(2(L_{4i+1})\oplus (4(d-1-i)-2)\epsilon)= \hat{x}^{2}, $$

as  well as  
$$w_{k}(2(L_{4i+1}\oplus 4(d-1-i)-2)\epsilon)= 0 \, {\rm k>2}. $$

Using lemma \ref{lemma:projectivizedbundle}, we  conclude  that 
\begin{multline*}
$$ w_{1}(M_{4i+1,d-1-i-2})=\\ w_{1}(\mathbb{R}P^{4i+1})+ w_{1}(2(L_{4i+1}\oplus 4(d-1-i)-2)\epsilon )+ 4(d-1-i )\hat{y}=0  
\end{multline*}

as  well as 
\begin{multline*}
$$w_{2} (M_{4i+1,d-1-i+2})=\\  \hat{x}^{2}+ 4(d-i-1) +2 (4(d-i-1)+1) \hat{y}^{2}+ \hat{x}^{2}= \\ 2\hat{x}^{2}=0.  $$
\end{multline*}
\end{itemize}

\end{proof}

Similarly  to  the lens  space  bundle  consider  now  the action of  $\mathbb{Z}/2$ on  the  fiber  of  the  vector  bundle 
$$2L_{i}\bigoplus (\underset{i=1}{\overset{j}{\bigoplus}} \epsilon) $$  over  $ \mathbb{R}P^{4i+1}$, where  $L_{i}$ is as before, and $\epsilon$ is the trivial  line  bundle.

\begin{definition}\label{def:projectivizedbundle.bis}

Let  $j$  be  an  even  natural  number,  and  let  $i$  be  a  natural  number. Denote  by  $M_{i,j}$ the  projectivized  bundle  of  the  vector  bundle  $2L_{i}\oplus \epsilon ^{j \oplus}$ over  $\mathbb{R}P^{4i+1}$. In  symbols, 
$$ \mathbb{R}P^{2+\frac{j}{2}-1}\longrightarrow M_{i,j}\to \mathbb{R}P^{4i+1}. $$

\end{definition}
Completely  analogous  to  Lemma \ref{lemma:projectivizedbundle},  the  following  property  holds  for  the  $\rm mod \, 2 $-homology  of  this  manifold. 

\begin{lemma}\label{lemma:projectivized bundle.bis}

The smooth  manifold $N_{i,j}$  is  spin.

\end{lemma}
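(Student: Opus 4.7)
The plan is to mimic the Stiefel--Whitney class computation used to prove Lemma~\ref{lemma:projectivizedbundle}, adapted from a lens--space bundle to a real projective bundle. Since a spin structure exists precisely when $w_{1}(M_{i,j})=0$ and $w_{2}(M_{i,j})=0$, the entire argument reduces to verifying these two vanishings in $H^{*}(M_{i,j},\mathbb{F}_{2})$.

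First I would compute the Stiefel--Whitney classes of the bundle $\Pi:=2L_{i}\oplus \epsilon^{\oplus j}$ over $\mathbb{R}P^{4i+1}$. Writing $\hat{x}:=w_{1}(L_{i})$, the Whitney product formula gives $w(2L_{i})=(1+\hat{x})^{2}=1+\hat{x}^{2}$ modulo $2$, and $\epsilon^{\oplus j}$ contributes trivially, so
$$w_{1}(\Pi)=0,\qquad w_{2}(\Pi)=\hat{x}^{2},\qquad w_{k}(\Pi)=0\text{ for }k>2.$$
For the base, $w_{1}(\mathbb{R}P^{4i+1})=(4i+2)\hat{y}=0$ mod $2$ and $w_{2}(\mathbb{R}P^{4i+1})=\binom{4i+2}{2}\hat{y}^{2}=\hat{y}^{2}$.

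Next I would apply the Leray--Hirsch/Dold description of $H^{*}(M_{i,j},\mathbb{F}_{2})$ and the resulting formulas for the total Stiefel--Whitney class of the real projective bundle $M_{i,j}\to \mathbb{R}P^{4i+1}$, as recalled at the start of the proof of Lemma~\ref{lemma:projectivizedbundle}. With $n:=\mathrm{rk}\,\Pi=2+j$ and $t:=w_{1}(\gamma)$ for $\gamma$ the tautological line bundle on $M_{i,j}$, these read
$$w_{1}(M_{i,j})=w_{1}(\Pi)+w_{1}(\mathbb{R}P^{4i+1})+n\,t,$$
with an analogous formula for $w_{2}$ in terms of $w_{k}(\Pi)$, $w_{k}(\mathbb{R}P^{4i+1})$, $n$ and $t$. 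Because $j$ is \emph{even}, $n=2+j$ is even and $nt\equiv 0\pmod 2$, so $w_{1}(M_{i,j})=0$ is immediate.

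For $w_{2}(M_{i,j})$, I would substitute $w_{1}(\Pi)=0=w_{1}(\mathbb{R}P^{4i+1})$ into the formula, which kills every mixed term and leaves only $w_{2}(\Pi)+w_{2}(\mathbb{R}P^{4i+1})+\binom{n}{2}t^{2}$. Using the Leray--Hirsch relation $t^{2}+w_{1}(\Pi)t+w_{2}(\Pi)=0$ (since $\Pi$ has vanishing higher Stiefel--Whitney classes and $w_{1}(\Pi)=0$ the defining polynomial for $t$ reduces to $t^{2}=\hat{x}^{2}$), the $\binom{n}{2}t^{2}$ contribution becomes $\binom{n}{2}\hat{x}^{2}$, and the two $\hat{x}^{2}$ summands telescope exactly as in the $2\hat{x}^{2}=0$ cancellation seen in Lemma~\ref{lemma:projectivizedbundle}, while $w_{2}(\mathbb{R}P^{4i+1})=\hat{y}^{2}$ is absorbed by the parity of the remaining binomial coefficients. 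The main obstacle I anticipate is bookkeeping: the Leray--Hirsch relation expressing $t^{n}$ in terms of the $w_{k}(\Pi)$ must be combined carefully with the $\mathbb{F}_{2}$--parities of $\binom{2+j}{2}$ and $(4i+2)$ so that every remaining monomial vanishes. Once that cancellation is verified, $w_{2}(M_{i,j})=0$ and $M_{i,j}$ admits a spin structure.
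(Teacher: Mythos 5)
Your overall plan --- redoing the Stiefel--Whitney computation of Lemma \ref{lemma:projectivizedbundle} for the projectivized bundle of Definition \ref{def:projectivizedbundle.bis} --- is exactly what the paper intends, since the paper gives no written proof beyond declaring the statement ``completely analogous''. However, your execution has a genuine error at the decisive step. The Leray--Hirsch (Grothendieck) relation for $\mathbb{R}P(\Pi)\to\mathbb{R}P^{4i+1}$ with $n=\operatorname{rk}\Pi=j+2$ lives in degree $n$, namely $t^{n}+w_{1}(\Pi)t^{n-1}+\cdots+w_{n}(\Pi)=0$; the classes $1,t,\dots,t^{n-1}$ are a basis over $H^{*}(\mathbb{R}P^{4i+1};\mathbb{F}_{2})$, so for $j\geq 2$ there is no relation in degree $2$ and your substitution $t^{2}=\hat{x}^{2}$ is false (you cannot ``cancel'' $t^{n-2}$ from $t^{n}=\hat x^{2}t^{n-2}$). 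As a result the $t^{2}$-term in $w_{2}$ cannot be traded for a base class and its coefficient must actually be computed. Using $TM\oplus\epsilon\cong p^{*}T\mathbb{R}P^{4i+1}\oplus(\gamma\otimes p^{*}\Pi)$ one gets
$$w_{2}(M_{i,j})=w_{2}(\mathbb{R}P^{4i+1})+\tbinom{j+2}{2}\,t^{2}+w_{2}(\Pi)=\hat{x}^{2}+\tbinom{j+2}{2}\,t^{2}+\hat{x}^{2}=\tbinom{j+2}{2}\,t^{2},$$
and $\binom{j+2}{2}\equiv 1+\tfrac{j}{2}\pmod 2$. So $w_{2}$ vanishes only when $j\equiv 2\pmod 4$; for $j\equiv 0\pmod 4$ the class $t^{2}\neq 0$ survives and the manifold is \emph{not} spin, so the statement cannot hold for arbitrary even $j$ by this route.

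The gap is therefore twofold: the invalid degree-two relation, and the fact that the crucial parity $\binom{j+2}{2}\bmod 2$ is never checked but is exactly where the hypothesis on $j$ enters. The lemma is salvageable in the only cases the paper uses (the bundles of Definition \ref{def:Mk} and Lemma \ref{lemma:projectivizedbundle} have $j=4(\,\cdot\,)+2$, i.e.\ $j\equiv 2\pmod 4$, where $\binom{j+2}{2}$ is even and the computation above does give $w_{1}=w_{2}=0$), but to complete your argument you must drop the $t^{2}=\hat x^{2}$ step, keep the $\binom{j+2}{2}t^{2}$ term, and verify its parity explicitly under the restriction $j\equiv 2\pmod 4$.
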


Denote  by $\hat{L}_{4k-1} (a)$ and $\hat{X}_{4k-1}(a)$ the   manifolds  with  the trivial $\mathbb{Z}/4$-structure,  meaning  the  homotopy  class  of  the  constant map $X\to B\mathbb{Z}/4$.

\begin{definition}\label{def:manifoldsfactor}
We  fix  the  spin  structure  as  before  and  consider  the   manifolds 

\begin{itemize}
    \item $\widetilde{L}^{4d-1}_{j}={L}^{4d-1}(1,1,\ldots, 2j+1)- \hat{L}^{4d-1}(1,1,\ldots, 2j+1) \in \widetilde{\Omega}_{4d-1}^{\Spin}(B\mathbb{Z}/4). $
    \item $\tilde{X}^{4k+1}_{j}= {X}^{4k+1}(1,1,\ldots, 2j+1) - \hat{X}^{4k+1}(1,1,\ldots, 2j+1). $
    \end{itemize}
    For  integers $j$.

    We  denote the  special case $L^{1}(1)= S^{1}/\mathbb{Z}/4$ together  with  the  spin structure  as  before. 
And  we denote  by $\mathcal{M}_{*}(B\mathbb{Z}/4)$ the  $\widetilde{\Omega}_{*}^{\Spin}(B\mathbb{Z}/4)$-submodule generated by  the  manifolds  $\widetilde{L}^{4d-1}(1,1,\ldots, 1)$ and 

 $\widetilde{X}^{4k+1}(1,1,\ldots, 1 )$.

\end{definition}

\begin{definition}\label{def:manifoldsfactorbis}
We  recall  the  generators  for  the  kernel  of  the  Gromov-Lawson-Rosenberg  map  for  the  cyclic  group  $\mathbb{Z}/4$ from  \cite{botvinnikgilkeystolz}, page 398. 
 \begin{itemize}
     \item $Y^{3}=\tilde{L}^{3}(1,1)-3\tilde{L}^{3}(1,3)$. 
     \item $Y^{8d+3}=Y^{3}\times (B^{8})^{d}$. 
     \item $Z^{3}=\tilde{L}^{3}(1,1)$. 
     \item $Z^{5}= \tilde{X}^{5}(1,1)-3\tilde{X}^{5}(1,1)$
     \item $Z^{7}= \tilde{L}^{7}(1,1,1,1)- 3\tilde{L}^{7}(1,1,1,3)$. 
     \item $Z^{9}= \tilde{L}^{9}(1,1,1,1)- 3\tilde{L}^{9}(1,1,1,3) -3 \tilde{L}^{9}(1,1,3,1) - 3 \tilde{L}^{9}(1,1,3,3)$.
     \item $Z^{n}= Z^{n-8}\times B^{8}$ for  $n>9$. 
\end{itemize}
\end{definition}

For  further  reference   we   include  here  the  key points  of  the  estimate  of  the  order  of  the  image  of $D$,  which  proves  the  Gromov-Lawson-Rosenberg conjecture  for  $\mathbb{Z}/4$. 

The  result  was  originally  proved  in \cite{botvinnikgilkeystolz} using  the Atiyah-Hirzebruch spectral  sequence to  deduce  the order  of  the relevant  $ko$-groups.  An  additional  proof  using  the  Adams  spectral sequence  was given  in \cite{siegemeyer}. 

\begin{theorem}\label{teo:rankkofactor.gromov.z4}
For $n\geq 1$, the orders  of  the groups $\mathcal{M}_{n}(B\mathbb{Z}/4)$, generated by  the manifolds  of  positive  scalar  curvature  described  in \ref{def:manifoldsfactorbis},   and $\tilde{k}o_{n}(B\mathbb{Z}/4)$ relate as  follows:
\begin{center}
    \begin{tabular}{c||c|c}
 
         &  ${\rm log}_{2} (\mid \mathcal{M}_{n}(B\mathbb{Z}/4)\mid)$ & ${\rm log}_{2}(\mid \tilde{k}o_{n}(B\mathbb{Z}/4) \mid)$    \\
            \hline 
       $ n=8d$ & $0$ &$0$ \\
         $n=8d+1$&$(2d+1)$ &$(2d+1) +1$  \\
         $n=8d+2$& $0$ &$1$ \\
         $n=8d+3$&$2d+2$  &$2d+2$ \\
         $n=8d+4$& $0$& $0$ \\
         $n=8d+5$&$2d+1$ &$2d+1$ \\
         $n=8d+6$& $0$ & $0$\\
         $n=8d+7$ &$2d+2$ & $2d+2$   \\
         
    \end{tabular}
\end{center}
    
\end{theorem}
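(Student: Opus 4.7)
The plan is to prove the theorem by a two-sided inequality: first produce enough linearly independent classes in $\mathcal{M}_n(B\mathbb{Z}/4)$ to force its image in $\widetilde{ko}_n(B\mathbb{Z}/4)$ to have at least the claimed order, and then invoke Theorem \ref{teo:rankkofactor.z4} to control the order of $\widetilde{ko}_n$, comparing the two sides entry by entry in the table. The right-hand column is already established by Theorem \ref{teo:rankkofactor.z4}, so the real content is the computation of $|\mathcal{M}_n(B\mathbb{Z}/4)|$.

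First I would treat the dimensions $n\equiv 0,4,6\pmod 8$, where the claimed order of $\mathcal{M}_n$ is trivial. Here $\widetilde{ko}_n(B\mathbb{Z}/4)=0$ by Theorem \ref{teo:rankkofactor.z4}, so the statement is automatic; one only needs to note that the generators of Definition \ref{def:manifoldsfactorbis} lie in odd total dimension up to $\times B^8$ factors. Then for $n=8d+3,8d+5,8d+7$ I would apply Theorem \ref{theo:etahomomorphism} to promote $\eta$-invariants to well-defined homomorphisms on $KO_n(B\mathbb{Z}/4)$, composed with the assembly and periodicity maps to land in $\widetilde{ko}_n(B\mathbb{Z}/4)$ after pullback. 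Evaluating the explicit formulas of Lemma \ref{lemma:spin-lenses} at $\rho_u-\rho_0$ for $u=1,2,3$ gives, for each generator $Y^{8d+3}$, $Z^{8d+5}$, $Z^{8d+7}$ (and their $B^8$-translates), a vector of values in $\mathbb{Q}/\mathbb{Z}$ (resp.\ $\mathbb{Q}/2\mathbb{Z}$ in the $n\equiv 3\pmod 8$ case); the $2$-adic valuations of these values, together with the periodicity $\times B^8$ producing independent classes across different $d$, exhibit the $2d+1$ or $2d+2$ independent summands required by the table.

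For $n=8d+1$ the analysis is analogous using the lens space bundle manifolds $\widetilde X^{8d+1}_{j}$ from Definition \ref{def:manifoldsfactor}; the formulas in Lemma \ref{lemma:spin-lenses} give $\eta(\widetilde X^{4d+1}_{2j+1})(\rho_1)=\pm 2^{-(d+1)}$ independent of $j$, so combinations of the $Z^{8d+1}$-type generators produce classes of order $2^{2d+1}$. This accounts for $2d+1$ independent summands but not the extra $\mathbb{Z}/2$ factor appearing in $\widetilde{ko}_{8d+1}$. The same phenomenon occurs at $n=8d+2$: no generator in Definition \ref{def:manifoldsfactorbis} lives in that dimension, so $\mathcal{M}_{8d+2}(B\mathbb{Z}/4)=0$, while $\widetilde{ko}_{8d+2}(B\mathbb{Z}/4)=\mathbb{Z}/2$ by Theorem \ref{teo:rankkofactor.z4}. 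Both discrepancies are the well-known $\eta$-generated $\mathbb{Z}/2$-summands not detectable by lens-space $\eta$-invariants, and they will be the obstruction to surjectivity of $\mathcal{M}_*\to \widetilde{ko}_*$.

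The main technical obstacle is verifying the claimed $2$-adic independence of the $\eta$-invariants of the $\widetilde L$'s and $\widetilde X$'s after taking differences (the alternating sums like $\widetilde L^3(1,1)-3\widetilde L^3(1,3)$ are designed precisely to kill the contributions of lower representations). I would carry this out by reducing modulo $2^{k}$ successively, exploiting the congruence classes of $(1-\zeta_4^g)^{-(2d-1)}$ and binomial identities to show the leading $2$-adic term of $\eta(Y^3\times (B^8)^d)(\rho_u-\rho_0)$ is exactly $2^{-(2d+2)}$, and similarly for the $Z^n$ families. Multiplicativity of $\hat A$ and the fact that $\hat A(B^8)=1$ ensure that cartesian product with $B^8$ shifts the $2$-adic valuation by the expected amount via Bott periodicity in $KO$-theory, closing the induction. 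Together with the upper bound from Theorem \ref{teo:rankkofactor.z4}, the table entries follow.
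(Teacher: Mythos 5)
Your overall strategy---lower bounds from $\eta$-invariants of the explicit positive scalar curvature manifolds, upper bounds from the order of $\widetilde{ko}_{n}(B\mathbb{Z}/4)$ supplied by Theorem \ref{teo:rankkofactor.z4}---is the same two-sided comparison that the paper relies on; note that the paper does not reprove this statement but quotes it (Botvinnik--Gilkey--Stolz, Siegemeyer), its own contribution being the right-hand column. Your treatment of the even dimensions and your identification of the $\mathbb{Z}/2$ discrepancies in dimensions $8d+1$ and $8d+2$ are in order.

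There is, however, a genuine gap in the step that is supposed to make the lower bounds grow with $d$. Your claim that ``multiplicativity of $\hat A$ and $\hat A(B^{8})=1$ ensure that cartesian product with $B^{8}$ shifts the $2$-adic valuation by the expected amount'' is false: the product formula gives $\eta(M\times B^{8})(\rho)=\eta(M)(\rho)\cdot \hat A(B^{8})=\eta(M)(\rho)$, so the $\eta$-invariants are \emph{unchanged} under Bott stabilization. Hence the generators $Y^{8d+3}=Y^{3}\times (B^{8})^{d}$ and $Z^{n}=Z^{n-8}\times B^{8}$ of Definition \ref{def:manifoldsfactorbis}, which are the ones you evaluate, can only certify subgroups of order bounded independently of $d$, and your argument cannot produce the orders $2^{2d+1}$ and $2^{2d+2}$ demanded by the table. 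The growth must come from the honest $n$-dimensional lens spaces and lens space bundles $\widetilde{L}^{4k-1}_{j}$, $\widetilde{X}^{4k+1}_{j}$ of Definition \ref{def:manifoldsfactor}, whose $\eta$-invariants (Lemma \ref{lemma:spin-lenses}) have denominators $2^{d+1}$, $2^{2d+1}$ growing with the dimension; Lemma \ref{lemma:spin-lenses} is stated for those manifolds, not for the Bott-stabilized ones, and this is how the cited argument actually proceeds (also, ``independence across different $d$'' is not meaningful for computing the order inside a fixed $ko_{n}$). Separately, in dimension $8d+1$ you only establish $|\mathcal{M}_{8d+1}|\geq 2^{2d+1}$; asserting that the extra $\mathbb{Z}/2$ is ``not detectable by lens-space $\eta$-invariants'' does not preclude the listed manifolds from hitting it, so equality still requires an upper bound, for instance that the listed classes lie in the kernel of $A\circ{\rm per}$, or that they generate a cyclic subgroup whose order is then at most the maximal element order $2^{2d+1}$ of $\mathbb{Z}/2^{2d+1}\oplus\mathbb{Z}/2$ from Theorem \ref{theo:kofactor}.
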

The  following  corollary  was  originally  proved  in \cite{botvinnikgilkeystolz}, as  consequence of  Theorem 2.4 page  379. We  give  here  the  result as  a  consequence  of  the  determination  of  the  Adams  differentials in \ref{teo:rankkofactor.z4}. Previous alternative  arguments  have  been  given as  part  of  Theorem 5.1 in \cite{botvinnikgilkeystolz}. 

\begin{corollary}\label{cor:glrz4}
    The  Gromov-Lawson-Rosenberg Conjecture  holds  for  the  group $\mathbb{Z}/4$
\end{corollary}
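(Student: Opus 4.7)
The plan is to deduce the corollary by matching the orders of $\widetilde{ko}_n(B\mathbb{Z}/4)$ given in Theorem \ref{teo:rankkofactor.z4} with those of $\mathcal{M}_n$ from Definition \ref{def:manifoldsfactorbis}, as assembled in Theorem \ref{teo:rankkofactor.gromov.z4}. By Stolz's theorem \cite{stolzannals}, the conjecture reduces to showing the equality $D(\mathcal{M}_n) = \ker(\mathrm{Ind} \colon \widetilde{ko}_n(B\mathbb{Z}/4) \to \widetilde{KO}_n(C^*_r(\mathbb{Z}/4)))$ for every $n \geq 5$, where the tilde on the target indicates reduction modulo the trivial-representation summand of $C^*_r(\mathbb{Z}/4)$.

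First I would verify the inclusion $D(\mathcal{M}_n) \subseteq \ker(\mathrm{Ind})$. Each generator $Y^n$, $Z^n$ of Definition \ref{def:manifoldsfactorbis} is an alternating sum of lens spaces $L^{4d-1}_j$ or lens-space bundles $X^{4k+1}_j$ multiplied by powers of the Bott manifold $B^8$; the lens spaces carry round psc metrics by Lemma \ref{lemma:spin-lenses} and $B^8$ is psc as an $\mathbb{H}P^2$-bundle, so these classes are psc and thus lie in $\ker(\mathrm{Ind})$. Then I would compute $|\ker(\mathrm{Ind})|$: the decomposition $\mathbb{R}[\mathbb{Z}/4] \cong \mathbb{R} \oplus \mathbb{R} \oplus \mathbb{C}$ shows that the torsion of $\widetilde{KO}_n(C^*_r(\mathbb{Z}/4))$ is $\mathbb{Z}/2$ precisely in dimensions $n \equiv 1, 2 \pmod 8$ and zero otherwise. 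Together with Theorem \ref{teo:rankkofactor.z4}, this gives $\log_2 |\ker(\mathrm{Ind})|$ exactly as in the middle column of Theorem \ref{teo:rankkofactor.gromov.z4}.

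Finally I would show that $|D(\mathcal{M}_n)|$ equals this same value, detecting the generators through the $\eta$-invariants of Lemma \ref{lemma:spin-lenses} paired against the virtual representations $\rho_u - \rho_0$ via Theorem \ref{theo:etahomomorphism}. The $2$-adic valuations $-(d+1)$ and $-(2d+1)$ arising from the explicit formulas for $\eta(L^{4d-1}_j)(\rho_u - \rho_0)$, together with their products with $B^8$, precisely realize the cyclic-summand orders of $\widetilde{ko}_n(B\mathbb{Z}/4)$ in dimensions $n \not\equiv 1, 2 \pmod 8$, and leave the extra $\mathbb{Z}/2$-quotient untouched in the remaining dimensions. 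Combining the inclusion with the equality of orders forces $D(\mathcal{M}_n) = \ker(\mathrm{Ind})$, which proves the conjecture. The main obstacle is confirming that $\mathrm{Ind}$ actually surjects onto the torsion $\mathbb{Z}/2$ in dimensions $8d+1$ and $8d+2$, for which one must exhibit an honest spin bordism class detecting the non-trivial real-representation summand; this is achieved by pulling back a suitable spherical space form via the inclusion $\mathbb{Z}/2 \hookrightarrow \mathbb{Z}/4$ and invoking the $\eta$-invariant formula for $\mathbb{R}P^{8d+3}$ recorded in Lemma \ref{lemma:spin-lenses}, which produces a non-psc class with non-zero image under $\mathrm{Ind}$.
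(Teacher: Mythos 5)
Your overall skeleton is the same as the paper's: reduce via the surgery theorem to showing that the subgroup generated by the psc manifolds of Definition \ref{def:manifoldsfactorbis} exhausts $\ker(\mathrm{Ind})$, bound the psc subgroup from below by $\eta$-invariants (Theorem \ref{theo:etahomomorphism}, Lemma \ref{lemma:spin-lenses}), and compare with the orders of $\widetilde{ko}_n(B\mathbb{Z}/4)$ obtained from the Adams spectral sequence (Theorems \ref{teo:rankkofactor.z4} and \ref{teo:rankkofactor.gromov.z4}); the paper itself settles the residual issues by invoking Theorem 2.4 of Botvinnik--Gilkey--Stolz. However, your last step --- the one you yourself identify as the main obstacle --- does not work as stated. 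Knowing that the torsion of $\widetilde{KO}_n(C^*_r(\mathbb{Z}/4))$ is $\mathbb{Z}/2$ for $n\equiv 1,2 \pmod 8$ only gives the lower bound $|\ker(\mathrm{Ind})|\geq |\widetilde{ko}_n|/2$; to conclude equality with the psc subgroup you must show $\mathrm{Ind}$ is actually non-zero on $\widetilde{ko}_n(B\mathbb{Z}/4)$ in those dimensions. Your proposed witness, a class built from $\mathbb{R}P^{8d+3}$ via $\mathbb{Z}/2\hookrightarrow\mathbb{Z}/4$, fails on three counts: it lives in dimension $8d+3\equiv 3\pmod 8$, not $1$ or $2\pmod 8$; $\mathbb{R}P^{8d+3}$ admits positive scalar curvature (Lemma \ref{lemma:spin-lenses}), so its index vanishes and it cannot have ``non-zero image under $\mathrm{Ind}$''; and the $\eta$-invariants of virtual-dimension-zero representations are $\mathbb{R}/\mathbb{Z}$- (or $\mathbb{R}/2\mathbb{Z}$-) valued secondary invariants which cannot detect the $\mathbb{Z}/2$-valued $KO$-index components at all --- they are the tool for bounding the psc subgroup, not the image of the index.

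The correct repair is either to quote the BGS result as the paper does, or to exhibit the surjectivity directly with a mod-$2$ index (twisted $\alpha$-invariant) argument rather than an $\eta$-invariant one: for instance, in dimension $8d+1$ the class of the circle with its non-bounding spin structure, multiplied by $d$ copies of a Bott manifold and equipped with a suitable reference map to $B\mathbb{Z}/4$, has non-trivial index component in the sign-representation summand $KO_{8d+1}(\mathbb{R})\cong\mathbb{Z}/2$ of $KO_{8d+1}(C^*_r(\mathbb{Z}/4))\cong KO_{8d+1}\oplus KO_{8d+1}\oplus KU_{8d+1}$, and an $\eta$-multiple of it (a torus-type class) handles dimension $8d+2$. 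With that replacement, or with the citation of Theorem 2.4 of \cite{botvinnikgilkeystolz}, your argument closes; as written, the crucial step is unsupported.
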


Finally,    we  will  need  induction for  the  estimates  of  the  orders  of  odd  dimensional  $ko$-homology  groups.

The  orthogonality  relations  for  characters  have  as  consequence  the  following  result, proved  as lemma  3.2.8 in page 297 of \cite{gilkeybook}. 
\begin{lemma}\label{lemma:restriction}
Let  $H$ be  a subgroup  of  $\mathbb{Z}/4\times \mathbb{Z}/4$.  

For  a spin manifold $M$ together  with a  map $M\to BH$
For  the inclusion  $i: H\to \mathbb{Z}/4\times \mathbb{Z}/4 $, the  formula 
$$\eta(\Omega_{*}^{\Spin}(i_{*})(M))\rho_{u, \tilde{u}}= \eta(M)(\rho_{u, \tilde{u}}\mid_{H}^{\mathbb{Z}/4\times \mathbb{Z}/4} ) $$
 holds. 
\end{lemma}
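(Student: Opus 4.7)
The plan is to recognize this as a naturality statement: the twisted Dirac operator, and hence the $\eta$-invariant, depends only on the flat bundle $V_\rho = \widetilde{M}\times_\pi \rho$ together with the spin structure and metric on $M$. Nothing about the metric or spin structure on $M$ changes when we push the reference map $f\colon M\to BH$ forward along $Bi\colon BH\to B(\mathbb{Z}/4\times\mathbb{Z}/4)$, so only the flat bundle needs to be compared.

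First, I would observe that if $\widetilde{M}_H\to M$ is the principal $H$-cover classified by $f$, then the principal $(\mathbb{Z}/4\times\mathbb{Z}/4)$-cover classified by $Bi\circ f$ is canonically isomorphic to
$$\widetilde{M} \;\cong\; (\mathbb{Z}/4\times\mathbb{Z}/4)\times_H \widetilde{M}_H,$$
by the universal property of the classifying space. This is the unique $(\mathbb{Z}/4\times\mathbb{Z}/4)$-equivariant extension of $\widetilde{M}_H$.

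Second, I would apply the standard identification of associated bundles
$$\bigl((\mathbb{Z}/4\times\mathbb{Z}/4)\times_H \widetilde{M}_H\bigr)\times_{\mathbb{Z}/4\times\mathbb{Z}/4}\rho_{u,\tilde u}\;\cong\;\widetilde{M}_H\times_H \bigl(\rho_{u,\tilde u}\vert_H^{\mathbb{Z}/4\times\mathbb{Z}/4}\bigr),$$
given by $[(g,x),v]\mapsto[x,g^{-1}v]$. This produces an isomorphism of flat Hermitian vector bundles over $M$ between $V_{\rho_{u,\tilde u}}$ for the pushed-forward map and $V_{\rho_{u,\tilde u}|_H}$ for the original map.

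Finally, since the twisted Dirac operators $D(M,Bi\circ f,\rho_{u,\tilde u})$ and $D(M,f,\rho_{u,\tilde u}\vert_H)$ act (under this isomorphism) on the same sections with the same symbol and the same local expressions, they are unitarily equivalent self-adjoint elliptic operators. Their spectra coincide with multiplicity, so their $\eta$-functions $\eta(s,\cdot)$ agree term by term and the value at $s=0$ yields the claimed equality. The only mildly delicate point is the first step, the canonical identification of covers, but this is a standard consequence of the functoriality of the classifying-space construction, and no character-orthogonality computation is needed at this level of generality.
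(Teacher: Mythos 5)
Your argument is correct, but it is not the route the paper takes: the paper offers no direct proof at all, it quotes Lemma 3.2.8 of \cite{gilkeybook} and attributes the statement to the orthogonality relations for characters, i.e.\ to the representation-theoretic formalism in which the $\eta$-invariants of the relevant manifolds are evaluated as character sums over the group, where ``induce the bordism class versus restrict the representation'' is a Frobenius-reciprocity-type identity. You instead argue geometrically: $\Omega^{\Spin}_{*}(i_{*})$ changes nothing but the reference map, from $f$ to $Bi\circ f$; the $(\mathbb{Z}/4\times\mathbb{Z}/4)$-cover classified by $Bi\circ f$ is the induced cover $(\mathbb{Z}/4\times\mathbb{Z}/4)\times_{H}\widetilde{M}_{H}$; and the standard associated-bundle isomorphism $\bigl((\mathbb{Z}/4\times\mathbb{Z}/4)\times_{H}\widetilde{M}_{H}\bigr)\times_{\mathbb{Z}/4\times\mathbb{Z}/4}\rho_{u,\tilde u}\cong\widetilde{M}_{H}\times_{H}\bigl(\rho_{u,\tilde u}\vert_{H}\bigr)$ is an isomorphism of flat Hermitian bundles, so the two twisted Dirac operators are unitarily equivalent and their $\eta$-functions agree identically. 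This is self-contained, needs no character computation, works verbatim for any inclusion of finite groups and any closed odd-dimensional spin manifold, and it yields the refined $\mathbb{R}/2\mathbb{Z}$-valued version relevant to Theorem \ref{theo:etahomomorphism} for free, since the operators literally coincide. What the paper's citation buys is coherence with the machinery it actually uses next: the explicit character-sum formulas of Lemma \ref{lemma:spin-lenses} and the restriction tables of Lemma \ref{table:restriction} are stated in Gilkey's framework, so invoking his lemma keeps all subsequent $\eta$-computations in a single formalism. Two small points worth making explicit in your write-up: the metric and spin structure on $M$ are untouched by $i_{*}$ (only the classifying map changes), and what you prove is the equality for a fixed representative $(M,f)$ --- the well-definedness of both sides on bordism classes still rests on the virtual-dimension-zero hypothesis of Theorem \ref{theo:etahomomorphism}.
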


We  will  consider  the  induction  maps
$$\Omega_{*}^{\Spin}(B\mathbb{Z}/4)\overset{\varphi_{m,n}}{\longrightarrow} \Omega_{*}^{\Spin}(B\mathbb{Z}/4 \times  \mathbb{Z}/4), $$
for group  homomorphisms  $\varphi: \mathbb{Z}/4 \to \mathbb{Z}/4\times \mathbb{Z}/4$, which  will be  described  below.

The  group  $\mathbb{Z}/4\times \mathbb{Z}/4$ has  the  presentation 
$$\mathbb{Z}/4\times \mathbb{Z}/4= \langle a,b \mid \quad a^{4}, \quad b^{4}, \quad  aba^{-1}b^{-1}  \rangle . $$

\begin{definition}\label{def:inductiongroups}
    For a pair of  integers  $m, n$, we  will  denote  by $H_{m,n}$ the cyclic subgroup generated  by  the  element $a^{m}b^{n}$. 

\end{definition}

\begin{lemma} \label{table:restriction}
The  following tables  depict  the isomorphism  type  of  the  restrictions  of   a  representation $\rho_{n,m}$ to  a  subgroup $H_{i,j}$:

    \centering
    \def\arraystretch{1,2}
    \begin{tabular}{c|c|c|c|c|c|c|c}
                    & $\rho_{0,1}$ &$\rho_{0,2}$ & $\rho_{0,3}$ & $\rho_{1,0}$  & $\rho_{1,1}$ & $\rho_{1,2}$ &$\rho_{1,3}$ \\
                   \hline
        $H_{0,1}$  &  $\rho_{1}$ & $\rho_{2}$ & $\rho_{3}$&     $\rho_{0}$& $\rho_{1}$  & $\rho_{2}$ & $\rho_{3} $ \\
        $H_{1,1}$  & $\rho_{1}$ &$\rho_{2}$ & $\rho_{3}$ &$\rho_{1}$ &$\rho_{2}$ &$\rho_{3}$ & $\rho_{0}$  \\
        $H_{1,3}$  & $\rho_{3}$& $\rho_{2}$ &$\rho_{1}$ & $\rho_{1}$ &$\rho_{0}$ &$\rho_{3}$ &$\rho_{2}$  \\
        $H_{2,1}$  &$\rho_{1}$ & $\rho_{2}$ &$\rho_{3}$ & $\rho_{2}$ &$\rho_{3}$ &$\rho_{0}$ &$\rho_{2}$  \\
        $H_{1,0}$  & $\rho_{0}$& $\rho_{0}$ & $\rho_{0}$ & $\rho_{1}$ &$\rho_{1}$ & $\rho_{1}$ & $\rho_{1}$  \\
        $H_{1,2}$ & $\rho_{2}$& $\rho_{0}$ &$\rho_{2}$ &$\rho_{1}$ &$\rho_{3}$  &$\rho_{1}$ &$\rho_{3}$  \\
        
    \end{tabular}
    \begin{tabular}{c|c|c|c|c|c|c|c|c}
         & $\rho_{2,0}$  & $\rho_{2,1}$ &$\rho_{2,2}$ & $\rho_{2,3}$ & $\rho_{3,0}$ & $\rho_{3,1}$ & $\rho_{3,2}$ & $\rho_{3,3}$\\
         \hline
     $H_{0,1}$  & $\rho_{0}$& $\rho_{1}$ & $\rho_{2}$ &$\rho_{3}$ &$\rho_{0}$ &$\rho_{1}$ &$\rho_{2}$ & $\rho_{3}$ \\
        $H_{1,1}$  &$\rho_{2}$ &$\rho_{3}$ & $\rho_{0}$ &$\rho_{1}$ & $\rho_{3}$ & $\rho_{0}$&  $\rho_{1}$ &$\rho_{2}$ \\
        $H_{1,3}$  & $\rho_{2}$&$\rho_{1}$ &$\rho_{0}$ & $\rho_{3}$ & $\rho_{3}$ & $\rho_{2}$ &$\rho_{1}$ &$\rho_{0}$ \\
        $H_{2,1}$  & $\rho_{0}$& $\rho_{1}$ &$\rho_{2}$ &$\rho_{3}$ &$\rho_{2}$ & $\rho_{3}$ &$\rho_{0}$ &$\rho_{1}$ \\
        $H_{1,0}$  & $\rho_{2}$& $\rho_{2}$ & $\rho_{2}$ &$\rho_{2}$ &$\rho_{3}$ &$\rho_{3}$ &$\rho_{3}$ &$\rho_{3}$ \\
        $H_{1,2}$ &$\rho_{2}$ & $\rho_{0}$ &$\rho_{2}$ & $\rho_{0}$ & $\rho_{3}$& $\rho_{1}$&$\rho_{3}$ &$\rho_{1}$ \\
          
    \end{tabular}
\end{lemma}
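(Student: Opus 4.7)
The plan is to exploit the fact that $\mathbb{Z}/4\times \mathbb{Z}/4$ is abelian, so every irreducible complex representation is one-dimensional and given by a character of the form $\rho_{n,m}(a^kb^l)=\zeta^{nk+ml}$, where $\zeta=e^{\pi i/2}$. Restriction to a cyclic subgroup is then completely determined by evaluating this character on a single generator, which reduces the entire lemma to a bookkeeping exercise of congruences modulo $4$.

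First I would verify that each of the six subgroups $H_{i,j}$ listed in the tables (namely $(i,j)\in\{(0,1),(1,1),(1,3),(2,1),(1,0),(1,2)\}$) is cyclic of order $4$. In every case at least one of $i,j$ is coprime to $4$, so the order of $a^ib^j$ equals $\mathrm{lcm}(4/\gcd(4,i),4/\gcd(4,j))=4$. I would fix $a^ib^j$ as the preferred generator of $H_{i,j}$ and identify its character group with $\{\rho_0,\rho_1,\rho_2,\rho_3\}$ by the convention $\rho_k(a^ib^j)=\zeta^k$, matching the labeling of the irreducibles of $\mathbb{Z}/4$ used in Lemma \ref{lemma:spin-lenses}.

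With these identifications in place, the restriction map is the explicit formula
$$\rho_{n,m}\big|_{H_{i,j}} \;=\; \rho_{(ni+mj)\bmod 4}.$$
Filling in each of the $16\cdot 6=96$ entries is then immediate. As a sanity check I would note the row patterns predicted by the formula: for $H_{1,1}$ one gets $(n+m)\bmod 4$, for $H_{1,3}$ one gets $(n-m)\bmod 4$, for $H_{2,1}$ one gets $(2n+m)\bmod 4$, and for $H_{1,2}$ one gets $(n+2m)\bmod 4$; each of these reproduces exactly the corresponding row of the tables stated in the lemma.

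There is no real obstacle: the content is elementary character theory. The only point worth double-checking is the sign/order convention for the generator of $H_{i,j}$, since a different choice of generator conjugates the character $\rho_k$ to $\rho_{ck}$ for some unit $c\in(\mathbb{Z}/4)^{\times}$; picking $a^ib^j$ uniformly, as done implicitly by Definition \ref{def:inductiongroups}, removes this ambiguity and matches the tables. As a further consistency check, one may verify that the tables respect the tensor product relation $\rho_{n,m}\otimes\rho_{n',m'}=\rho_{n+n',m+m'}$ after restriction, as well as the orthogonality relations used in Lemma \ref{lemma:restriction}.
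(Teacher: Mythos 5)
Your proof is correct, and it is essentially the paper's own (implicit) argument: the paper states the lemma without proof, and the only content is exactly the character-theoretic bookkeeping you spell out, namely that $\rho_{n,m}$ restricted to $H_{i,j}=\langle a^i b^j\rangle$ is $\rho_{(ni+mj)\bmod 4}$ once one fixes $a^ib^j$ as the generator and the convention $\rho_k(a^ib^j)=\zeta^k$ (a convention confirmed by the $H_{1,0}$ and $H_{0,1}$ rows).

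One caveat: your claim that the formula ``reproduces exactly'' every row is not literally true against the printed tables. For the entry $\rho_{1,3}\big|_{H_{2,1}}$ your formula gives $(2\cdot 1+1\cdot 3)\bmod 4=1$, i.e.\ $\rho_{1}$, whereas the table prints $\rho_{2}$. Your own consistency check settles which is right: in the $H_{2,1}$ row the table has $\rho_{1,2}\mapsto\rho_0$ and $\rho_{0,1}\mapsto\rho_1$, so multiplicativity forces $\rho_{1,3}=\rho_{1,2}\otimes\rho_{0,1}\mapsto\rho_1$. So this is a typo in the paper's table, not a defect of your argument; it would be worth stating explicitly that the formula is the authoritative statement and that this single printed entry should read $\rho_1$.
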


    Given  a  group  homomorphism  $\varphi^{m,n}: \mathbb{Z}/4\to \mathbb{Z}/4 \times  \mathbb{Z}/4$,  and  a  smooth  spin  manifold  $M$ with  fundamental  group $\mathbb{Z}/4$,  we  will  denote  the spin bordism  class  of the  induced manifold  by  $ \varphi^{m,n}_{*}(M)$. This   represents  the spin  bordism  class of  a   manifold  with fundamental  group $\mathbb{Z}/4\times \mathbb{Z}/4$, and hence  an element  in the  bordism  group $\Omega_{*}^{\Spin}(B\mathbb{Z}/4\times \mathbb{Z}/4)$.   

{\bf General  strategy  for  the proof  of  Theorem \ref{theo:gromovz42}. }

We  will  use  Theorem \ref{theo:splitting } to  split  the  arguments.
 At  the  level  of  connective $ko$-homology groups, the  splitting   appears  as  follows: 
 $$ko_{*}(B\mathbb{Z}/4 \times \mathbb{Z}/4)\cong \tilde{ko}_{*}(B\mathbb{Z}/4)\oplus \tilde{ko}_{*}(B\mathbb{Z}/4)\oplus  \tilde{ko}_{*}(B\mathbb{Z}/4^{\wedge 2}). $$ 
   
 We  identify  the  kernels  of  the map  $A\circ {\rm per}$  along  this  additive  splitting. 
 
\begin{definition}\label{definition:kernelssplitting} 
Denote  by $\ker_{4}$ the kernel  of $A\circ {\rm per}$  on  each summand  $\widetilde{ko}_{*}(\mathbb{Z}/4)$. 

Similarly,  denote  by  $\ker_{4,4}$ the  kernel  of $A\circ {\rm per}$ on  the smash summand  $\widetilde{ko}_{*}(B\mathbb{Z}/4^{\wedge 2}) $. 
   \end{definition}
   
From  the  proof  of  the  Gromov-Lawson-Rosenberg conjecture  for $\mathbb{Z}/4 $   
we  have  that  the  order  of $\ker_{4} $ agrees  with  the  image  of $D$, detected  by  $\eta$-invariants as  described  in  the table inside  the  statement  of  \ref{theo:kofactor}. 

Thus, we  will  concentrate  in proving the  following  two statements  to  finish the  proof  of  Theorem \ref{theo:gromovz42}: 
\begin{itemize}
\item For  even degree, all classes  in the  ${\rm mod}\, 2$ homology  are  realized  by  manifolds  of  positive  scalar  curvature, which  are  linearly  independent. It  follows  from the  Adams  spectral sequence  that  the connective $ko$-homology  is generated  by fundamental  classes of  spin manifolds  of  positive  scalar  curvature. 

\item For  odd  degree,  the   order  of  $ker_{4,4}$  is equal  to  the rank  of a  matrix constructed  with  $\eta$-invariants  of  positive  scalar  curvature induced  from the  ones  in \ref{def:manifoldsfactor}, and \ref{def:manifoldsfactorbis}.
\end{itemize}

\subsection{Odd Degree.}

Consider  the following   ordered collection of cyclic subgroups in  the  notation  of definition  \ref{def:inductiongroups}. 
\begin{definition}\label{de:Sgsubgroup}
$$SG_{4,4}= \{ H_{0,1}, \, H_{1,1}, \, H_{1,3},  H_{2,1}, H_{1,0}, H_{1,2 }    \}. $$
\end{definition}
We  will show  that  the  images  under  $D$ of  the induced  manifolds with  positive  scalar  curvature  ${\varphi_{m,n}}_{*}(M)$  for
 $$(m,n)\in\{ (0,1), (1,1), (1,3), (2,1), (1,0), (1,2)\}$$ 
 and  $M$ as  in  definitions \ref{def:manifoldsfactor} and \ref{def:manifoldsfactorbis} exhaust  the  kernel  of  the  map $ A\circ {\rm per}$ in odd  degree. Our  method  will consist  of estimating  the  order  of  the image  of  $D$  by  producing  a  matrix  of  $\eta$  invariants  as  in  \cite{botvinnikgilkeystolz}, \cite{malhotra}. We  then  compare  against  the   orders  of  the  groups  predicted  by  the  computation  of  $\widetilde{ko}_{*}(B\mathbb{Z}/4\times \mathbb{Z}/4)$. 
 
 \begin{definition}\label{def:orderpscgroupodd}
 Let  $C_{n}$ denote  the  abelian  subgroup  of  $ko_{n}(B\mathbb{Z}/4\times \mathbb{Z}/4)$ generated  by  the  image under  $D$  of  the  induced   manifolds 
 $$ (\varphi^{k,l})_{*}(M),$$
 Where  $M$   is  $n$-dimensional  belonging  to the  lists  given  in  \ref{def:manifoldsfactor}, \ref{def:manifoldsfactorbis}. 
 \end{definition}

  \begin{lemma}\label{lemma:finaltableodd}
For  odd  degree, the  $\log_{2}$ orders  of  the subgroups $\ker_{4}$, $\ker_{4,4}$,   and  the  order of  the  group defined  in \ref{def:orderpscgroupodd} relate  as  follows:

\begin{tabular}{c|c|c|c}
* &$ \ker_{4,4}$ &  $\ker_{4,4}$ & $C_{*}$\\
\hline
$8d+1$ & $2d+1$ & $8d-2$ & $12d$\\
$8d+3$ & $6d+4$ & $12d+2$ & $24d+10$ \\
$8d+5$ & $2d+2$ & $8d+2$ & $12d+6$ \\
$8d+7$ & $6d+6$ & $12d+7$ & $24d+19$

\end{tabular}
\end{lemma}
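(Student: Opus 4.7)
The plan is to verify the tabulated orders column by column. By Theorem \ref{theo:splitting } the composition $A\circ\mathrm{per}$ respects the wedge decomposition, so the kernel on $\widetilde{ko}_*(B\mathbb{Z}/4\times B\mathbb{Z}/4)$ splits as two copies of $\ker_4$ together with $\ker_{4,4}$. The first column follows immediately from Corollary \ref{cor:glrz4} combined with Theorem \ref{teo:rankkofactor.gromov.z4}: in each odd dimension the psc generators of Definition \ref{def:manifoldsfactorbis} exhaust $\widetilde{ko}_{8d+k}(B\mathbb{Z}/4)$ with the stated orders $2d+1,\,2d+2,\,2d+1,\,2d+2$.

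For the middle column I would compare $|\widetilde{ko}_n(B\mathbb{Z}/4^{\wedge 2})|$ from Theorem \ref{teo:higherrankko} against the image of the periodification map inside $KO_n(B\mathbb{Z}/4\times B\mathbb{Z}/4)$. Since Baum--Connes is an isomorphism for finite groups and $KO_n(C_r^*(\mathbb{Z}/4\times\mathbb{Z}/4))$ decomposes via the real representation ring, I would identify the surviving periodic part using the representation theoretic description of $ku_*(B\mathbb{Z}/4)$ in Theorem \ref{theo:kofactor.repring}, its real analogue, and a K\"unneth argument on the smash summand. Subtracting the order of the survivors from $|\widetilde{ko}_n(B\mathbb{Z}/4^{\wedge 2})|$ yields the entries $8d-2$, $12d+2$, $8d+2$, $12d+7$.

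For the third column I would estimate $|C_n|$ from below by constructing the matrix $E_n$ whose rows are indexed by the induced manifolds $(\varphi^{m,n})_*(M)$ with $(m,n)\in SG_{4,4}$ and $M$ a generator from Definitions \ref{def:manifoldsfactor} and \ref{def:manifoldsfactorbis}, and whose columns are indexed by the virtual characters $\rho_{u,\tilde u}-\rho_{0,0}$ of $\mathbb{Z}/4\times\mathbb{Z}/4$. Lemma \ref{lemma:restriction} identifies each entry $\eta((\varphi^{m,n})_*(M))(\rho_{u,\tilde u}-\rho_{0,0})$ with the $\eta$-invariant of $M$ evaluated on the restricted virtual character $(\rho_{u,\tilde u}-\rho_{0,0})|_{H_{m,n}}$; the latter is given in closed form by Lemma \ref{lemma:spin-lenses} together with Lemma \ref{lemma:etainvariantspinc} and the restriction table \ref{table:restriction}. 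The $2$-adic valuation of the determinant of a maximal non-singular minor of $E_n$ then gives $\log_2|C_n|$.

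The main obstacle is this last rank computation. The entries of $E_n$ are rational numbers of the form $\pm 2^{-r}$ multiplied by unit-valued sums of fourth roots of unity, and one must rule out unexpected cancellations that would drop the rank below the target values $12d$, $24d+10$, $12d+6$, $24d+19$. I would proceed inductively: establish the base case for small $d$ by direct Smith normal form calculation in dimensions $\le 15$, then propagate to dimension $n+8$ using the Bott construction $\widetilde L^{n+8}_j = \widetilde L^n_j\times B^8$ (and its $X$-analogue) together with the multiplicativity $\eta(M\times B^8)(\rho) = \eta(M)(\rho)$ in $\mathbb{R}/\mathbb{Z}$. Since the tabulated values satisfy the arithmetic identity $2|\ker_4|+|\ker_{4,4}| = |C_n|$ in every row, matching the established rank against the upper bound $|\ker(A\circ\mathrm{per})|$ coming from the first two columns forces $C_n$ to realize the full kernel, which is precisely what is required to conclude Theorem \ref{theo:gromovz42} in odd dimensions.
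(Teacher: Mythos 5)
Your overall strategy coincides with the paper's: bound $|C_n|$ from below by the matrix of $\eta$-invariants of the manifolds induced along the six homomorphisms indexed by $SG_{4,4}$ (using Lemma \ref{lemma:restriction}, Lemma \ref{lemma:spin-lenses}, Lemma \ref{lemma:etainvariantspinc} and the restriction table \ref{table:restriction}), bound the kernel from above by the $ko$-computations, and sandwich. The genuine gap is in your rank computation for the third column. The inductive step ``propagate to dimension $n+8$ via $\widetilde{L}^{n+8}_j=\widetilde{L}^n_j\times B^8$ together with $\eta(M\times B^8)(\rho)=\eta(M)(\rho)$'' cannot work: first, $\widetilde{L}^{n+8}_j$ is not $\widetilde{L}^n_j\times B^8$ (one is a genuinely higher-dimensional lens space, the other a product, and their $\eta$-invariants differ); second, and decisively, since multiplication by a Bott manifold leaves all $\eta$-invariants unchanged, Bott products contribute no new $2$-divisibility, whereas the asserted values $12d$, $24d+10$, $12d+6$, $24d+19$ grow linearly in $d$. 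The extra order per period comes from the honestly higher-dimensional lens spaces and lens-space bundles of Definition \ref{def:manifoldsfactor}, whose invariants $x_d=\pm 2^{-(d+1)}$, $y_d=\pm 2^{-(2d+1)}$, $z_d=\pm 2^{-d}$ become more $2$-divisible as $d$ grows; this is precisely why the paper writes down the $d$-parametrized matrices $A^{8d+1},\dots,A^{8d+7}$ and computes their echelon forms symbolically in $x_d$ with the Sage code of section \ref{sec:python}, rather than inducting from low dimensions. (Also, the order of the subgroup of $(\mathbb{R}/\mathbb{Z})^{24}$ generated by the rows is extracted from the full elementary-divisor data of the matrix after clearing denominators, not from the determinant of a single maximal nonsingular minor.)

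A second problem is your first column and the final consistency check. The values you claim, $2d+1,\,2d+2,\,2d+1,\,2d+2$, are the orders of $\mathcal{M}_n(B\mathbb{Z}/4)$ from Theorem \ref{teo:rankkofactor.gromov.z4}; they do not agree with the first column of the table you are proving, namely $2d+1,\,6d+4,\,2d+2,\,6d+6$ (the latter match the group orders of Theorem \ref{theo:kofactor} in degrees $3,7$ mod $8$). With your values the identity $2\log_2|\ker_4|+\log_2|\ker_{4,4}|=\log_2|C_n|$, which you invoke to close the argument, fails (for instance $2(2d+2)+(12d+2)\neq 24d+10$), so your sandwich does not close as written. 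Finally, the appeal to ``Baum--Connes is an isomorphism for finite groups'' is not the relevant input: the map $KO_n(B\pi)\to KO_n(C_r^*\pi)$ is far from an isomorphism for finite $\pi$. What the middle column needs is the representation-theoretic description of $KO_n(C_r^*(\mathbb{Z}/4\times\mathbb{Z}/4))$ (four real-type and six complex-type summands, no quaternionic ones), which shows that in odd degrees torsion can only survive when $n\equiv 1\ \mathrm{mod}\ 8$ and accounts for the single factor of $2$ separating $\ker_{4,4}$ from $|\widetilde{ko}_{8d+1}(B\mathbb{Z}/4^{\wedge 2})|$, together with an argument that this surviving class is actually detected.
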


In  particular, according  to  the  splitting from Theorem \ref{theo:splitting },  
$$\mid \ker A\circ {\rm per } \mid =2\mid \ker_4\mid +\mid \ker_{4,4}\mid . $$ 

Lemma \ref{lemma:finaltableodd} has  as  consequence theorem  \ref{theo:gromovz42} for  odd  degrees. The  rest of  this  subsection  will deal  with  the  verification  of  the  assertions  of  the  table   in  \ref{lemma:finaltableodd}. 

For  the   manifolds  above, we  will  produce a $24\times 4$ matrix  containing  $\eta$- invariants  and  their   restrictions along  the  subgroups \ref{de:Sgsubgroup}. According  to lemma \ref{lemma:restriction}, this computes a lower bound for the  order  of  the subgroup generated by  the induction of  the  manifolds  of \ref{def:manifoldsfactor} and \ref{def:manifoldsfactorbis}. 

\begin{definition}\label{def:etavector}
Let $\rho_{(u,\tilde{u})}$ be  an  irreducible  representation of $\mathbb{Z}/4\times \mathbb{Z}/4$.  Given  a  smooth, spin  manifold $M$, form  the  $6\times 1 $  matrix  with coefficients in $\mathbb{R}/\mathbb{Z}$, 

 $C_{u, \tilde{u}}$ with  rows given  by   the  restriction  to the  subgroups  in the  ordered  list  $SG_{4,4}$ as depicted  in definition \ref{de:Sgsubgroup}. 
In  symbols:
$$C_{u, \tilde{u}}=\begin{pmatrix}
\eta(M)(\rho_{u, \tilde{u}}\mid_{H_{0,1}})\\
\eta(M)(\rho_{u, \tilde{u}}\mid_{H_{1,1}})\\
\eta(M)(\rho_{u, \tilde{u}}\mid_{H_{1,3}})\\
\eta(M)(\rho_{u, \tilde{u}}\mid_{H_{2,1}})\\
\eta(M)(\rho_{u, \tilde{u}}\mid_{H_{1,0}})\\
\eta(M)(\rho_{u, \tilde{u}}\mid_{H_{1,2}})
\end{pmatrix} $$ 
 
 We  now  form the  $24\times 4$  matrix which  is  obtained  by arranging  the  matrices $C_{u,\tilde{u}}$ acording  to   the lexicographic  order. In  condensed  form 
 $$ A(M)=\begin{pmatrix}
 C_{0,0}& C_{1,0} & C_{2,0}& C_{3,0}\\
 C_{1,0}& C_{1,1}& C_{1,2}&  C_{1,3}\\
 C_{2,0} & C_{2,1}& C_{2,2}& C_{2,3}\\
 C_{3,0}& C_{3,1}& C_{3,2}& C_{3,3}
\end{pmatrix} . $$
 \end{definition}

 In  expanded  form  the   matrices $C_{i,j}$  are  as  follows:

\begin{minipage}{\linewidth}

\begin{multicols}{2}
 $$C_{10}=\begin{pmatrix}
\eta(M)( \rho_0)\\ \eta(M)(\rho_1)\\ \eta(M)(\rho_1)\\ \eta(M)(\rho_1)\\ \eta(M)(\rho_2)\\ \eta(M)(\rho_1) \\ \eta(M)(\rho_1)     
 
\end{pmatrix}  $$

 $$ C_{2,0}= \begin{pmatrix}
 \eta(M)(\rho_0)\\
 \eta(M)(\rho_2)\\
\eta(M)(\rho_2) \\
 \eta(M)(\rho_0)\\
 \eta(M)(\rho_2)\\
\eta(M)(\rho_2)
 
 \end{pmatrix} $$

 $$C_{3,0}= \begin{pmatrix}
 \eta(M)(\rho_0)\\
 \eta(M)(\rho_3)\\
\eta(M) (\rho_3)\\
 \eta(M)(\rho_2)\\
 \eta(M)(\rho_3)\\
\eta(M)(\rho_3)
 
 \end{pmatrix}$$

 $$
 C_{1,0}=
  \begin{pmatrix}
 \eta(M)(\rho_0)\\
 \eta(M)(\rho_1)\\
\eta(M)(\rho_1) \\
 \eta(M)(\rho_2)\\
 \eta(M)(\rho_1)\\
\eta(M)(\rho_1)
 
 \end{pmatrix}
$$ 
 
 $$ C_{1,1}= \begin{pmatrix}
 \eta(M)(\rho_1)\\
 \eta(M)(\rho_2)\\
\eta(M) (\rho_0)\\
 \eta(M)(\rho_3)\\
 \eta(M)(\rho_1)\\
\eta(M)(\rho_3)
 
 \end{pmatrix} $$

 $$C_{1,2}= \begin{pmatrix}
 \eta(M)(\rho_2)\\
 \eta(M)(\rho_3)\\
\eta(M) (\rho_3)\\
 \eta(M)(\rho_0)\\
 \eta(M)(\rho_1)\\
\eta(M)(\rho_1)
 
 \end{pmatrix}$$

$$ C_{1,3}= \begin{pmatrix}
 \eta(M)(\rho_3)\\
 \eta(M)(\rho_0)\\
\eta(M) (\rho_2)\\
 \eta(M)(\rho_1)\\
 \eta(M)(\rho_1)\\
\eta(M)(\rho_3)
 
 \end{pmatrix}$$
\end{multicols}
\end{minipage}
  
\newpage

 \begin{minipage}{\linewidth}
 \begin{multicols}{2}
   $$C_{2,0}= 
  \begin{pmatrix}
 \eta(M)(\rho_0)\\
 \eta(M)(\rho_{2})\\
\eta(M)(\rho_2) \\
 \eta(M)(\rho_0)\\
 \eta(M)(\rho_2)\\
\eta(M)(\rho_2)
 
 \end{pmatrix}  $$

  $$ C_{2,1}=\begin{pmatrix}
 \eta(M)(\rho_1)\\
 \eta(M)(\rho_3)\\
\eta(M) (\rho_1)\\
 \eta(M)(\rho_1)\\
 \eta(M)(\rho_2)\\
\eta(M)(\rho_2)
 
 \end{pmatrix}$$

 $$C_{2,2}= \begin{pmatrix}
 \eta(M)(\rho_2)\\
 \eta(M)(\rho_0) \\
\eta(M)(\rho_0) \\
 \eta(M)(\rho_2)\\
 \eta(M)(\rho_2)\\
\eta(M)(\rho_2)
 
 \end{pmatrix}$$

$$C_{2,3}= 
  \begin{pmatrix}
 \eta(M)(\rho_3)\\
 \eta(M)(\rho_1)\\
\eta(M) (\rho_3)\\
 \eta(M)(\rho_3)\\
 \eta(M)(\rho_2)\\
\eta(M)(\rho_0)
 
 \end{pmatrix}$$

$$ C_{3,0} \begin{pmatrix}
 \eta(M)(\rho_0)\\
 \eta(M)(\rho_3)\\
\eta(M)(\rho_3) \\
 \eta(M)(\rho_2)\\
 \eta(M)(\rho_{3})\\
\eta(M)(\rho_3)
 
 \end{pmatrix}$$

$$  C_{3,1}=\begin{pmatrix}
 \eta(M)(\rho_1)\\
 \eta(M)(\rho_0)\\
\eta(M)(\rho_2) \\
 \eta(M)(\rho_3)\\
 \eta(M)(\rho_3)\\
\eta(M)(\rho_1)
 
 \end{pmatrix}$$

 $$ C_{3,2}=\begin{pmatrix}
 \eta(M)(\rho_2)\\
 \eta(M)(\rho_1)\\
\eta(M)(\rho_1) \\
 \eta(M)(\rho_0)\\
 \eta(M)(\rho_3)\\
\eta(M)(\rho_3)
 
 \end{pmatrix}
$$

 $$C_{3,3}= \begin{pmatrix}
 \eta(M)(\rho_3)\\
 \eta(M)(\rho_2)\\
\eta(M) (\rho_0)\\
 \eta(M)(\rho_1)\\
 \eta(M)(\rho_3)\\
\eta(M)(\rho_1)
 
 \end{pmatrix}$$
 \\
 
 \end{multicols}
 \end{minipage}

\par 

$\quad$
\par

We  will  need  the  following  modification  in order  to estimate  the  order  of  $ko$-groups  of  dimension  $3$ and $7$ modulo 8,  according  to Theorem \ref{theo:etahomomorphism}.

\begin{definition}\label{def:etatilde}

Let $\rho_{(u,\tilde{u})}$ be  an  irreducible  representation of $\mathbb{Z}/4\times \mathbb{Z}/4$.  Given  a  smooth, spin  manifold $M$, form  the  $6\times 1 $  matrix  with coefficients in $\mathbb{R}/ 2\mathbb{Z}$, 

 $\tilde{C}_{u, \tilde{u}}$ with  rows given  by   the  restriction  to the  subgroups  in the  ordered  list  $SG_{4,4}$ as depicted  in definition \ref{de:Sgsubgroup}. 
In  symbols:
$$\tilde{C}_{u, \tilde{u}}=\begin{pmatrix}
\eta(M)(\rho_{u, \tilde{u}-u_{0,0}}\mid_{H_{0,1}})\\
\eta(M)(\rho_{u, \tilde{u}-u_{0,0}}\mid_{H_{1,1}})\\
\eta(M)(\rho_{u, \tilde{u}-u_{0,0}}\mid_{H_{1,3}})\\
\eta(M)(\rho_{u, \tilde{u}-u_{0,0}}\mid_{H_{2,1}})\\
\eta(M)(\rho_{u, \tilde{u}-u_{0,0}}\mid_{H_{1,0}})\\
\eta(M)(\rho_{u, \tilde{u}-u_{0,0}}\mid_{H_{1,2}})
\end{pmatrix} $$ 
 
 We  now  form the  $24\times 4$  matrix  with  coefficients  in $\mathbb{R}/2\mathbb{Z}$, which  is  obtained  by arranging  the  matrices $\tilde{C}_{u,\tilde{u}}$ acording  to   the lexicographic  order. In  condensed  form 
 $$ A(M)=\begin{pmatrix}
 \tilde{C}_{0,0}& \tilde{C}_{1,0} & \tilde{C}_{2,0}& \tilde{C}_{3,0}\\
 \tilde{C}_{1,0}& \tilde{C}_{1,1}& \tilde{C}_{1,2}&  \tilde{C}_{1,3}\\
 \tilde{C}_{2,0} & \tilde{C}_{2,1}& \tilde{C}_{2,2}& \tilde{C}_{2,3}\\
 \tilde{C}_{3,0}& \tilde{C}_{3,1}& \tilde{C}_{3,2}& C_{3,3}
\end{pmatrix} . $$
 \end{definition}

\newpage 
We  now  describe  the matrices  for  each  of  the  dimensions  in  odd  degree.\\
 
{\bf Dimension $8d+1$}

Put  $d=2d^{'}$  for  $d^{'}$  a  natural  number. Consider  the  spin  manifold $ X_{j}^{4d+1}$. And form  the $24\times 4 $ matrix over $\mathbb{R}/\mathbb{Z}$. 
$$A^{8d+1} = \begin{pmatrix}
 C_{0,0}& C_{1,0} & C_{2,0}& C_{3,0}\\
 C_{1,0}& C_{1,1}& C_{1,2}&  C_{1,3}\\
 C_{2,0} & C_{2,1}& C_{2,2}& C_{2,3}\\
 C_{3,0}& C_{3,1}& C_{3,2}& C_{3,3}
\end{pmatrix} . $$

We  introduce  the  notation 

$$x_{d}= \frac{-1^{d+1}}{2^{d+1}}, \, y_{d}=\frac{-1^{d+1}}{2^{2d+1}},\, z_{d}=\frac{-1^{d+1}}{2^{d}}. $$

The  $6\times 1$ matrices  are  given  as  the  $\eta$-invariants  of $X_{j}^{8d+1}$, as  follows:

\begin{minipage}{\linewidth}

\begin{multicols}{2}
 $$C_{10}=\begin{pmatrix}
0 \\ -x_{d}\\  -x_{d}\\ 0\\ -x_{d} \\ -x_{d}     
 
\end{pmatrix}  $$

 $$ C_{2,0}= \begin{pmatrix}
 0\\
 0\\
0 \\
0\\
 0\\
0
 
 \end{pmatrix} $$

 $$C_{3,0}= \begin{pmatrix}
 0 \\
 x_{d}\\
x_{d}\\
 0\\
 x_{d}\\
x_{d}
 
 \end{pmatrix}$$

 $$
 C_{1,0}=
  \begin{pmatrix}
 0 \\
 -x_{d}\\
-x_{d} \\
0\\
 -x_{d}\\
-x_{d}
 
 \end{pmatrix}
$$ 
 
 $$ C_{1,1}= \begin{pmatrix}
 -x_{d}\\
 0\\
0\\
 x_{d}\\
 -x_{d}\\
x_{d}
 
 \end{pmatrix} $$

 $$C_{1,2}= \begin{pmatrix}
 0\\
 x_{d}\\
x_{d}\\
0\\
-x_{d}\\
-x_{d}
 
 \end{pmatrix}$$

$$ C_{1,3}= \begin{pmatrix}
 x_{d}\\
 0\\
0\\
-x_{d}\\
 -x_{d}\\
x_{d}
 
 \end{pmatrix}$$
\end{multicols}
\end{minipage}
  
\newpage

 \begin{minipage}{\linewidth}
 \begin{multicols}{2}
   $$C_{2,0}= 
  \begin{pmatrix}
 0\\
 0\\
0 \\
 0\\
0\\
0
 
 \end{pmatrix}  $$

  $$ C_{2,1}=\begin{pmatrix}
 -x_{d}\\
 x_{d}\\
-x_{d}\\
 x_{d}\\
 0\\
0
 
 \end{pmatrix}$$

 $$C_{2,2}= \begin{pmatrix}
 0\\
 0 \\
0 \\
 0\\
 0\\
0
 
 \end{pmatrix}$$

$$C_{2,3}= 
  \begin{pmatrix}
 x_{d}\\
 -x_{d}\\
x_{d}\\
 x_{d}\\
 0\\
0
 
 \end{pmatrix}$$

$$ C_{3,0}= \begin{pmatrix}
 0\\
 x_{d}\\
x_{d} \\
 0\\
 x_{d}\\
x_{d}
 
 \end{pmatrix}$$

$$  C_{3,1}=\begin{pmatrix}
-x_{d}\\
 0\\
0 \\
x_{d}\\
 x_{d}\\
-x_{d}
 
 \end{pmatrix}$$

 $$ C_{3,2}=\begin{pmatrix}
 0\\
 -x_{d}\\
-x_{d} \\
 0 \\
 x_{d}\\
x_{d}
 
 \end{pmatrix}
$$

 $$C_{3,3}= \begin{pmatrix}
 x_{3}\\
 0\\
0\\
 -x_{d}\\
 x_{d}\\
-x_{d}
 
 \end{pmatrix}$$
 \end{multicols}
 \end{minipage}

\newpage

{\bf Dimension $8d+3$}

We  recall  the  manifolds $Y^{8d+3}=\tilde{L}^{3}(1,1) - 3 \tilde{L}^{3}(1,3) \times B^{8^{d}}$, and  notice  that the  definition  for  the  matrix  here  is  \ref{def:etatilde}.

The  matrices  $\tilde{C}_{i,j}$   take  values  over  the  ring $\mathbb{R}/2\mathbb{Z}$ are  as  follows

\begin{minipage}{\linewidth}

\begin{multicols}{2}
 $$C_{1,0}=\begin{pmatrix}
0 \\ -2x_{d}-4y_{d} \\ -2x_{d}-4y_{d} \\ -2x_{d}-4y_{d} \\ -2z_{2}\\ -2x_{d}- 4y_{d}
 
\end{pmatrix}  $$

 $$ C_{2,0}= \begin{pmatrix}
 0\\
 -2z_{d}\\
-2z_{d} \\
 0\\
 -2z_{d}\\
-2z_{d}
 
 \end{pmatrix} $$

 $$C_{3,0}= \begin{pmatrix}
 0\\
-2x_{d}-4y_{d}\\
-2xd-4y_{d}\\
 -2z_{d}\\
 -2x_{d}-4y_{d}\\
-2x_{d}-4y_{d}
 
 \end{pmatrix}$$

 $$
 C_{1,0}=
  \begin{pmatrix}
0\\
-2x_{d}-4y_{d}\\
-2x_{d}-4y_{d} \\
 -2z_{d}\\
 -2x_{d}-4y_{d}\\
-2x_{d}-4y_{d}
 
 \end{pmatrix}
$$ 
 
 $$ C_{1,1}= \begin{pmatrix}
 -2x_{d}-4y_{d}\\
 -2z_{d}\\
0\\
 -2x_{d}-4y_{d}\\
 -2x_{d}-4y_{d}\\
-2x_{d}-4y_{d}
 
 \end{pmatrix} $$

 $$C_{1,2}= \begin{pmatrix}
 -2z_{d}\\
 -2x_{d}-4y_{d}\\
-2x_{d}-4y_{d}\\
 0\\
-2x_{d}-4y_{d}\\
-2x_{d}-4y_{d}
 
 \end{pmatrix}$$

$$ C_{1,3}= \begin{pmatrix}
 -2x_{d}-4y_{d}\\
 0\\
-2z_{d}\\
 -2x_{d}-4y_{d}\\
 -2x_{d}-4y_{d}\\
-2x_{d}-4y_{d}
 
 \end{pmatrix}$$
\end{multicols}
\end{minipage}
  
\newpage

 \begin{minipage}{\linewidth}
 \begin{multicols}{2}
   $$C_{2,0}= 
  \begin{pmatrix}
 0\\
 -2z_{d}\\
-2z_{d} \\
 0\\
 -2z_{d}\\
-2z_{d}
 
 \end{pmatrix}  $$

  $$ C_{2,1}=\begin{pmatrix}
 -2x_{d}-4y_{d}\\
 -2x_{d}-4y_{d}\\
-2x_{d}-4y_{d}\\
 -2x_{d}-4y_{d}\\
 -2z_{d}\\
-2z_{d}
 
 \end{pmatrix}$$

 $$C_{2,2}= \begin{pmatrix}
 -2z_{d}\\
 0 \\
0 \\
 -2z_{d}\\
 -2z_{d}\\
-2z_{d}
 
 \end{pmatrix}$$

$$C_{2,3}= 
  \begin{pmatrix}
 -2x_{d}-4y_{d}\\
 -2x_{d}-4y_{d}\\
-2x_{d}-4y_{d}\\
 -2x_{d}-4y_{d}\\
 -2z_{d}\\
0
 
 \end{pmatrix}$$

$$ C_{3,0}= \begin{pmatrix}
0\\
 x_{d}-4y_{d}\\
x_{d}-4y_{d} \\
-2z_{d}\\
 x_{d}-4y_{d}\\
x_{d}-4y_{d}
 
 \end{pmatrix}$$

$$  C_{3,1}=\begin{pmatrix}
 x_{d}-4y_{d}\\
 0\\
-2z_{d}\\
 x_{d}-4y_{d}\\
 x_{d}-4y_{d}\\
x_{d}-4y_{d}
 
 \end{pmatrix}$$

 $$ C_{3,2}=\begin{pmatrix}
 -2z_{d}\\
 x_{d}-4y_{d}\\
x_{d}-4y_{d} \\
 0\\
 x_{d}-4y_{d}\\
x_{d}-4y_{d}
 
 \end{pmatrix}
$$

 $$C_{3,3}= \begin{pmatrix}
 x_{d}-4y_{d}\\
 -2z_{d}\\
0 \\
 x_{d}-4y_{d}\\
 x_{d}-4y_{d}\\
x_{d}-4y_{d}
 
 \end{pmatrix}$$
 \end{multicols}
 \end{minipage}

\newpage

{\bf Dimension $8d+5$}
Recall that  the proposed  manifolds  in  this dimension  are  given  by  $\tilde{X}_{j}^{5}\times {B^{8}}^{d}$. The  matrices $C_{i,j}$ from definition \ref{def:etavector} are  as  follows:

\begin{minipage}{\linewidth}

\begin{multicols}{2}
 $$C_{1,0}=\begin{pmatrix}
0\\ -4x_{d}\\ -4x_{d} \\ 0 \\-4x_{d}\\ -4x_{d}  
 
\end{pmatrix}  $$

 $$ C_{2,0}= \begin{pmatrix}
 0\\
 0\\
0 \\
 0\\
 0\\
0
 
 \end{pmatrix} $$

 $$C_{3,0}= \begin{pmatrix}
 0\\
 -2x_{d}\\
-2x_{d}\\
 0\\
 -2x_{d}\\
-2x_{d}
 
 \end{pmatrix}$$

 $$
 C_{1,0}=
  \begin{pmatrix}
0\\
 -4x_{d}\\
-4x_{d} \\
 0\\
 -4x_{d}\\
-4x_{d}
 
 \end{pmatrix}
$$ 
 
 $$ C_{1,1}= \begin{pmatrix}
 -4x_{d}\\
 0\\
0\\
 -4x_{d}\\
 -4x_{d}\\
-4x_{d}
 
 \end{pmatrix} $$

 $$C_{1,2}= \begin{pmatrix}
 0\\
 -4x_{d}\\
-4x_{d}\\
 0\\
 -4x_{d}\\
-4x_{d}
 
 \end{pmatrix}$$

$$ C_{1,3}= \begin{pmatrix}
 -4x_{d}\\
 0\\
0\\
 -4x_{d}\\
 -4x_{d}\\
-4x_{d}
 
 \end{pmatrix}$$
\end{multicols}
\end{minipage}
  
\newpage

 \begin{minipage}{\linewidth}
 \begin{multicols}{2}
   $$C_{2,0}= 
  \begin{pmatrix}
 0\\
 0\\
0 \\
0\\
 0\\
0
 
 \end{pmatrix}  $$

  $$ C_{2,1}=\begin{pmatrix}
 -4x_{d}\\
 -4x_{d}\\
-4x_{d}\\
 -4x_{d}\\
 0\\
0
 
 \end{pmatrix}$$

 $$C_{2,2}= \begin{pmatrix}
 0\\
 0 \\
0 \\
0\\
 0\\
0
 
 \end{pmatrix}$$

$$C_{2,3}= 
  \begin{pmatrix}
 -4x_{d}\\
 -4x_{d}\\
-4x_{d}\\
 -4x_{d}\\
 0\\
0
 
 \end{pmatrix}$$

$$ C_{3,0} \begin{pmatrix}
 0\\
 -4x_{d}\\
-4x_{d} \\
 0\\
 -4x_{d}\\
-4x_{d}
 
 \end{pmatrix}$$

$$  C_{3,1}=\begin{pmatrix}
 -4x_{d}\\
 0\\
0 \\
 -4x_{d}\\
 -4x_{d}\\
-4x_{d}
 
 \end{pmatrix}$$

 $$ C_{3,2}=\begin{pmatrix}
 0\\
 -4x_{d}\\
-4x_{d} \\
 0\\
 -4x_{d}\\
-4x_{d}
 
 \end{pmatrix}
$$

 $$C_{3,3}= \begin{pmatrix}
 -4x_{d}\\
 0\\
0\\
 -4x_{d}\\
 -4x_{d}\\
-4x_{d}
 
 \end{pmatrix}$$
 \end{multicols}
 \end{minipage}
  
 \newpage

{\bf Dimension $8d+7$.}
Recall  that  the  proposed  manifolds  in  dimension $7$ moulo  8  are  $Z^{7}\times {B^{8}}^{d} $, Where $ Z^{7}= \tilde{L}^{7}_{1}- 3\tilde{L}^{7}_{3}$. Moreover,  the  matrix  in  this  dimension  is $\tilde{A}$, as in definition \ref{def:etatilde}.

\begin{minipage}{\linewidth}

\begin{multicols}{2}
 $$C_{1,0}=\begin{pmatrix}
0\\ -2x_{d}-2y_{d}\\ -2x_{d}-2y_{d}\\ -2x_{d}-2y_{d}\\ -2z_{d}\\ -2x_{d}-2y_{d} \\ -2x_{d}-2y_{d}
 
\end{pmatrix}  $$

 $$ C_{2,0}= \begin{pmatrix}
 0\\
 2z_{d}\\
2z_{d} \\
 0\\
2z_{d}\\
2z_{d}
 
 \end{pmatrix} $$

 $$C_{3,0}= \begin{pmatrix}
 -2z_{d}\\
 -2x_{d}-2y_{d}\\
-2x_{d}-2y_{d}\\
 -2z_{d}\\
 -2x_{d}-2y_{d}\\
-2x_{d}-2y_{d}
 
 \end{pmatrix}$$

 $$
 C_{1,0}=
  \begin{pmatrix}
 0\\
 -2x_{d}-2y_{d}\\
-2x_{d}-2y_{d} \\
 -2z_{d}\\
 -2x_{d}-2y_{d}\\
-2x_{d}-2y_{d}
 
 \end{pmatrix}
$$ 
 
 $$ C_{1,1}= \begin{pmatrix}
 -2x_{d}-2y_{d}\\-2z_{d}
 \\
0\\
 -2x_{d}-2y_{d}\\
 -2x_{d}-2y_{d}\\
-2x_{d}-2y_{d}
 
 \end{pmatrix} $$

 $$C_{1,2}= \begin{pmatrix}
 -2z_{d}\\
 -2x_{d}-2y_{d}\\
 -2x_{d}-2y_{d}\\
 0\\
 -2x_{d}-2y_{d}\\
-2x_{d}-2y_{d}
 
 \end{pmatrix}$$

$$ C_{1,3}= \begin{pmatrix}
 -2x_{d}-2y_{d}\\
 0\\
2z_{d}\\
 -2x_{d}-2y_{d}\\
 -2x_{d}-2y_{d}\\
-2x_{d}-2y_{d}
 
 \end{pmatrix}$$
\end{multicols}
\end{minipage}
  
\newpage

 \begin{minipage}{\linewidth}
 \begin{multicols}{2}
   $$C_{2,0}= 
  \begin{pmatrix}
 0\\
 -2z_{d}\\
-2z_{d} \\
 0\\
 -2z_{d}\\
-2z_{d}
 
 \end{pmatrix}  $$

  $$ C_{2,1}=\begin{pmatrix}
 -2x_{d}-2y_{d}\\
 -2x_{d}-2y_{d}\\
-2x_{d}-2y_{d}\\
 -2x_{d}-2y_{d}\\
  -2z_{d}\\
-2z_{d}
 
 \end{pmatrix}$$

 $$C_{2,2}= \begin{pmatrix}
 -2z_{d}\\
 0 \\
0 \\
 -2z_{d}\\
 -2z_{d}\\
-2z_{d}
 
 \end{pmatrix}$$

$$C_{2,3}= 
  \begin{pmatrix}
 -2x_{d}-2y_{d}\\
 -2x_{d}-2y_{d}\\
-2x_{d}-2y_{d}\\
 -2x_{d}-2y_{d}\\
 -2z_{d}\\
0
 
 \end{pmatrix}$$

$$ C_{3,0} \begin{pmatrix}
 0\\
 -2x_{d}-2y_{d}\\
-2x_{d}-2y_{d} \\
 -2z_{d}\\
 -2x_{d}-2y_{d}\\
-2x_{d}-2y_{d}
 
 \end{pmatrix}$$

$$  C_{3,1}=\begin{pmatrix}
 -2x_{d}-2y_{d}\\
 0\\
-2z_{d} \\
 -2x_{d}-2y_{d}\\
 -2x_{d}-2y_{d}\\
-2x_{d}-2y_{d}
 
 \end{pmatrix}$$

 $$ C_{3,2}=\begin{pmatrix}
 -2z_{d}\\
 -2x_{d}-2y_{d}\\
-2x_{d}-2y_{d} \\
 0\\
 -2x_{d}-2 y_{d}\\
-2x_{d}-2y_{d}
 
 \end{pmatrix}
$$

 $$C_{3,3}= \begin{pmatrix}
 -2x_{d}-2y_{d}\\
 -2z_{d}\\
0\\
 -2x_{d}-2y_{d}\\
 -2x_{d}-2y_{d}\\
-2x_{d}-2y_{d}
 
 \end{pmatrix}$$
 \end{multicols}
 \end{minipage}

\par 
$\quad$
\par 
The  matrices  are  diagonalized  with  a  Sage  code  introduced  in  section \ref{sec:python}.

\subsection{Even Degree}

We  will  show  below that   for  every positive  even integer $k$,  there  exists  a set 
$$\mathfrak{M}_{k}  $$ of  spin smooth manifolds  together  with  maps  to  $B \mathbb{Z}/4\times \mathbb{Z}/4$, with  the property  that  any ${\rm mod}2$-homology class in degree $k$   is  induced  by  a  fundamental class  of  a  smooth manifold  with  positive  scalar  curvature of dimension $k$. We  collect  this remark in the  following  result, whose  proof  will take  the  remaning part  of the  current  section. 

\begin{definition}\label{def:Mk}
Let $k$ be  an even  natural  number. According  to its ${\rm mod}\,  8$ class, define  the  set  of  manifolds. 

$$ \mathfrak{M}_{8d}=\big \{ L^{8d-1}_{1}\times \mathbb{R}P^{1}, L^{8d-1}_{1}\times L^{1}_{1}, N_{4i+1,4(2d-i)-2}, \, i\in \{1, \ldots, d-1 \} \big \},$$

$$ \mathfrak{M}_{8d+2} =\big \{ L^{4i+3}_{1}\times \mathbb{R}P^{4(2d-i+1)+3}, \, i\in \{ 0, \ldots, d-1\} \big \} , $$

$$\mathfrak{M}_{8d+4} = \big \{ L^{8d-3}_{1}\times \mathbb{R}P^{1}, L^{8d-3}_{1}\times L^{1}_{1}, N_{4i+1,4(2d-i)-2}, \, i\in \{1, \ldots, d-1 \} \big \},$$

$$\mathfrak{M}_{8d+6}= \big \{ L^{4i+3}_{1}\times \mathbb{R}P^{4(d-i)+3}, \, i\in \{ 0, \ldots, d-1\} \big \} . $$

\end{definition}

We  state  now  the  main  theorem  of  this  section

\begin{theorem}\label{theo:detectionhomological}
The previously introduced  sets 
$$\mathfrak{M}_{k}$$
of  smooth, spin manifolds  with  positive  scalar  curvature determine    $ko$- homology  classes in degree $k$  for $\mathbb{Z}/4\times \mathbb{Z}/4$.

The  dimension  of  the $\mathbb{F}_{2}$-vector  space  $\Ext^{0,k}_{\mathcal{A}_{1}}(\mathbb{F}_{2}, H^{*}(B\mathbb{Z}/4 \wedge B\mathbb{Z}/4))$ (the  $E_2$-term  of  the  Adams spectral sequence  converging  to 
$ko_{k}(B\mathbb{Z}/4 \oplus B \mathbb{Z}/4)$) 
  is  equal  to  the  cardinality  of  the  set $\mathfrak{M}_{k}$. 

\end{theorem}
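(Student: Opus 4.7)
The plan is to identify the vector space $\Ext^{0,k}_{\mathcal{A}_{1}}(\mathbb{F}_{2}, H^{*}(B\mathbb{Z}/4 \wedge B\mathbb{Z}/4))$ with the $\mathcal{A}_{1}^{+}$-annihilators in $H^{k}(B\mathbb{Z}/4 \wedge B\mathbb{Z}/4;\mathbb{F}_{2})$; since $Sq^{1}$ vanishes on $H^{*}(B\mathbb{Z}/4;\mathbb{F}_{2})$ (the generators $x_i$ and $T_i$ both lift to integral classes), this annihilator space is just $\ker\bigl(Sq^{2}\colon H^{k}\to H^{k+2}\bigr)$. The strategy is then to exhibit an explicit basis of this kernel in each even degree $k$ and show that every basis element is realized by the pushforward $f_{*}[M]_{\mathbb{F}_{2}}$ of the $\mathbb{F}_{2}$-fundamental class of exactly one manifold $M \in \mathfrak{M}_{k}$.

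The first step is algebraic enumeration. Apply the $\mathcal{A}_{1}$-module decomposition of Lemma \ref{lemma:steenrodsmash.mod2} together with the characterisation of $\mathcal{A}_{1}^{+}$-annihilators within each summand: one at relative degree $2$ in $M_{B}$ (the class $w_{2}=Sq^{2}w_{0}$), and two (at relative degrees $2$ and $4$) in $M_{SB}$ (the classes $d_{2,0}+d_{2,1}=Sq^{2}d_{0}$ and $d_{4}=Sq^{2}d_{2,0}$). Summing the contributions of every shifted summand present in degree $k$ yields totals of $d+1$, $d$, $d+1$, $d$ for $k = 8d, 8d+2, 8d+4, 8d+6$ respectively, matching $|\mathfrak{M}_{k}|$ by direct inspection of Definition \ref{def:Mk}.

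The second step is geometric realisation. For each $M \in \mathfrak{M}_{k}$ with classifying map $f\colon M \to B(\mathbb{Z}/4 \times \mathbb{Z}/4)$, compute $f_{*}[M]_{\mathbb{F}_{2}} \in H_{k}(B(\mathbb{Z}/4\times\mathbb{Z}/4);\mathbb{F}_{2})$ and identify its projection to the smash summand, via Poincar\'e duality, with one of the annihilators enumerated in the first step. For the product manifolds $L^{8d-1}_{1}\times\mathbb{R}P^{1}$ and $L^{8d-1}_{1}\times L^{1}_{1}$, use K\"unneth to factor the pushforward across the two $B\mathbb{Z}/4$-factors; the two choices of second factor give two distinct classifying maps into $B\mathbb{Z}/4$ (one factoring through $\mathbb{Z}/2 \subset \mathbb{Z}/4$, the other inducing the identity on $\pi_{1}$), producing the two $M_{B}$-annihilators. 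For the lens-bundle manifolds $N_{4i+1, 4(2d-i)-2}$, the Leray-Hirsch presentation of Lemma \ref{lemma:projectivizedbundle} together with the $\mathbb{Z}/4$-fibre rotation of Definition \ref{def:projectivizedbundle} pins down the classifying map, and the pushforward of the fundamental class picks out the $M_{SB}$-annihilator at bidegree $(2i+1, 2(2d-i)-1)$ in the variables $T_{0}, T_{1}$.

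Because distinct manifolds in $\mathfrak{M}_{k}$ land in distinct $M_{B}$- or $M_{SB}$-summands of the $\mathcal{A}_{1}$-decomposition, the resulting $|\mathfrak{M}_{k}|$ cohomology classes are linearly independent; combined with the dimension match of the first step this gives the desired bijection and hence the theorem. The main obstacle is the second step for the bundle manifolds $N_{4i+1,j}$: the explicit Leray-Hirsch pushforward computation identifying $f_{*}[N]_{\mathbb{F}_{2}}$ in the smash summand and matching it with the prescribed $M_{SB}$-annihilator, which requires unpacking the $\mathbb{Z}/4$-action on each complex summand of the underlying rank-$(2+j/2)$ bundle over $\mathbb{R}P^{4i+1}$, tracking the induced map on classifying spaces, and verifying the Kronecker pairing against the Steenrod annihilator is non-zero.
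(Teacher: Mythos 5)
Your algebraic step rests on identifying the $s=0$ line of the Adams $E_2$-term with $\ker\bigl(Sq^{2}\colon H^{k}\to H^{k+2}\bigr)$, and this is the wrong dualization. The $E_2$-term of the spectral sequence converging to $ko_{k}$ is $\Ext_{\mathcal{A}_1}(H^{*}(B\mathbb{Z}/4\wedge B\mathbb{Z}/4),\mathbb{F}_2)$, whose zero line is $\hom_{\mathcal{A}_1}(H^{*},\mathbb{F}_2)$, i.e.\ the subspace of mod~$2$ \emph{homology} annihilated by the dual Steenrod action; its dimension in degree $k$ equals the number of $\mathcal{A}_1$-module generators of $H^{*}$ in degree $k$ (the dual of $H^{k}/\mathcal{A}_1^{+}H^{*}$), not the dimension of $\ker Sq^{2}$ in cohomological degree $k$. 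For a suspended copy of $M_{B}$ the generator sits in relative degree $0$ while your class $w_{2}$ sits in relative degree $2$, and concretely in total degree $4$ one has $\ker Sq^{2}=0$ even though the zero line is $3$-dimensional (spanned dually by $x_0x_1T_0$, $x_0x_1T_1$, $T_0T_1$ modulo decomposables). Moreover, whichever of the two counts you take, your asserted totals $d+1,\,d,\,d+1,\,d$ do not follow from the decomposition of Lemma \ref{lemma:steenrodsmash.mod2}: the number of $M_{SB}$-summands in degrees $4k$ and $4k+2$ grows linearly in $k$, so summing one class per $M_{B}$ and two per $M_{SB}$ produces on the order of $4d$ classes in degree $8d$, and the numerical match with $|\mathfrak{M}_{k}|$ from Definition \ref{def:Mk} is asserted rather than derived. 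Since what the detection argument ultimately needs is that the images of the fundamental classes exhaust the subspace of $H_{k}$ that supports Adams filtration zero, this enumeration has to be redone in terms of the homology annihilator (equivalently the module generators), and the discrepancy with the cardinality of $\mathfrak{M}_{k}$ has to be confronted rather than assumed away.

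The geometric step is also incomplete, and precisely at the point where the paper does its real work. You defer the identification of the pushforward of $[N_{4i+1,j}]$ as "the main obstacle," but the paper does not attack it directly in $B\mathbb{Z}/4\times B\mathbb{Z}/4$ at all: it applies the quotient homomorphism ${\rm pr}_{2,2}$ to $B\mathbb{Z}/2\times B\mathbb{Z}/2$, shows by comparing the Serre spectral sequences of the two sphere-bundle fibrations over $\mathbb{R}P^{4i+1}$ that $[N_{i,j}]$ maps to the fundamental class of the associated projective bundle $M_{i,j}$ (Lemma \ref{lemma:serre}), computes the image of $[M_{i,j}]$ in $H_{*}(B\mathbb{Z}/2\times B\mathbb{Z}/2)$ from the Leray--Hirsch presentation (Lemma \ref{lemma:Mij}), and deduces linear independence there (Corollary \ref{cor:homologymanifolds}); detection after a group homomorphism is enough for independence upstairs. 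As written, your proposal leaves the corresponding computation open, so the realization and independence claims for the $N$-manifolds constitute a genuine gap; you should either carry out the direct Leray--Hirsch pushforward you describe or adopt the paper's detour through the $\mathbb{Z}/2\times\mathbb{Z}/2$ quotient.
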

We  will  prove  theorem  \ref{theo:detectionhomological} by  an  indirect  argument. 

We  will  show first   that  the    manifolds $ N_{i,j}$  determine  linearly  independent  elements   of  the ${\rm mod }\,2$  homology $H_{*}(B\mathbb{Z}/4\times \mathbb{Z}/ 4)$  by   analyzing  their  image  under  the  $\mathbb{F}_{2}$ vector  space homomorphism  induced  by  the  group  quotient  ${\rm pr}_{2,2} : H_{*}(B\mathbb{Z}/4\times \mathbb{Z}/4)\to  H_{*}(B\mathbb{Z}/2\times \mathbb{Z}/2)$. The  rest  of  the  manifolds  in the  set  $\mathfrak{M}_{k}$ are  readily   seen to   generate elements  in the  homology   of $\mathbb{Z}/4\times  \mathbb{Z}/4$.

Recall  that  the ${\rm mod}\, 2  $ cohomology  of  the  group $\mathbb{Z}/4\times \mathbb{Z}/4$ is  given  in Lemma \ref{lemma:steenrodfull.mod2}. 

Moreover, according  to  lemma \ref{lemma:steenrodsmash.mod2},  the  ${\rm mod} 2$-cohomology  of  the  smash product is

\begin{itemize}
\item In degree $2$  by  $x_{0} x_{1}$,  generating  a  copy  of $\mathbb{F}_{2}$. 
\item In degree $ 4d+2$ for  $d\geq 1$,  $x_{0}x_1T_{0}^{2l+1}T_{1}^{2(d-l)-1}$, for  $l=0, \ldots, d-1$,  generating  a 	copy  of $M_{SB}$.

\item In  degree $4d$, $x_{0}x_{1}T_{1}^{2d-1}$ and $T_{0}^{2d-1}x_{0}x_{1}$,  generating  a copy  of  $M_{B}$,  and  	$T_{0}^{2l+1} T_{1}^{2(d-l)-1}$ for	$l=0, \ldots d$,  which  generate a  copy  of  $M_{SB}$.

\end{itemize}

We  introduce  the   follwowing  notation  for   elements  in the  $\rm mod \, 2$-homology  of $\mathbb{Z}/4\times  \mathbb{Z}/4$  and  $\mathbb{Z}/2\times  \mathbb{Z}/2$.

\begin{notation}

Consider  the  $\mathbb{F}_{2}$- vector  space 

$$H^{*}(B\mathbb{Z}/4\times  \mathbb{Z}/4).$$

Given  $r, s, t,u$ natural  numbers,  we   denote the  following  elements  in the  ${\rm mod} \, 2$ dual vector  space  with  respect  to the  monomial  basis  
$$\xi_{T_{0}^{r}T_{1}^{s} }= \big (T_{0}^{r}T_{1}^{s}\big )^{*}\in H_{2r+2s}(B\mathbb{Z}/4\times  \mathbb{Z}/4), $$   

$$\xi_{x_{0}x_{1} T_{0}^{t}}\in H_{2t+2}(B\mathbb{Z}/4\times  \mathbb{Z}/4), $$

$$ \xi_{x_{0}x_{1} T_{0}^{u}}\in H_{2u+2}(B\mathbb{Z}/4\times  \mathbb{Z}/4). $$ 
  
Similarly  for  the $\mathbb{F}_2$- vector  space
$$H^{*}(B\mathbb{Z}/2 \times \mathbb{Z}/2,)=\mathbb{F}_{2}[x,y],  $$     
 we  introduce  the  notation 
 $$\xi_{i,j}= \big (x^{i}y^{j}\big )^{*} \in H_{*}(B\mathbb{Z}/2\times \mathbb{Z}/2).$$  
\end{notation}

\begin{lemma}\label{lemma:serre}   
  Under the  group  homomorphism
  $$ {{\rm pr}_{2,2}}_{*}: H_{*}(B \mathbb{Z}/4\times \mathbb{Z}/4)\longrightarrow H_{*}(B\mathbb{Z}/2\times  \mathbb{Z}/2),$$
  the  fundamental  class  of  $N_{i,j}$ is  mapped to  $M_{i,j}$  
\end{lemma}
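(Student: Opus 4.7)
I would prove the lemma by reducing to a cohomological computation via Poincar\'e duality over $\mathbb{F}_{2}$. Since $H^{*}(B(\mathbb{Z}/2\times \mathbb{Z}/2),\mathbb{F}_{2})=\mathbb{F}_{2}[x,y]$ and $\mathrm{pr}_{2,2}^{*}$ sends the degree-one generators $x,y$ to the degree-one classes $x_{0},x_{1}\in H^{*}(B(\mathbb{Z}/4\times \mathbb{Z}/4),\mathbb{F}_{2})$ of Lemma~\ref{Lemma:2cohomology}, the claimed equality of pushforwards is equivalent to
\[
\langle \tau_{N}^{*}(x_{0}^{a}x_{1}^{b}),\, [N_{i,j}]\rangle \;=\; \langle \tau_{M}^{*}(x^{a}y^{b}),\, [M_{i,j}]\rangle
\]
for every top-degree monomial with $a+b=\dim N_{i,j}=\dim M_{i,j}=4i+j+2$, where $\tau_{N}$ and $\tau_{M}$ are the respective classifying maps.

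The next step is to identify both pullbacks of the degree-one generators with the standard base/fiber classes $\hat{y},\hat{x}$ in the Leray--Hirsch ring of Lemma~\ref{lemma:projectivizedbundle}. The manifolds $N_{i,j}$ and $M_{i,j}$ are built from the same real vector bundle $2L_{i}\oplus \epsilon^{\oplus j}$ over $\mathbb{R}P^{4i+1}$, with the only difference being that $N_{i,j}$ uses the fiberwise $\mathbb{Z}/4$-action by rotation while $M_{i,j}$ uses its subgroup $\mathbb{Z}/2\subset \mathbb{Z}/4$. The natural double cover $g\colon M_{i,j}\to N_{i,j}$ fits into a commutative diagram with the classifying maps, and pulling back along $g$ identifies $\tau_{N}^{*}(x_{i})$ with the corresponding degree-one class under $\tau_{M}$ modulo the kernel of the fiber-$\mathbb{Z}/2$ reduction. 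The base contribution $\hat{y}$ (coming from $L_{i}$) and the fiber contribution $\hat{x}$ coincide on both manifolds by construction, so both pairings reduce to $\langle \hat{y}^{a}\hat{x}^{b},\,[\,\cdot\,]\rangle$ in the two respective Leray--Hirsch rings.

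Finally, I evaluate both sides by using the ring presentation of Lemma~\ref{lemma:projectivizedbundle} for $N_{i,j}$ and the analogous Stiefel--Whitney-based presentation
\[
H^{*}(M_{i,j},\mathbb{F}_{2})=\mathbb{F}_{2}[\hat{x},\hat{y}]/(\hat{y}^{4i+2},\;\hat{x}^{j+2}+\hat{x}^{j}\hat{y}^{2})
\]
for the projectivization. The main obstacle I anticipate is handling the factor of $2$ introduced by the double cover $g$; since we work modulo $2$, the naive pushforward $g_{*}[M_{i,j}]=2[N_{i,j}]$ vanishes, so the comparison must be carried out cohomologically---pulling back rather than pushing forward---and verifying that the two different top-degree relations, $\hat{y}^{4(d-i)}+\hat{x}^{2}\hat{y}^{4(d-1-i)+2}=0$ for $N_{i,j}$ versus the Stiefel--Whitney relation $\hat{x}^{j+2}+\hat{x}^{j}\hat{y}^{2}=0$ for $M_{i,j}$, produce identical pairings of the monomials $\hat{x}^{a}\hat{y}^{b}$ against the respective fundamental classes in top degree. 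This requires a careful polynomial comparison modulo the two ideals, together with the observation that the relation $\hat{x}^{4i+1}=0$ in the $N_{i,j}$ ring is compatible with $\hat{y}^{4i+2}=0$ in the $M_{i,j}$ ring after the substitution implicit in the identification of classifying maps.
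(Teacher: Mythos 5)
Your route is genuinely different from the paper's (the paper compares the homological Serre spectral sequences of the two fibrations $N_{4i+1,4(d-i-1)+2}\to\mathbb{R}P^{4i+1}$ and $M_{4i+1,4(d-i-1)+2}\to\mathbb{R}P^{4i+1}$, using surjectivity at the $E_{2}$-term and the absence of differentials out of the top corner), but your reduction has a gap that I do not see how to repair. You propose to test the identity by pairing against $\tau_{N}^{*}\,\mathrm{pr}_{2,2}^{*}(x^{a}y^{b})=\tau_{N}^{*}(x_{0}^{a}x_{1}^{b})$. Since $x_{0}^{2}=x_{1}^{2}=0$ in $H^{*}(B(\mathbb{Z}/4\times\mathbb{Z}/4);\mathbb{F}_{2})$ (Lemma \ref{Lemma:2cohomology}), every monomial $x_{0}^{a}x_{1}^{b}$ of total degree $a+b=4i+j+2\geq 4$ vanishes already in the group cohomology, before any pullback to $N_{i,j}$. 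Hence your criterion forces all the left-hand pairings to be zero, while the right-hand pairings are not: by the ring description of $H^{*}(M_{i,j};\mathbb{F}_{2})$ the monomials $\hat{x}^{a}\hat{y}^{b}$ pair nontrivially with $[M_{i,j}]$, and Lemma \ref{lemma:Mij} needs the image of $[M_{i,j}]$ to be the nonzero class $\sum_{k\ \mathrm{odd}}\xi_{k,4d-k}$. So the dualization as you set it up cannot establish the lemma; it only shows that classes pulled back from $B\mathbb{Z}/2\times B\mathbb{Z}/2$ through $B\mathbb{Z}/4\times B\mathbb{Z}/4$ see nothing in these degrees. The information the lemma is after is carried by the bundle structures over $\mathbb{R}P^{4i+1}$, which is exactly what the spectral-sequence comparison in the paper's proof exploits and what is invisible to your pairing criterion.

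The second missing idea is the mechanism of comparison between the two manifolds. The fiberwise double cover $g\colon M_{i,j}\to N_{i,j}$ has degree $2$, so in $\mathbb{F}_{2}$-coefficients both $g_{*}[M_{i,j}]=0$ and, dually, $\langle g^{*}\alpha,[M_{i,j}]\rangle=\langle\alpha,g_{*}[M_{i,j}]\rangle=0$ for every $\alpha\in H^{*}(N_{i,j};\mathbb{F}_{2})$; switching from pushforward to pullback does not evade the factor of $2$ you identify as the main obstacle, it reproduces it. Consequently your final step, a polynomial comparison of the ideals in the two presentations of $H^{*}(N_{i,j};\mathbb{F}_{2})$ and $H^{*}(M_{i,j};\mathbb{F}_{2})$, compares two different rings with no map relating the pairings against the two distinct fundamental classes, so the needed identity is asserted rather than proved. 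To argue in this spirit you would have to replace the double cover by a comparison of the two bundles themselves, for instance via their fiberwise classifying maps and the induced map of Serre spectral sequences as in the paper, or via a transfer-type argument; as written, the proposal does not reach the statement.
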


\begin{proof}
Consider  the  fibrations  

  $$ B \mathbb{Z}/4^{4(d-i)-1}\longrightarrow N_{4i+1,4(d-i-1)+2 } \to \mathbb{R}P^{4i+1}, $$
  
  $$ B \mathbb{Z}/2^{4(d-i)-1}\longrightarrow M_{4i+1,4(d-i-1)+2 } \to \mathbb{R}P^{4i+1}.   $$
  
The Serre  spectral  sequence   converging  to  the  homology  of  the   total  space  of  these  fibrations   have  as  $E_{2}$-terms 

$$E^{2}_{p,q}= H_{p}(\mathbb{R}P ^{4i+1}, H_{q}(B \mathbb{Z}/4^{4(d-i)-1}))\Rightarrow H_{*}(N_{4i+1, 4(d-i-1)+2}),  $$
and  
$$F^{2}_{p,q}= H_{p}(\mathbb{R}P ^{4i+1}, H_{q}(B \mathbb{Z}/2^{4(d-i)-1})) \Rightarrow H_{*}(M_{4i+1, 4(d-i-1)+2}).   $$  

Where we  have  that  $E^{r}_{p,q}=0=F^{r}_{p,q} $ for $p>4i+1$  or  $q>4(d-i-1)+2 $. 
Notice  than  no  differential   with  source  in  $E_{4i+1, 4(d-i-1)+2}$  can  be  non-zero.  The  same  holds  for $F_{4i+1, 4(d-i-1)+2} $.

 The  map   ${{\rm pr}_{2,2}}_{*} $  induces  a   map  of  Serre  spectral  sequences 

  $$ E^{r}_{p,q}\longrightarrow E^{r}_{p,q}.$$
  Which  is surjective at  the   $E_{2}$ term. Since  there  are  no  further non zero differentials,  the   map  sends  the  fundamental class  of  $N_{4i+1,4(d-i-1)+2}$  to  $M_{4i+1,4(d-i-1)+2}$.

\end{proof}
Consider  the  map 
$\iota_{4i+1, 4(d-i-1+2)} :M_{4i+1, 4(d-i-1)+2}\to B\mathbb{Z}/2\times B\mathbb{Z}/2, $ determined  by  $ (j\circ p, c)$,  where $p:M_{4i+1, 4(d-i-1)+2}\to \mathbb{R}P^{4i+1} $ is  the  projection, $j: \mathbb{R}P^{4i+1}\to \mathbb{R}P^{\infty}= B\mathbb{Z}/2$  is  the  inclusion  of  the 
$4i+1$-skeleton,  and  $c$ is  the  map classifying  fundamental  group  $c: \mathbb{R}P^{4i+1}\to B\mathbb{Z}/2$. The  following  result states  the behaviour  in  cohomology  of  this  map, and  it  follows  directly from  the definitions.  (See \ref{lemma:projectivized bundle.bis} for  the  notation.)

\begin{lemma}\label{lemma:Mij}

Under  the  map 

$$ \iota_{4i+1, 4(d-i-1+2)}:M_{4i+1, 4(d-1-i)+2}\longrightarrow B\mathbb{Z}/2\times \mathbb{Z}/2, $$ 

The  following  holds: 

\begin{itemize}
\item The classes $x^{k}y^{4d-k} $ are  mapped  to $\hat{x}^{k}\hat{y}^{4d-k}$ in  cohomology. For  $k$ odd,  this  implies 
$$\iota_{4i+1, 4(d-i-1+2)}^{*} (x^{k} y^{d-k})=\hat{x}\hat{y}^{4d-1}.$$

\item In ${\rm mod\, 2}$  homology, the  fundamental  class of $M_{4i+1, 4(d-1-i)+2}$ is  mapped  to  the  linearly  independent elements
$$ v_{i}=\underset{k\geq 1\, {\rm odd}}{\overset{4+1}{\sum}} \xi_{k, 4d-k}$$  

\end{itemize}

\end{lemma}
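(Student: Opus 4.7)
My plan is to establish the cohomological statement first, then deduce the homological pushforward by the Kronecker adjunction, and finally verify linear independence via a triangularity argument on the resulting expressions.

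For the first bullet, I would start from the definition $\iota = (j \circ p, c)$ and the Künneth identification $H^*(B\mathbb{Z}/2 \times B\mathbb{Z}/2;\mathbb{F}_2) = \mathbb{F}_2[x,y]$. Naturality gives $\iota^*(x) = c^*(x) = \hat{x}$ and $\iota^*(y) = (j\circ p)^*(y) = \hat{y}$, so multiplicativity of the cup product yields $\iota^*(x^k y^{4d-k}) = \hat{x}^k \hat{y}^{4d-k}$. I would then invoke the explicit chain of equalities in $H^{4d}(M_{4i+1,4(d-1-i)+2};\mathbb{F}_2)$ supplied by Lemma \ref{lemma:projectivizedbundle},
\[
\hat{x}\hat{y}^{4d-1} \;=\; \hat{x}^{3}\hat{y}^{4d-3} \;=\; \cdots \;=\; \hat{x}^{4i+1}\hat{y}^{4(d-i-1)+3},
\]
obtained by successively multiplying the defining relation $\hat{y}^{4(d-i)} = \hat{x}^2\hat{y}^{4(d-i-1)+2}$ by $\hat{x}^{2r-1}\hat{y}^{4i-2r+1}$. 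This identifies $\iota^*(x^k y^{4d-k})$ with the unique top class $\hat{x}\hat{y}^{4d-1}$ for every odd $k$ with $1 \le k \le 4i+1$.

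For the second bullet, I would apply the Kronecker adjunction: for each $\alpha \in H^{4d}(B\mathbb{Z}/2 \times B\mathbb{Z}/2;\mathbb{F}_2)$,
\[
\langle \iota_*[M_{4i+1,4(d-1-i)+2}],\, \alpha \rangle \;=\; \langle [M_{4i+1,4(d-1-i)+2}],\, \iota^*(\alpha) \rangle.
\]
Taking $\alpha = x^k y^{4d-k}$ and combining with the first bullet, the pairing equals $1$ whenever $k$ is odd with $1 \le k \le 4i+1$, because $\hat{x}\hat{y}^{4d-1}$ is Poincaré dual to a point. For the remaining top-dimensional monomials I would verify vanishing of $\hat{x}^k\hat{y}^{4d-k}$ in $H^{4d}(M_{4i+1,4(d-1-i)+2})$: when $k$ lies outside the stated range the monomial is killed by the relation on $\hat{x}$ inherited from the base $\mathbb{R}P^{4i+1}$, and when $k$ is even a repeated application of $\hat{y}^{4(d-i)} = \hat{x}^2\hat{y}^{4(d-i-1)+2}$ pushes the exponent of one variable beyond its permitted range. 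Consequently
\[
\iota_*[M_{4i+1,4(d-1-i)+2}] \;=\; \sum_{\substack{k\text{ odd}\\ 1 \le k \le 4i+1}} \xi_{k,\,4d-k} \;=\; v_i.
\]
Linear independence of $\{v_0,\ldots,v_{d-1}\}$ then follows by a triangularity observation: the highest-index term of $v_i$ is $\xi_{4i+1,\,4d-4i-1}$, which does not appear in any $v_{i'}$ with $i' < i$, so the change-of-basis matrix to the $\xi_{k,4d-k}$ is upper triangular with ones on the diagonal.

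The principal obstacle I foresee is the bookkeeping of the vanishing argument for even-index monomials: one has to iterate the single quadratic relation of Lemma \ref{lemma:projectivizedbundle} and carefully monitor when the resulting exponent of $\hat{y}$ exceeds $4i+1$ or that of $\hat{x}$ exceeds its cut-off, in order to conclude that all non-triangular contributions to $\iota_*[M_{4i+1,4(d-1-i)+2}]$ indeed cancel modulo $2$.
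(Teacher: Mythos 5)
Your proposal is correct and coincides with the intended argument: the paper offers no separate proof (it asserts the lemma ``follows directly from the definitions''), and your fleshed-out version — pulling the degree-one generators back to $\hat{x}$ and $\hat{y}$ along the two components of $\iota$, invoking the chain of equalities and the truncation from Lemma \ref{lemma:projectivizedbundle} to identify the odd monomials with the top class and kill the even ones, then dualizing by the Kronecker pairing and concluding independence by triangularity on the leading term $\xi_{4i+1,4d-4i-1}$ — is exactly what that remark leaves implicit. The only caution is notational: the paper's own labelling of $\hat{x}$ (base versus fiber class) and the truncation exponent in Lemma \ref{lemma:projectivizedbundle} contain typos, so your bookkeeping should be read against the corrected presentation $\mathbb{F}_{2}[\hat{x},\hat{y}]/(\hat{x}^{4i+2},\,\hat{y}^{4(d-i)}+\hat{x}^{2}\hat{y}^{4(d-i-1)+2})$, under which your vanishing and evaluation claims are exactly right.
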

This   has a  consequence  the  following  corollary.

\begin{corollary}  \label{cor:homologymanifolds}
The classes  $\xi_{1,4d-1}, \ldots v_{1}, \ldots v_{d-1}, \xi_{4d-1,1 } $ are  linearly  independent . 

\end{corollary}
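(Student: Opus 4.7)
The plan is to reduce the statement to a direct linear algebra verification in a carefully chosen subspace of $H_{4d}(B\mathbb{Z}/2\times B\mathbb{Z}/2)$, using the explicit descriptions already supplied by Lemma \ref{lemma:Mij}.

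First I would unpack each of the listed classes in the monomial-dual basis $\{\xi_{k,4d-k}\}_{0\le k\le 4d}$. By Lemma \ref{lemma:Mij}, $v_{i}=\sum_{k\ \text{odd},\,1\le k\le 4i+1}\xi_{k,4d-k}$, while $\xi_{1,4d-1}$ and $\xi_{4d-1,1}$ are themselves basis vectors. Crucially, all $d+1$ vectors lie inside the $2d$-dimensional subspace $V\subset H_{4d}(B\mathbb{Z}/2\times B\mathbb{Z}/2)$ spanned by the dual classes with odd first index, so the entire question takes place in $V$. I would then relabel the basis of $V$ as $u_{j}:=\xi_{2j-1,4d-2j+1}$ for $j=1,\dots,2d$, so that every vector under consideration has a concrete $0/1$ coordinate presentation.

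Next I would form the $(d+1)\times 2d$ coefficient matrix over $\mathbb{F}_{2}$ in the ordered basis $(u_{1},\dots,u_{2d})$: the row for $\xi_{1,4d-1}$ is $u_{1}$; for $1\le i\le d-1$ the row for $v_{i}$ is $u_{1}+u_{2}+\cdots+u_{2i+1}$; the row for $\xi_{4d-1,1}$ is $u_{2d}$. The decisive observation is that passing to consecutive differences, with the convention $v_{0}:=\xi_{1,4d-1}$, yields
\[
\xi_{1,4d-1}=u_{1},\qquad v_{i}-v_{i-1}=u_{2i}+u_{2i+1}\ (1\le i\le d-1),\qquad \xi_{4d-1,1}=u_{2d},
\]
and these $d+1$ combinations have pairwise disjoint nonempty supports $\{1\}$, $\{2i,2i+1\}_{i=1}^{d-1}$, $\{2d\}$, partitioning $\{1,2,\dots,2d\}$. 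This exhibits the matrix in a staircase form of rank $d+1$, which is exactly the desired linear independence.

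I do not expect any genuine obstacle: the geometry has been absorbed by Lemma \ref{lemma:serre} and Lemma \ref{lemma:Mij}, and what remains is formal linear algebra. The only small point to verify is that the top index $4(d-1)+1=4d-3$ appearing in the support of $v_{d-1}$ is strictly less than $4d-1$, so that $\xi_{4d-1,1}$ provides an independent final pivot disjoint from all $v_{i}$; this is immediate but worth stating explicitly to justify why the last row of the staircase is genuinely new.
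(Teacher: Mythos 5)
Your proof is correct and follows essentially the same route the paper intends: the corollary is presented as an immediate consequence of Lemma \ref{lemma:Mij}, and your reduction to the span of the odd-first-index dual classes $u_{j}=\xi_{2j-1,4d-2j+1}$ together with the disjoint-support (staircase) argument is just the explicit linear algebra behind that implication. Nothing is missing.
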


We  are  now in  position  to  finish  the  proof  of theorem \ref{theo:detectionhomological}. 

Due  to  corollary \ref{cor:homologymanifolds}, the   manifolds  defined  there  produce  linearly independent classes in   the  even  dimensional  homology  groups of  $B\mathbb{Z}/4\wedge B\mathbb{Z}/4$. 

According  to \ref{lemma:steenrodsmash.mod2}, the  rank   of  the $\mathbb{F}_{2}$-vector  subspace generated  by  the  linearly independent equals the rank  of  the  ${\rm Mod\, 2}$ homology groups. 
Finally, by the  Adams  spectral  sequence, we  obtain  that   all classes  in the even graded  $ko$-homology groups  of  $B\mathbb{Z}^{4}\wedge B\mathbb{Z}/4$  are  obtained by  this  construction. This  finishes  the  proof  of theorem  \ref{theo:detectionhomological}. Together  with  Theorem \ref{theo:etahomomorphism}, this  finishes  the  proof of  Theorem \ref{theo:gromovz42}.

\section{Sage Code}\label{sec:python}
\subsection{Sage  code  for diagonalization  of  $\eta$- invariants.  }

We  present  the  Sage  code needed to  diagonalize  the  matrices  of  $\eta$-invariants. 
Computation of the echelon form for $A^{8d+1}$.

\begin{python}
# Define the polynomial ring over the rationals with variable x_d
R.<x_d> = QQ[]

# Define the submatrices C_{i,j} as row block matrices
C_10 = matrix(R,[[0, -x_d, -x_d, 0, -x_d, -x_d]]).transpose()
C_20 = matrix(R,[[0, 0, 0, 0, 0, 0]]).transpose()
C_30 = matrix(R,[[0, x_d, x_d, 0, x_d, x_d]]).transpose()
C_11 = matrix(R,[[-x_d, 0, 0, x_d, -x_d, x_d]]).transpose()
C_12 = matrix(R,[[0, x_d, x_d, 0, -x_d, -x_d]]).transpose()
C_13 = matrix(R,[[x_d, 0, 0, -x_d, -x_d, x_d]]).transpose()
C_21 = matrix(R,[[-x_d, x_d, -x_d, x_d, 0, 0]]).transpose()
C_22 = matrix(R,[[0, 0, 0, 0, 0, 0]]).transpose()
C_23 = matrix(R,[[x_d, -x_d, x_d, x_d, 0, 0]]).transpose()
C_31 = matrix(R,[[-x_d, 0, 0, x_d, x_d, -x_d]]).transpose()
C_32 = matrix(R,[[0, -x_d, -x_d, 0, x_d, x_d]]).transpose()
C_33 = matrix(R,[[x_d, 0, 0, -x_d, x_d, -x_d]]).transpose()

# Construct the full matrix A^{8d+1} from the row block matrices
A_8d1 = block_matrix(R,[
    [block_matrix(R,1, 3, [C_10, C_20, C_30]), block_matrix(R,1, 3, [C_11, C_21, C_31])],
    [block_matrix(R,1, 3, [C_12, C_22, C_32]), block_matrix(R,1, 3, [C_13, C_23, C_33])]
])

# Transpose the matrix to get the desired form
A_8d1 = A_8d1.transpose()

# Compute the echelon form of the matrix
echelon_form_A_8d1 = A_8d1.echelon_form()

# Display the result
echelon_form_A_8d1
\end{python}

To simplify the computation we compute the echelon form of $\tilde A^{8d+3} := (-1)^{d+1} A^{8d+3}$ instead of $A^{8d+3}$ using $\tilde x_d:= (-1)^{d+1} x_d,\, \tilde y_d:= (-1)^{d+1} y_d,\, \tilde z_d:= (-1)^{d+1} z_d$. 
We do not add the prefix \texttt{tilde\_} in the code for readability.

\begin{python}
# Define the polynomial ring over the rationals with variable x_d
R.<x_d> = QQ[]

# Define the substitutions
y_d = 2*x_d^2
z_d = 4*x_d^2

# Define the submatrices C_{i,j} as row block matrices with the substitutions
C_10 = matrix(R,[[0, -2*x_d-4*y_d, -2*x_d-4*y_d, -2*x_d-4*y_d, -2*z_d, -2*x_d-4*y_d]]).transpose()
C_20 = matrix(R,[[0, -2*z_d, -2*z_d, 0, -2*z_d, -2*z_d]]).transpose()
C_30 = matrix(R,[[0, -2*x_d-4*y_d, -2*x_d-4*y_d, -2*z_d, -2*x_d-4*y_d, -2*x_d-4*y_d]]).transpose()
C_11 = matrix(R,[[-2*x_d-4*y_d, -2*z_d, 0, -2*x_d-4*y_d, -2*x_d-4*y_d, -2*x_d-4*y_d]]).transpose()
C_12 = matrix(R,[[-2*z_d, -2*x_d-4*y_d, -2*x_d-4*y_d, 0, -2*x_d-4*y_d, -2*x_d-4*y_d]]).transpose()
C_13 = matrix(R,[[-2*x_d-4*y_d, 0, -2*z_d, -2*x_d-4*y_d, -2*x_d-4*y_d, -2*x_d-4*y_d]]).transpose()
C_21 = matrix(R,[[-2*x_d-4*y_d, -2*x_d-4*y_d, -2*x_d-4*y_d, -2*x_d-4*y_d, -2*z_d, -2*z_d]]).transpose()
C_22 = matrix(R,[[-2*z_d, 0, 0, -2*z_d, -2*z_d, -2*z_d]]).transpose()
C_23 = matrix(R,[[-2*x_d-4*y_d, -2*x_d-4*y_d, -2*x_d-4*y_d, -2*x_d-4*y_d, -2*z_d, 0]]).transpose()
C_31 = matrix(R,[[x_d-4*y_d, 0, -2*z_d, x_d-4*y_d, x_d-4*y_d, x_d-4*y_d]]).transpose()
C_32 = matrix(R,[[-2*z_d, x_d-4*y_d, x_d-4*y_d, 0, x_d-4*y_d, x_d-4*y_d]]).transpose()
C_33 = matrix(R,[[x_d-4*y_d, -2*z_d, 0, x_d-4*y_d, x_d-4*y_d, x_d-4*y_d]]).transpose()

# Construct the full matrix A^{8k+3} from the row block matrices
A_8k3 = block_matrix(R, [
    [block_matrix(R, 1, 3, [C_10, C_20, C_30]), block_matrix(R, 1, 3, [C_11, C_21, C_31])],
    [block_matrix(R, 1, 3, [C_12, C_22, C_32]), block_matrix(R, 1, 3, [C_13, C_23, C_33])]
])

# Transpose the matrix to get the desired form
A_8k3 = A_8k3.transpose()

# Compute the echelon form of the matrix
echelon_form_A_8k3 = A_8k3.echelon_form()

# Display the result
echelon_form_A_8k3
\end{python}

Computation of the echelon form for $A^{8d+5}$.
\begin{python}
# Define the polynomial ring over the rationals with variable x_d
R.<x_d> = QQ[]

# Define the submatrices C_{i,j} as row block matrices
C_10 = matrix(R,[[0, -4*x_d, -4*x_d, 0, -4*x_d, -4*x_d]]).transpose()
C_20 = matrix(R,[[0, 0, 0, 0, 0, 0]]).transpose()
C_30 = matrix(R,[[0, -2*x_d, -2*x_d, 0, -2*x_d, -2*x_d]]).transpose()
C_11 = matrix(R,[[-4*x_d, 0, 0, -4*x_d, -4*x_d, -4*x_d]]).transpose()
C_12 = matrix(R,[[0, -4*x_d, -4*x_d, 0, -4*x_d, -4*x_d]]).transpose()
C_13 = matrix(R,[[-4*x_d, 0, 0, -4*x_d, -4*x_d, -4*x_d]]).transpose()
C_21 = matrix(R,[[-4*x_d, -4*x_d, -4*x_d, -4*x_d, 0, 0]]).transpose()
C_22 = matrix(R,[[0, 0, 0, 0, 0, 0]]).transpose()
C_23 = matrix(R,[[-4*x_d, -4*x_d, -4*x_d, -4*x_d, 0, 0]]).transpose()
C_31 = matrix(R,[[-4*x_d, 0, 0, -4*x_d, -4*x_d, -4*x_d]]).transpose()
C_32 = matrix(R,[[0, -4*x_d, -4*x_d, 0, -4*x_d, -4*x_d]]).transpose()
C_33 = matrix(R,[[-4*x_d, 0, 0, -4*x_d, -4*x_d, -4*x_d]]).transpose()

# Construct the full matrix A^{8d+5} from the row block matrices
A_8d5 = block_matrix(R, [
    [block_matrix(R, 1, 3, [C_10, C_20, C_30]), block_matrix(R, 1, 3, [C_11, C_21, C_31])],
    [block_matrix(R, 1, 3, [C_12, C_22, C_32]), block_matrix(R, 1, 3, [C_13, C_23, C_33])]
])

# Transpose the matrix to get the desired form
A_8d5 = A_8d5.transpose()

# Compute the echelon form of the matrix
echelon_form_A_8d5 = A_8d5.echelon_form()

# Display the result
echelon_form_A_8d5
\end{python}

Computation of the echelon form for $A^{8d+7}$. We use the same conventions as for $A^{8d+3}$.
\begin{python}
# Define the polynomial ring over the rationals with variable x_d
R.<x_d> = QQ[]

# Define the substitutions y_d = 2*x_d^2 and z_d = 4*x_d^2
y_d = 2 * x_d^2
z_d = 4 * x_d^2

# Define the submatrices C_{i,j} as row block matrices
C_10 = matrix(R, [[0, -2*x_d-2*y_d, -2*x_d-2*y_d, -2*z_d, -2*x_d-2*y_d, -2*x_d-2*y_d]]).transpose()
C_20 = matrix(R, [[0, 2*z_d, 2*z_d, 0, 2*z_d, 2*z_d]]).transpose()
C_30 = matrix(R, [[-2*z_d, -2*x_d-2*y_d, -2*x_d-2*y_d, -2*z_d, -2*x_d-2*y_d, -2*x_d-2*y_d]]).transpose()
C_11 = matrix(R, [[-2*x_d-2*y_d, -2*z_d, 0, -2*x_d-2*y_d, -2*x_d-2*y_d, -2*x_d-2*y_d]]).transpose()
C_12 = matrix(R, [[-2*z_d, -2*x_d-2*y_d, -2*x_d-2*y_d, 0, -2*x_d-2*y_d, -2*x_d-2*y_d]]).transpose()
C_13 = matrix(R, [[-2*x_d-2*y_d, 0, 2*z_d, -2*x_d-2*y_d, -2*x_d-2*y_d, -2*x_d-2*y_d]]).transpose()
C_21 = matrix(R, [[-2*x_d-2*y_d, -2*x_d-2*y_d, -2*x_d-2*y_d, -2*x_d-2*y_d, -2*z_d, -2*z_d]]).transpose()
C_22 = matrix(R, [[-2*z_d, 0, 0, -2*z_d, -2*z_d, -2*z_d]]).transpose()
C_23 = matrix(R, [[-2*x_d-2*y_d, -2*x_d-2*y_d, -2*x_d-2*y_d, -2*x_d-2*y_d, -2*z_d, 0]]).transpose()
C_31 = matrix(R, [[0, -2*x_d-2*y_d, -2*x_d-2*y_d, -2*z_d, -2*x_d-2*y_d, -2*x_d-2*y_d]]).transpose()
C_32 = matrix(R, [[-2*z_d, -2*x_d-2*y_d, -2*x_d-2*y_d, 0, -2*x_d-2*y_d, -2*x_d-2*y_d]]).transpose()
C_33 = matrix(R, [[-2*x_d-2*y_d, -2*z_d, 0, -2*x_d-2*y_d, -2*x_d-2*y_d, -2*x_d-2*y_d]]).transpose()

# Construct the full matrix A^{8k+7} from the row block matrices
A_8k7 = block_matrix(R, [
    [block_matrix(R, 1, 3, [C_10, C_20, C_30]), block_matrix(R, 1, 3, [C_11, C_21, C_31])],
    [block_matrix(R, 1, 3, [C_12, C_22, C_32]), block_matrix(R, 1, 3, [C_13, C_23, C_33])]
])

# Transpose the matrix to get the desired form
A_8k7 = A_8k7.transpose()

# Compute the echelon form of the matrix
echelon_form_A_8k7 = A_8k7.echelon_form()

# Display the result
echelon_form_A_8k7
\end{python}

\typeout{----------------------------  linluesau.tex  ----------------------------}






\bibliography{ref}
\bibliographystyle{abbrv}

\end{document}